\DeclareMathAlphabet{\mathpzc}{OT1}{pzc}{m}{it}
\newtheorem{theorem}{Theorem}[section]
\newtheorem{proposition}[theorem]{Proposition}
\newtheorem{corollary}[theorem]{Corollary}
\newtheorem{lemma}[theorem]{Lemma}
\newtheorem*{theorem*}{Theorem}
\newtheorem*{proposition*}{Proposition}
\newtheorem*{corollary*}{Corollary}
\newtheorem*{lemma*}{Lemma}
\newtheorem*{conjecture*}{Conjecture}
\theoremstyle{definition}
\newtheorem{definition}[theorem]{Definition}
\newtheorem*{definition*}{Definition}
\theoremstyle{remark}
\newtheorem{example}[theorem]{Example}
\newtheorem{examples}[theorem]{Examples}
\newtheorem{remark}[theorem]{Remark}
\newtheorem{remarks}[theorem]{Remarks}
\newtheorem*{example*}{Example}
\newtheorem*{examples*}{Examples}
\newtheorem*{remark*}{Remark}
\newtheorem*{remarks*}{Remarks}
\newtheorem*{exercise*}{Exercise}
\newcommand\da{\!\downarrow\!}
\newcommand\ra{\rightarrow}
\newcommand\la{\leftarrow}
\newcommand\id{\mathrm{id}}
\newcommand\ten{\otimes}
\newcommand\vareps{\varepsilon}
\newcommand\CC{\mathrm{C}}
\newcommand\DD{\mathrm{D}}
\renewcommand\H{\mathrm{H}}
\newcommand\z{\mathrm{Z}}
\newcommand\N{\mathbb{N}}
\newcommand\Z{\mathbb{Z}}
\newcommand\Q{\mathbb{Q}}
\newcommand\Bu{\mathbb{B}}
\newcommand\bD{\mathbb{D}}
\newcommand\bF{\mathbb{F}}
\newcommand\bG{\mathbb{G}}
\newcommand\bI{\mathbb{I}}
\newcommand\bJ{\mathbb{J}}
\newcommand\bL{\mathbb{L}}
\newcommand\bP{\mathbb{P}}
\newcommand\bS{\mathbb{S}}
\newcommand\C{\mathcal{C}}
\newcommand\cA{\mathcal{A}}
\newcommand\cB{\mathcal{B}}
\newcommand\cD{\mathcal{D}}
\newcommand\cE{\mathcal{E}}
\newcommand\cG{\mathcal{G}}
\newcommand\cM{\mathcal{M}}
\newcommand\cN{\mathcal{N}}
\newcommand\cU{\mathcal{U}}
\renewcommand\O{\mathscr{O}}
\newcommand\sF{\mathscr{F}}
\newcommand\sM{\mathscr{M}}
\newcommand\Del{\mathfrak{Del}}
\newcommand\fG{\mathfrak{G}}
\newcommand\fM{\mathfrak{M}}
\newcommand\fX{\mathfrak{X}}
\renewcommand\L{\Lambda}
\newcommand\m{\mathfrak{m}}
\newcommand\g{\mathfrak{g}}
\newcommand\fm{\mathfrak{m}}
\newcommand\cHom{\mathcal{H}\!\mathit{om}}
\newcommand\Ring{\mathrm{Ring}}
\newcommand\Alg{\mathrm{Alg}}
\newcommand\NAlg{\mathrm{NAlg}}
\newcommand\hyp{\mathrm{hyp}}
\newcommand\Mod{\mathrm{Mod}}
\newcommand\Hom{\mathrm{Hom}}
\newcommand\HHom{\underline{\mathrm{Hom}}}
\newcommand\Ext{\mathrm{Ext}}
\newcommand\EExt{\mathbb{E}\mathrm{xt}}
\newcommand\cone{\mathrm{cone}}
\newcommand\Ob{\mathrm{Ob}\,}
\newcommand\Ab{\mathrm{Ab}}
\newcommand\Gp{\mathrm{Gp}}
\newcommand\hen{\mathrm{hen}}
\newcommand\loc{\mathrm{loc}}
\newcommand\Proj{\mathrm{Proj}\,}
\newcommand\Spec{\mathrm{Spec}\,}
\newcommand\Dec{\mathrm{Dec}\,}
\newcommand\DEC{\mathrm{DEC}\,}
\newcommand\Set{\mathrm{Set}}
\newcommand\Dat{\mathrm{Dat}}
\newcommand\ad{\mathrm{ad}}
\newcommand\Lim{\varprojlim}
\newcommand\LLim{\varinjlim}
\newcommand\into{\hookrightarrow}
\newcommand\onto{\twoheadrightarrow}
\newcommand\abuts{\implies}
\newcommand\xra{\xrightarrow}
\newcommand\pr{\mathrm{pr}}
\newcommand\alg{\mathrm{alg}}
\newcommand\bt{\bullet}
\newcommand\by{\times}
\newcommand\mc{\mathrm{MC}}
\newcommand\mmc{\underline{\mathrm{MC}}}
\newcommand\cMC{\mathcal{MC}}
\newcommand\ddel{\mathrm{Del}}
\newcommand\Symm{\mathrm{Symm}}
\newcommand\SL{\mathrm{SL}}
\newcommand\GL{\mathrm{GL}}
\newcommand\et{\acute{\mathrm{e}}\mathrm{t}}
\newcommand\Tot{\mathrm{Tot}\,}
\newcommand\diag{\mathrm{diag}\,}
\newcommand\ind{\mathrm{ind}}
\newcommand\pro{\mathrm{pro}}
\newcommand\pd{\partial}
\newcommand\half{\frac{1}{2}}
\newcommand\gp{\mathrm{Gp}}
\newcommand\gpd{\mathrm{Gpd}}
\newcommand\Gpd{\mathrm{Gpd}}
\renewcommand\alg{\mathrm{alg}}
\newcommand\cosk{\mathrm{cosk}}
\newcommand\op{\mathrm{opp}}
\newcommand\co{\colon\thinspace}
\newcommand\oR{\mathbf{R}}
\newcommand\oP{\mathbf{P}}
\newcommand\oL{\mathbf{L}}
\newcommand\oSpec{\mathbf{Spec}\,}
\newcommand\on{\mathbf{n}}
\newcommand\oO{\mathbf{0}}
\newcommand\oI{\mathbf{1}}
\newcommand\uleft\underleftarrow
\newcommand\uline\underline
\newcommand\uright\underrightarrow
\begin{document}

\begin{abstract}
We introduce  frameworks for constructing global derived moduli stacks associated to a broad range of problems, bridging the gap between the concrete and abstract conceptions of derived moduli.
Our three approaches  are via differential graded Lie algebras, via cosimplicial groups, and via quasi-comonoids, each more general than the last. 
Explicit examples of derived moduli problems addressed here are finite schemes, polarised projective schemes, torsors, coherent sheaves, and finite group schemes.

\end{abstract}

\title{Constructing derived moduli stacks}
\author{J.P.Pridham}
\thanks{The author was supported during this research by Trinity College, Cambridge; and   by the Engineering and Physical Sciences Research Council [grant number  EP/F043570/1].}

\maketitle

\section*{Introduction}

In \cite{dmsch},  representability was established  for many derived moduli  problems involving schemes and quasi-coherent sheaves. However, the derived stacks there were characterised as nerves of $\infty$-groupoids with very many objects, making it difficult to understand 
the derived stacks concretely.

By contrast to the indirect approach of satisfying a representability theorem, \cite{Hilb} and \cite{Quot} construct explicit derived Hilbert and Quot schemes as dg-schemes with the necessary  properties, but  give no universal family, so the derived moduli spaces lack functorial interpretations. 
In this paper, we will show how to reconcile these approaches, thereby giving explicit presentations for the derived moduli spaces of \cite{dmsch}. 

In fact, we go substantially beyond the problems considered in \cite{Hilb} and \cite{Quot}, and  give a framework valid in all characteristics (rather than just over $\Q$). This is done by working with quasi-comonoid-valued functors, which give a  global analogue of the simplicial deformation complexes of  \cite{paper2}. In broad terms, derived moduli constructions over $\Q$ tend to be based on differential graded Lie algebras (DGLAs), while  quasi-comonoids perform the same r\^ole in much greater generality.  Since quasi-comonoids 
arise naturally from algebraic theories, they are  much more general than DGLAs, even   in characteristic zero.
 
Beware that for the purposes of this paper, derived algebraic geometry will mean the theory of \cite{lurie} based on simplicial commutative rings, or on dg algebras when working over $\Q$, rather than the more exotic HAG contexts of \cite{hag2}. This enables us to apply Lurie's Representability Theorem in \S \ref{background}, but is also  needed in later sections. The key to \S \ref{dglamoduli} is that tensoring a commutative algebra with a Lie algebra gives a Lie algebra, but similar constructions could be made with any pair of algebras for Koszul-dual operads. Likewise, the constructions of \S \S\ref{cgpmoduli}--\ref{qmmoduli} adapt to give functors on any category of simplicial objects. However, they will not adapt to give functors on symmetric spectra, since they depend on the functor $A_{\bt} \leadsto A_0$.

The structure of the paper is as follows.
Section \ref{background} summarises various results from \cite{drep} concerning representability of derived stacks, and gives a few minor generalisations. Section \ref{sheafsn} develops some technical results on the pro-Zariski and pro-\'etale sites. Lemma \ref{fpsheafpro} shows that any finitely presented sheaf  is a sheaf for the associated pro site, 
 and our main results are Lemmas \ref{univZarcover} and \ref{univetcover}, concerning the existence of weakly universal coverings. These are applied in later sections to deal with  infinite sums of locally free sheaves, which feature when studying polarised projective varieties.  

In Section \ref{dglamoduli}, DGLAs are introduced, together with the Deligne groupoid $\Del(L)$ associated to any DGLA $L$ with a gauge action. By adapting the techniques of \cite{Quot},  DGLAs are used to construct derived moduli stacks for  pointed finite schemes (Proposition \ref{repfin}) and for polarised projective schemes (Proposition \ref{reppol}). The resulting functors are shown (in Propositions \ref{finconsistent} and \ref{polconsistent}, respectively) to be equivalent to the   corresponding functors in \cite{dmsch}, defined as nerves of $\infty$-groupoids of derived geometric stacks. 

DGLAs only tend to work in characteristic $0$, and  
Section \ref{cgpmoduli} shows how to construct derived moduli stacks using cosimplicial groups instead. For any simplicial cosimplicial group $G$, there is a derived Deligne groupoid $\uline{\Del}(G)$; 
Proposition \ref{delgpnice} shows that cosimplicial group-valued functors $G$ give rise to well-behaved derived moduli functors  $\uline{\Del}(\uline{G})$. For any DGLA $L$ with gauge $G_L$, there is an associated  cosimplicial group $D(\exp(L),G_L)$, and 
Corollary \ref{cfexp2} shows that the  Deligne groupoids associated to $L$ and $D(\exp(L),G_L)$ are isomorphic.
\S \ref{shfcgpsn} defines a kind of  sheafification $G^{\sharp}$  for cosimplicial group-valued functors $G$, removing the need to sheafify $\uline{\Del}(\uline{G})$; this gives an immediate advantage of cosimplicial groups over DGLAs.
 Proposition \ref{torsorconsistent} gives a  cosimplicial group governing derived moduli of torsors, a problem not easily accessible via DGLAs. 

Cosimplicial groups cannot handle all moduli problems, so  Section \ref{qmmoduli} begins by recalling the  quasi-comonoids of \cite{monad}, and the derived Deligne groupoid $\uline{\Del}(E)$ of a simplicial quasi-comonoid $E$. Corollary \ref{delqmnice} then shows that quasi-comonoid-valued functors $E$ give rise to well-behaved derived moduli functors  $\uline{\Del}(\uline{E})$.
 In \S \ref{monads}, we recall basic properties of monads, together with results from \cite{monad} showing how these give rise to quasi-comonoids. Monads are ubiquitous, arising whenever there is some kind of algebraic structure.
\S \ref{diagrams} goes further, by associating quasi-comonoids to diagrams. In particular, this allows derived moduli of morphisms  to be constructed for all the examples considered in \S \ref{qmegs}. \S \ref{shfqmsn} then  defines a kind of  sheafification $E^{\sharp}$  for quasi-comonoid-valued functors $E$, removing the need to sheafify $\uline{\Del}(\uline{E})$.

For every cosimplicial group $G$, there is a  quasi-comonoid $\cE(G)$, and Lemma \ref{cfdef} shows that $\uline{\Del}(\cE(G))\simeq \uline{\Del}(G)$, ensuring consistency between the various approaches. 
For moduli problems based on additive categories, the associated quasi-comonoid $E$ is linear. This means that its normalisation $NE$ is a DG associative non-commutative algebra (so \emph{a fortiori} a DGLA), so the techniques of this section give DGLAs for abelian moduli problems. Moreover,
Proposition \ref{mcqmdgla} gives an equivalence $\uline{\Del}(E)\simeq \uline{\Del}(NE)$, so quasi-comonoids and DGLAs give equivalent derived moduli.

Section \ref{qmegs} gives a selection of examples which can be tackled by quasi-comonoids. Derived moduli of 
finite schemes, of polarised projective schemes, and of finite group schemes  are constructed in  Propositions \ref{repfinqm}, \ref{reppolqm} and \ref{repfingpqm}, respectively. In Propositions \ref{finconsistentqm}, \ref{polconsistentqm} and \ref{fingpconsistentqm}, these are shown to be equivalent to the  corresponding functors in \cite{dmsch}, defined as nerves of $\infty$-groupoids of derived geometric stacks. Proposition \ref{cohmod} constructs derived moduli of coherent sheaves, and Proposition \ref{cohconsistent} shows that this is equivalent to the nerve of $\infty$-groupoids of hypersheaves considered in \cite{dmsch}.

\tableofcontents

\section{Background on representability}\label{background}

Let $\bS$ be the category of simplicial sets. Denote the  category of simplicial commutative rings by $s\Ring$, the category of simplicial commutative $R$-algebras by $s\Alg_R$, and the category of simplicial $R$-modules by $s\Mod_R$. 
If $\Q \subset R$, we let $dg_+\Alg_R$ be the category of differential graded-commutative $R$-algebras in non-negative chain degrees,
  and $ dg_+\Mod_R$ the category of $R$-modules in  chain complexes in non-negative chain degrees.  

\begin{definition}\label{normdef}
 Given a simplicial abelian group $A_{\bt}$, we denote the associated normalised chain complex  by $N^sA$ (or, when no confusion is likely, by $NA$). Recall that this is given by  $N(A)_n:=\bigcap_{i>0}\ker (\pd_i: A_n \to A_{n-1})$, with differential $\pd_0$. Then $\H_*(NA)\cong \pi_*(A)$.

When $\Q \subset R$, using the Eilenberg--Zilber shuffle product (\cite{W} 8.5.4),  normalisation  $N$ extends to a right Quillen equivalence
$$
N:s\Alg_R \to dg_+\Alg_R,
$$  
by \cite{QRat} \S I.4.
\end{definition}

\begin{definition}
Define $dg_+\cN_R$ (resp. $s\cN_R$)  to be the full subcategory of $dg_+\Alg_R$ (resp. $s\Alg_R$) consisting of objects $A$
 for which the map $A \to \H_0A$ (resp. $A \to \pi_0A$) has nilpotent kernel. Define $dg_+\cN_R^{\flat}$ (resp. $s\cN_R^{\flat}$) to be the full subcategory of $dg_+\cN_R$ (resp. $s\cN_R$) consisting of objects $A$
 for which $A_i=0$ (resp. $N_iA=0$) for all $i \gg 0$.
\end{definition}

From now on, we will write $d\cN^{\flat}$ (resp. $d\Alg_R$, resp. $d\Mod_R$) to mean either $s\cN_R^{\flat}$ (resp. $s\Alg_R$, resp. $s\Mod_R$) or $dg_+\cN_R^{\flat}$ (resp. $dg_+\Alg_R$, resp. $dg_+\Mod_R$), noting that we only use chain algebras   in characteristic $0$.

\begin{definition}
Say that a  surjection $A \to B$ in  $dg_+\Alg_R$ (resp. $s\Alg_R$) is a \emph{tiny acyclic extension} if the kernel $K$ satisfies $I_A\cdot K=0$, and  $K$ (resp. $NK$) is of the form $\cone(M)[-r]$ for some $\H_0A$-module  (resp. $\pi_0A$-module) $M$. In particular, $\H_*K=0$.
\end{definition}

\subsection{Formal quasi-smoothness and homogeneity}

The following definitions are mostly taken from \cite{dmsch}.

\begin{definition}
Say that a natural transformation $\eta: F \to G$ of functors $F, G:  d\cN^{\flat} \to \bS$  is homotopic (resp. pre-homotopic) if for all tiny acyclic extensions $A \to B$, the map
\[
F(A) \to F(B)\by_{G(B)}G(A)
\]
is a trivial fibration (resp. a surjective fibration). Say that $F$ is homotopic if $F \to \bt$ is so, where $\bt$ denotes the one-point set.
\end{definition}

\begin{definition}
Say that a natural transformation $\eta: F \to G$ of functors $F, G:  d\cN^{\flat} \to \bS$  is formally quasi-presmooth (resp. formally presmooth)  if for all square-zero extensions $A \to B$, the map
\[
F(A) \to F(B)\by_{G(B)}G(A)
\]
is a fibration (resp. a surjective fibration).

Say that $\eta$ is formally quasi-smooth (resp. formally smooth) if it is formally quasi-presmooth (resp. formally presmooth)  and homotopic. 
\end{definition}

\begin{definition}
Say that a natural transformation $\eta: F \to G$ of functors on $  d\cN^{\flat}$ is formally \'etale if 
for all square-zero extensions $A \to B$, the map
\[
F(A) \to F(B)\by_{G(B)}G(A)
\]
is an isomorphism.
\end{definition}


\begin{definition}
Say that a natural transformation $F \to G$ of functors on $ d\cN^{\flat}$ is (relatively) homogeneous if for all square-zero extensions $A \to B$, the map
\[
F(A\by_BC ) \to G(A\by_BC)\by_{[G(A)\by_{G(B)}G(C)]}[F(A)\by_{F(B)}F(C)]
\]
is an isomorphism. Say that $F$ is homogeneous if $F \to \bt$ is relatively homogeneous.
\end{definition}

\begin{proposition}\label{fethgs}
Let $\alpha:F \to G$ be a formally \'etale morphism of functors $F, G :  d\cN^{\flat} \to \Set$. If $G$ is homogeneous, then so is $F$. Conversely, if  $\alpha$ is surjective  and $F$ is  homogeneous, then so is $G$.
\end{proposition}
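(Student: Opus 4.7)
The plan is to exploit the stability of square-zero extensions under base change: if $A \to B$ is a square-zero extension and $C \to B$ is any morphism in $d\cN^{\flat}$, then $A \times_B C \to C$ is again a square-zero extension (with the same kernel, viewed as a $C$-module via $C \to B$). This observation will let us apply formal étaleness of $\alpha$ to both $A \to B$ and to its base change $A \times_B C \to C$, which is the essential ingredient for both implications.

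For the forward direction, suppose $G$ is homogeneous. Given a square-zero extension $A \to B$ and any $C \to B$, I would chain three isomorphisms: first, formal étaleness of $\alpha$ applied to $A \times_B C \to C$ gives $F(A \times_B C) \cong F(C) \times_{G(C)} G(A \times_B C)$; next, homogeneity of $G$ rewrites $G(A \times_B C) \cong G(A) \times_{G(B)} G(C)$, so the expression simplifies to $F(C) \times_{G(B)} G(A)$; finally, formal étaleness of $\alpha$ applied to $A \to B$ gives $F(A) \cong F(B) \times_{G(B)} G(A)$, from which a short bookkeeping shows $F(A) \times_{F(B)} F(C) \cong F(C) \times_{G(B)} G(A)$ as well. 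Composing the three yields the required iso $F(A \times_B C) \cong F(A) \times_{F(B)} F(C)$.

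For the converse, assume $\alpha$ is surjective (pointwise on sets) and $F$ is homogeneous. Given a square-zero extension $A \to B$ and $C \to B$, I would verify bijectivity of $\varphi \colon G(A \times_B C) \to G(A) \times_{G(B)} G(C)$ directly. For surjectivity, given $(g_A, g_C)$, lift $g_C$ to some $f_C \in F(C)$ using surjectivity of $\alpha$; formal étaleness of $\alpha$ along $A \to B$ then yields a unique $f_A \in F(A)$ with $\alpha(f_A) = g_A$ and $f_A|_B = f_C|_B$; $F$-homogeneity glues $(f_A, f_C)$ to $f \in F(A \times_B C)$, and $\alpha(f)$ is a preimage of $(g_A, g_C)$. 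For injectivity, given $g, g' \in G(A \times_B C)$ with $g|_A = g'|_A$ and $g|_C = g'|_C$, choose a lift $g = \alpha(f)$, then use formal étaleness along the pullback square-zero extension $A \times_B C \to C$ to produce the unique $f' \in F(A \times_B C)$ with $\alpha(f') = g'$ and $f'|_C = f|_C$; comparing $f|_A$ and $f'|_A$ in $F(A) \cong F(B) \times_{G(B)} G(A)$ shows they have matching components, so $F$-homogeneity forces $f' = f$ and hence $g' = g$.

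The main obstacle I anticipate is the injectivity step of the converse, where one must apply formal étaleness of $\alpha$ along two distinct square-zero extensions ($A \to B$ and the base change $A \times_B C \to C$) and interleave this with $F$-homogeneity. This is precisely why I would highlight the base-change stability of square-zero extensions at the very start; the rest of the proof is a formal diagram chase through the defining universal properties.
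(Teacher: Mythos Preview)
Your argument is correct and complete. The paper itself does not give a proof here but merely cites \cite{dmsch} Proposition \ref{dmsch-fethgs}; your self-contained argument is essentially the natural diagram chase one would expect to find there, so there is nothing substantive to compare.
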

\begin{proof}
This is \cite{dmsch} Proposition \ref{dmsch-fethgs}.
\end{proof}

\subsection{Tangent complexes}

Given a category $\C$, write $\Ab(\C)$ for the category of abelian group objects in $\C$.

\begin{definition}
For a  homogeneous functor $F:d\cN^{\flat} \to \bS$, $A \in d\cN^{\flat} $ and $M \in d\Mod_A$, define the tangent space by
$$
T(F,M):= F(A \oplus M) \in \bS \da F(A),
$$
noting that this is an abelian group object in this category. Here, $\bS \da F(A) $ denotes the category of objects over $ F(A)$.

Given a natural transformation $\alpha:F \to G$ of  homogeneous functors $F,G:d\cN^{\flat}\to \bS$, define the relative tangent space by
$$
T(F/G, M):= \ker (T(F,M)\to T(G,M)\by_{G(A)}F(A)) \in \Ab(\bS \da F(A)).
$$

Given $x \in F(A)$, define $T_x(F/G,M):=T(F/G,M)\by_{F(A)}\{x\} \in \Ab(\bS)=s\Ab$. 
\end{definition}

 When $\alpha:F \to G$ is formally quasi-presmooth, note that this definition is compatible with \cite{drep} Definition \ref{drep-Tdef}, in the sense that for $x \in \pi_0F(A)$, the space $T_x(F/G)(M)$ of \cite{drep} is the homotopy fibre of  $T(F/G,M)\to F(A)$ over $x$, since $T(F/G,M) \to F(A)$ is a fibration.

\begin{definition}\label{pretotcohodef}
Given a pre-homotopic formally quasi-presmooth transformation $F\xra{\alpha} G$ of homogeneous functors $F,G:d\cN^{\flat} \to \bS$, an object $A \in  d\cN^{\flat}$, a point $x \in F_0(A)$, and  a module $M\in d\Mod_A$, define $\DD^{i}_x(F/G,M)$ as follows, following \cite{drep} Definition \ref{drep-pretotcohodef}.

For $i\le 0$, set 
\[
\DD^i_x(F/G,M):= \pi_{-i}( T_x(F/G,M) ). 
\]
For $i >0$, set 
\[
\DD^i_x(F,M):= \pi_0F( T_x(F/G,M[-i]) /\pi_0(T_x(F/G, \cone(M)[1-i])). 
\]

Note that homogeneity of $F$ ensures that these are abelian groups for all $i$, and that the multiplicative action of $A$ on $M$ gives them the structure of $A$-modules.
\end{definition}

If $F\xra{\alpha} G$ is  formally quasi-smooth, note that \cite{drep} Lemma \ref{drep-adf} gives
$$
\DD^{n-i}_x(F/G,M)= \pi_i (T_x(F/G,M[-n])).
$$

The following is immediate.
\begin{lemma}
If $X,Y,Z:d\cN^{\flat} \to \bS$ are homogeneous, and   $X \xra{\alpha} Y$ is formally quasi-presmooth, with $\beta:Z \to Y$ any map,  set $T:=X\by_YZ $, and observe that $T \to Z$ is quasi-presmooth. There is  an isomorphism
$$
\DD^*_t(T/Z,M) \cong \DD^*_{x}(X/Y,M),
$$ 
for $t \in T(A)$ with image $x \in X(A)$.
\end{lemma}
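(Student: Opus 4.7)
The plan is to unpack the definitions carefully using the fact that the relative tangent space is defined as a kernel of abelian group objects, and observe that the pullback structure makes the computation essentially formal.

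First I would verify that $T \to Z$ is formally quasi-presmooth: since $X \to Y$ is formally quasi-presmooth, for any square-zero extension $A \to B$ the map $X(A) \to X(B) \by_{Y(B)} Y(A)$ is a fibration, and base-change along $Z(A) \to Y(A)$ (or rather along the analogous map of pullbacks) shows that $T(A) = X(A) \by_{Y(A)} Z(A) \to T(B) \by_{Z(B)} Z(A)$ is a fibration; this is the standard fact that quasi-presmoothness is preserved by base change.  Homogeneity of $T$ follows immediately from homogeneity of $X$, $Y$, $Z$ because limits commute with limits.

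The main step is to compute $T_t(T/Z,M)$. Because formation of $A \oplus M$ commutes with fibre products of the ring $A$ (and all functors involved are homogeneous), we have
\[
T(T,M) = X(A\oplus M)\by_{Y(A\oplus M)}Z(A\oplus M) = T(X,M)\by_{T(Y,M)}T(Z,M)
\]
as abelian group objects over $T(A) = X(A)\by_{Y(A)}Z(A)$.  By definition,
\[
T(T/Z,M) = \ker\bigl(T(T,M)\to T(Z,M)\by_{Z(A)}T(A)\bigr),
\]
and this kernel, taken fibrewise over $t = (x,z)\in T(A)$, consists of those pairs $(x',z')\in X(A\oplus M)\by_{Y(A\oplus M)}Z(A\oplus M)$ for which $x'$ restricts to $x$ and $z'$ is the zero section $0_z$ over $z$.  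The pullback condition then forces the image of $x'$ in $Y(A\oplus M)$ to equal the image of $0_z$, which is the zero section $0_y$ over the common image $y$ of $x$ and $z$.  Thus
\[
T_t(T/Z,M) \;=\; \{x'\in X(A\oplus M) : x'|_A = x,\ \text{image in } Y(A\oplus M) = 0_y\} \;=\; T_x(X/Y,M).
\]

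Finally, since the identification $T_t(T/Z,M)\cong T_x(X/Y,M)$ is natural in $M$, applying it to $M[-i]$ and $\cone(M)[1-i]$ and passing to the relevant homotopy groups in Definition \ref{pretotcohodef} yields the required isomorphism $\DD^*_t(T/Z,M)\cong \DD^*_x(X/Y,M)$.  There is no real obstacle here; everything follows from formal manipulation of pullbacks of homogeneous functors, which is why the statement is flagged as immediate.
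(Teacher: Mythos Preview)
Your proposal is correct and is exactly the kind of unpacking the paper has in mind: the lemma is stated as immediate, and your argument simply writes out the fibre-product identification $T_t(T/Z,M)\cong T_x(X/Y,M)$ and its naturality in $M$, which is precisely why it is immediate. There is nothing to add or correct.
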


\begin{proposition}\label{longexact}
Let $X,Y, Z:d\cN^{\flat}\to \bS$ be homogeneous functors, with  $X \xra{\alpha} Y$ and $Y \xra{\beta} Z$ formally quasi-smooth.    For $x \in X(A)$, there is then a long exact sequence
$$
\ldots \xra{\pd} \DD^j_x(X/Y,M) \to \DD^j_x(X/Z,M) \to \DD^j_y(Y/Z,M) \xra{\pd} \DD^{j+1}_x(X/Y,M) \to \DD^{j+1}_x(X/Z,M) \to \ldots,
$$
where $y \in Y(A)$ is the image of $x$.
\end{proposition}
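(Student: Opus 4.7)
The plan is to produce a homotopy fibre sequence of simplicial abelian groups
\[
T_x(X/Y, M) \to T_x(X/Z, M) \to T_y(Y/Z, M)
\]
and then extract the long exact sequence on $\DD^*$ by taking homotopy groups (after a suitable shift of $M$). First I would record that the composite $X \to Z$ is formally quasi-smooth (since compositions of formally quasi-(pre)smooth maps are formally quasi-(pre)smooth, and compositions of homotopic maps are homotopic), so that the translation $\DD^{n-i}_x(F/G,M) = \pi_i(T_x(F/G, M[-n]))$ is available for all three relative tangent spaces.

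Unpacking the definitions, an element of $T_x(X/Z, M)$ is a lift $\tilde x$ of $x$ along $A\oplus M \to A$ whose image in $Z(A \oplus M)$ is the trivial lift of $z := \beta(y)$; the analogous description holds for the other two spaces. Using that $T_y(Y/Z, M)$ is by definition the fibre of $T_y(Y,M) \to T_z(Z,M)$ over the trivial section, one checks
\[
T_x(X/Z, M) \;\cong\; T_x(X, M)\by_{T_y(Y, M)} T_y(Y/Z, M),
\]
so the map $T_x(X/Z, M) \to T_y(Y/Z, M)$ is a pullback of $T_x(X, M) \to T_y(Y, M)$. Now formal quasi-presmoothness of $X \to Y$ applied to the square-zero extension $A \oplus M \to A$ says that $X(A\oplus M) \to Y(A \oplus M)\by_{Y(A)}X(A)$ is a fibration; pulling back along the inclusion $\{y\} \to Y(A)$ shows $T_x(X, M) \to T_y(Y,M)$ is a fibration, and hence so is $T_x(X/Z, M) \to T_y(Y/Z, M)$. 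Its fibre over the trivial section is precisely $T_x(X/Y, M)$, yielding the desired fibre sequence.

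With the fibre sequence in hand, the long exact sequence of homotopy groups of a fibration of simplicial abelian groups gives
\[
\ldots \to \pi_{i+1}(T_y(Y/Z, M)) \to \pi_i(T_x(X/Y, M)) \to \pi_i(T_x(X/Z, M)) \to \pi_i(T_y(Y/Z, M)) \to \ldots.
\]
Replacing $M$ by $M[-n]$ for $n$ large and using $\DD^{n-i}_x(F/G, M) = \pi_i(T_x(F/G, M[-n]))$ (valid since all three morphisms are formally quasi-smooth) reindexes this as the asserted long exact sequence in $\DD^*_x(-, M)$; compatibility of the connecting maps under increasing $n$ follows from the naturality of the shift isomorphism.

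The main obstacle I expect is the bookkeeping around basepoints: one must keep track of which ``trivial lift'' serves as the zero section in each tangent space and verify that the sequence of kernels is identified with the sequence of homotopy fibres over these basepoints. Once the pullback identity $T_x(X/Z, M) \cong T_x(X,M)\by_{T_y(Y,M)}T_y(Y/Z, M)$ is in place (which uses homogeneity to guarantee that the relative tangent spaces genuinely sit inside the absolute ones as abelian group objects over $X(A)$ resp.\ $Y(A)$), the fibration and long exact sequence follow mechanically.
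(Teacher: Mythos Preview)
Your approach is correct and is essentially the same as the paper's: establish that $T_x(X/Y,M)$ is the fibre of $T_x(X/Z,M)\to T_y(Y/Z,M)$, take the long exact sequence of homotopy groups, and reindex via $\pi_iT_x(-,M[-n])=\DD^{n-i}_x(-,M)$. The paper is terser---it simply records $T_x(X/Y,M)=\ker\bigl(T_x(X/Z,M)\to T_y(Y/Z,M)\bigr)$ and asserts the fibration sequence---whereas you spell out the pullback identification and invoke formal quasi-presmoothness to see the map is a fibration; your extra bookkeeping is sound and fills in what the paper leaves implicit.
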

\begin{proof}
Since $T_x(X/Y,M)=\ker(\alpha:T_x (X/Z,M) \to T_y (Y/Z,M))$, we have fibration sequences
\[
\ldots \to \pi_iT_x(X/Y,M[-n]) \to \pi_iT_x(X/Z,M[-n]) \to  \pi_iT_x(Y/Z,M[-n])   \to \ldots  
\]
for all $i,n\ge 0$
so the result follows because $\pi_iT_x(X/Y,M[-n])= \DD^{n-i}(X/Y,M)$, and similarly for $X/Z, Y/Z$.
\end{proof}

\begin{definition}
Recall that a local coefficient system on $S \in \bS$ is an object $V$ of $\Ab(\bS \da S)$ for which the maps $\pd_i : V_s \to V_{\pd_is}$ are isomorphisms for all $ s \in S_n$, where $V_s:= V_n\by_{S_n}\{s\}$.
\end{definition}

\begin{lemma} \label{tanlocsys}
If  $\alpha: X \to  Y$ is a  formally quasi-smooth    morphism between homogeneous functors, take 
an object $A \in d\cN^{\flat}$ and  $M\in d\Mod_A$.
Then there is a local coefficient system 
$$
\DD^*(X/Y,M)
$$
on $X(A)$, whose stalk at $x \in X(A)$ is $\DD^*_x(X/Y,M)$. In particular, $\DD^*_x(X/Y,M)$ depends (up to non-canonical isomorphism) only on the image of $x$ in $\pi_0X(A)$.
\end{lemma}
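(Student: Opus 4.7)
The plan is to realise each $\DD^i(X/Y, M)$ as the sheaf of fibrewise homotopy groups of a pointed Kan fibration over $X(A)$, so that the local coefficient system structure comes from the standard theory of Kan fibrations.

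First I would show that, for any $M'\in d\Mod_A$, the relative tangent space $T(X/Y,M')\to X(A)$ is a Kan fibration of simplicial sets with a canonical zero section. This uses formal quasi-smoothness of $\alpha$: the map $T(X,M')\to T(Y,M')\times_{Y(A)}X(A)$ is a fibration in $\bS$, and $T(X/Y,M')$ is its pullback along the zero section of $T(Y,M')\times_{Y(A)}X(A)\to X(A)$, hence also a fibration over $X(A)$; the zero section comes from $A\to A\oplus M'$. In particular $T(X/Y,M')$ is a simplicial abelian group in $\bS\da X(A)$ whose underlying simplicial map is a Kan fibration.

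Next I would invoke the standard construction of fibrewise homotopy groups: for any pointed Kan fibration $p:E\to B$, the assignment $s\mapsto\pi_k(s^*E,0)$ for $s\in B_n$ assembles naturally into an object $\pi_k(E/B)\in\Ab(\bS\da B)$. This is a local coefficient system because each face inclusion $\pd^i\Delta^{n-1}\into\Delta^n$ is a homotopy equivalence, so pulling the Kan fibration $s^*E$ back along $\pd_i$ preserves all homotopy groups. Applying this construction to $T(X/Y,M[-n])\to X(A)$ for the relevant values of $n$, and combining with the formally-quasi-smooth identification $\DD^{n-i}_x(X/Y,M)=\pi_i(T_x(X/Y,M[-n]))$ noted after Definition \ref{pretotcohodef}, yields local coefficient systems $\DD^i(X/Y,M)$ with the required stalks at each $x\in X(A)_0$.

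The main obstacle is checking that the fibrewise $\pi_k$ construction genuinely produces a simplicial object of $\Ab(\bS\da B)$, not merely an assignment of groups to simplices together with a face-isomorphism property; this amounts to verifying compatibility of the zero-section choice with the simplicial structure on $E$ and $B$, which is routine but requires care. Once this is established, the final assertion of the lemma is immediate, since path-connected points have non-canonically isomorphic stalks in any local coefficient system, via transport along a joining path.
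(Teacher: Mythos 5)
Your proposal is correct and is essentially the argument the paper delegates to \cite{drep} (Lemmas \ref{drep-tanlocsys} and \ref{drep-pi0tan} there): formal quasi-(pre)smoothness makes $T(X/Y,M[-n])\to X(A)$ a Kan fibration of abelian group objects, and fibrewise homotopy groups of such a fibration assemble into a local coefficient system, with the identification $\DD^{n-i}_x(X/Y,M)\cong\pi_i(T_x(X/Y,M[-n]))$ supplying the stalks. Your care about the direction of the structure maps (inverting the natural isomorphisms $\pi_k((s\theta)^*E)\to\pi_k(s^*E)$ coming from right properness) is exactly the only point needing attention, and it goes through strictly since inverses of a covariant functor's isomorphisms compose contravariantly.
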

\begin{proof}
As with \cite{drep} Lemma \ref{drep-tanlocsys}, this follows straightforwardly from the proof of  \cite{drep} Lemma \ref{drep-pi0tan}. 
\end{proof}

\subsubsection{Obstructions}

\begin{proposition}\label{robs}
If $F, G:d\cN^{\flat}\to \bS$ are homogeneous, with $G$  pre-homotopic and $\alpha:F \to G$  formally quasi-smooth, 
 then for any square-zero extension $e:I \to A \xra{f} B$ in $d\cN^{\flat}$, there is a sequence of sets
$$
\pi_0(FA)\xra{f_*} \pi_0(FB\by_{GB}GA) \xra{o_e} \Gamma(FB,\DD^1(F/G, I)), 
$$  
where $\Gamma(-)$ denotes the global section functor. 
This is exact in the sense that the fibre of $o_e$ over $0$ is the image of $f_*$ 
 Moreover,  there is a group action of $\DD^0_x(F/G, I)$ on the fibre of $\pi_0(FA) \to \pi_0(FB)$ over $x$, whose orbits are precisely the fibres of $f_*$. 

For any $y \in F_0A$, with $x=f_*y$, the fibre of $FA \to FB\by_{GB}GA$ over $x$ is isomorphic to $T_{x}(F/G,I)$, and the sequence above 
extends to a long exact sequence
$$\xymatrix@R=0ex{
\cdots \ar[r]^-{e_*} &\pi_n(FA,y) \ar[r]^-{f_*}&\pi_n(FB\by_{GB}GA,x) \ar[r]^-{o_e}& \DD^{1-n}_{y}(F/G,I) \ar[r]^-{e_*} &\pi_{n-1}(FA,y)\ar[r]^-{f_*}&\cdots\\ &\cdots \ar[r]^-{f_*}&\pi_1(FB\by_{GB}GA,x) \ar[r]^-{o_e}& \DD^0_{y}(F/G,I)  \ar[r]^-{-*y} &\pi_0(FA).
}
$$
\end{proposition}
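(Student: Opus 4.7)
My approach would follow the standard pattern in derived deformation theory: identify the fibration structure, use homogeneity to compute its fibre as a tangent space, extract the long exact sequence of a Kan fibration, then extend one step further by constructing the obstruction map via a cone construction.

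\emph{Step 1: Fibre of the fibration.} Since $\alpha$ is formally quasi-smooth, the map $F(A) \to F(B) \times_{G(B)} G(A)$ is a Kan fibration. I would use the ring isomorphism $A \times_B A \cong A \oplus I$ (which holds since $I^2 = 0$) together with homogeneity of both $F$ and $G$ to identify the strict fibre over $x = f_*y$ with $T_y(F/G, I)$. The standard long exact sequence of homotopy groups for this fibration then yields all terms of the stated LES above the level of $\pi_0(FA)$, provided one identifies $\pi_i T_y(F/G, I) = \DD^{-i}_y(F/G, I)$; this identification holds by the remark following Definition \ref{pretotcohodef}, which uses precisely the quasi-smoothness hypothesis.

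\emph{Step 2: Construction of $o_e$.} To extend the sequence one step, I would construct $o_e$ using an acyclic path object: take $PI$ such that $PI \twoheadrightarrow I$ is a surjection with acyclic source (for instance $PI = \cone(I[-1])$, suitably interpreted in either the simplicial or chain setting). The trivial acyclic square-zero extension $A \oplus PI \to A$ composed with $f$ then gives a square-zero extension of $B$ whose kernel fits into a short exact sequence involving $I[1]$. Applying $F$ and using the pre-homotopy hypothesis on $G$ to ensure surjective-fibration properties, chasing $x$ through the resulting diagram produces a class in $\pi_0 T_{x_B}(F/G, I[1])$; by the quasi-smooth identification this equals $\DD^1_{x_B}(F/G, I)$, and Lemma \ref{tanlocsys} promotes this pointwise construction to a section $o_e \in \Gamma(FB, \DD^1(F/G, I))$. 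The exactness clause $o_e(x) = 0 \iff x \in \mathrm{image}(f_*)$ then reduces to the trivial-fibration property of the map $F(A \oplus PI) \to F(B) \times_{G(B)} G(A \oplus PI)$: a vanishing obstruction is exactly what is needed to lift the relevant section back across this fibration.

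\emph{Step 3: Group action and orbits.} The action of $\DD^0_x(F/G, I) = \pi_0 T_y(F/G, I)$ on the fibre of $\pi_0(FA) \to \pi_0(FB)$ is inherited from the simplicial abelian group structure on $T_y(F/G, I)$, viewed as acting on $FA$ via the homogeneity isomorphism. The fact that orbits coincide with fibres of $f_*$ is a direct reformulation of exactness of the LES at $\pi_0(FA, y)$, coming from the bottom end of the fibration sequence in Step~1.

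I expect Step 2 to be the main obstacle: one must check that the cone construction gives a well-defined section of the local coefficient system $\DD^1(F/G, I)$ independently of the choice of path object $PI$, and that the surjectivity/exactness claim at the $\pi_0$ level matches the homogeneity-based definition of $\DD^1$. The usual trick is to appeal to homogeneity to reduce everything to the universal tangent object and then exploit contractibility of $\cone$; once this is in place, the remaining assertions follow formally from standard fibration-sequence manipulations.
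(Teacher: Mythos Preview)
Your outline is correct and follows the standard route: use homogeneity plus $A\times_BA\cong A\oplus I$ to identify the fibre of the Kan fibration as $T_y(F/G,I)$, extract the long exact sequence, and extend one step further by passing through an acyclic cone $\cone(I)[-1]$ to manufacture the obstruction class in $\DD^1$. The paper does not give an independent proof here; it simply states that the argument of \cite{drep} Proposition \ref{drep-obs} carries over to the present generality, and your sketch is precisely that argument, so there is nothing to compare.
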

\begin{proof}
The proof of \cite{drep} Proposition \ref{drep-obs} carries over to this generality.
\end{proof}

\begin{corollary}\label{cohosmoothchar}
If $F, G:d\cN^{\flat}\to \bS$ are homogeneous, with $G$  pre-homotopic and $\alpha:F \to G$  pre-homotopic and formally  quasi-presmooth, then $\alpha$ is formally presmooth if and only if $\DD^i_x(F/G,M)=0$ for all $i>0$, all discrete rings $A$, all $x \in \pi_0F(A)$ and all $A$-modules $M$.
\end{corollary}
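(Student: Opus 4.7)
The strategy is to translate formal presmoothness into a statement about the obstruction maps of Proposition \ref{robs} and to identify those obstructions with $\DD^i$. Since $\alpha$ is formally quasi-presmooth, for every square-zero extension $A \to B$ in $d\cN^{\flat}$ the map $F(A) \to F(B) \by_{G(B)} G(A)$ is already a fibration, so formal presmoothness reduces to $\pi_0$-surjectivity of this map for all such extensions. By Proposition \ref{robs}, this is equivalent to the obstruction map $o_e \colon \pi_0(F(B) \by_{G(B)} G(A)) \to \DD^1_y(F/G, I)$ being identically zero for every square-zero extension with kernel $I$.

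For the $(\Leftarrow)$ direction, assuming $\DD^i_x(F/G, M) = 0$ for all $i > 0$ with $A$ and $M$ discrete, I would verify $o_e = 0$ by reducing to the discrete hypothesis. Given a square-zero extension $A \to B$ with kernel $I$, I would filter $I$ by its good-truncation tower as a $B$-module, factoring the extension into a sequence of tiny square-zero extensions whose kernels have the form $N[-k]$ for $N$ a discrete $\pi_0 B$-module and $k \geq 0$. The formal identity $\DD^1(F/G, N[-k]) = \DD^{k+1}(F/G, N)$, which falls straight out of Definition \ref{pretotcohodef}, combined with Lemma \ref{tanlocsys} (so that $\DD^*_y$ may be replaced by $\DD^*_{\bar y}$ for a discrete lift $\bar y$ of the basepoint), then reduces the required vanishing of $\DD^1$ to the stated hypothesis on $\DD^{k+1}$.

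For the $(\Rightarrow)$ direction, assuming $\alpha$ is formally presmooth, I would use the identification $\DD^i(F/G, M) = \DD^1(F/G, M[1-i])$ (again immediate from Definition \ref{pretotcohodef}) to recast the problem as showing $\DD^1(F/G, N) = 0$ for $N$ of the form $M[1-i]$ with $M$ discrete. A natural approach is to construct a square-zero extension whose obstruction map surjects onto $\DD^1(F/G, M[1-i])$, so that the forced vanishing of $o_e$ produces vanishing of $\DD^1$. The candidate is the extension $A \oplus \cone(M)[2-i] \onto A \oplus M[2-i]$ of kernel $M[1-i]$, whose source is weakly equivalent to $A$ via the acyclic $\cone(M)[2-i]$; combining the obstruction LES with the pre-homotopy hypothesis should exhibit every class in $\DD^1(F/G, M[1-i])$ as an obstruction class.

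The main obstacle is delicate and occurs on both sides. On the $(\Leftarrow)$ side one must rigorously factor an arbitrary square-zero extension in $d\cN^{\flat}$ into tiny ones compatibly with the homogeneity of $F$ and $G$, and transfer modules from the possibly non-discrete base $B$ down to $\pi_0 B$ using the local-coefficient-system structure of $\DD^*$ from Lemma \ref{tanlocsys}. On the $(\Rightarrow)$ side one must verify that, for the chosen universal extension, $o_e$ actually surjects onto the whole of $\DD^1(F/G, M[1-i])$ rather than merely landing in it. Both difficulties amount to a careful compatibility between the obstruction-theoretic and cohomological pictures of $\DD^*$, and should follow the techniques of the analogous absolute statement in \cite{drep}.
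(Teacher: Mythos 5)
Your overall strategy --- reduce presmoothness to $\pi_0$-surjectivity via the obstruction sequence of Proposition \ref{robs}, realise $\DD^i$ as $\DD^1$ of a shift, and d\'evisser arbitrary square-zero extensions down to the discrete hypothesis --- is exactly the argument the paper leaves implicit (the corollary is stated without proof, as a direct consequence of Proposition \ref{robs} together with homogeneity). But two steps as written are genuinely broken. First, on the $(\Rightarrow)$ side, your candidate extension $A \oplus \cone(M)[2-i] \onto A \oplus M[2-i]$ with kernel $M[1-i]$ does not exist: in the short exact sequence $0 \to M[2-i] \to \cone(M)[2-i] \to M[1-i] \to 0$ the shifted module $M[2-i]$ is the \emph{sub}complex and $M[1-i]$ the \emph{quotient}, and indeed the only chain map $\cone(M)[2-i] \to M[2-i]$ is zero (the chain-map condition forces it to kill the image of the identity differential). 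The correct extension is $A \oplus \cone(M)[1-i] \onto A \oplus M[-i]$, whose kernel is $M[1-i]$ and whose base lies in $d\cN^{\flat}$. Once you use it, the ``main obstacle'' you flag --- whether $o_e$ surjects onto $\DD^1$ --- evaporates: presmoothness makes $F(A\oplus \cone(M)[1-i]) \to F(A\oplus M[-i]) \by_{G(A\oplus M[-i])} G(A\oplus \cone(M)[1-i])$ a surjective fibration, and pulling back along the basepoint $x$ and the zero point of $G$ gives a surjection $\pi_0 T_x(F/G, \cone(M)[1-i]) \onto \pi_0 T_x(F/G, M[-i])$, whose cokernel is $\DD^i_x(F/G,M)$ \emph{verbatim} by Definition \ref{pretotcohodef}. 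No obstruction classes, and no appeal to Proposition \ref{robs}, are needed in this direction.

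Second, on the $(\Leftarrow)$ side, Lemma \ref{tanlocsys} cannot perform the reduction you ask of it: it fixes the ring $A$ and only says the stalk $\DD^*_x$ depends on the component of $x$ in $\pi_0 F(A)$ (and it is stated for formally quasi-smooth $\alpha$); it says nothing about comparing the base $B$ with $\H_0B$. The missing idea is that homogeneity gives this comparison for free: for any complex $N$ of $\H_0B$-modules pulled back to $B$, one has $B \oplus N \cong B \by_{\H_0B}(\H_0B \oplus N)$, so $T_y(F/G,N) \cong T_{\bar{y}}(F/G,N)$ for $\bar{y}$ the image of $y$ in $F(\H_0B)$, whence $\DD^i_y(F/G,N)\cong \DD^i_{\bar{y}}(F/G,N)$ with discrete base. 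Relatedly, your good-truncation tower does not literally produce kernels $N[-k]$: the graded pieces $\tau_{\ge k}I/\tau_{\ge k+1}I$ are two-term complexes (boundaries included into cycles in degree $k$), i.e.\ extensions of $\H_k(I)[-k]$ by acyclic complexes of the form $\cone(-)[-k]$. The stages with cone-shaped kernels are precisely tiny acyclic extensions and are disposed of by pre-homotopicity of $\alpha$ --- which is the real reason that hypothesis appears, and which your sketch only invokes on the other side --- while the stages with kernel $\H_k(I)[-k]$ use the hypothesis via $\DD^1(F/G,\H_k(I)[-k]) = \DD^{k+1}(F/G,\H_k(I))$ over the discrete ring $\H_0B$. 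With these two repairs your outline closes up correctly.
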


\begin{definition}
 Given a functor $F$ on $d\cN^{\flat}_R$, define the functor $\pi^0F$ on $\Alg_{\H_0R}$ by $(\pi^0F)(A):= F(A)$.
\end{definition}

\begin{corollary}\label{detectweak}
Take a morphism $\alpha: F \to F'$ of homogeneous formally quasi-smooth functors  $F, F':d\cN^{\flat}\to \bS$. Then $\alpha$ is a weak equivalence if and only if 
\begin{enumerate}
        \item $\pi^0\alpha: \pi^0F \to \pi^0F'$ is a weak equivalence of functors $\Alg_{\H_0R}\to \bS$, and
\item the maps $\DD^i_x(F,M) \to \DD^i_{\alpha x}(F',M)$ are isomorphisms for all   $A \in \Alg_{\H_0R}$, all $A$-modules $M$, all $x \in F(A)_0$, and all $i>0$.
\end{enumerate}
\end{corollary}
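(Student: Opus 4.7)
The forward direction is immediate: if $\alpha$ is a weak equivalence, then restricting to discrete rings yields (1), while the induced map on fibres of $F(A\oplus M)\to F(A)$ versus $F'(A\oplus M)\to F'(A)$ is a weak equivalence, yielding (2).

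For the converse, the strategy is to prove by induction on a Postnikov-style tower that $\alpha_A\co F(A)\to F'(A)$ is a weak equivalence for every $A \in d\cN^{\flat}$. Any such $A$ is built from $\pi_0 A=\H_0 A$ via a finite sequence of tiny square-zero extensions $A_n \twoheadrightarrow A_{n-1}$ whose kernels take the form $M[r]$ for $M$ a discrete $\pi_0 A$-module. I would obtain this decomposition by combining good truncation $\tau_{\le k}A\to \tau_{\le k-1}A$ (with kernel $\H_k A[k]$) with the nilpotent filtration on $\ker(A_0\to\pi_0 A)$, which allows each $\H_kA$ to be further filtered by $\pi_0A$-submodules; finiteness of the tower uses the $\flat$-condition together with nilpotence. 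The base case $A=\pi_0 A$ is handled by hypothesis (1).

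For the inductive step at a stage $A_n\twoheadrightarrow A_{n-1}$ with kernel $M[r]$, naturality of Proposition \ref{robs} (with $G=\bt$) produces a morphism of long exact sequences comparing $\pi_\ast F(A_n)$, $\pi_\ast F(A_{n-1})$, and $\DD^{\ast}_y(F,M[r])$ with their counterparts for $F'$. The formal quasi-smoothness identity $\DD^{n-i}_x(F,M)=\pi_i T_x(F,M[-n])$ rewrites the tangent terms as shifts of $\DD^{\ast}_y(F,M)$. Crucially, homogeneity gives an isomorphism $A_{n-1}\oplus M[-n] \cong A_{n-1}\times_{\pi_0 A}(\pi_0 A \oplus M[-n])$ (using the canonical splitting of $\pi_0 A \oplus M[-n]\to\pi_0 A$), so that $T_y(F,M[-n])\cong T_{\bar y}(F,M[-n])$ with $\bar y\in F(\pi_0 A)$ the image of $y$. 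Thus each $\DD^\ast_y(F,M[r])$ is canonically identified with a $\DD^\ast$-group at the discrete ring $\pi_0 A$, where hypothesis (2) gives an isomorphism. Combined with the inductive hypothesis on $A_{n-1}$, the five-lemma yields isomorphisms on $\pi_k$ for $k\ge 1$ of $F(A_n)\to F'(A_n)$.

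The main obstacle is the terminus of the sequence at $\pi_0$, which is not a group homomorphism but the torsor-style $\DD^0$-action on fibres described in the second half of Proposition \ref{robs}. Surjectivity onto $\pi_0 F'(A_n)$ requires first lifting to $\pi_0 F'(A_{n-1})$, transferring across $\alpha$ via the inductive hypothesis, and then up the extension using vanishing of the obstruction class in $\DD^1$ pulled across via the tangent isomorphism; injectivity follows from transitivity of the $\DD^0$-action together with the discrete $\pi_0$-comparison.
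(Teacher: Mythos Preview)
Your proof is correct and follows essentially the same approach as the paper: decompose $A \to \H_0A$ into a finite tower of square-zero extensions with $\H_0A$-module kernels and apply the long exact sequence of Proposition~\ref{robs} inductively. The paper uses the $I_A$-adic filtration $A/I_A^{n+1}\to A/I_A^n$ (with $I_A=\ker(A\to\H_0A)$) rather than your Postnikov-style tower, and is considerably terser about the reduction of the tangent terms to the discrete base and the $\pi_0$ endgame, both of which you spell out correctly.
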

\begin{proof}
For any $A\in d\cN^{\flat}$, we need to show that $\alpha_A:F(A) \to F'(A)$ is a weak equivalence. By hypothesis, we know that this holds if we replace $A$ with $\H_0A$.  Now, the map $A \to \H_0A$ is a nilpotent extension; let the kernel be $I_A$. The maps $A/I_A^{n+1} \to A/I_A^{n}$ are  square-zero extensions, and their kernels  $I_A^n/I_A^{n+1}$ are $\H_0A$-modules. This allows us to proceed inductively, using the long exact sequence of Proposition \ref{robs}
 to deduce that $F(A/I_A^{n+1}) \to F'(A/I_A^{n+1})$ is a weak equivalence whenever $F(A/I_A^{n}) \to F'(A/I_A^{n})$ is so.
\end{proof}

\subsection{Representability}

For the remainder of this section, $R$ will be a  derived G-ring admitting a dualising module (in the sense of \cite{lurie} Definition 3.6.1). In particular, this is satisfied if $R$ is a G-ring admitting  a dualising complex in the sense of \cite{HaRD} Ch. V. Examples are $\Z$, any field,  or any  Gorenstein local ring.

\begin{theorem}\label{mylurierep3}
Take a functor $F: d\cN_R^{\flat} \to \bS$ satisfying the following conditions:

\begin{enumerate}

\item $F$ is formally quasi-smooth.

\item\label{trunc} For all discrete rings $A$, $F(A)$ is $n$-truncated, i.e. $\pi_iF(A)=0$ for all $i>n$.

\item\label{cohesive} 
$F$ is homogeneous.

\item\label{stack} $\pi^0F:\Alg_{\H_0R} \to \bS$ is a hypersheaf for the \'etale topology. 

\item\label{afp1a} $\pi_0\pi^0F:  \Alg_{\H_0R} \to \Set$  preserves filtered colimits.

\item\label{afp1b} For all $A \in \Alg_{\H_0R}$ and all $x \in F(A)$, the functors $\pi_i(\pi^0F,x): \Alg_A \to \Set$  preserve filtered colimits for all $i>0$.

\item 
for all finitely generated integral domains $A \in \Alg_{\H_0R}$, all $x \in F(A)_0$ and all \'etale morphisms $f:A \to A'$, the maps
\[
\DD_x^*(F, A)\ten_AA' \to \DD_{fx}^*(F, A')
\]
are isomorphisms.

\item\label{afp2} for all finitely generated $A \in \Alg_{\H_0R}$  and all $x \in F(A)_0$, the functors $\DD^i_x((F/R), -): \Mod_A \to \Ab$ preserve filtered colimits for all $i>0$.

\item for all finitely generated integral domains $A \in \Alg_{\H_0R}$  and all $x \in F(A)_0$, the groups $\DD^i_x(F/R, A)$ are all  finitely generated $A$-modules.

\item\label{effective} for any complete discrete local Noetherian  $\H_0R$-algebra $A$, with maximal ideal $\m$, the map
$$
\pi^0F(A) \to {\Lim}F(A/\m^r)
$$
is a weak equivalence.
\end{enumerate}

Then $F$ is the restriction to $d\cN_R^{\flat}$ of  an almost finitely presented geometric derived $n$-stack $F':d\Alg_R \to \bS$. Moreover, $F'$ is uniquely determined by $F$ (up to weak equivalence).
\end{theorem}
\begin{proof}
This variant of Lurie's Representability Theorem  essentially  appears as \cite{drep} Theorem \ref{drep-lurierep3}, which takes a homotopy-preserving, homotopy-homogeneous functor instead of a formally quasi-smooth homogeneous functor. However,  
 every homotopic functor is homotopy-preserving (by \cite{dmsch} Lemma \ref{dmsch-2htpicgood}), while every formally quasi-presmooth homogeneous functor is homotopy-homogeneous (by \cite{dmsch} Lemma \ref{dmsch-2hgsgood}). Finally, note that formal quasi-presmoothness allows us to replace homotopy limits with limits.
\end{proof}

\begin{remark}\label{stackhyper}
 For the definition of hypersheaves featuring in (\ref{stack}) above, see \cite{drep} Definition \ref{drep-hypersheafdef}. For all the applications in this paper, the following observation suffices.  Given a groupoid-valued functor $\Gamma: \Alg_{\H_0R} \to \gpd$, the nerve $B\Gamma : \Alg_{\pi_0R} \to \bS$ is a hypersheaf if and only if $\Gamma$ is a stack (in the sense of \cite{champs}).
\end{remark}

\begin{remark}\label{cflurie}
Note that there are slight differences in terminology between \cite{hag2} and \cite{lurie}. In the former, only disjoint unions of affine schemes are $0$-representable, so arbitrary schemes are $2$-geometric stacks, and Artin stacks are $1$-geometric stacks if and only if they have affine diagonal. In the latter, algebraic spaces are $0$-stacks.  A geometric $n$-stack  is called $n$-truncated in \cite{hag2}, and it follows easily that every $n$-geometric stack in \cite{hag2} is $n$-truncated. A weak converse is that  every  geometric $n$-stack  is  $(n+2)$-geometric. 

Theorem \ref{mylurierep3} follows the convention from \cite{lurie}, so ``geometric derived $n$-stack'' means ``$n$-truncated derived geometric stack''.

Beware, however, that Condition (\ref{trunc}) of the theorem only applies to \emph{discrete} rings. In general, if $A \in d\cN_R^{\flat}$ with $\H_iA=0$ for $i>m$, then a geometric derived $n$-stack $F$ will have the property that $\pi_j F(A)=0$ for all $j>m+n$. 
\end{remark}

\subsection{Pre-representability}

Concrete approaches to derived moduli can naturally produce functors $F \co d\cN_R^{\flat} \to \bS$ with the property that $\pi_iF(A)=0$ for all $i>n$ and \emph{all} $A$. Such functors are not  geometric derived $n$-stacks, since they cannot be both homotopic and homogeneous. The purpose of this section is to establish weaker conditions which can be satisfied by such functors, and still allow us to  associate a geometric derived $n$-stack $\uline{F}$ to $F$. In particular, all of the examples in \S\S \ref{dglamoduli}--\ref{qmegs} will work by constructing derived geometric $1$-stacks $\uline{F}$ from groupoid-valued functors $F$.


\begin{definition}\label{salgstr}
Define a  simplicial enrichment of $s\cN_R^{\flat}$   as follows. For  $A\in s\cN_R^{\flat}$ and a finite simplicial set $K$,   $A^K \in s\cN_R^{\flat}$ is defined by 
$$
(A^K)_n:= \Hom_{\bS}(K \by \Delta^n, A).
$$ 

Spaces  $\HHom(A,B) \in \bS$ of morphisms are then given by
$$
\HHom_{s\cN_R^{\flat}}(A, B)_n:= \Hom_{s\cN_R^{\flat}}(A, B^{\Delta^n}).
$$
\end{definition}

\begin{definition}
Define a  simplicial enrichment of $dg_+\cN_R^{\flat}$   as follows.
        First set $\Omega_n$ to be the  differential graded algebra 
$$
\Q[t_0, t_1, \ldots, t_n,dt_0, dt_1, \ldots, dt_n ]/(\sum t_i -1, \sum dt_i)
$$  
of rational differential forms on the $n$-simplex $\Delta^n$, where $t_i$ is of degree $0$.
 These fit together to form a simplicial complex $\Omega_{\bt}$ of graded-commutative DG-algebras, and we define $A^{\Delta^n}$ as the good truncation $A^{\Delta^n}:= \tau_{\ge 0}(A \ten \Omega_n)$.  [Note that this construction  commutes with finite limits, so  extends to define $A^K$ for finite simplicial sets $K$.]

Spaces  $\HHom(A,B) \in \bS$ of morphisms are then given by
$$
\HHom_{dg_+\cN_R^{\flat}}(A, B)_n:= \Hom_{dg_+\cN_R^{\flat}}(A, B^{\Delta^n}).
$$
\end{definition}

\begin{definition}
Given
a functor $F: d\cN^{\flat} \to \bS$, we  define $\uline{F}: d\cN^{\flat}\to s\bS$, (for $s\bS$ the category of bisimplicial sets), by 
$$
\underline{F}(A)_{n} :=  F(A^{\Delta^n}).
$$
\end{definition}

\begin{definition}\label{barwdef}
Define  $\bar{W}:s\bS \to \bS$ to be the right adjoint to  Illusie's total $\Dec$ functor given by $\DEC(X)_{mn}= X_{m+n+1}$. Explicitly,
\[
 \bar{W}_p(X) = \{(x_0, x_1, \ldots, x_p) \in
 \prod^p_{i=0} X_{i,p-i} | \pd^v_0 x_i = \pd^h_{i+1}x_{i+1},\, \forall 0 \le i <p\}
\]
with operations
\begin{eqnarray*}
 \pd_i(x_0, \ldots, x_p) &=& (\pd^v_i x_0, \pd^v_{i-1}x_1, \ldots , \pd^v_1 x_{i-1}, \pd^h_ix_{i+1}, \pd^h_i x_{i+2}, \ldots, \pd^h_i x_p),\\
\sigma_i(x_0, \ldots, x_p) &=& (\sigma^v_i x_0,\sigma^v_{i-1}x_1, \ldots , \sigma^v_0 x_i, \sigma^h_i x_i, \sigma^h_i x_{i+1}, \ldots, \sigma^h_i x_p).
\end{eqnarray*}
\end{definition}

In \cite{CRdiag},  it is established that the canonical natural transformation
\[
\diag X \to \bar{W}X
\]
from the diagonal is a weak equivalence for all $X$.

\begin{lemma}\label{ulinec}
For a homotopic functor $F:d\cN^{\flat} \to \bS$, the natural transformation $F \to \bar{W}F$  is a weak equivalence.
\end{lemma}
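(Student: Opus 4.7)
Since the paper has cited that the canonical map $\diag X \to \bar{W} X$ is a weak equivalence for every bisimplicial set $X$, it suffices to show that $F(A) \to \diag\,\uline{F}(A)$ is a weak equivalence for each $A\in d\cN^{\flat}$. Unwinding $\uline{F}(A)_{m,\bullet} = F(A^{\Delta^m})$ and using the standard fact that $\diag$ preserves levelwise weak equivalences of bisimplicial sets, this reduces to showing that the structural map $F(A) \to F(A^{\Delta^n})$ induced by the collapse $\Delta^n \to \Delta^0$ is a weak equivalence for every $n \ge 0$.

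For this core assertion, I would exploit the retraction $r : A^{\Delta^n} \twoheadrightarrow A$ obtained by evaluating at a vertex $v \in \Delta^n$, which splits the structural map $A \to A^{\Delta^n}$. By two-out-of-three in $\bS$, it is enough to show that $F(r)$ is a weak equivalence. Since $F$ is homotopic, this follows as soon as $r$ is exhibited as a finite composition of tiny acyclic extensions in $d\cN^{\flat}$. I would build such a tower
\[
A^{\Delta^n} = B_0 \twoheadrightarrow B_1 \twoheadrightarrow \cdots \twoheadrightarrow B_N = A
\]
via a double filtration. In the DG presentation, first filter $A \otimes \Omega_n$ by powers of the differential augmentation ideal $K := \ker(\Omega_n \to \Q)$, whose successive graded pieces $A \otimes (K^p/K^{p+1})$ are acyclic and automatically annihilated by the image of $K$ because $K \cdot K^p \subseteq K^{p+1}$; then refine each such graded piece using the nilpotent filtration $I_A^\bullet$ on $A$ (which is finite because $A \in d\cN^\flat$ has nilpotent $I_A$), and decompose the resulting acyclic bounded subquotients into a Postnikov tower of two-term $\cone(M)[-r]$-layers over $\H_0 A$. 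The simplicial presentation is handled analogously, using the skeletal filtration of the pair $(\Delta^n, \{v\})$ in place of the $K$-adic filtration.

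The main obstacle is precisely this double-filtration bookkeeping: verifying at each stage that the successive kernel has both the $\cone(M)[-r]$ shape over $\H_0 A$ and is annihilated by the evolving nilpotent ideal $I_{B_p}$. Particular care is needed because $A^{\Delta^n}$ is not finitely presented over $A$, so one must exploit the boundedness of chain degrees implicit in $d\cN^\flat$ (and the nilpotence of $I_A$) crucially to get a \emph{finite} refinement. Once this combinatorial step is in place, everything else is formal: two-out-of-three for weak equivalences, the definition of ``homotopic,'' and the cited diagonal/$\bar{W}$ equivalence assemble the required weak equivalence $F(A) \xrightarrow{\sim} \bar{W}\,\uline{F}(A)$, naturally in $A$.
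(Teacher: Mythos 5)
Your outer skeleton (the cited $\diag X \to \bar{W}X$ equivalence, reduction to showing $F(A)\to F(A^{\Delta^n})$ is a weak equivalence, and the vertex retraction plus two-out-of-three) is sound, and it matches the paper's set-up; the paper itself simply cites \cite{drep} for this lemma. But your central step --- exhibiting $r\co A^{\Delta^n}\onto A$ as a \emph{finite} composite of tiny acyclic extensions via the proposed double filtration --- has a genuine gap. The augmentation ideal $K=\ker(\Omega_n\to\Q)$ is not nilpotent: $\Omega_n^0$ is a polynomial ring, and (choosing the vertex $v$ where $t_0=1$) one has $t_1^p\in K^p\cap\Omega_n^0$ for every $p$. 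Boundedness of the chain degrees of $A$ does not rescue finiteness, because $K^p$ has nonzero components in cochain degree $0$, which pair with \emph{every} chain degree of $A$; so the filtration $\tau_{\ge 0}(A\ten K^p)$ of $\ker(r)$ is strictly decreasing forever whenever $A$ is non-discrete (for discrete $A$ one has $A^{\Delta^n}=A$ and there is nothing to prove). Worse, $A\ten\Omega_n$ is not $K$-adically complete (polynomials, not power series), so even a transfinite tower of the quotients would not converge back to $A^{\Delta^n}$, and nothing in the definition of ``homotopic'' lets $F$ commute with such a limit. The simplicial variant fails differently: the skeletal filtration of $(\Delta^n,\{v\})$ produces stages such as $A^{\Delta^n}\to A^{\pd\Delta^n}$ that are not weak equivalences at all (already $A^{\pd\Delta^1}=A\by A$); the stages with acyclic kernels come from horn inclusions $\L^{k,i}\subset\Delta^k$, but those kernels have normalisation a cone on the whole complex $NA$ rather than on a discrete $\pi_0A$-module, and are not annihilated by $I_{A^{\Delta^n}}$, so they are not tiny.

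There is also a structural tension your plan does not address: every stage of a tower of tiny acyclic extensions must itself have acyclic kernel, yet refining a filtration by powers of the nilpotent ideal destroys acyclicity of the layers (subquotients of an acyclic complex are not acyclic), and you provide no mechanism to restore it. Note that when the total kernel $C$ happens to satisfy $I_{A^{\Delta^n}}\cdot C=0$, a finite tower \emph{does} exist with no $K$-adic input at all: the good-truncation subcomplexes $T_r=(\cdots\to C_{r+1}\to \z_rC)$ are $d$-stable ideals whose successive quotients are of the form $\cone(\z_rC)[-r]$, giving a tower of length equal to the amplitude of $C$. The entire difficulty is that $\ker(A^{\Delta^n}\to A)$ is in general \emph{not} killed by $I_{A^{\Delta^n}}$ (products with positive-degree elements of $A$ land back in the kernel), and this is exactly the content the paper delegates to the cited lemma: as the proof of Theorem \ref{mylurierep3} indicates, homotopic functors are homotopy-preserving by \cite{dmsch}, the vertex evaluation $A^{\Delta^n}\to A$ is a weak equivalence, whence $F(A)\simeq F(A^{\Delta^n})$ levelwise, and your step 1 then finishes. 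So your reduction reproduces the easy part of the argument, while the core assertion is precisely the nontrivial cited input, which your filtration does not prove.
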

\begin{proof}
 This is \cite{drep} Lemma \ref{drep-ulinec}.       
\end{proof}

\begin{proposition}\label{settotop}
If a formally quasi-presmooth homogeneous functor $F:d\cN^{\flat} \to \bS$ is  pre-homotopic, then the   functor  $\bar{W}\underline{F}:d\cN^{\flat} \to \bS$ is homogeneous and formally quasi-smooth.
\end{proposition}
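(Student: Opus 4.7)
The plan is to verify the three conditions defining a homogeneous, formally quasi-smooth functor: homogeneity, formal quasi-presmoothness, and the homotopic condition. The engine throughout is that cotensoring, $A \mapsto A^{\Delta^n}$, commutes with finite limits in both the simplicial and DG settings, and carries any square-zero extension $A \to B$ (with kernel $I$, $I^2 = 0$) to a square-zero extension $A^{\Delta^n} \to B^{\Delta^n}$, whose kernel $I^{\Delta^n}$ still squares to zero: pointwise products $(fg)(x,y) = f(x,y)g(x,y)$ land in $I^2 = 0$ whenever $f,g$ are valued in $I$.

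I would verify homogeneity and formal quasi-presmoothness together. Given a square-zero extension $A \to B$ and a map $C \to B$, the identity $(A\by_B C)^{\Delta^n} = A^{\Delta^n}\by_{B^{\Delta^n}} C^{\Delta^n}$ combined with $F$'s homogeneity yields level-wise homogeneity of $\underline F$, and $\bar W$ preserves the resulting pullbacks since it is right adjoint to $\Dec$. Likewise, formal quasi-presmoothness of $F$ makes $\underline F(A) \to \underline F(B)$ a level-wise Kan fibration of bisimplicial sets, and $\bar W$ carries level-wise Kan fibrations to Kan fibrations (either by direct check from the explicit formula for $\bar W$, or because $\bar W$ is right Quillen for a suitable model structure on $s\bS$).

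The real work is to show $\bar W \underline F$ is homotopic: for a tiny acyclic extension $A \to B$ with kernel $K$ whose normalisation is $\cone(M)[-r]$, the map $\bar W \underline F(A) \to \bar W \underline F(B)$ must be a trivial fibration. The fibration part already holds, so only contractibility of fibers remains. Using homogeneity of $\bar W \underline F$, the fiber over a chosen vertex is $\bar W$ of the bisimplicial abelian object whose $n$-th level is $T_{b_n}(F, K^{\Delta^n})$. The key observation is that $K^{\Delta^n}$ is acyclic for every $n$: in the DG case this is immediate from $K^{\Delta^n} = \tau_{\ge 0}(K \ten \Omega_n)$ combined with $\Omega_n \simeq \Q$, and in the simplicial case it follows from contractibility of $\Delta^n$. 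The cone decomposition of $K$ transfers to $K^{\Delta^n}$, and the long exact sequence of Proposition \ref{robs} forces $\DD^*_{b_n}(F, K^{\Delta^n}) = 0$, so each $T_{b_n}(F, K^{\Delta^n})$ is contractible. Since the bisimplicial tangent object is also level-wise Kan fibrant over $\underline F(B)$ by formal quasi-presmoothness, $\bar W$ of it is contractible.

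The main obstacle is this final step, passing from level-wise contractibility of the tangent objects to contractibility after $\bar W$, which requires the level-wise fibrancy arranged by formal quasi-presmoothness. A direct appeal to Lemma \ref{ulinec} is unavailable since $F$ is only assumed pre-homotopic rather than homotopic; the obstruction-theoretic computation of $\DD^*$ via Proposition \ref{robs} is precisely what substitutes for the missing homotopy property, and is the reason the bisimplicial totalization rectifies a pre-homotopic $F$ into a fully homotopic functor.
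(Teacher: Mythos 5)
There is a genuine gap, in two places, both at the heart of the argument. First, your fibration step rests on the claim that $\bar{W}$ carries level-wise Kan fibrations of bisimplicial sets to Kan fibrations. That is not a theorem, and neither of your suggested justifications delivers it: the adjunction $\DEC \dashv \bar{W}$ makes $\bar{W}$ right Quillen only for a diagonal-type model structure, whose fibrations are strictly stronger than level-wise ones. The result the paper actually invokes (from \cite{CRfib}) is that \emph{diagonal} fibrations are $\bar{W}$-fibrations, and a level-wise fibration need not be a diagonal fibration. Verifying that $\diag\uline{F}(A) \to \diag\uline{F}(B)$ is a fibration is where homogeneity and formal quasi-presmoothness really earn their keep: by homogeneity, the mixed horn-lifting data for the diagonal is rewritten as $F$ applied to ring-level fibre products, and quasi-presmoothness is then applied to the square-zero surjections $A^{\Delta^n} \to A^{\Lambda^n_k}\by_{B^{\Lambda^n_k}} B^{\Delta^n}$ (whose kernels consist of elements of $K^{\Delta^n}$ vanishing on the horn, hence square to zero). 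This uses strictly more than level-wise fibrancy of $\uline{F}(A) \to \uline{F}(B)$.

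Second, and more seriously, your contractibility claim is false and circular. Pre-homotopicity only makes $F(A\oplus\cone(M)[-r]) \to F(A)$ a \emph{surjective} fibration, so $T_{b}(F,\cone(M)[-r])$ is a nonempty Kan complex, not a contractible one (set-valued pre-homotopic functors such as $\mc(\uline{G})$ give tangent fibres with many points). If each $T_{b_n}(F,K^{\Delta^n})$ were contractible, then taking $n=0$ would make $T_b(F,K)$ contractible for every tiny acyclic kernel, i.e.\ $F$ would already be homotopic --- exactly the hypothesis you correctly note is unavailable; your appeal to Proposition \ref{robs} is likewise inadmissible, since its hypotheses require formal quasi-smoothness, which is what is being proved. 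The correct mechanism, and the content of the paper's proof (which is a citation to \cite{drep} Corollaries together with the \cite{CRfib} result), is that contractibility emerges only after totalisation: the trivial-fibration property of the diagonal reduces to lifting along $F(A^{\Delta^n}) \to F(A^{\pd\Delta^n}\by_{B^{\pd\Delta^n}}B^{\Delta^n})$, and the crucial observation is that these boundary-relative extensions are again \emph{tiny} acyclic --- the kernel is $K$ smashed with $\Delta^n/\pd\Delta^n$, with normalisation the shifted cone $\cone(M)[-r-n]$ on the same $\H_0$-module --- so pre-homotopicity applies verbatim to supply exactly the required lifts. This is precisely how $\bar{W}\uline{F}$ rectifies a pre-homotopic functor into a homotopic one. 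Your homogeneity argument (cotensoring commutes with finite limits, kernels $I^{\Delta^n}$ square to zero, $\bar{W}$ preserves limits) is correct and matches the standard reasoning.
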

\begin{proof}
 This is essentially the same as \cite{drep} Corollaries \ref{drep-settotop} and \ref{drep-settotopb} (replacing weak equivalences with isomorphisms, and homotopy fibre products with fibre products), using the result from \cite{CRfib} that diagonal fibrations are $\bar{W}$-fibrations.
\end{proof}

\begin{lemma}\label{totcohoc}
Given a formally quasi-presmooth pre-homotopic homogeneous functor $F: d\cN_R^{\flat} \to \bS$, an object $A \in  d\cN^{\flat}$, a  point $x \in F_0(A)$, and a module      $M \in d\Mod_A$, there are canonical isomorphisms
\[
\DD^i_x(F,M) \cong \DD^i_x(\bar{W}\uline{F},M).
\]
\end{lemma}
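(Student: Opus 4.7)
The plan is to reduce everything to the assertion that the natural map $T_x(F, N) \to T_x(\bar{W}\uline{F}, N)$ is a weak equivalence of simplicial sets for every $N \in d\Mod_A$, from which the isomorphism $\DD^i_x(F, M) \cong \DD^i_x(\bar{W}\uline{F}, M)$ will follow by direct appeal to Definition \ref{pretotcohodef}. Indeed, Proposition \ref{settotop} makes $\bar{W}\uline{F}$ formally quasi-smooth, so \cite{drep} Lemma \ref{drep-adf} yields $\DD^i(\bar{W}\uline{F}, M) = \pi_{-i}T_x(\bar{W}\uline{F}, M)$ for $i \le 0$ and $\DD^i(\bar{W}\uline{F}, M) = \pi_0 T_x(\bar{W}\uline{F}, M[-i])$ for $i > 0$, the cone correction vanishing because homotopy of $\bar{W}\uline{F}$ makes $T_x(\bar{W}\uline{F}, \cone(M)[r])$ contractible. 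Transporting along the tangent equivalence, $T_x(F, \cone(M)[1-i])$ also acquires trivial $\pi_0$, so the quotient in Definition \ref{pretotcohodef} for $F$ collapses to $\pi_0 T_x(F, M[-i])$, matching.

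For the tangent-space equivalence itself, $\bar{W}$ is a right adjoint and so commutes with the fibre product defining tangent spaces, giving $T_x(\bar{W}\uline{F}, N) = \bar{W}T_x(\uline{F}, N)$, where $T_x(\uline{F}, N)$ is the bisimplicial set with $p$-th column $T_{x_p}(F, N^{\Delta^p})$, with $x_p$ the image of $x$ under the constant algebra map $A \to A^{\Delta^p}$. I would then establish each column augmentation $T_x(F, N) \to T_{x_p}(F, N^{\Delta^p})$ as a weak equivalence by filtering the retraction $A\oplus N^{\Delta^p} \to A\oplus N$ (evaluation at a vertex) through a composition of tiny acyclic extensions --- using the polynomial filtration of $\Omega_p$ in the dg case, or a Reedy-type matching filtration in the simplicial case. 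At each stage, pre-homotopy of $F$ combined with formal quasi-presmoothness identifies the induced map on tangent spaces as a trivial fibration. Preservation of column-wise weak equivalences by $\diag$, together with the natural weak equivalence $\diag X \to \bar{W}X$ of \cite{CRdiag}, then upgrades the column equivalences to the desired $T_x(F, N) \simeq T_x(\bar{W}\uline{F}, N)$.

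The hard part will be this column comparison, because the retraction $A^{\Delta^p} \to A$ is not itself a tiny acyclic extension, so some work is required to decompose it into such before pre-homotopy of $F$ can be applied inductively. This is essentially the same construction as in the homotopy-homogeneous analogue in \cite{drep}, and I would adapt that argument using the dictionary from the proof of Theorem \ref{mylurierep3}: pre-homotopic implies homotopy-preserving (\cite{dmsch} Lemma \ref{dmsch-2htpicgood}), and formally quasi-presmooth homogeneous implies homotopy-homogeneous (\cite{dmsch} Lemma \ref{dmsch-2hgsgood}). With these translations in place, the tangent-space weak equivalence --- and hence the isomorphism of $\DD^*$ --- follows.
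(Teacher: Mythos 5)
There is a genuine gap, and it sits at the centre of your argument: the claimed weak equivalence $T_x(F,N)\to T_x(\bar{W}\uline{F},N)$ for \emph{every} $N\in d\Mod_A$ is false, because $F$ is only \emph{pre}-homotopic. Pre-homotopicity sends tiny acyclic extensions to \emph{surjective} fibrations, not trivial ones, so when you filter $A^{\Delta^p}\oplus N^{\Delta^p}\to A\oplus N$ through tiny acyclic extensions you only get a surjective fibration $T_{x_p}(F,N^{\Delta^p})\to T_x(F,N)$; by homogeneity its fibre is $T_{x_p}(F,K)$ for the acyclic module $K=\ker(N^{\Delta^p}\to N)$, and for merely pre-homotopic $F$ such acyclic-coefficient tangent spaces do \emph{not} vanish. (If they did, or if you really had the trivial fibrations you invoke at each filtration stage, $F$ would be homotopic, and then Lemma \ref{ulinec} already gives $F\simeq\bar{W}\uline{F}$ --- the whole point of the lemma is the case where that fails.) Concretely, for $F=\mc(L\ten-)$ with $L$ a DGLA as in Proposition \ref{repfin}, one has $\pi_0T_{\omega}(F,\cone(M)[1-i])=\z^1(L\ten\cone(M)[1-i],d+[\omega,-])\neq 0$, with image in $\pi_0T_{\omega}(F,M[-i])=\z^{i+1}(L\ten M)$ exactly the coboundaries. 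So your step ``transporting along the tangent equivalence, $T_x(F,\cone(M)[1-i])$ also acquires trivial $\pi_0$'' would render the quotient in Definition \ref{pretotcohodef} vacuous for every pre-homotopic $F$ (defeating the purpose of that definition) and would compute $\DD^i_x(F,M)=\z^{i+1}$ rather than the correct $\H^{i+1}$; compare Lemma \ref{cgpcohomc}, where $\DD^i$ is $\H^{i+1}$ for $i>0$ but only $\z^1$ in degree $0$.

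Your bisimplicial bookkeeping is fine and surely shared with the cited proof ($\bar{W}$, being a right adjoint, preserves the fibre over $x$; the columns are $T_{x_p}(F,N^{\Delta^p})$; $\diag\simeq\bar{W}$), and for \emph{discrete} $M$ the columns genuinely are constant --- a constant simplicial module is a $\pi_0A$-module, and homogeneity gives $A^{\Delta^p}\oplus M\cong(A\oplus M)\times_A A^{\Delta^p}$, hence $T_{x_p}(F,M^{\Delta^p})\cong T_x(F,M)$ --- which disposes of $i\le 0$. But for $i>0$ the relevant coefficients are $M[-i]$ and $\cone(M)[1-i]$, where $(M[-i])^{\Delta^p}=\tau_{\ge 0}(M[-i]\ten\Omega_p)$ (resp.\ its simplicial analogue) is not constant, the column maps are surjective fibrations with non-contractible fibres, and the actual content of the lemma is that the external simplicial direction of $\bar{W}\uline{F}$ \emph{implements} the quotient of Definition \ref{pretotcohodef}, i.e.\ $\pi_0T_x(\bar{W}\uline{F},M[-i])\cong \pi_0T_x(F,M[-i])/\pi_0T_x(F,\cone(M)[1-i])$, rather than inducing an equivalence of tangent spaces. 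The paper's own proof is a one-line deferral to \cite{drep} Lemma \ref{drep-totcohoc} (discrete $A$ and $M$, carrying over), but whatever its details, it must take this quotient-computing form, not yours. A smaller slip in the same spirit: \cite{dmsch} Lemma \ref{dmsch-2htpicgood} says \emph{homotopic} functors are homotopy-preserving; pre-homotopic functors are only homotopy-surjecting, so your closing ``dictionary'' also overreaches.
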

\begin{proof}
 The proof of \cite{drep} Lemma \ref{drep-totcohoc}, which deals with the case when $A$ and $M$ are discrete, carries over to this generality.       
\end{proof}

\begin{corollary}\label{predetectweak}
 If a formally quasi-presmooth homogeneous functor $F:d\cN^{\flat} \to \bS$ is  pre-homotopic, and admits a morphism $\alpha:F \to G$ to a formally quasi-smooth   homogeneous functor, then $\alpha$ induces a functorial weak equivalence
\[
 \bar{W}\uline{F} \simeq G       
\]
if and only if
\begin{enumerate}
        \item $\pi^0\alpha: \pi^0F \to \pi^0G$ is a weak equivalence of functors $\Alg_{\H_0R}\to \bS$, and
\item the maps $\DD^i_x(F,M) \to \DD^i_{\alpha x}(G,M)$ are isomorphisms for all   $A \in \Alg_{\H_0R}$, all $A$-modules $M$, all $x \in F(A)_0$, and all $i>0$.
\end{enumerate}
\end{corollary}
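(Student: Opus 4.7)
The plan is to reduce the statement to Corollary \ref{detectweak} applied to the induced morphism $\bar{W}\uline{\alpha}\colon \bar{W}\uline{F} \to \bar{W}\uline{G}$.

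By Proposition \ref{settotop}, the hypotheses on $F$ ensure that $\bar{W}\uline{F}$ is homogeneous and formally quasi-smooth, and $\bar{W}\uline{G}$ inherits these properties since $G$ already has them. Because $G$ is formally quasi-smooth it is in particular homotopic, so Lemma \ref{ulinec} gives a natural weak equivalence $G \simeq \bar{W}\uline{G}$. Therefore $\alpha$ induces a functorial weak equivalence $\bar{W}\uline{F} \simeq G$ if and only if $\bar{W}\uline{\alpha}$ is itself a weak equivalence. Corollary \ref{detectweak}, applied to $\bar{W}\uline{\alpha}$, reduces this in turn to (i) $\pi^0\bar{W}\uline{\alpha}$ being a weak equivalence of functors $\Alg_{\H_0R}\to \bS$, together with (ii) the tangent maps $\DD^i_x(\bar{W}\uline{F},M) \to \DD^i_{\alpha x}(\bar{W}\uline{G},M)$ being isomorphisms for all discrete $A \in \Alg_{\H_0R}$, all $A$-modules $M$, all $x \in \bar{W}\uline{F}(A)_0$, and all $i>0$.

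For any discrete $\H_0R$-algebra $A$, the (simplicial or dg) ring $A^{\Delta^n}$ coincides with $A$, so $\uline{F}(A)$ is constant as a simplicial object in $\bS$ with value $F(A)$; the weak equivalence $\diag X \to \bar{W}X$ of \cite{CRdiag} then gives a natural weak equivalence $\bar{W}\uline{F}(A) \simeq F(A)$, and likewise for $G$. Hence (i) is equivalent to hypothesis (1). Lemma \ref{totcohoc} provides natural isomorphisms $\DD^i_x(F,M) \cong \DD^i_x(\bar{W}\uline{F},M)$ and similarly for $G$, under which (ii) becomes hypothesis (2).

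The only delicate point, and really not an obstacle, is verifying that the identifications of $\pi^0$ and of tangent modules are natural in $\alpha$, so that they intertwine $\alpha$ with $\bar{W}\uline{\alpha}$; this is immediate from the naturality of the constructions $\uline{(-)}$, $\bar{W}$, and of the comparison morphisms produced by Lemmas \ref{ulinec} and \ref{totcohoc}.
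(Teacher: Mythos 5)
Your proof is correct and takes essentially the same route as the paper: use Proposition \ref{settotop} and Lemma \ref{totcohoc} (together with the observation that $A^{\Delta^n}=A$ for discrete $A$, so $\pi^0$ is unchanged) to reduce everything to Corollary \ref{detectweak} applied to $\bar{W}\uline{F}\to\bar{W}\uline{G}$, then identify $G\simeq\bar{W}\uline{G}$. The only cosmetic difference is that you invoke Lemma \ref{ulinec} for the comparison $G\simeq\bar{W}\uline{G}$ where the paper re-applies Corollary \ref{detectweak}; both are valid, and your formulation via the zigzag $\bar{W}\uline{F}\to\bar{W}\uline{G}\xleftarrow{\sim}G$ makes the ``only if'' direction slightly more explicit than the paper's write-up.
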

\begin{proof}
    By Lemma \ref{totcohoc}, the map from $F$ to $\bar{W}\uline{F}$ induces isomorphisms on $\DD^i$, so the maps
\[
\DD^i_{x}(\bar{W}\uline{F},M) \to \DD^i_{\alpha x}( \bar{W}\uline{G},M) 
\]
are isomorphisms. Proposition \ref{settotop} shows that $\bar{W}\uline{F}$ and   $\bar{W}\uline{G}$ are formally quasi-smooth   homogeneous functors.
Since $F \leadsto \bar{W}\uline{F}$ does not change $\pi^0F$, Lemma \ref{totcohoc} and Corollary \ref{detectweak} imply that the map
\[
   \bar{W}\uline{F} \to    \bar{W}\uline{G}                                                                                                                                                                                                                                                  \]
is a weak equivalence.

Since $G$ is  also homogeneous and formally quasi-smooth, Corollary \ref{detectweak} gives a weak equivalence $G \to \bar{W}\uline{G}$.
 Combining this with the weak equivalence above, we see that $\bar{W}\uline{F} $ and $G $ are canonically weakly equivalent.   
\end{proof}

\begin{remark}\label{predetectweakh}
 By replacing   Proposition \ref{robs} with \cite{drep} Proposition \ref{drep-obs},  the proof of Corollary \ref{predetectweak} works just as well if $F$ is homotopy-homogeneous and homotopy-surjecting, while $G$ is homotopy-homogeneous and homotopy-preserving. In particular, this holds if $G$ is any presentation of a derived geometric $n$-stack.   
\end{remark}


\begin{theorem}\label{mylurieprerep}
Take a functor $F: d\cN_R^{\flat} \to \bS$ satisfying the following conditions.

\begin{enumerate}
 
\item $F$ is 
pre-homotopic. 

\item $F$ is formally quasi-presmooth.

\item For all discrete rings $A$, $F(A)$ is $n$-truncated, i.e. $\pi_iF(A)=0$ for all $i>n$ .

\item
$F$ is homogeneous.

\item $\pi^0F:\Alg_{\H_0R} \to \bS$ is a hypersheaf for the \'etale topology.

\item
$\pi_0\pi^0F:  \Alg_{\H_0R} \to \Set$  preserves filtered colimits.

\item
For all $A \in \Alg_{\H_0R}$ and all $x \in F(A)$, the functors $\pi_i(\pi^0F,x): \Alg_A \to \Set$  preserve filtered colimits for all $i>0$.

\item 
for all finitely generated integral domains $A \in \Alg_{\H_0R}$, all $x \in F(A)_0$ and all \'etale morphisms $f:A \to A'$, the maps 
\[
\DD_x^*(F, A)\ten_AA' \to \DD_{fx}^*(F, A')
\]
are isomorphisms.

\item
 for all finitely generated $A \in \Alg_{\H_0R}$  and all $x \in F(A)_0$, the functors $\DD^i_x(F, -): \Mod_A \to \Ab$ preserve filtered colimits for all $i>0$.

\item for all finitely generated integral domains $A \in \Alg_{\H_0R}$  and all $x \in F(A)_0$, the groups $\DD^i_x(F, A)$ are all  finitely generated $A$-modules.

\item for all complete discrete local Noetherian  $\H_0R$-algebras $A$, with maximal ideal $\m$, the map
$$
\pi^0F(A) \to {\Lim} F(A/\m^r)
$$
is a weak equivalence. 
\end{enumerate}
 Then $\bar{W}\uline{F}$ is the restriction to $d\cN^{\flat}_R$ of  an almost finitely presented geometric derived $n$-stack $F':s\Alg_R \to \bS$ (resp. $F':dg_+\Alg_R \to \bS$). Moreover, $F'$ is uniquely determined by $F$ (up to weak equivalence).
\end{theorem}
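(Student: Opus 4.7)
The natural strategy is to reduce directly to Theorem \ref{mylurierep3} applied to the functor $\bar{W}\uline{F}$, so the task becomes the verification of the hypotheses of that representability theorem for $\bar{W}\uline{F}$ (from which the uniqueness clause transfers automatically).

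Two of the technical inputs already supplied in the paper do most of the work. First, Proposition \ref{settotop} tells us that because $F$ is pre-homotopic, formally quasi-presmooth, and homogeneous, $\bar{W}\uline{F}$ is homogeneous and formally quasi-smooth; this gives conditions (1) and (3) of Theorem \ref{mylurierep3}. Second, Lemma \ref{totcohoc} provides canonical isomorphisms $\DD^i_x(F,M) \cong \DD^i_x(\bar{W}\uline{F},M)$, so the three tangent-complex hypotheses (étale base change, filtered-colimit preservation in $M$, and finite generation over finitely generated integral domains) transfer from $F$ to $\bar{W}\uline{F}$ without further work.

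The remaining hypotheses are all conditions on the behavior of the functor on discrete rings $A \in \Alg_{\H_0R}$, namely hypersheaf descent (\'etale), truncation, preservation of filtered colimits, and effectivity of formal completions. For these I would argue that on discrete $A$, $\pi^0\bar{W}\uline{F}$ agrees with $\pi^0F$. In the simplicial setting this is immediate: $A^{\Delta^n}=A$ for discrete $A$, so $\uline{F}(A)$ is horizontally constant and $\bar{W}\uline{F}(A)\simeq F(A)$. In the dg setting $A^{\Delta^n}=A[t_1,\ldots,t_n]$ is still a discrete $\H_0R$-algebra, so the hypotheses on $F$ apply to each $F(A^{\Delta^n})$; one then checks that the $\bar{W}$-construction preserves the hypersheaf, filtered-colimit, truncation and effectivity properties, using formal quasi-presmoothness to turn the relevant homotopy limits into ordinary limits. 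With this identification in hand, conditions (2), (4), (5), (6) and (10) of Theorem \ref{mylurierep3} for $\bar{W}\uline{F}$ follow from the corresponding hypotheses on $F$.

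Applying Theorem \ref{mylurierep3} to $\bar{W}\uline{F}$ then produces the desired almost finitely presented geometric derived $n$-stack $F'$ on $d\Alg_R$, unique up to weak equivalence and restricting to $\bar{W}\uline{F}$ on $d\cN_R^{\flat}$. The step that requires most care is the dg case of the $\pi^0$-identification: one cannot simply invoke $\bar{W}\uline{F}(A) \simeq F(A)$, and instead must track how the polynomial extensions $A[t_1,\ldots,t_n]$ assemble into the bisimplicial object $\uline{F}(A)$ and verify that passage to $\bar W$ does not disturb any of the finiteness, descent, or effectivity properties inherited from the discrete hypotheses on $F$.
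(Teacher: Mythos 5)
Your strategy coincides with the paper's own (alternative) argument: the paper proves this theorem either by citing \cite{drep} Theorem \ref{drep-lurieprerep} together with the observations that pre-homotopic functors are homotopy-surjecting and that formally quasi-presmooth homogeneous functors are homotopy-homogeneous, or---as you do---by using Proposition \ref{settotop} to make $\bar{W}\uline{F}$ homogeneous and formally quasi-smooth and then invoking Theorem \ref{mylurierep3}, with Lemma \ref{totcohoc} transferring the tangent-cohomology conditions. Your fleshing-out of that second route is sound in outline; note also that $\bar{W}\uline{F}(A)_0=\uline{F}(A)_{0,0}=F(A)_0$, so the points at which conditions (7)--(9) of Theorem \ref{mylurierep3} must be verified are literally points of $F$, which is what makes Lemma \ref{totcohoc} directly applicable.

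The one genuine error is your treatment of the dg case. For discrete $A$, the definition reads $A^{\Delta^n}=\tau_{\ge 0}(A\ten\Omega_n)$, and the \emph{good} truncation places the cycles $\z_0(A\ten\Omega_n)$ in degree $0$, not all of $(A\ten\Omega_n)_0$. Since a polynomial over a $\Q$-algebra with vanishing de Rham differential is constant, one gets $A^{\Delta^n}=A$, not $A[t_1,\ldots,t_n]$. This matters: had $\uline{F}(A)_n$ really been $F(A[t_1,\ldots,t_n])$, your proposed remedy---``check that $\bar{W}$ preserves the hypersheaf, filtered-colimit, truncation and effectivity properties''---would be a genuine gap, since $F$ is not assumed homotopy-invariant in a polynomial variable, and there is no reason that $\bar{W}$ applied to the diagram $n\mapsto F(A[t_1,\ldots,t_n])$ returns anything equivalent to $F(A)$. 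With the correct computation the dg case is exactly as immediate as the simplicial one: for discrete $A$ the bisimplicial object $\uline{F}(A)$ is constant in the external direction, so the explicit formula of Definition \ref{barwdef} gives $\bar{W}\uline{F}(A)\cong F(A)$, whence $\pi^0\bar{W}\uline{F}\cong\pi^0F$ and conditions (2), (4)--(6) and (10) of Theorem \ref{mylurierep3} are inherited verbatim. With that repair, your proof is complete and agrees with the paper's.
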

\begin{proof}
 This essentially  appears as \cite{drep} Theorem \ref{drep-lurieprerep}, which takes a homotopy-surjecting, 
homotopy-homogeneous functor instead of a formally quasi-smooth pre-homotopic 
homogeneous functor. However,  
 every pre-homotopic functor is automatically homotopy-surjecting,
while 
every formally quasi-presmooth homogeneous functor is homotopy-homogeneous (by \cite{dmsch} Lemma \ref{dmsch-2hgsgood}).

Alternatively, note that Proposition \ref{settotop} ensures that $\bar{W}\uline{F}$ is homogeneous and formally quasi-smooth, so we may apply Theorem \ref{mylurierep3}.
\end{proof}

\section{Sheaves on the pro-Zariski and pro-\'etale sites}\label{sheafsn}

Our primary motivation for this section is the following. In general, an infinite  direct sum $M=\bigoplus_i M_i$ of locally free $A$-modules is not locally free for the \'etale topology, in the sense that there need not exist any faithfully flat \'etale morphism $A \to A'$ with $M\ten_AA'$ free. However, for all maximal ideals $\m$ of  $A$, the $A_{\m}$-module $M\ten_AA_{\fm}$ is free. Indeed, for any set $S$ of maximal ideals, the $\prod_{\m \in S} A_{\m}$-module $M\ten_A\prod_{\m \in S}A_{\fm}$ is free. As we will show below, this amounts to saying that $M$ is locally free for the pro-Zariski topology, and hence for the pro-\'etale topology.

\begin{definition}
 A presheaf $\sF:\Alg_R \to \Set$ is said to be locally of finite presentation if for any filtered direct system $\{A_i\}_i$, the map
\[
\LLim_i \sF(A_i) \to \sF(\LLim_i A_i)        
\]
is an isomorphism.      
\end{definition}

\begin{definition}
 Given a property $\oP$ of morphisms of affine schemes, we say that $f: X \to Y$ is pro-$\oP$ if it can be expressed as the limit $X = \Lim_i X_i$ of a filtered inverse system $\{X_i\}_i$ of $\oP$-morphisms   $X_i\to Y$, in which all structure maps $X_i \to X_j$ are   $\oP$-morphisms. Likewise, we say that a map $A \to B$ of rings is ind-$\oP$ if $\Spec B \to \Spec A$ is pro-$\oP$.
\end{definition}

\begin{lemma}\label{fpsheafpro}
 If $\sF:\Alg_R \to \Set$ is locally of finite presentation and a sheaf for a class $\oP$ of covering morphisms, then $\sF$ is also a sheaf for the class $\pro(\oP)$.        
\end{lemma}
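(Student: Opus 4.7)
The plan is to reduce the pro-$\oP$ sheaf condition to the $\oP$-sheaf condition by passing to filtered colimits, exploiting the fact that filtered colimits in $\Set$ commute with finite limits (in particular with equalizers).

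First, I would unpack the setup. A pro-$\oP$ cover is a morphism $A \to B$ with $B = \LLim_i B_i$ for some filtered system, where each $A \to B_i$ is a $\oP$-cover. The sheaf axiom for $A \to B$ requires that
\[
\sF(A) \to \sF(B) \rightrightarrows \sF(B\otimes_A B)
\]
be an equalizer. Since tensor product commutes with filtered colimits, $B\otimes_A B = \LLim_{i,j} B_i \otimes_A B_j$, and cofinality of the diagonal in $I \times I$ (which is filtered) identifies this with $\LLim_i B_i \otimes_A B_i$.

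Next I would invoke the hypothesis that $\sF$ is locally of finite presentation to identify $\sF(B) = \LLim_i \sF(B_i)$ and $\sF(B \otimes_A B) = \LLim_i \sF(B_i \otimes_A B_i)$. For each index $i$, the $\oP$-sheaf property gives an equalizer diagram
\[
\sF(A) \to \sF(B_i) \rightrightarrows \sF(B_i \otimes_A B_i).
\]
Taking the filtered colimit over $i$ preserves this equalizer (since filtered colimits are exact in $\Set$) and produces the desired exact diagram for the pro-$\oP$ cover.

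The only subtle point is the cofinality step for identifying $B \otimes_A B$ as a colimit over the diagonal; everything else is formal. The transition maps in the pro-system being $\oP$-morphisms is not actually needed for this particular argument — we only use that the indexing category is filtered and that each $A \to B_i$ is a $\oP$-cover. No serious obstacle is anticipated.
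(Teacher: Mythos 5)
Your proof is correct and follows essentially the same route as the paper's: write $B=\LLim_i B_i$, use local finite presentation to identify $\sF(B)$ and $\sF(B\ten_AB)$ with filtered colimits of $\sF(B_i)$ and $\sF(B_i\ten_AB_i)$, and conclude since filtered colimits commute with equalisers; your explicit cofinality argument for $B\ten_AB\cong\LLim_i B_i\ten_AB_i$ just spells out a step the paper leaves implicit. The only point the paper includes that you omit is the (trivial) verification of the sheaf axiom for finite disjoint covers, which is automatic and does not affect correctness.
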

\begin{proof}
Given any finite (possibly empty) set $\{A_s\}_{s\in S}$ of objects of $\Alg_R $, we automatically have an isomorphism
\[
 \sF(\prod_{s\in S} A_s) \to \prod_{s\in S} \sF(A_s),      
\]
 so we need only check that for any ring homomorphism $A \to B$ in $\ind(\oP)$, the diagram
\[
 \sF(A) \to \sF(B) \abuts \sF(B\ten_AB)       
\]
is an equaliser diagram.

Now, we can express $A \to B$ as a direct limit $B = \LLim_i B_i$ of $\oP$-morphisms $A \to B_i$, so
\[
 \sF(B) \cong \LLim_i \sF(B_i), \quad    \sF(B\ten_AB)\cong    \LLim_i\sF(B_i\ten_AB_i), 
\]
$\sF$ being locally of finite presentation. Since $\sF$ is a $\oP$-sheaf, the diagram
\[
 \sF(A) \to \sF(B_i) \abuts \sF(B_i\ten_AB_i)       
\]
is an equaliser, and the required result now follows from the observation that finite limits commute with filtered direct limits.      
\end{proof}

We will now construct weak universal covers for the topologies which concern us.
\subsection{The pro-Zariski topology}

\begin{definition}
 A morphism $A \to B$ of commutative rings is said to be conservative if the map
\[
 A^{\by} \to A\by_BB^{\by}        
\]
is an isomorphism, where $A^{\by}$ denotes units in $A$. Say that a morphism $\Spec B \to \Spec A$ of affine schemes is conservative if $A\to B$ is so.  
\end{definition}

\begin{definition}
 Say that a map $A \to B$ of commutative rings is a localisation if $B \cong A[S^{-1}]$, for some subset $S \subset A$.       
\end{definition}
Note that $\Spec D \to \Spec C$ is an open immersion if and only if $ D \cong C[S^{-1}]$ for some finite set $S$. Thus $A \to B$ is a localisation if and only if $\Spec B \to \Spec A$ is a pro-(open immersion).

\begin{lemma}\label{UFopen}
 Any commutative ring homomorphism $f:A \to B$ has a unique factorisation $A \to C \to B$ as a localisation followed by a conservative map.
\end{lemma}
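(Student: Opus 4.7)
The plan is to construct $C$ by inverting exactly those elements of $A$ that are forced to become units for any conservative completion to reach $B$. Concretely, I would set
\[
S := f^{-1}(B^{\times}) = \{a \in A : f(a) \in B^{\times}\}
\]
and define $C := A[S^{-1}]$. By construction $A \to C$ is a localisation, and since $f$ sends every element of $S$ to a unit in $B$, the universal property of localisation produces a unique factorisation $A \to C \to B$ of $f$.

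The next step is to verify conservativity of $g:C \to B$. Suppose $c \in C$ is such that $g(c) \in B^{\times}$; writing $c = a/s$ with $a \in A$ and $s \in S$, the image $f(a) = g(s)\cdot g(c)$ lies in $B^{\times}$, so $a \in S$ and hence $c$ is already a unit of $C$. This shows $C^{\times} \to C\times_B B^{\times}$ is surjective, and injectivity is trivial since $C^{\times} \hookrightarrow C$.

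For uniqueness, suppose $A \to C' \to B$ is a second such factorisation with $C' = A[T^{-1}]$ and $C' \to B$ conservative. For any $a \in A$, conservativity of $C' \to B$ gives the equivalence
\[
a \in (C')^{\times} \iff \text{image of } a \text{ in } B \text{ is a unit} \iff a \in S,
\]
using that $a \in (C')^{\times}$ implies its image in $B$ is a unit, and the converse follows by applying conservativity to the element $a\in C'$. Thus the set of elements of $A$ made invertible in $C'$ is exactly $S$. Since a localisation $A \to A[U^{-1}]$ depends (up to unique isomorphism of $A$-algebras) only on the saturation of $U$ — equivalently only on which elements of $A$ become invertible — we conclude $C' \cong A[S^{-1}] = C$ uniquely over $A$, and this isomorphism is then automatically compatible with the maps to $B$.

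I do not expect any real obstacle: the construction is forced by the conservativity condition. The only minor care needed is to note that $S$ is automatically saturated (if $f(ab) \in B^{\times}$ then $f(a)$ and $f(b)$ both are, since $B$ is commutative), which is what makes the universal-property argument for uniqueness clean.
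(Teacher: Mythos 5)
Your proposal is correct and takes essentially the same approach as the paper: the paper's proof consists of exactly your construction, setting $S:=\{a\in A: f(a)\in B^{\times}\}$ and $C:=A[S^{-1}]$, and cites \cite{anel} Proposition 52 for the rest. You have merely written out the routine verifications (conservativity of $C\to B$ and uniqueness via the universal property of localisation) that the paper delegates to that citation, and these verifications are sound.
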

\begin{proof}
 This is \cite{anel} Proposition 52. The factorisation is given by setting $S:= \{a \in A\,:\, f(a) \in B^{\by}\}$, then letting $C:= A[S^{-1}]$.
\end{proof}

In order to study the Zariski topology, we wish to use local isomorphisms rather than open immersions. Likewise, for the pro-Zariski topology, we want pro-(local isomorphisms) rather than pro-(open immersions).

\begin{definition}
 A morphism $A \to B$ of commutative rings is said to be strongly conservative if it is conservative, and 
the map
$
 \id(A)\to \id(B)        
$
on sets of idempotent elements
is an isomorphism. Say that a morphism $\Spec B \to \Spec A$ of affine schemes is strongly conservative if $A\to B$ is so.  
\end{definition}

\begin{remark}
 The set $\id(A)$  just consists of  ring homomorphisms $\Z^2 \to A$. If $A$ is finitely generated, then $\Spec A$  has a finite set $\pi(\Spec A)$ of components. Since an arbitrary ring $A$ can be expressed as a filtered colimit $A = \LLim_i A_i$ of finitely generated rings, we can then define $\pi(A)$ to be the profinite set $\LLim_i \pi(\Spec A_i)$. Thus a conservative morphism  $\Spec B \to \Spec A$ is strongly conservative if and only if $\pi(\Spec B) \to \pi(\Spec A)$ is an isomorphism of profinite sets. 
\end{remark}

\begin{lemma}\label{UFZar}
 Every morphism $f:X \to Y$ of affine schemes has a unique factorisation $X \to (X/Y)^{\loc} \to Y$ as  a strongly conservative map followed by a pro-(local isomorphism).
\end{lemma}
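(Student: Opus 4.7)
My plan is to construct $(X/Y)^{\loc}$ as a cofiltered limit of a suitable system of intermediate factorisations, then verify strong conservativity and deduce uniqueness via an orthogonality argument. Writing $A = \Gamma(Y, \O_Y)$ and $B = \Gamma(X, \O_X)$, I let $\fF$ be the category whose objects are factorisations $A \to C \to B$ of $f$ with $A \to C$ an ind-(local isomorphism), morphisms being the evident $A$-algebra maps over $B$. This category is filtered: given $C_1, C_2 \in \fF$, the tensor product $C_1 \ten_A C_2$ receives a canonical map to $B$ (the maps $C_i \to B$ agree on $A$), and $A \to C_1 \to C_1 \ten_A C_2$ is a composition of ind-(local isomorphism)s (this class being stable under base change and composition, since local isomorphisms are). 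Setting $C_\univ := \LLim_\fF C$, the map $A \to C_\univ$ is a filtered colimit of ind-(local isomorphism)s, hence itself an ind-(local isomorphism), so $(X/Y)^{\loc} := \Spec C_\univ \to Y$ is a pro-(local isomorphism).

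Next I verify that $C_\univ \to B$ is strongly conservative. For conservativity, if $c \in C_\univ$ maps to a unit in $B$, then $c$ originates from some $C \in \fF$, and the localisation $A \to C \to C[c^{-1}]$ is still ind-(local isomorphism), with $C[c^{-1}] \to B$ well-defined; hence $C[c^{-1}] \in \fF$ and $c^{-1} \in C_\univ$. For the bijection on idempotents, given $b \in \id(B)$, Lemma \ref{UFopen} applied to each of the maps $A \to bB$ and $A \to (1-b)B$ produces localisations $A \to A[S_b^{-1}]$ and $A \to A[S_{1-b}^{-1}]$, and the composite $A \to A[S_b^{-1}] \times A[S_{1-b}^{-1}] \to B$ lies in $\fF$ (the first map being a disjoint union of two pro-open immersions, hence a pro-(local isomorphism)). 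The idempotent $(1,0)$ in this intermediate manifestly lifts $b$, so $b$ lies in the image from $C_\univ$.

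Uniqueness (up to canonical isomorphism) reduces to an orthogonality statement: for any commutative square of ring maps with $A \to C$ an ind-(local isomorphism) on one side and $D \to B$ strongly conservative on the other, there is a unique $A$-algebra lift $C \to D$ over $B$. By passing to the filtered colimit defining $C$, it suffices to lift a single local isomorphism $A \to C_i$, which decomposes as a finite product $\prod_{j=1}^n A[S_j^{-1}]$ with each $S_j$ finite. The orthogonal idempotents of $C_i$ map to orthogonal idempotents in $B$, and by strong conservativity these lift uniquely and orthogonally to $D$ via the Boolean isomorphism $\id(D) \cong \id(B)$. On each resulting factor, conservativity of the restricted map $D \to B$ promotes the units in $e_jB$ coming from $S_j$ back into the corresponding component of $D$, yielding a unique map $A[S_j^{-1}] \to D$; assembling these over $j$ produces $C_i \to D$. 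Applied to any two factorisations of the prescribed form, this orthogonality supplies mutually inverse comparison maps, giving the required uniqueness. The most delicate point is ensuring that every idempotent of $B$ is realised in $C_\univ$; this is secured by allowing ind-(local isomorphism) intermediates rather than merely local isomorphism ones, since Lemma \ref{UFopen} generally requires inverting an infinite set of elements.
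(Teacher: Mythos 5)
Your strategy is genuinely different from the paper's: Pridham's proof is an explicit two-step construction, first factoring $f$ through $Y\by_{\pi(Y)}\pi(X)=\Spec(A\ten_{\Z.\id(A)}\Z.\id(B))$ (which splits off all the idempotent data at once) and then applying Lemma \ref{UFopen} to the remaining map; you instead build $(X/Y)^{\loc}$ as a colimit over the category $\fF$ of all admissible factorisations and deduce uniqueness from orthogonality. Your orthogonality half is essentially sound (including the reduction along the ind-presentation, the forced lifting of idempotents, and uniqueness via localisations being epimorphisms), but the construction half has two genuine gaps. First, filteredness of $\fF$: you verify only that two objects admit a common cocone, whereas filteredness also requires every parallel pair $g,h\colon C_1\rightrightarrows C_2$ in $\fF$ to be coequalised by some further arrow of $\fF$. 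This is not automatic, and it is precisely where a special property of local isomorphisms enters: their diagonal is an open immersion, so $C_3:=C_2\ten_{C_1\ten_AC_1}C_1$, formed via $(g,h)$ and the multiplication $C_1\ten_AC_1\to C_1$, lies in $\fF$ and coequalises $g$ and $h$. Without this, your claims that $A\to C_\univ$ is an ind-(local isomorphism) and that elements of $C_\univ$ are controlled stagewise are unsupported. Even granting filteredness, the statement that a filtered colimit of ind-(local isomorphism) algebras along \emph{arbitrary} transition maps is again ind-(local isomorphism) is a nontrivial spreading-out lemma (the analogue of the corresponding fact for ind-\'etale algebras), which you assert without proof.

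Second, size: $\fF$ as you define it is a proper class, so $\LLim_{\fF}C$ need not exist. Already for $A=B=k$ a field, every algebra of locally constant $k$-valued functions on a pointed profinite set gives an object of $\fF$, and these form a proper class of isomorphism types. The standard repair is to restrict to $C$ of cardinality at most $\max(|A|,|B|,\aleph_0)$ and check that this small subcategory is filtered and closed under the operations your verification uses (inverting single elements, the product-of-two-localisations object, binary tensors, the coequalising construction above), but this must be said. A smaller omission: strong conservativity requires $\id(C_\univ)\to\id(B)$ to be \emph{bijective}, and you prove only surjectivity; injectivity does follow from conservativity (if idempotents $e,e'$ have equal image, set $x=e-e'$, so $x^3=x$ and $1-x^2$ maps to $1$, whence $1-x^2$ is a unit, $x^2=0$, and $x=x^3=0$), but it should be recorded. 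In short: a workable alternative route whose orthogonality argument buys a construction-free proof of uniqueness, at the cost of exactly the filteredness and smallness issues that the paper's explicit construction avoids entirely.
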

\begin{proof}
 This is remarked at the end of \cite{anel} \S 4.2, where strongly conservative maps are denoted by Conv, and pro-(local isomorphisms) by Zet. Explicitly, we first factorise $f$ as $X \to Y\by_{\pi(Y)}\pi(X)\to Y$, and then apply Lemma \ref{UFopen} to the first map, obtaining $X \to  (X/Y)^{\loc}\to  Y\by_{\pi(Y)}\pi(X)\to Y$. If $X=\Spec B$ and $Y= \Spec A$, note that 
\[
 Y\by_{\pi(Y)}\pi(X)= \Spec (A\ten_{\Z.\id(A)}\Z.\id(B)).
\]
Note that we would get the same construction by applying  Lemma \ref{UFopen} to $ X \to Y\by\pi(X)$.
 \end{proof}

\begin{lemma}\label{univZarcover}
 For any commutative ring $A$, the category of pro-Zariski covers of $\Spec A$ has a weakly initial object $\Spec C$. In other words, for any  covering pro-(local isomorphism) $Y \to \Spec A$, there exists a map $\Spec C \to Y$ over $\Spec A$, although the map need not be unique. 

Moreover, every   covering pro-(local isomorphism) $Z \to \Spec C$ has a section.
\end{lemma}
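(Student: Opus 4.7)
The plan is to construct $C$ as an explicit filtered colimit, verify the pro-Zariski cover and weak-initiality properties directly, and then establish the splitting property via an ``idempotency'' argument.

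Let $\cP$ be the filtered category whose objects are finite tuples $\underline{f} = (f_1,\ldots,f_n)$ in $A$ generating the unit ideal, and whose morphisms $\underline{f} \to \underline{g}$ are refinements, i.e.\ assignments $j \mapsto i(j)$ such that $f_{i(j)}$ becomes a unit in $A[g_j^{-1}]$. Define
$$
C := \LLim_{\underline{f} \in \cP} \prod_{i} A[f_i^{-1}].
$$
Each term $\prod_i A[f_i^{-1}]$ corresponds to the local isomorphism $\bigsqcup_i \Spec A[f_i^{-1}]\to\Spec A$, so $A \to C$ is ind-(local isomorphism), making $\Spec C \to \Spec A$ a pro-(local isomorphism). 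Surjectivity is clear because for each prime $\p \subset A$ and each $\underline{f} \in \cP$ some $f_i$ lies outside $\p$, giving a lift of $\p$ to every term of the colimit. For weak initiality, I would present an arbitrary pro-Zariski cover $Y \to \Spec A$ as $\Spec B$ with $B = \LLim_n B_n$ and $B_n = \prod_k A[h_{n,k}^{-1}]$ for some $(h_{n,k}) \in \cP$. Each $B_n$ is a term in the colimit defining $C$, giving canonical maps $B_n \to C$ that are compatible under refinement morphisms in $\cP$, hence a ring map $B \to C$ and so $\Spec C \to Y$ over $\Spec A$.

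The moreover statement is the main obstacle, and I would tackle it through an ``idempotency'' lemma: every finite Zariski cover $(g_1,\ldots,g_m)$ of $\Spec C$ is refined by one pulled back from $\cP$. Represent the $g_j$ in a common factor $\prod_i A[f_i^{-1}]$ as $g_j = (a_{j,i})_i$ with $a_{j,i} = b_{j,i}/f_i^{k_{j,i}}$, $b_{j,i} \in A$. The unit-ideal condition in $\prod_i A[f_i^{-1}]$ forces $(b_{j,i})_j$ to generate $A[f_i^{-1}]$ for each $i$, and a direct check (no prime of $A$ can contain every $f_i b_{j,i}$) shows $(f_i b_{j,i})_{i,j}$ is a refinement of $\underline{f}$ in $\cP$. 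The resulting map $\prod_{i,j} A[(f_i b_{j,i})^{-1}] \to C$ from the colimit produces orthogonal idempotents $e_{(i,j)}$ summing to $1$ in $C$; setting $e^C_j := \sum_i e_{(i,j)}$ yields a partition of unity with $g_j$ invertible on $e^C_j C$, giving a section of this finite cover. A general pro-Zariski cover $Z = \Lim_\alpha Z_\alpha \to \Spec C$ then admits a section by extracting compatible sections $\Spec C \to Z_\alpha$ from the non-empty sets $S_\alpha$ of sections via a standard inverse-limit compactness argument (each $S_\alpha$ embeds in a product of finite sets indexed by clopen subsets of $\Spec C$, which is compact in the product topology).
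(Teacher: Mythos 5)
Your construction of $C$ breaks down at the first step: the category $\cP$, with refinement assignments as morphisms, is \emph{not} filtered, and the colimit as you have defined it is actually the zero ring for every $A$. Concretely, the tuples $(1,1)$ and $(1)$ are both objects of $\cP$, and the two assignments $1\mapsto 1$, $1\mapsto 2$ give two parallel morphisms $(1,1)\rightrightarrows(1)$ whose induced ring maps are the two projections $A\times A \to A$. Any cocone $\iota$ must coequalize these, so $\iota_{(1)}(a)=\iota_{(1,1)}(a,b)=\iota_{(1)}(b)$ for all $a,b\in A$; taking $(a,b)=(1,0)$ gives $1=0$ in $C$. Nor can these parallel pairs be coequalized by passing to a further refinement, since the induced maps differ on idempotents ($e_{u(j)}\mapsto 1$ under one and $\mapsto 0$ under the other), and no localisation kills this discrepancy. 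If you instead try to index by the \emph{preorder} of covers under refinement, you must choose one refinement map per comparable pair, and because different assignments genuinely induce different ring maps, such choices cannot in general be made functorial. Every subsequent step of your argument — that $A\to C$ is ind-(local isomorphism), that the finitely many $g_j$ can be represented ``in a common factor'' $\prod_i A[f_i^{-1}]$, the extraction of the idempotents $e_{(i,j)}$, and the inverse-limit compactness argument — presupposes a filtered presentation of $C$, so the proof does not get off the ground. (A secondary issue: a covering pro-(local isomorphism) need not be a limit of disjoint unions of principal localisations, e.g.\ the inclusion of a non-principal affine open, so your normal form $B_n=\prod_k A[h_{n,k}^{-1}]$ would also need a refinement argument even for the weak-initiality step.)

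The paper avoids exactly this coherence problem by indexing over canonical data rather than abstract covers: it fixes the set $S$ of maximal ideals of $A$, considers the single map $A \to \prod_{\m\in S}A/\m$, and defines $C:=(A/\prod_{\m\in S}(A/\m))^{\loc}$ via the unique factorisation system of Lemma \ref{UFZar} (strongly conservative maps versus pro-(local isomorphisms), from Anel) — explicitly, the subring of $A^S$ of finite-image functions, suitably localised, where all transition maps are canonical and no choices arise. Weak initiality then follows by lifting the closed points along a given cover to obtain $B \to \prod_\m A/\m$ and invoking the uniqueness (orthogonality) property of the factorisation system; the splitting of covers $\Spec D \to \Spec C$ follows the same way, with strong conservativity of $C \to \prod_\m A/\m$ forcing the composite $C \to D \to C$ to be the identity. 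Your idempotency lemma is a sensible analogue of this splitting step and would likely survive in a corrected setting, but to repair the construction you would need either coherent choices along a well-ordered cofinal family of covers (a transfinite construction in the style of w-contractible covers for the pro-\'etale topology) or, more economically, the paper's factorisation-system definition.
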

\begin{proof}
Let $S$ be the set of maximal ideals of $ A$, and set $C:= (A/\prod_{\m \in S} (A/\m))^{\loc}$, as constructed in Lemma \ref{UFZar}. Explicitly, we first form the subring $A'$ of $A^S$ consisting of functions $f:S \to A$ with finite image. To form $C$, we then invert any element $f \in A'$ whenever for all $s \in S$, $f(s) \notin \m_s$.

Now, given any covering  pro-(local isomorphism) $\Spec B \to \Spec A$, use the covering property to lift the closed points of $A$ to closed points of $B$; this gives us a map
\[
 g:B \to \prod_{\fm} A/{\fm}.
\]
Properties of unique factorisation systems then give a unique map
\[
B \to  (A/\prod_{\m \in S} (A/\m))^{\loc}
\]
compatible with $g$.

For the second part, take a covering pro-(local isomorphism) $Z= \Spec D \to \Spec C$, and choose a lift $D \to A/\m$ of each canonical map $C \to A/\m$. This gives a diagram $A \xra{h} D \to \prod_{\m \in S} A/\m$ with $h$ opposite to a pro-(local isomorphism), so the universal property of $C$ then gives  a unique factorisation $D \to C \to \prod_{\m \in S} A/\m$. The composition $C \to D \to C$ must then be the identity, since $C \to \prod_{\m \in S} A/\m$ is strongly conservative. 
\end{proof}


\subsection{The pro-\'etale topology}

\begin{definition}
 A morphism $f:A \to B$ is said to be Henselian if any factorisation $A \to A' \to B$, with $A \to A'$ \'etale, has a section $A' \to A$ over $B$. Say that a morphism  $\Spec B \to \Spec A$ of affine schemes is Henselian if  $A \to B$ is so.
\end{definition}

\begin{lemma}\label{UFet}
 Every morphism $f:X \to Y$ of affine schemes has a unique factorisation $X \to (X/Y)^{\hen} \to Y$ as  a Henselian map followed by a pro-\'etale morphism.
\end{lemma}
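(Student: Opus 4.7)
The plan is to mirror the construction of Lemma \ref{UFZar}. Writing $f:A \to B$ for the ring map underlying $f:X\to Y$, I would define $\cC$ to be the filtered category whose objects are pairs $(A',\phi)$ consisting of an \'etale $A$-algebra $A'$ and a factorisation $\phi:A' \to B$ of $f$, with morphisms $A$-algebra maps $A' \to A''$ over $B$. Filteredness is straightforward: $(A,f)$ gives a nonempty start, the tensor product $A' \ten_A A''$ provides binary cocones (with the map to $B$ induced by $\phi$ and $\psi$), and parallel arrows $u,v:A' \rightrightarrows A''$ in $\cC$ are coequalised by quotienting $A''$ by the ideal $(u(a)-v(a) : a \in A')$, which remains \'etale over $A''$ because the diagonal of an \'etale morphism is an open immersion. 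Setting
\[
 A^{\hen} := \LLim_{(A',\phi) \in \cC} A'
\]
then gives an ind-\'etale $A$-algebra, so $\Spec A^{\hen} \to \Spec A$ is pro-\'etale, and the induced map $A^{\hen} \to B$ factors $f$.

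The Henselian property of $A^{\hen} \to B$ reduces to a descent argument. Given a factorisation $A^{\hen} \to C \to B$ with $A^{\hen} \to C$ \'etale, I would use that \'etale morphisms are finitely presented to descend $C$ and its map to $B$ to some finite stage: there exists $(A',\phi) \in \cC$ together with an \'etale $A'$-algebra $C'$ and a compatible map $C' \to B$ satisfying $C \cong C' \ten_{A'} A^{\hen}$. Since the composition $A \to A' \to C'$ is \'etale, the pair $(C', C' \to B)$ is itself an object of $\cC$ and comes with a structural map $C' \to A^{\hen}$, from which the required section $C = C' \ten_{A'} A^{\hen} \to A^{\hen}$ over $B$ is built by pairing with $\id_{A^{\hen}}$ through the $A'$-algebra tensor product.

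For uniqueness, given any other factorisation $A \to A_1 \to B$ with $A_1$ ind-\'etale over $A$ and $A_1 \to B$ Henselian, the universal property of the filtered colimit $A^{\hen}$ produces a canonical map $A_1 \to A^{\hen}$ over $B$ (since $A_1$ is itself a filtered colimit of objects of $\cC$), and the Henselian property of $A_1 \to B$ applied to each \'etale base change $A_1 \ten_A A'$ produces maps $A' \to A_1$ that should assemble into an inverse map $A^{\hen} \to A_1$. I expect the main obstacle to lie precisely here: unlike in the Zariski case, \'etale maps are not monomorphisms and Henselian sections of a fixed \'etale extension need not be unique, so showing the constructed sections $A_1 \ten_A A' \to A_1$ can be chosen compatibly across morphisms in $\cC$ requires care. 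The cleanest treatment is likely via orthogonal factorisation system theory: verifying that any commutative square with an ind-\'etale left edge and a Henselian right edge admits a unique diagonal filler, from which uniqueness of the factorisation follows by a formal argument.
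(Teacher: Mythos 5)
Your construction is literally the paper's: the paper defines $(A/B)^{\hen}:=\LLim A_i$, the colimit over all factorisations $A \to A_i \to B$ with $A \to A_i$ \'etale, and for everything else simply cites \cite{anel} Proposition 64, which asserts that ind-\'etale and Henselian morphisms form the left and right classes of a unique factorisation system on rings. So where the paper cites, you reprove. Your existence half is sound: the filteredness check works (your coequaliser $A''/(u(a)-v(a))$ is exactly $A''\ten_{A'\ten_AA'}A'$ along the multiplication map, which is \'etale over $A''$ because the diagonal of an \'etale morphism of affines is a clopen immersion, and the induced map to $B$ exists since $\psi u=\psi v$), and the Henselianity argument for $A^{\hen}\to B$ via finite presentation of \'etale maps is correct; the only points worth making explicit are that one should pass to a small skeleton of $\cC$ before taking the colimit, and that $A'\to C'$ is a morphism in $\cC$ (the composite $A'\to C'\to B$ equals $\phi$ by construction), which is what makes the structure map $C'\to A^{\hen}$ an $A'$-algebra map so that $c'\ten x\mapsto \iota(c')x$ is well defined.

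The one genuine gap is uniqueness, but the obstacle you fear dissolves on inspection: with the paper's definition of Henselian (mere \emph{existence} of sections), sections over $B$ are automatically unique. Indeed, suppose $D\to B$ is Henselian, $D\to D'$ is \'etale, and $s_1,s_2:D'\to D$ are two retractions over $B$. Their agreement locus in $\Spec D$ is clopen (it is the equaliser of two sections of a separated unramified morphism), say $\Spec D_e$ for an idempotent $e\in D$; the over-$B$ condition forces $\Spec B\to\Spec D$ to land in this locus, so $D\to D_e\to B$ is an \'etale factorisation, and Henselianity supplies a retraction of the surjection $D\to D_e$, forcing $e=1$ and hence $s_1=s_2$. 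Uniqueness of sections gives uniqueness of diagonal fillers against \'etale maps; the fillers at the various stages of a filtered presentation are then automatically compatible, giving unique fillers against ind-\'etale maps, and the standard orthogonal-factorisation-system argument (the comparison maps between two factorisations fill identity squares, so are mutually inverse) yields uniqueness of the factorisation. Carrying this out would complete your plan, in effect reproving the cited \cite{anel} Proposition 64 rather than invoking it.
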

\begin{proof}
 This is an immediate consequence of \cite{anel} Proposition 64, which shows that ind-\'etale morphisms and Henselian morphisms form the left and right classes of a unique factorisation system on the category of commutative  rings. Explicitly, if $Y=\Spec A$ and $X= \Spec B$, then
\[
 (A/B)^{\hen}:= \LLim A_i,
\]
where $A_i$ runs over all factorisations $A \to A_i \to B$ of $f^{\sharp}$ with $A \to A_i$ \'etale. Then $(X/Y)^{\hen}:= \Spec (A/B)^{\hen}$.
 \end{proof}

\begin{lemma}\label{univetcover}
For any commutative ring $A$, there is a weakly initial object $\Spec C$ in the category of pro-\'etale coverings of $\Spec A$. 

Moreover, every  pro-\'etale covering  $Z \to \Spec C$ has a section.
\end{lemma}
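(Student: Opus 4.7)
The plan is to mirror the proof of Lemma \ref{univZarcover}, with separable closures of residue fields replacing the residue fields, and the Henselian/pro-\'etale factorisation system of Lemma \ref{UFet} playing the role of the Zariski factorisation of Lemma \ref{UFZar}. For each maximal ideal $\m$ of $A$, I fix a separable closure $k(\m)^{\mathrm{sep}}$ of the residue field $A/\m$ and apply Lemma \ref{UFet} to the canonical ring homomorphism $A \to \prod_\m k(\m)^{\mathrm{sep}}$, producing a factorisation $A \to C \to \prod_\m k(\m)^{\mathrm{sep}}$ in which $A \to C$ is ind-\'etale and $C \to \prod_\m k(\m)^{\mathrm{sep}}$ is Henselian. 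Since $A \to C$ is flat and each composite $C \to k(\m)^{\mathrm{sep}}$ certifies that $C/\m C \ne 0$, the map $A \to C$ is faithfully flat, so $\Spec C \to \Spec A$ is a genuine pro-\'etale covering.

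The weakly initial property is then a direct consequence of the orthogonality between the left and right classes of the factorisation system. Given a pro-\'etale covering $\Spec B \to \Spec A$, surjectivity allows me to choose, for each maximal $\m \subset A$, a prime $\n \subset B$ lying over $\m$; since $A \to B$ is ind-\'etale, the residue extension $k(\n)/k(\m)$ is separable algebraic and hence embeds into $k(\m)^{\mathrm{sep}}$ over $k(\m)$. Assembling the composites $B \to k(\n) \hookrightarrow k(\m)^{\mathrm{sep}}$ gives a map $B \to \prod_\m k(\m)^{\mathrm{sep}}$ over $A$, and the unique diagonal fill for the resulting square (ind-\'etale on the left, Henselian on the right) supplies the morphism $B \to C$ over $A$.

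For the second assertion I would run the same machinery on a pro-\'etale cover $D \to C$: for each $\m$, lift the prime $J := \ker(C \to k(\m)^{\mathrm{sep}})$ to a prime $\tilde{J} \subset D$, then extend the embedding $k(J) \hookrightarrow k(\m)^{\mathrm{sep}}$ to the separable algebraic extension $k(\tilde{J})$ using that $k(\m)^{\mathrm{sep}}$ is separably closed. Taking the product yields a map $D \to \prod_\m k(\m)^{\mathrm{sep}}$ over $C$, and since $A \to D$ is ind-\'etale, the unique diagonal fill produces $\psi \co D \to C$ over $A$ and over $\prod_\m k(\m)^{\mathrm{sep}}$. Both $\psi \circ (C \to D)$ and $\id_C$ solve the same orthogonality problem for the square $A \to C \rightrightarrows C \to \prod_\m k(\m)^{\mathrm{sep}}$, so uniqueness forces $\psi \circ (C \to D) = \id_C$, exhibiting $\psi$ as a section. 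The one point that requires genuine verification, rather than formal invocation of the factorisation system, is that residue extensions arising from ind-\'etale morphisms are separable algebraic, so that the embeddings into $k(\m)^{\mathrm{sep}}$ really exist; this is routine for \'etale maps and passes to filtered colimits.
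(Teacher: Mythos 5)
Your proof is correct and takes essentially the same approach as the paper: both apply the Henselian/ind-\'etale factorisation of Lemma \ref{UFet} to the map from $A$ into a product of separably closed residue fields (the paper takes geometric points over all points of $\Spec A$ rather than just maximal ideals, an immaterial difference since flatness plus nonemptiness of closed fibres already gives faithful flatness, as you note) and then adapt the arguments of Lemma \ref{univZarcover}. Your checks — separable algebraicity of residue extensions along ind-\'etale maps, and the uniqueness of diagonal fills forcing $\psi\circ(C\to D)=\id_C$ — are precisely the details the paper compresses into ``the arguments of Lemma \ref{univZarcover} now adapt.''
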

\begin{proof}
 For each point $x$ of $\Spec A$, choose a geometric point $\bar{x}$ over $x$, so $k(\bar{x})$ is a separably closed field, and let the set of all these points be $S$.
Now, use Lemma \ref{UFet} to construct the unique factorisation
\[
A \to (A/ \prod_{\bar{x} \in S} k(\bar{x}))^{\hen} \to \prod_{\bar{x} \in S} k(\bar{x})
\]
of $A \to \prod_{\bar{x} \in S} k(\bar{x}) $. 
The arguments of Lemma \ref{univZarcover} now adapt to show that $\Spec C:= [(\Spec  \prod_{\bar{x} \in S} k(\bar{x}))/\Spec A]^{\hen}$ is weakly  initial in the category of pro-\'etale coverings of $\Spec A$, and that every covering of $\Spec C$ has a section.
\end{proof}


\subsection{Sheaves on derived rings}

\begin{definition}
 Given a subclass $\oP$ of flat morphisms of commutative rings, closed under pushouts and composition, say that a morphism $f:A \to B$ in $s\Ring$ is
\begin{enumerate}
 \item homotopy-$\oP$ if $\pi_0f: \pi_0A \to \pi_0B$ is in $\oP$, and the maps
\[
 \pi_n(A) \ten_{\pi_0A}\pi_0B \to \pi_nB
\]
are isomorphisms for all $n$;

\item strictly $\oP$ if  $f_0: A_0 \to B_0$ is in $\oP$, and the maps
\[
 A_n \ten_{A_0}B_0 \to B_n
\]
are isomorphisms for all $n$.
\end{enumerate}
 \end{definition}

\begin{definition}
 Given $\oP$ as above, say that a morphism $f:A \to B$ in $dg_+\Alg_{\Q}$ is
\begin{enumerate}
 \item homotopy-$\oP$ if $\H_0f: \H_0A \to \H_0B$ is in $\oP$, and the maps
\[
 \H_n(A) \ten_{\H_0A}\H_0B \to \H_nB
\]
are isomorphisms for all $n$;

\item strictly $\oP$ if  $f_0: A_0 \to B_0$ is in $\oP$, and the maps
\[
 A_n \ten_{A_0}B_0 \to B_n
\]
are isomorphisms for all $n$.
\end{enumerate}
 \end{definition}

\begin{lemma}
 Every strictly $\oP$ morphism in $s\Ring$ or $dg_+\Alg_{\Q}$ is homotopy-$\oP$.
\end{lemma}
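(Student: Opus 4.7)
The plan is to exploit the fact that a strictly $\oP$ morphism $f \co A \to B$ satisfies $B_n \cong A_n \otimes_{A_0} B_0$ as a levelwise (chain level) tensor product, and then use flatness of $A_0 \to B_0$ to pass homology or homotopy through the tensor product.

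First I would handle the simplicial case. Given a strictly $\oP$ map $f\co A \to B$ in $s\Ring$, the hypothesis gives a levelwise identification $B \cong A \otimes_{A_0} B_0$ as simplicial $A$-modules. Because $A_0 \to B_0$ lies in $\oP$ and $\oP$ consists of flat morphisms, tensoring by $B_0$ over $A_0$ is exact, so it commutes with the homotopy groups of a simplicial $A_0$-module. Hence
\[
 \pi_n(B) \;\cong\; \pi_n(A) \otimes_{A_0} B_0.
\]
Applying this with $n=0$ gives $\pi_0 B \cong \pi_0 A \otimes_{A_0} B_0$, so $\pi_0 f$ is the pushout of the $\oP$-morphism $A_0 \to B_0$ along $A_0 \to \pi_0 A$; since $\oP$ is closed under pushouts, $\pi_0 f \in \oP$. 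For the base-change assertion, rewrite
\[
 \pi_n(A) \otimes_{A_0} B_0 \;\cong\; \pi_n(A) \otimes_{\pi_0 A} (\pi_0 A \otimes_{A_0} B_0)\;\cong\; \pi_n(A) \otimes_{\pi_0 A} \pi_0 B,
\]
which gives exactly the required isomorphism $\pi_n(A)\otimes_{\pi_0 A}\pi_0 B \to \pi_n B$.

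The dg case over $\Q$ is formally identical, with $\H_*$ in place of $\pi_*$: strict $\oP$ gives $B \cong A \otimes_{A_0} B_0$ as a chain complex of $A_0$-modules, flatness of $A_0 \to B_0$ implies $\H_n(B) \cong \H_n(A) \otimes_{A_0} B_0$, and the same two-line rearrangement yields both $\H_0 f \in \oP$ (via closure under pushouts) and the base-change isomorphism for $\H_n$.

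There is no real obstacle: the only point that needs care is justifying that $A_0 \to B_0$ flat implies tensoring preserves $\pi_*$ (resp.\ $\H_*$), which is just the usual fact that a flat base change is exact and hence commutes with taking homology of the normalised complex (resp.\ directly with homology of chain complexes). The closure of $\oP$ under pushouts, assumed in the setup, is what allows us to conclude $\pi_0 f, \H_0 f \in \oP$ from the pushout square, and closure under composition is not needed here.
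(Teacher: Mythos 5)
Your proof is correct and takes essentially the same route as the paper: both use the levelwise identification $B \cong A \ten_{A_0} B_0$, exactness of $-\ten_{A_0}B_0$ (flat base change) to get $\pi_n(B) \cong \pi_n(A)\ten_{A_0}B_0$, and the same rearrangement $\pi_n(A)\ten_{A_0}B_0 \cong \pi_n(A)\ten_{\pi_0A}(\pi_0A\ten_{A_0}B_0) \cong \pi_n(A)\ten_{\pi_0A}\pi_0B$, with the chain case obtained by replacing $\pi_*$ with $\H_*$. If anything you are slightly more complete, since you spell out why $\pi_0f \in \oP$ (pushout-closure applied to $\pi_0B \cong \pi_0A\ten_{A_0}B_0$), a point the paper's proof leaves implicit.
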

\begin{proof}
 We first prove this in the simplicial case. Take a strictly $\oP$ morphism $f:A \to B$; taking homotopy groups gives $\pi_n(B)\cong \pi_n(A)\ten_{A_0}B_0$, by flat base change.
We then have isomorphisms
\begin{eqnarray*}
\pi_n(B) &\cong&  \pi_n(A)\ten_{A_0}B_0\\
&\cong& \pi_n(A)\ten_{\pi_0A} (\pi_0A\ten_{A_0}B_0)\\
&\cong& \pi_n(A)\ten_{\pi_0A}\pi_0B,
 \end{eqnarray*}
as required. For the chain algebra case, replace $\pi_n$ with $\H_n$.
\end{proof}

\begin{definition}\label{strictcover}
 On $s\Ring^{\op}$ and $dg_+\Alg_{\Q}^{\op}$, we define topologies for every class $\oP$ as above by setting $\oP_c$ to be the intersection of $\oP$ with faithfully flat morphisms, and saying that $f:A \to B$ is a homotopy-$\oP$ covering (resp. a strict $\oP$ covering) if $f$ is homotopy-$\oP_c$ (resp. strictly $\oP_c$).

In this way, we define both homotopy and strict sites for the \'etale, Zariski, pro-\'etale and pro-Zariski topologies.
\end{definition}

\section{Moduli from DGLAs}\label{dglamoduli}

\subsection{DGLAs}

\begin{definition}       
A \emph{differential graded Lie algebra (DGLA)}   is a  graded $\Q$-vector space  $L=\bigoplus_{i \in \N_0} L^i$, equipped with operators $[-,-]:L \by L \ra L$ bilinear and $d:L \ra L$ linear,  satisfying:

\begin{enumerate}
\item $[L^i,L^j] \subset L^{i+j}.$

\item $[a,b]+(-1)^{\bar{a}\bar{b}}[b,a]=0$.

\item $(-1)^{\bar{c}\bar{a}}[a,[b,c]]+ (-1)^{\bar{a}\bar{b}}[b,[c,a]]+ (-1)^{\bar{b}\bar{c}}[c,[a,b]]=0$.

\item $d(L^i) \subset L^{i+1}$.

\item $d \circ d =0.$

\item $d[a,b] = [da,b] +(-1)^{\bar{a}}[a,db]$
\end{enumerate}
Here $\bar{a}$ denotes the degree of $a$, mod $ 2$, for $a$ homogeneous.
\end{definition}

\subsubsection{Maurer--Cartan}
\begin{definition}
Given a  DGLA $L^{\bt}$, define the Maurer--Cartan set by 
$$
\mc(L):= \{\omega \in  L^{1}\ \,|\, d\omega + \half[\omega,\omega]=0 \in   L^{2}\}
$$
\end{definition}
\begin{lemma}\label{obsdgla}
If a map $e:L \onto M$ of DGLAs has kernel $K$, with $[K,K]=0$, then for any $\omega \in \mc(M)$, the obstruction to lifting $\omega$ to $\mc(L)$ lies in
\[
 \H^2(K, d+[\omega,-]).
\]
\end{lemma}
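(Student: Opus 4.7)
The plan is to construct an explicit representative for the obstruction as a twisted cocycle on $K$ and then check it is well-defined modulo twisted coboundaries. First, since $e$ is surjective, I would choose any lift $\tilde{\omega} \in L^{1}$ of $\omega$ and form
\[
\psi := d\tilde{\omega} + \half[\tilde{\omega},\tilde{\omega}] \in L^{2}.
\]
Because $e(\tilde{\omega}) = \omega$ is Maurer--Cartan in $M$, the element $\psi$ lies in $K^{2}$. The candidate class for the obstruction is $[\psi] \in \H^{2}(K,d+[\omega,-])$.

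Next, I would verify that $D := d + [\tilde{\omega},-]$ defines a square-zero differential on $K$ depending only on $\omega$. Three observations suffice: $K$ is a Lie ideal of $L$, so $[\tilde{\omega},-]$ preserves $K$; any two lifts of $\omega$ differ by some $k \in K^{1}$, and the hypothesis $[K,K]=0$ forces $[\tilde{\omega}+k,x] = [\tilde{\omega},x]$ for all $x \in K$, so the operator descends to one depending only on $\omega$, justifying the notation $d+[\omega,-]$; and the standard identity $D^{2} = [\psi,-]$ vanishes on $K$, again by $[K,K]=0$.

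The computational heart of the argument is the identity $D\psi = 0$. Expanding, one has $d\psi = \half d[\tilde{\omega},\tilde{\omega}] = [d\tilde{\omega},\tilde{\omega}]$ from the graded Leibniz rule, while $[\tilde{\omega},\psi] = [\tilde{\omega},d\tilde{\omega}] + \half[\tilde{\omega},[\tilde{\omega},\tilde{\omega}]]$. The triple bracket vanishes by the graded Jacobi identity applied to three copies of an odd-degree element, and graded antisymmetry gives $[\tilde{\omega},d\tilde{\omega}] = -[d\tilde{\omega},\tilde{\omega}]$, so the two terms cancel.

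Finally, I would verify independence of the lift together with the obstruction-theoretic interpretation. Replacing $\tilde{\omega}$ by $\tilde{\omega}+k$ with $k \in K^{1}$ yields $\psi' - \psi = Dk + \half[k,k] = Dk$, using $[K,K]=0$ once more, so the class $[\psi]$ is well-defined in $\H^{2}(K,d+[\omega,-])$; and $\omega$ admits a Maurer--Cartan lift precisely when some $\tilde{\omega}-k$ satisfies the Maurer--Cartan equation, equivalently $\psi = Dk$, that is, $[\psi]=0$. The only real obstacle is careful sign bookkeeping in the graded Leibniz and Jacobi identities, which is routine but easy to mishandle.
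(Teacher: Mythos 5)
Your proposal is correct and follows essentially the same route as the paper: choose a lift $\tilde{\omega}$, form the curvature $d\tilde{\omega}+\half[\tilde{\omega},\tilde{\omega}]\in K^2$, show it is a cocycle for $d+[\omega,-]$ using $[a,[a,a]]=0$, and show that changing the lift by $k\in K^1$ shifts it by the twisted coboundary $dk+[\tilde{\omega},k]$ since $[K,K]=0$. Your additional checks (that $d+[\omega,-]$ is a well-defined square-zero operator on $K$ independent of the lift, and that vanishing of the class is equivalent to the existence of a Maurer--Cartan lift) are left implicit in the paper and are verified correctly.
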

\begin{proof}
This is well-known. 
Given $\omega \in \mc(M)$, choose a lift $\tilde{\omega} \in L^1$, and look at $u(\tilde{\omega}):= d\tilde{\omega}+\half[\tilde{\omega},\tilde{\omega}]$. Since $[a,[a,a]]=0$ for any $a \in L^1$, we get
\[
 du+[\tilde{\omega},u(\tilde{\omega})]= [d\tilde{\omega},\tilde{\omega}]+[\tilde{\omega},d\tilde{\omega}]=0,
\]
so $u \in \z^2( K, d+[\omega,-])$. Another choice for $\tilde{\omega}$ is of the form $\tilde{\omega}+a$, for $a \in K^1$, and then 
\[
 u((\tilde{\omega}+a)=  u((\tilde{\omega})+ da + [\tilde{\omega},a],
\]
so the obstruction  is
\[
o_e(\omega):= [u(\tilde{\omega})] \in \z^2( K, d+[\omega,-])/(d+[\omega,-])K^1=  \H^2(K, d+[\omega,-]).
\]
\end{proof}

\subsubsection{The gauge action}

\begin{definition}
 Given a DGLA $L$, we  say that a group $G_L$ is a gauge group for $L$ if it is equipped with the following extra data:
  \begin{enumerate}
        \item group homomorphisms $\ad: G_L \to \GL(L^n)$ for all $n$, and 
\item a map $D: G_L \to L^1$,
\end{enumerate}
 satisfying the following conditions for $g,h \in G_L$, $v,w \in L$:
\begin{enumerate}
 \item  $\ad_g([v,w])= [\ad_gv, \ad_gw]$,
\item $D(gh)= Dg+ \ad_g(Dh)$,
\item $d(Dg)= \half[Dg,Dg]$,
\item $d(\ad_g(v))= [Dg, \ad_g(v)]+ \ad_g(dv)$.
\end{enumerate}
\end{definition}

\begin{examples}\label{nilpdglagauge}
 If the DGLA $L$ is nilpotent, then a canonical choice for $G_L$ is the group $\exp(L^0)$, with $D(g)= (dg)\cdot g^{-1}$. 

When $L^0$ is finite-dimensional, $G_L$ will typically be an algebraic group integrating $L^0$, again with  $D(g)= (dg)\cdot g^{-1}$.       
\end{examples}

\begin{definition}\label{gauge}
Given a gauge group $G_L$ for a DGLA $L$, define the \emph{gauge action} of $G_L$ on $\mc(L)$ by
\[
 g\star\omega := \ad_g(\omega) -Dg       
\]
for $g \in G_L$ and $\omega \in \mc(L)$, noting that the conditions on $\ad_g$ and $D$ ensure that this is well-defined and a group homomorphism. 
\end{definition}

\begin{definition}\label{deldef}
 Given a DGLA $L$ with gauge group $G_L$, define the Deligne groupoid by  
$
 \Del(L):= [\mc(L)/G_L]
$
In other words, $\Del(L)$ has  objects $\mc(L)$, and morphisms from $\omega$ to $\omega'$ consist of $\{g \in G_L\,:\, g\star\omega=\omega'\}$. 

Define  $\ddel(L)\in \bS$ to be the nerve $B\Del(L) $ of $\Del(L)$.
\end{definition}

\subsection{Moduli of pointed finite schemes}

For a fixed $r \in \N$, we now construct a DGLA governing moduli of pointed finite schemes of rank $r+1$. For any commutative  $\Q$-algebra $A$, our moduli groupoid consists of  non-unital commutative $A$-algebras $B$, with the $A$-module underlying $B$ being locally free of rank $r$.
Our approach is analogous to the treatment  of  finite subschemes in \cite{Hilb} \S 3.

\begin{definition}
Given a graded vector space $V$ over $\Q$, let $CL(V)$ be the free (ind-conilpotent) graded Lie coalgebra $\bigoplus_{n \ge 1}CL_n(V)$   cogenerated by $V$. Note that $CL_n(V) $ is a quotient of $V^{\ten n}$ by graded shuffle permutations.
 \end{definition}

\begin{definition}
 Given a graded-commutative chain algebra $A$, define $\beta(A)$ to be the dg Lie coalgebra $  CL(A[-1])$, with coderivation $d_C$ given on cogenerators   by
$$
d_C( a_1\ten a_2\ldots \ten a_n) = \left\{\begin{matrix} da_1 & n=1 \\ a_1a_2 & n=2 \\ 0 & n>2. \end{matrix} \right.
$$
\end{definition}

\begin{definition}\label{findgla}
 Define a DGLA $L$ by
\[
 L^n:= \Hom_{\Q}(CL_{n+1}(\Q^r[-1]), \Q^r[-1]);       
\]
this can be identified with the space of degree $-n$ Lie coalgebra derivations of $\beta(\Q^r)$, and this latter description allows us to define differential and bracket as
\[
 d_L(f)= d_{\beta}\circ f \pm f\circ d_{\beta} \quad [f,g]= f\circ g \mp g \circ f.       
\]
 
Define a gauge group for $L$ by setting $G_L= \GL(\Q^r)= \GL_r(\Q)$. This has a canonical action on $\beta(\Q^r)$, so we set  $\ad: G_L \to \GL(L^n)$ to be the adjoint action on derivations.  Finally, $D:G_L \to L^1$ is given by $D(g)= d_{\beta} - \beta(g) \circ d_{\beta} \circ  \beta(g)^{-1}$.
\end{definition}

\begin{definition}\label{betastar}
 Given a differential graded (chain) Lie coalgebra $C$, define the graded-commutative chain algebra $\beta^*(C)$ to be the free graded-commutative algebra on generators $C[1]$, with derivation given on generators by
\[
 d_{\beta^*(C)}= d_C+ \Delta: C[1] \to C \oplus S^2(C[1])[-1],
\]
where $\Delta: C[1] \to S^2(C[1])[-1]= CL_2(C)[1]$ is the cobracket.
\end{definition}

Note that $\beta^*$ is left adjoint to the functor $\beta$ from graded-commutative chain algebras to ind-conilpotent chain  Lie coalgebras.

 \begin{lemma}\label{localfin}
If we set $G_{L\ten A}:= \GL_r(A)$, then for any commutative $\Q$-algebra 
$A$,
$\Del(L\ten A)$ is canonically isomorphic to the groupoid of non-unital -commutative $A$-algebra structures on the $A$-module $A^r$.
 \end{lemma}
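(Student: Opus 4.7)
The plan is to unpack each ingredient of the Deligne groupoid directly. Since $\Q^r$ carries no algebra structure, the coalgebra differential on $\beta(\Q^r) = CL(\Q^r[-1])$ vanishes, hence $d_L = 0$, and $D: G_L \to L^1$ is identically zero as well.

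I first identify $L^1 \ten A$ with the set of commutative $A$-bilinear multiplications $\mu: A^r \ten A^r \to A^r$. Since $\Q^r[-1]$ sits in chain degree $1$, the Koszul sign rule identifies the graded-antisymmetric square $CL_2(\Q^r[-1])$ with the \emph{ungraded} symmetric square $S^2(\Q^r)$, placed in chain degree $2$. Hence
\[
L^1 \ten A \;\cong\; \Hom_A(S^2(A^r),\, A^r),
\]
which is precisely the set of commutative bilinear multiplications on $A^r$.

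Next, I translate the Maurer--Cartan equation $d\omega + \tfrac12 [\omega,\omega] = 0$. Since $d_L = 0$ and $[\omega,\omega] = 2\omega\circ\omega$ for $\omega$ of cohomological degree $1$, this reduces to $\omega\circ\omega = 0$. The degree $-1$ coderivation $\omega$ of $\beta(A^r)$ is determined by its projection to cogenerators, which is nontrivial only on $CL_2$; similarly $\omega\circ\omega$ is a degree $-2$ coderivation, whose projection to cogenerators is supported only on $CL_3$. Unwinding the standard cofree-coderivation extension formula, the vanishing of $\omega\circ\omega|_{CL_3}$ becomes exactly the associativity of the multiplication $\mu$ encoded by $\omega|_{CL_2}$. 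This is the Koszul-duality correspondence between commutative associative non-unital algebra structures on $A^r$ and square-zero coderivations of the cofree conilpotent graded Lie coalgebra $\beta(A^r)$.

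Finally, the gauge action reduces to $g \star \omega = \ad_g(\omega) = \beta(g) \circ \omega \circ \beta(g)^{-1}$ since $Dg=0$. Under our identification this sends $\mu$ to $g \circ \mu \circ (g^{-1} \ten g^{-1})$, i.e.\ to the transport of $\mu$ under the $A$-linear automorphism $g$ of $A^r$. Morphisms in $\Del(L \ten A)$ between two multiplications thus correspond to the $A$-algebra isomorphisms induced by elements of $\GL_r(A)$, yielding the claimed equivalence of groupoids. The most delicate step is the associativity computation: one must match $\omega\circ\omega|_{CL_3}$ with a sign-normalised form of the associator of $\mu$ on $S^3(A^r)$, which requires careful bookkeeping of graded shuffle relations in $CL_3(A^r[-1])$ and of Koszul signs, but this is a standard Harrison-complex calculation and no unexpected phenomena arise.
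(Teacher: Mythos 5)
Your proof is correct and takes essentially the same route as the paper's (which simply records the result as ``standard''): Maurer--Cartan elements of $L\ten A$ are identified with square-zero degree $-1$ coderivations of $\beta(\Q^r)\ten_{\Q}A$, hence via Koszul duality with commutative associative non-unital multiplications $S^2(A^r)\to A^r$, and the gauge action of $\GL_r(A)$ with conjugation by $\beta(g)$, i.e.\ transport of structure. Your explicit remarks that $d_{\beta(\Q^r)}=0$ and $D\equiv 0$, and the $CL_2$/$CL_3$ cogenerator bookkeeping, merely fill in details the paper leaves implicit.
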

 \begin{proof}
This is standard. Square-zero  $A$-linear degree $-1$ derivations on $\beta(\Q^r)\ten_{\Q}A$ are all of the form $d_{\beta(\Q^r)} +\omega$, for $\omega \in \mc(L\ten A)$. Given $g \in \GL(A)$, the derivation $ d_{\beta(\Q^r)} +g\star\omega$ is then  $\beta(g) \circ (d_{\beta(\Q^r)} +\omega) \circ \beta(g)^{-1}$. 

An element $\omega \in \mc(L\ten A)$ is just  an associative multiplication $S^2(A^r) \to A^r$, so corresponds to a non-unital commutative $A$-algebra structure.
\end{proof}
      
\begin{definition}
 Given $A \in dg_+\cN^{\flat}_{\Q}$, define $L\ten A$ to be the DGLA
\[
(L\ten A)^n:= \bigoplus_i L^{n+i}\ten A_i,                                                                      
\]
with differential $d_L \pm d_A$ and bracket given by $[v\ten a, w\ten b] = \pm [v,w]\ten (ab)$, where signs follow the usual graded conventions.
\end{definition}

\begin{definition}
 For the DGLA $L$ of Definition \ref{findgla}, 
define  the  groupoid-valued functor $\cG:dg_+\cN^{\flat}_{\Q}\to \gpd$ to be the stackification of the groupoid presheaf
\[
 A \mapsto  [\mc(L\ten A)/\GL_r(A_0)] 
\]
 in the strict Zariski topology of Definition \ref{strictcover}. 

Explicitly, objects of $\cG(A)$ are pairs $(\omega, g) \in \mc(L\ten A\ten_{A_0}B)\by \GL_r(B\ten_{A_0}B)$, for $A_0 \to B$ a faithfully flat local isomorphism (so $\Spec B \to \Spec A_0$ is an open cover), satisfying the following conditions:
\begin{enumerate}
 \item $g\star (\pr_1^*\omega) = \pr_0^*\omega \in \mc(L\ten  A\ten_{A_0}B\ten_{A_0}B)$,
\item $\pr_{02}^*g = (\pr_{01}^*g)\cdot  (\pr_{12}^*g) \in \GL_r( B\ten_{A_0}B\ten_{A_0}B)$.
\end{enumerate}

An isomorphism from $(B,\omega, g)$ to $(C,\nu, h)$ is a local isomorphism $B\ten_{A_0}C \to D$ with $A_0 \to D$  faithfully flat, together with an element $\alpha \in \GL_r(D)$ such that $\alpha\star \omega = \nu \in \mc(L\ten  A\ten_{A_0}D)$, with $(\pr_0^*\alpha)\cdot g = h\cdot (\pr_1^*\alpha) \in \GL_r(D\ten_{A_0}D)$.
\end{definition}

\begin{definition}\label{barwdef2}
As in \cite{pathgpd}, given a simplicial object $\C$ in the category of categories,  we define the simplicial set $\bar{W}\C$ by first forming the nerve $B\C$ (a bisimplicial set), then applying the functor $\bar{W}$ of Definition \ref{barwdef}, giving 
\[  
  \bar{W}\C:= \bar{W}B\C.
 \]     
Explicitly,
\begin{eqnarray*}
(\bar{W}\Gamma)_n = \{(\uline{x},\uline{g})\,:\, \uline{x}&\in& \Ob\Gamma_n \by \Ob\Gamma_{n-1}\by \ldots\by \Ob\Gamma_0,\\
\uline{g} &\in& \Gamma_{n-1}(\pd_0x_n, x_{n-1}) \by\Gamma_{n-2}(\pd_0x_{n-1}, x_{n-2})\by \ldots \by \Gamma_0(\pd_0x_1, x_0) \},
\end{eqnarray*}
with operations giving $\pd_i(x_n,\ldots ,x_0; g_{n-1},\ldots ,g_0)$ as
\begin{eqnarray*}
\left\{ \begin{matrix} 
\left(\begin{matrix}x_{n-1},\ldots, x_0  ;\\
        g_{n-2},\ldots, g_0
      \end{matrix}\right) & i=0,\\
\left(\begin{matrix} \pd_ix_n,\pd_{i-1}x_{n-1},\ldots, \pd_1x_{n-i+1}, x_{n-i-1},\ldots, x_0  ;\\
\pd_{i-1}g_{n-1},\ldots, \pd_1g_{n-i+1}, (\pd_0g_{n-i})g_{n-i-1}, g_{n-i-2}, \ldots, g_0 \end{matrix}\right) & 0<i<n,\\ 
\left(\begin{matrix}\pd_nx_n, \ldots, \pd_1x_1;\\
\pd_{n-1}g_{n-1}, \ldots, \pd_1g_1\end{matrix}\right) & i=n, 
\end{matrix} \right.\\
\end{eqnarray*}
and $\sigma_i(x_n,\ldots ,x_0; g_{n-1}, \ldots , g_0)$ as
\[
\left(\begin{matrix}\sigma_ix_n,\sigma_{i-1}x_{n-1},\ldots, \sigma_0x_{n-i}, x_{n-i}, \ldots, x_0;\\
      
  \sigma_{i-1}g_{n-1},\ldots, \sigma_0g_{n-i}, \id_{x_{n-i}} ,  g_{n-i-1}, \ldots, g_0  
      \end{matrix}\right).
\]
\end{definition}

\begin{proposition}\label{repfin}
 The functor $\bar{W}\uline{\cG}:dg_+\cN^{\flat}_{\Q}\to \bS$ is representable by an almost finitely presented derived geometric $1$-stack.
\end{proposition}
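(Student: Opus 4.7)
The plan is to verify all hypotheses of the pre-representability Theorem \ref{mylurieprerep} for $F=\cG$ (identified with its nerve), thereby producing the geometric derived $1$-stack that $\bar{W}\uline{\cG}$ represents. Since $\cG$ is groupoid-valued, its nerve is automatically $1$-truncated on discrete rings, and Lemma \ref{localfin}, applied locally on $\Spec A_0$, identifies $\pi^0\cG$ with the classical Artin stack of non-unital $\Q$-algebra structures on locally free modules of rank $r$. This stack is of finite type, $1$-truncated, locally of finite presentation, and an \'etale hypersheaf, which handles the $n$-truncation, filtered-colimit, and hypersheaf conditions at the $\pi^0$ level in one stroke.

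Homogeneity of the prestack $A \mapsto [\mc(L\ten A)/\GL_r(A_0)]$ is immediate from linearity: for a square-zero extension $A\onto B$ and any $C\to B$, the identification $L\ten(A\by_BC)=(L\ten A)\by_{L\ten B}(L\ten C)$ of graded Lie algebras restricts to the quadratic MC locus, while $\GL_r$ is homogeneous for square-zero extensions. Homogeneity passes to the stackification $\cG$ via Proposition \ref{fethgs}, since the weak universal pro-Zariski covers of Lemma \ref{univZarcover} can be chosen compatibly over any square-zero fibre product. Formal quasi-presmoothness and the pre-homotopic property then follow from the DGLA obstruction theory of Lemma \ref{obsdgla}: given a square-zero extension with kernel $I$, MC lifting is obstructed by $\H^2(L\ten I, d+[\omega,-])$ and controlled at the gauge level by $\H^{\le 1}$ of the same twisted complex, all of which vanish when $I$ is acyclic. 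The same computation identifies the tangent complex at $\omega$ with a shift of the twisted DGLA $L\ten M$; since $L$ is finite-dimensional in each degree, the groups $\DD^i_\omega(\cG, M)$ are finitely generated for finitely generated $M$, commute with filtered colimits and flat base change in both $A$ and $M$, and vanish for $i$ sufficiently large.

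It remains to verify effectivity: for a complete local Noetherian $\H_0R$-algebra $A$ with maximal ideal $\m$, the map $\pi^0\cG(A) \to \Lim_r \cG(A/\m^r)$ should be a weak equivalence. This reduces via Proposition \ref{robs} and a Mittag--Leffler argument, using the finiteness of the tangent groups established above, to the classical Grothendieck existence theorem for finite flat $A$-algebras, noting that Zariski descent data trivialise over local rings so the stackification plays no essential role at this stage. The effectivity step, with its need to coherently lift compatible pro-Zariski descent data along the $\m$-adic tower, is the main obstacle; all the other conditions reduce to elementary linear algebra for the finite-dimensional DGLA $L$. With every hypothesis verified, Theorem \ref{mylurieprerep} concludes that $\bar{W}\uline{\cG}$ is represented by an almost finitely presented geometric derived $1$-stack.
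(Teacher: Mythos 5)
Your overall strategy coincides with the paper's: both verify the hypotheses of Theorem \ref{mylurieprerep} for the nerve of $\cG$, using Lemma \ref{localfin} to identify $\pi^0\cG$ with the classical stack of non-unital rank-$r$ algebras (which disposes of the truncation, hypersheaf, filtered-colimit and effectivity conditions at once), Lemma \ref{obsdgla} for pre-homotopicity via the vanishing of $\H^2(L\ten I,d+[\omega,-])$ when $I$ is acyclic, smoothness of $\GL_r$ for formal quasi-presmoothness, and the identification of the groups $\DD^i$ with twisted DGLA cohomology, which the paper expresses as $\Ext^{i+1}_{A\oplus B}(\bL^{A\oplus B/A},M\ten_AB)$ via \cite{Hilb} \S 2.

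There are, however, two local flaws. First, your justification of homogeneity of the stackification uses the wrong mechanism: this $\cG$ is stackified in the strict \emph{Zariski} topology, not pro-Zariski, so Lemma \ref{univZarcover} is out of place here (it enters only for the polarised case of \S\ref{polsn}); moreover its weakly initial cover is explicitly non-unique, so your phrase ``chosen compatibly over any square-zero fibre product'' is unjustified, and Proposition \ref{fethgs} would require you to exhibit a formally \'etale (and, in the relevant direction, surjective) morphism, which you never produce. The correct and much simpler point, implicit in the paper's remark that $\mc(L\ten -)$ and $\GL_r$ preserve finite limits, is that a square-zero extension $A \onto B$ induces an isomorphism of Zariski sites of $A_0$ and $B_0$, and $(A\by_B C)_0 = A_0\by_{B_0}C_0$, so in the explicit description of $\cG(A)$ by pairs $(\omega,g)$ over covers of $A_0$ the covers match up across the fibre product and homogeneity holds termwise. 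Second, your claim that the $\DD^i_{\omega}$ ``vanish for $i$ sufficiently large'' is false in general (finite flat algebras are rarely lci, so these $\Ext$ groups need not vanish), but harmless, since no such condition appears in Theorem \ref{mylurieprerep}. Relatedly, effectivity is not ``the main obstacle'' for this proposition: over a complete local Noetherian ring, locally free modules are free and $L$ is degreewise finite-dimensional, so $\mc(L\ten A)\cong \Lim_r \mc(L\ten A/\m^r)$ and $\GL_r(A)\cong\Lim_r\GL_r(A/\m^r)$ directly; no Grothendieck existence theorem is needed here (that machinery enters only for the projective case, Proposition \ref{reppol}).
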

\begin{proof}
We verify the conditions of Theorem \ref{mylurieprerep} for $B\cG:dg_+\cN^{\flat}_{\Q}\to \bS $. Homogeneity follows immediately, because both $\mc(L\ten -)$ and $\GL_r$ preserve finite limits. Lemma \ref{obsdgla} implies that $\mc(L\ten -)$ is pre-homotopic, since for any tiny acyclic extension $A \to B$ with kernel $I$, it gives the obstruction space as 
\[
\H^2(L\ten I, d+[\omega,-])= \bigoplus_n \H^{2+n}( L\ten \H_0B, d+[\omega,-])\ten_{\H_0B}\H_n(I)=0.
\]
It follows immediately that $B\cG$ is pre-homotopic, and formal quasi-presmoothness is a consequence of the smoothness of $\GL_r$.
 
Now, for $A \in \Alg_{\Q}$, Lemma \ref{localfin} implies that $\cG(A)$ is equivalent to the groupoid of rank $r$ commutative algebras over $A$. This implies that $\pi^0\cG$ is a stack, so $\pi^0B\cG$ is a hypersheaf, and it also guarantees that the other conditions relating to $\cG$ hold, so we need only verify the cohomological conditions.

 For an $A$-algebra $B$  corresponding to an object $[B]$ of $\cG(A)$, the results of \cite{Hilb} \S 2 imply that
\[
 \DD^i_{[B]}(B\cG,M) \cong \Ext^{i+1}_{A \oplus B}(\bL^{A \oplus B/A}_{\bt}, M\ten_AB),
\]
 which has all the properties we require. Here, $\bL^{A \oplus B/A}$ denotes the cotangent complex (in the sense of \cite{Ill1}) of the unital algebra $A \oplus B$ over $A$. This corresponds to the cotangent complex  $\bL^{B/A}$ in the category of non-unital commutative rings, defined using the formalism of \cite{Q}.
\end{proof}

\begin{remark}
 Alternatively, we can describe the associated derived geometric $1$-stack explicitly. The functor  $A \mapsto \mc(L\ten A)$ is an affine dg scheme, and  $\bar{W}\uline{\cG}$ is just the hypersheafification of the quotient $B[\mc(L)/\GL_r]$ in the homotopy-Zariski (and indeed homotopy-\'etale) topologies. In the terminology of \cite{stacks2}, the simplicial affine dg scheme $B[\mc(L)/\GL_r]$ is a derived Artin $1$-hypergroupoid representing  $\bar{W}\uline{\cG}$.
\end{remark}

\begin{proposition}\label{finconsistent}
For $A \in dg_+\cN^{\flat}_{\Q}$, the space $\bar{W}\uline{\cG}(A)$ is functorially weakly equivalent to the nerve $\bar{W}\fG(A)$ of the $\infty$-groupoid  $\fG(A)$ of non-unital graded-commutative chain  $A$-algebras $B$ in non-negative degrees for which $B\ten_A^{\oL}\H_0A$ is weakly equivalent to a locally free module rank $r$ over $\H_0A$.
\end{proposition}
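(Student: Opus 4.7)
The plan is to apply the detection criterion of Corollary \ref{predetectweak} (in its extended form of Remark \ref{predetectweakh}) to a natural comparison transformation $\alpha\co B\cG \to \bar{W}\fG$. To construct $\alpha$, I would take a representative $(B,\omega,g)$ for an object of $\cG(A)$, where $A_0 \to B$ is a faithfully flat open cover. The argument of Lemma \ref{localfin} extends verbatim to dg coefficients, so $\omega$ encodes a non-unital chain $(A\ten_{A_0}B)$-algebra structure on the free rank $r$ module $(A\ten_{A_0}B)^r$, while $g$ provides descent data; since $\fG$ is a hypersheaf for the strict Zariski topology, these assemble into a well-defined $\tilde{B}\in \fG(A)$, functorially in $A$ and in the simplicial structure.

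For the $\pi^0$ condition, I take a discrete $\Q$-algebra $A$. By Lemma \ref{localfin} together with stackification, $\pi^0\cG(A)$ is the groupoid of non-unital $A$-algebras whose underlying $A$-module is Zariski-locally free of rank $r$. Conversely, for any vertex $B'$ of $\fG(A)$ the condition $B'\ten_A^{\oL}A \simeq B'$ locally free of rank $r$ forces $B'\simeq \H_0B'$ with $\H_0 B'$ locally free of rank $r$, so $\pi^0\fG(A)$ is the same groupoid and $\alpha$ realises the equivalence.

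The proof of Proposition \ref{repfin} supplies
\[
\DD^i_{[B]}(B\cG,M) \;\cong\; \Ext^{i+1}_{A\oplus B}(\bL^{(A\oplus B)/A},\, M\ten_A B),
\]
where $A\oplus B$ denotes the unitisation. On the $\fG$ side, the standard deformation theory of non-unital commutative chain $A$-algebras (equivalently, of augmented commutative $A$-algebras $A\oplus B$) yields the same $\Ext$ groups, and the two identifications agree under $\alpha$ by construction, since both arise from the cotangent complex of $A\oplus B$. Combining these, Corollary \ref{predetectweak} applied via Remark \ref{predetectweakh} (noting that $\bar{W}\fG$ presents a derived geometric $1$-stack by the representability results of \cite{dmsch}) yields the required weak equivalence $\bar{W}\uline{\cG}(A)\simeq \bar{W}\fG(A)$.

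The hard part will be the explicit matching of tangent cohomology on the $\fG$ side: although both sides are expected to be governed by $\bL^{(A\oplus B)/A}$, on the $\fG$ side this requires unpacking the construction of $\fG$ from \cite{dmsch} and verifying that the deformation-theoretic identification there is compatible with the DGLA-based identification under $\alpha$, rather than being merely abstractly isomorphic.
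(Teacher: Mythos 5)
Your overall strategy---construct a comparison functor $\cG(A)\to\fG(A)$, check an equivalence on $\pi^0$ and isomorphisms on $\DD^i$ via cotangent-complex identifications, then conclude with Remark \ref{predetectweakh} using representability of $\bar{W}\fG$ from \cite{dmsch}---is exactly the paper's. But the construction of your comparison map $\alpha$ has a genuine gap. You assert that Lemma \ref{localfin} ``extends verbatim to dg coefficients'', so that $\omega$ equips the free rank $r$ module over $A\ten_{A_0}B$ with a strict non-unital chain algebra structure. This is false when $A$ is genuinely dg: since $(L\ten A)^1=\bigoplus_i L^{1+i}\ten A_i$, an element $\omega\in\mc(L\ten A)$ has, beyond the binary multiplication in $L^1\ten A_0$, higher components in $L^{n+1}\ten A_n$ for all $n>0$, i.e.\ higher operations $CL_{n+2}(\Q^r[-1])\to \Q^r[-1]\ten A_n$; the Maurer--Cartan equation assembles these into a closed degree $-1$ coderivation $\delta$ on $CL_{A_0}(M[-1])\ten_{A_0}A$, which is a strong homotopy (not strict) commutative structure. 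There is no natural strict multiplication on $(A\ten_{A_0}B)^r$ itself, so the objects you propose to glue do not exist. The paper's proof supplies precisely the missing ingredient: the cobar-type functor $\beta^*$ of Definition \ref{betastar}, applied to $(CL_{A_0}(M[-1])\ten_{A_0}A,\delta)$, produces an honest non-unital chain $A$-algebra---quasi-free on the coalgebra generators, hence with a much larger underlying module---which only becomes equivalent to a rank $r$ locally free module after applying $\ten_A^{\oL}\H_0A$. Without $\beta^*$ (or some equivalent rectification), your $\alpha$ is undefined, and insisting that the higher components of $\omega$ vanish would collapse the derived structure you are trying to compare.

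Two lesser remarks. First, the gluing you delegate to hypersheafness of $\fG$ is handled more directly in the paper: the descent datum $g$ is used to descend the module itself, yielding a locally free $A_0$-module $M$ and the coalgebra $CL_{A_0}(M[-1])$, before $\beta^*$ is applied; no sheafification on the target side is needed. Second, your treatment of the tangent comparison is consonant with the paper's: it computes $\DD^i_{(\omega,\id)}(B\cG,N)\cong\H^{i+1}(L\ten A,d+[\omega,-])\cong\Ext^{i+1}_{A\oplus B}(\bL^{A\oplus B/A},N\ten_AB)$ via \cite{Hilb} \S 2, extends to general $(\omega,g)$ by faithfully flat descent, and matches this against the non-unital adaptation of \cite{dmsch} on the $\fG$ side. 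The compatibility-of-identifications issue you flag as ``the hard part'' is indeed passed over briskly in the paper as well, so that is a fair caveat rather than a defect; the substantive missing idea is $\beta^*$.
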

\begin{proof}
The data  $(\omega,g)\in \cG(A)$ amount to giving a locally free $A_0$-module $M$ of rank $r$ (defined by the descent datum $g$), and a closed degree $-1$ differential $\delta$ on the free chain Lie $A$-coalgebra $CL_{A_0}(M[-1])\ten_{A_0}A$.  Note that in the notation of \cite{Hilb} 3.5, $RCA(\Q^r)$ is the dg scheme representing $\mc(L\ten -)$.

The functor $\beta^*$ from Definition \ref{betastar} maps from dg Lie $A$-coalgebras to non-unital graded-commutative chain $A$-algebras, giving us a  chain algebra
\[
 \beta^*(CL_{A_0}(M[-1])\ten_{A_0}A, \delta).
\]
over $A$.
Thus we have defined a functor $\beta^*: \cG(A) \to \fG(A)$, and Lemma \ref{localfin} implies that this is a weak equivalence when $A \in \Alg_{\Q}$, so $\pi^0\cG \simeq \pi^0\fG(A)$. 

Now, for $A \in \Alg_{\Q}$, $\omega \in \mc(L\ten A)$ and an $A$-module $N$, a standard calculation gives
\[
 \DD^i_{(\omega,\id)}(B\cG,N) \cong \H^{i+1}(L\ten A, d+[\omega, ]),
\]
which by \cite{Hilb} \S 2 is just $\Ext^{i+1}_{A \oplus B}(\bL^{A \oplus B/A}, N\ten_AB)$, where $B$ is the non-unital $A$-algebra corresponding to $\omega$. By faithfully flat descent, we deduce that if an $A$-algebra $B$ is associated to  $(\omega,g)\in \cG(A)$, then
\[
  \DD^i_{(\omega,\id)}(B\cG,N) \cong \Ext^{i+1}_{A \oplus B}(\bL^{A \oplus B/A}, N\ten_AB).
\]
Adapting  \cite{dmsch} Corollary \ref{dmsch-representaffine} and Example \ref{dmsch-fineg} to non-unital algebras, 
\[
 \DD^i_{[B]}(\bar{W}\fG,N) \cong \Ext^{i+1}_{A \oplus B}(\bL^{A \oplus B/A}, N\ten_AB),
\]
so $f$ induces isomorphisms on the cohomology groups $\DD^i$. 

As \cite{dmsch}  Example \ref{dmsch-fineg} adapts to non-unital algebras, the functor $\bar{W}\fG$ is also representable by a derived geometric $1$-stack, so the weak equivalence follows from Remark \ref{predetectweakh}.
\end{proof}

\subsection{Derived moduli of polarised projective schemes}\label{polsn}

Fix a numerical polynomial $h \in \Q[t]$, with $h(i)\ge 0$ for $i\gg 0$. We will now study the moduli of polarised projective schemes $(X,\O_X(1))$ over an affine base, with $\O_X(1)$ ample, for which $\Gamma(X, \O_X(n))$ is locally free of rank $h(n)$ for $n \gg 0$. As in \cite{Mum} Lecture 7 Corollary 3, such a polynomial $h$ exists for every flat projective scheme  over a connected Noetherian base.

Note that a $\bG_m$-representation $M$ in  $A$-modules is equivalent to an $A$-linear decomposition 
\[
 M= \bigoplus_{n \in \Z} M\{n\},
\]
with $\lambda \in \bG_m(A)$ acting on $M\{n\}$ as multiplication by $\lambda^n$. The functors $\beta^*$ and $\beta$ of the previous section both extend naturally to $\bG_m$-equivariant objects.

\begin{definition}
 Given  $p\gg 0$ and $q \ge p$,
 define a  DGLA $L_{[p,q]}$ over $\Q$ by
\[
 L^n_{[p,q]}:= \Hom_{\Q}^{\bG_m}(CL_{n+1}( \bigoplus_{q \ge r\ge p} \Q^{h(r)}\{r\}  [-1]),  \bigoplus_{q \ge r\ge p} \Q^{h(r)}\{r\} [-1]);       
\]
this can be identified with the space of $\bG_m$-equivariant degree $-n$ Lie coalgebra derivations of $ \beta(\bigoplus_{q \ge r\ge p} \Q^{h(r)}\{r\})$, and this latter description allows us to define differential and bracket as
\[
 d_{L_{[p,q]}}(f)= d_{\beta}\circ f \pm f\circ d_{\beta} \quad [f,g]= f\circ g \mp g \circ f.       
\]
\end{definition}

\begin{definition}\label{poldgla}
Given  $p\gg 0$, define the pro-DGLA $L_p$ over $\Q$  to be the inverse system $L_p = \{L_{[p,q]}\}_q$, so the underlying DGLA is $\Lim_q L_{[p,q]}$, given by
\[
 L^n_p= \Hom_{\Q}^{\bG_m}(CL_{n+1}( \bigoplus_{ r\ge p} \Q^{h(r)}\{r\}  [-1]),  \bigoplus_{r\ge p} \Q^{h(r)}\{r\} [-1]),      
\]
which can be identified with the space of $\bG_m$-equivariant degree $-n$ Lie coalgebra derivations of $ \beta(\bigoplus_{r\ge p} \Q^{h(r)}\{r\})$, the latter regarded as the colimit $\LLim_q \beta(\bigoplus_{q \ge r\ge p} \Q^{h(r)}\{r\})$. 

Given a $\Q$-vector space $V$, we then define $L_p\hat{\ten}V$ to be the completed tensor product 
\[
L_p\hat{\ten}V:= \Lim_q (L_{[p,q]}\ten V),        
\]
so
\[
 (L_p\hat{\ten}V)^n:= \Hom_{\Q}^{\bG_m}(CL_{n+1}( \bigoplus_{ r\ge p} \Q^{h(r)}\{r\}  [-1]),  \bigoplus_{r\ge p} V^{h(r)}\{r\} [-1]).      
\]
\end{definition}

\begin{definition}
 Given $A \in \Alg_{\Q}$, define a gauge group for $L_p\hat{\ten}A$ by setting $G_{L_p}(A):= \prod_{r \ge p}\GL_{h(r)}(A)$. This has a canonical action on $\beta(\bigoplus_{r\ge p} \Q^{h(r)}(r))\ten_{\Q}A$, so we set  $\ad: G_{L_p} \to \GL(L_p^n)$ to be the adjoint action on derivations.  Finally, $D:G_{L_p} \to L^1$ is given by $D(g)= d_{\beta} - \beta(g) \circ d_{\beta} \circ  \beta(g)^{-1}$.    
\end{definition}

\begin{lemma}\label{localpol}
 For any commutative $\Q$-algebra $A$, the groupoid $[\mc(L_p\hat{\ten} A)/G_{L_p}(A)]$ is naturally equivalent to the groupoid of $\bG_m$-equivariant non-unital commutative $A$-algebra structures on
\[
 \bigoplus_{r\ge p} A^{h(r)}\{r\}.
\]
\end{lemma}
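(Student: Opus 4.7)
The plan is to adapt the proof of Lemma \ref{localfin} by tracking $\bG_m$-weight throughout. Set $V_p := \bigoplus_{r\ge p}\Q^{h(r)}\{r\}$, regarded as a $\bG_m$-representation. By construction of $L_p$, elements $\omega \in \mc(L_p\hat{\ten} A)$ correspond to $\bG_m$-equivariant $A$-linear Lie-coalgebra coderivations of $\beta(V_p)\ten_{\Q}A$ of the degree appearing in Definition \ref{findgla}, with the property that $d_\beta + \omega$ squares to zero.

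Since $A$ is concentrated in chain degree $0$, the same argument as in Lemma \ref{localfin} applies: a coderivation of the cofree Lie coalgebra $\beta(V_p)\ten A$ is determined by its projection to the cogenerators $V_p[-1]\ten A$, and the only component that can contribute beyond $d_\beta$ itself is the one on $CL_2$, namely an $A$-linear map $\omega_2: S^2(V_p\ten A) \to V_p\ten A$. The square-zero condition $(d_\beta+\omega)^2=0$ unravels precisely to associativity of $\omega_2$, so $\omega$ is equivalent to the datum of a non-unital associative multiplication on $V_p\ten A = \bigoplus_{r\ge p}A^{h(r)}\{r\}$. The $\bG_m$-equivariance constraint decomposes $\omega_2$ into its weight components $\omega_2^{s,t}: A^{h(s)}\{s\}\ten A^{h(t)}\{t\}\to A^{h(s+t)}\{s+t\}$, which is exactly the data of a graded (equivalently, $\bG_m$-equivariant) multiplication.

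For morphisms, I would identify $G_{L_p}(A)=\prod_{r\ge p}\GL_{h(r)}(A)$ with the group of $\bG_m$-equivariant (i.e.\ grading-preserving) $A$-linear automorphisms of $V_p\ten A$; as in Lemma \ref{localfin}, the gauge action $g\star\omega = \beta(g)\circ(d_\beta+\omega)\circ\beta(g)^{-1} - d_\beta$ then corresponds on the algebra side to transport of the multiplication structure under the change of basis $g$, making the identification of groupoids functorial in $A$. The one point specific to the polarised case is the pro-limit in Definition \ref{poldgla}, and this is where the main (mild) obstacle lies: one must observe that $\bG_m$-equivariance forces each weight piece $\omega_2^{s,t}$ to involve only the finite-dimensional summands of $V_p$ at weights $s$, $t$, and $s+t$, so the completed tensor product $L_p\hat{\ten}A$ records exactly the direct-sum multiplication with no convergence issue, and the gauge action decomposes analogously into finite-rank pieces weight by weight.
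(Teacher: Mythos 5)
Your proposal is correct and takes essentially the same approach as the paper, whose entire proof is the remark that this is ``just a graded version of Lemma \ref{localfin}''; you have simply carried out that graded adaptation explicitly. Your weight-by-weight observation about the completed tensor product $L_p\hat{\ten}A$ correctly fills in the one detail the paper leaves implicit.
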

\begin{proof}
 This is just a  graded version of Lemma \ref{localfin}.
\end{proof}

\begin{definition}
 Given $A \in dg_+\cN^{\flat}_{\Q}$, define $L_p(A)$ to be the DGLA
\[
L_p(A)^n:= \bigoplus_i L^{n+i}_p\hat{\ten}A_i                                                                      
\]
with differential $d_L \pm d_A$ and bracket given by $[v\ten a, w\ten b] = \pm [v,w]\ten (ab)$, where signs follow the usual graded conventions.
\end{definition}

\begin{definition}
 For the DGLA $L_p$ of Definition \ref{poldgla}, 
define  the  groupoid-valued functor $\cG_p:dg_+\cN^{\flat}_{\Q}\to \gpd$ to be the stackification of the groupoid presheaf
\[
 A \mapsto  [\mc(L_p(A))/G_{L_p}(A_0)] 
\]
 in the strict pro-Zariski topology of Definition \ref{strictcover}. 

Making use of Lemma \ref{univZarcover}, we can describe this explicitly by setting 
$A'_0:=(A_0/\prod_{\m} A_0/\m)^{\loc}$, where $\m$ runs over all maximal ideals of $A_0$, and then setting $A':= A_0'\ten_{A_0}A$. Objects of $\cG_p(A)$ are then pairs $(\omega, g) \in \mc(L_p(A'))\by G_{L_p}(A'_0\ten_{A_0}A'_0)$, satisfying the following conditions:
\begin{enumerate}
 \item $g\star (\pr_1^*\omega) = (\pr_0^*\omega) \in \mc(L(A'\ten_{A}A'))$,
\item $\pr_{02}^*g = (\pr_{01}^*g)\cdot  (\pr_{12}^*g) \in G_{L_p}(A'_0\ten_{A_0}A'_0\ten_{A_0}A'_0)$.
\end{enumerate}

An isomorphism from $(\omega_1, g_1)$ to $(\omega_2, g_2)$ is  an element $\alpha \in G_{L_p}(A'_0)$ such that $\alpha\star \omega_1 = \omega_2 \in \mc(L_p(A'))$, with $(\pr_0^*\alpha)\cdot g_1 = g_2\cdot (\pr_1^*\alpha) \in G_{L_p}(A'\ten_AA')$
\end{definition}

\begin{lemma}\label{globalpol}
 For $A \in \Alg_{\Q}$, the groupoid  $\cG_p(A)$ is canonically equivalent to the groupoid of non-unital $\bG_m$-equivariant commutative $A$-algebras
\[
 B= \bigoplus_{r \ge p} B\{r\},
\]
with each $A$-module $B\{r\}$ locally free of rank $h(r)$.
\end{lemma}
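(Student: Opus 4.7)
The plan is to combine Lemma \ref{localpol}, which handles the case of the trivial module, with faithfully flat descent along the weakly universal pro-Zariski cover $A \to C$ supplied by Lemma \ref{univZarcover}, exploiting this single cover to trivialise all infinitely many graded pieces simultaneously. Since $A$ is discrete, we have $A_0 = A$ and $A' = A'_0 = C = (A/\prod_\m A/\m)^{\loc}$, so the explicit presentation of $\cG_p(A)$ reduces to pairs $(\omega,g)$ with $\omega \in \mc(L_p\hat{\ten}C)$ and $g \in G_{L_p}(C\ten_A C)$ satisfying the cocycle condition. Lemma \ref{localpol} identifies $\omega$ with a $\bG_m$-equivariant non-unital $C$-algebra structure on $\bigoplus_{r\ge p} C^{h(r)}\{r\}$, and $g$ then constitutes faithfully flat descent data for this structure along $A \to C$. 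Degree-by-degree faithfully flat descent produces a $\bG_m$-equivariant non-unital $A$-algebra $B = \bigoplus_{r \ge p} B\{r\}$ with $B\{r\}\ten_A C \cong C^{h(r)}$, and since being locally free of fixed rank is fpqc-local (so \emph{a fortiori} pro-Zariski-local by Lemma \ref{fpsheafpro}), each $B\{r\}$ is Zariski-locally free of rank $h(r)$ over $A$.

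For essential surjectivity of the resulting functor I would argue in reverse: given $B = \bigoplus_r B\{r\}$ with each $B\{r\}$ Zariski-locally free of rank $h(r)$, the central claim is that $B\ten_A C$ admits a $\bG_m$-equivariant trivialisation. For each fixed $r$ there is a Zariski cover $A \to D_r$ with $B\{r\}\ten_A D_r \cong D_r^{h(r)}$; the base change $C \to D_r\ten_A C$ is a pro-Zariski cover of $\Spec C$, which splits by the second clause of Lemma \ref{univZarcover}, and pushing the trivialisation through the resulting section yields $B\{r\}\ten_A C \cong C^{h(r)}$. Making one such choice for each $r$ assembles (componentwise, compatibly with the grading) into the desired $\bG_m$-equivariant trivialisation of $B\ten_A C$, which by Lemma \ref{localpol} corresponds to an element $\omega \in \mc(L_p\hat{\ten}C)$. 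Comparing the two induced trivialisations of $B\ten_A(C\ten_A C)$ produces the gauge $g \in G_{L_p}(C\ten_A C)$, and its cocycle identity on $C\ten_A C \ten_A C$ is automatic because it comes from an honest isomorphism of descent data.

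Full faithfulness reduces in exactly the same way to faithfully flat descent for morphisms of modules, applied in each graded degree: a morphism $(\omega_1,g_1) \to (\omega_2,g_2)$ in $\cG_p(A)$ is by construction an $\alpha \in G_{L_p}(C)$ intertwining the descent data, which is precisely what descent requires to specify a $\bG_m$-equivariant $A$-algebra isomorphism $B_1 \to B_2$. The one step that is not purely formal is the simultaneous trivialisation of all countably many $B\{r\}$ over a single cover $C$, and this is precisely what motivated the pro-Zariski formalism of Section \ref{sheafsn}: the second clause of Lemma \ref{univZarcover} ensures that the universal cover $C$ absorbs every Zariski cover of $\Spec A$ trivialising any individual graded piece, which is exactly what makes the pro-Zariski topology the correct setting for this moduli problem.
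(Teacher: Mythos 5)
Your proposal is correct and follows essentially the same route as the paper: pass to the weakly universal pro-Zariski cover $A'=(A/\prod_{\m}A/\m)^{\loc}$, use the splitting property of Lemma \ref{univZarcover} to make locally free modules free so that Lemma \ref{localpol} applies, and conclude by faithfully flat descent along $A \to A'$ with $g$ as the descent datum. The paper's proof is just a terser version of yours, leaving the essential-surjectivity and full-faithfulness checks you spell out as implicit standard descent.
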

\begin{proof}
 If we set $A':= (A/\prod_{\m} A/\m)^{\loc}$, where $\m$ runs over all maximal ideals of $A$, then Lemma \ref{univZarcover} shows that any Zariski cover $\Spec B \to \Spec A'$ must have a section. Hence locally free $A'$-modules are free. Lemma \ref{localpol} then implies that $[\mc(L_p(A'))/G_{L_p}(A')]$ is equivalent to the groupoid of non-unital $\bG_m$-equivariant commutative $A'$-algebras
$
 B'= \bigoplus_{r \ge p} B'\{r\},
$
with each $A'$-module $B'\{r\}$ locally free of rank $h(r)$.

Given an object $(\omega,g)$ of $\cG_p(A)$, it thus follows that $\omega$ corresponds to such an $A'$-algebra $B'$, while  
$g$ is a descent datum. This determines a unique $A$-algebra $B$ with $B'= B\ten_AA'$, and isomorphisms behave as required.
\end{proof}

\begin{definition}
For $A \in \Alg_{\Q}$, define $\cM_p(A)$ to be the full subgroupoid of $\cG_p(A)$ whose objects correspond under Lemma \ref{globalpol} to finitely generated commutative $A$-algebras.
\end{definition} 

\begin{lemma}\label{fgfet}
 The morphism $\cM_p \to \pi^0\cG_p$ of groupoid-valued functors on $\Alg_{\Q}$ is formally \'etale.
\end{lemma}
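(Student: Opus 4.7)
The plan is to exploit the fact that $\cM_p \hookrightarrow \pi^0\cG_p$ is a full subgroupoid inclusion for every $A \in \Alg_{\Q}$. Since a full inclusion of groupoids is automatically fully faithful, the formally \'etale condition at a square-zero extension $e \co A \onto B$ with kernel $I$ (so $I^2 = 0$) reduces to the object-level assertion that an element $X \in \pi^0\cG_p(A)$ lies in $\cM_p(A)$ if and only if $Y := X \ten_A B \in \pi^0\cG_p(B)$ lies in $\cM_p(B)$. Using Lemma \ref{globalpol}, I identify $X$ with a $\bG_m$-equivariant non-unital $A$-algebra $X = \bigoplus_{r \geq p} X\{r\}$ whose graded pieces are locally free of rank $h(r)$, so that $Y\{r\} = X\{r\} \ten_A B$.

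The main step will be a characterization of finite generation: such an $X$ is finitely generated as an $A$-algebra if and only if there exists an integer $N \geq 2p$ such that for every $r > N$ the binary multiplication map
\[
\mu_r^X \co \bigoplus_{\substack{r_1 + r_2 = r \\ r_1, r_2 \geq p}} X\{r_1\} \ten_A X\{r_2\} \longrightarrow X\{r\}
\]
is surjective. The forward implication uses $\bG_m$-equivariance to choose homogeneous generators of bounded degree $\leq N$: for $r > N$ any element of $X\{r\}$ is a sum of products of length $\geq 2$ in the generators, and splitting each such product as ``first factor times the remainder'' exhibits it in the image of $\mu_r^X$ (both resulting factors having degree $\geq p$ automatically). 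The reverse implication is an induction on $r$, using that $\bigoplus_{p \leq r \leq N} X\{r\}$ is finitely generated as an $A$-module (being a finite direct sum of locally free modules of finite rank), while surjectivity of $\mu_r^X$ for $r > N$ propagates algebra-generation upward degree by degree.

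Given this reformulation, the descent across $e$ is essentially pure Nakayama. Right exactness of tensor product gives $\coker(\mu_r^Y) = \coker(\mu_r^X) \ten_A B$. Since $\coker(\mu_r^X)$ is a quotient of the finitely generated $A$-module $X\{r\}$, it is itself finitely generated, so Nakayama's lemma applied to the nilpotent ideal $I$ yields $\coker(\mu_r^X) = 0$ if and only if $\coker(\mu_r^Y) = 0$. Thus $\mu_r^X$ is surjective if and only if $\mu_r^Y$ is, and a single bound $N$ witnesses finite generation simultaneously for $X$ and $Y$, yielding the required equivalence $X \in \cM_p(A) \iff Y \in \cM_p(B)$.

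The principal technical point will be establishing the binary-multiplication characterization of finite generation, for which $\bG_m$-equivariance is essential in order to pass to homogeneous generators; once it is in hand, the transfer across the nilpotent extension is a routine Nakayama argument.
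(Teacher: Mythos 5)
Your proof is correct, but it takes a genuinely different and substantially heavier route than the paper's. After the same reduction to objects via fullness of $\cM_p \subset \pi^0\cG_p$ (which the paper leaves implicit), the paper disposes of the key equivalence in one line: if $C\ten_AB = C/IC$ is generated over $B$ by finitely many elements, lift them to $C$ and let $C' \subseteq C$ be the $A$-subalgebra they generate; then $C = C' + IC$, and multiplying by $I$ gives $IC = IC' + I^2C = IC' \subseteq C'$ (as $C'$ is an $A$-submodule and $I^2 = 0$), whence $C = C'$. This generator-lifting argument uses only nilpotence of the kernel --- no grading, no local freeness, not even flatness. Your detour through the characterisation that $X$ is finitely generated if and only if $\mu_r^X$ is surjective for all $r > N$ is sound: homogeneous generators exist because a finite generating set of a graded algebra may be replaced by its homogeneous components, your splitting of monomials of length $\ge 2$ works since any nonzero homogeneous element automatically has degree $\ge p$, and the upward induction terminates because both factors have degree $\ge p \ge 1$, hence $< r$. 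The transfer across $e$ by right-exactness of $\ten_AB$ plus Nakayama is also fine --- in fact for the nilpotent ideal $I$ you do not even need finite generation of the cokernels, since $M = IM$ forces $M = I^2M = 0$ outright, and since surjectivity of $\mu_r$ transfers degree by degree in both directions, a single bound $N$ indeed works simultaneously. What your approach buys: it exhibits the finitely generated locus as an eventual degreewise surjectivity condition, making explicit how the grading interacts with the nilpotent extension. What it costs: it genuinely uses the grading and the finite-rank local freeness of the pieces $X\{r\}$ (your reverse implication needs the truncation $\bigoplus_{p\le r\le N}X\{r\}$ to be a finitely generated $A$-module), whereas the paper's argument applies verbatim to any algebra over any square-zero (indeed nilpotent) extension.
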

\begin{proof}
 For any square-zero extension $A \onto B$ of commutative $\Q$-algebras, we need to show that
\[
 \cM_p(A)\to \cM_p(B)\by_{\cG_p(B)}\cG_p(A)
\]
is an isomorphism. This follows because any flat $A$-algebra $C$ is finitely generated if and only if $C\ten_AB$ is finitely generated as a $B$-algebra, since any lift of a generating set for $ C\ten_AB$ must give a generating set for $C$.
\end{proof}

\begin{lemma}\label{fglfp}
 The functor $\cM_p:\Alg_{\Q}\to \gpd $ is locally of finite presentation, in the sense that for any filtered direct system $\{A_i\}_i$ of commutative $\Q$-algebras with $A=\LLim_iA_i $, the map
\[
\LLim_i \cM_p(A_i)\to \cM_p(A)       
\]
is an equivalence of groupoids.        
\end{lemma}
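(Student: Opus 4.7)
The strategy is to reduce the infinite-degree algebraic structure to finite data via the finite generation hypothesis, and then invoke standard Grothendieck-style results (EGA IV \S8) on the compatibility of finitely presented modules and algebras with filtered colimits.

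For essential surjectivity, given $B\in\cM_p(A)$, use Lemma \ref{globalpol} to identify $B$ with a finitely generated non-unital $\bG_m$-equivariant $A$-algebra $\bigoplus_{r\ge p}B\{r\}$ whose graded pieces are locally free of rank $h(r)$. By $\bG_m$-equivariance, finite generation means there exists $q\ge p$ such that the generators lie in $B_{[p,q]}:=\bigoplus_{p\le r\le q}B\{r\}$; in particular $B_{[p,q]}$ is a finitely presented $A$-module. The multiplication maps $B\{r\}\otimes_A B\{s\}\to B\{r+s\}$ for $p\le r,s$ and $r+s\le 2q$ are morphisms between finitely presented modules, so by the standard descent of finite-presentation data (EGA IV \S8), there exist an index $i$, finitely presented $A_i$-modules $M_r$ for $p\le r\le 2q$, and associative multiplication maps on them whose base-change to $A$ recovers the truncation $B_{\le 2q}$. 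Enlarging $i$ further, we may ensure that each $M_r$ is locally free of rank $h(r)$ (an open condition), giving a finitely generated $A_i$-algebra $B_i$ whose associated object lies in $\cG_p(A_i)$. Because the ranks of the higher pieces $B_i\{r\}$ for $r>2q$ are determined by the multiplication on the generating degrees via successive quotients, and because the locus where these ranks equal $h(r)$ is open (by semicontinuity of fibre dimension) and is satisfied after base change to $A$, we may enlarge $i$ once more so that $B_i\in \cM_p(A_i)$.

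For full faithfulness, an isomorphism $\phi:B_1\to B_2$ of objects in $\cM_p(A_i)$ pulled back to $A$ is determined by its restriction to generators, i.e.\ by its restriction to $B_{1,[p,q]}\to B_{2,[p,q]}$ for large $q$. This is a morphism of finitely presented $A$-modules, so it descends to some $A_j$ by EGA IV \S8, and the relations (compatibility with multiplication, being an isomorphism) are finitely many conditions on finitely presented modules, hence hold already over some larger $A_k$. A symmetric argument lifts isomorphisms.

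The main obstacle is controlling the locally-free rank condition in all degrees $r>2q$: this is not immediately a finite-presentation condition. The resolution is that once the generating multiplication data is correct over $A_i$, the higher-degree pieces $B_i\{r\}$ are forced to be the images of successive products of lower-degree pieces, and the open condition that these have the prescribed rank is stable under further base change; since it holds over $A=\LLim A_i$, it holds over some $A_j$. Combined with the (already finitely presented) $\bG_m$-equivariant pro-Zariski descent data arising from Lemma \ref{univZarcover}, which only involves finitely many degrees, this establishes the lemma.
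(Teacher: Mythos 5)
There is a genuine gap, and it sits exactly at the point you flag as ``the main obstacle.'' After you descend the truncated multiplication data on $\bigoplus_{p\le r\le 2q}M_r$ to some $A_i$, you have not actually constructed an object of $\cM_p(A_i)$: the graded pieces in degrees $r>2q$ are simply not determined by the truncated data, so the phrase ``the higher-degree pieces $B_i\{r\}$ are forced to be the images of successive products of lower-degree pieces'' has no ambient module to take images in. The only way to make it precise is to define $B_i$ as the free graded algebra on the degree-$\le q$ generators modulo the ideal generated by the descended relations in degrees $\le 2q$; but then $B_i\ten_{A_i}A\cong B$ only if the relation ideal $I=\ker(A[x_1,\ldots,x_n]\onto B)$ is generated in degrees $\le 2q$, and finite generation of $B$ as an algebra gives no a priori degree bound on generators of $I$. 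Even setting that aside, your openness argument does not close the remaining hole: while the locus where a finitely presented module is locally free of rank $h(r)$ is indeed open, you have one such condition for \emph{each} of the infinitely many degrees $r>2q$, and a filtered system only guarantees upper bounds for finite families of indices, so ``it holds over $A=\LLim_i A_i$, hence over some $A_j$'' is not valid for infinitely many conditions at once. (Semicontinuity of fibre dimension is also delicate here, as no Noetherian hypotheses are available.)

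What is missing is precisely a uniform boundedness statement tied to the Hilbert function, and this is how the paper proceeds: it chooses a surjection $f:A[x_1,\ldots,x_n]\onto B$ with $S=\Z[x_1,\ldots,x_n]$, and invokes \cite{HaimanSturmfels} Corollary 1.2, which represents the functor of graded ideals of $S$ with Hilbert function $h_p$ by a scheme $H_S^{h_p}$ of finite presentation over $\Z$ --- crucially with the correct functor of points over \emph{arbitrary} (non-Noetherian) rings, which is why Grothendieck's classical construction is explicitly avoided. Finite presentation of $H_S^{h_p}$ gives $H_S^{h_p}(A)=\LLim_i H_S^{h_p}(A_i)$ in one stroke, packaging all the infinitely many rank conditions and the degree-boundedness of $I$ simultaneously; and \cite{HaimanSturmfels} Proposition 3.2 gives that $I$ is finitely generated, hence $B$ is finitely presented as an $A$-algebra, which is also the input your full-faithfulness step silently assumes when you claim the compatibility-with-multiplication and isomorphism conditions are ``finitely many conditions.'' A Gotzmann-type regularity bound could in principle patch your truncation strategy in the standard-graded case, but with generators in arbitrary degrees $d_j$ you would still need the multigraded boundedness that \cite{HaimanSturmfels} provides, so the appeal to such a result cannot be avoided.
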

\begin{proof}
 We first show essential surjectivity. Take an object $B \in \cM_p(A)$. Since $B$ is finitely generated, we can choose homogeneous generators $x_j$ of degree $d_j$ for $1\le j\le n$, giving us a surjection
\[
f:A[x_1, \ldots, x_n] \onto B.       
\]

If we let $S:= \Z[x_1, \ldots, x_n] $, then  $I:= \ker f$ is a graded ideal of $S\ten A$. In the notation of \cite{HaimanSturmfels}, we have a degree functor $\deg: \N^n \to \N$ given by $(a_1, \ldots, a_n) \mapsto \sum_i a_id_i$, and the Hilbert polynomial is given by  $h_I=h$. By  \cite{HaimanSturmfels} Corollary 1.2, there is a projective scheme $H_S^h$ over $\Z$ with $H_S^h(A)$ the set of all graded ideals of $S$ with Hilbert function
\[
h_p(i):= \left\{ \begin{matrix} h(i) & i \ge p \\  0 & i< p.\end{matrix}\right.      
\]
 [Note that we do not use Grothendieck's  construction from \cite{GroHilb}, since that only  describes $A$-valued points of the Hilbert scheme for $A$ Noetherian.] 

In particular, $H_S^{h_p}$ is of finite presentation, so $H_S^{h_p}(A)= \LLim_i H_S^{h_p}(A_i)$. Therefore there exists $B_i \in H_S^{h_p}(A_i)$ with $B\cong B_i\ten_{A_i}A$. The forgetful functor $H_S^{h_p} \to \cM_p$ then ensures that $B_i \in \cM_p(A_i)$.

It only remains to show that     $\LLim_i \cM_p(A_i)\to \cM_p(A)$ is full and faithful. Now, \cite{HaimanSturmfels} Proposition 3.2 shows that the ideal $I$ above is finitely generated, so $B$ is finitely presented over $A$. Likewise, any objects $B_i, B_i' \in \cM_p(A_i)$ will be finitely presented, which implies that
\[
 \Hom_{\cM_p(A)}(B_i\ten_{A_i}A, B_i'\ten_{A_i}A)\cong \LLim_j \Hom_{\cM_p(A_j)}(B_i\ten_{A_i}A_j, B_i'\ten_{A_i}A_j),
\]
 completing the proof.
\end{proof}

\begin{definition}\label{cGpoldef}
 Define $\cM: \Alg_{\Q} \to \gpd$ by $\cM(A):= \LLim_p \cM_p(A)$. Likewise, define 
\[
 \cG:= \LLim_p \cG_p: dg_+\cN^{\flat}_{\Q} \to \gpd
\]
 and 
$\tilde{\cM}: dg_+\cN^{\flat}_{\Q}\to \gpd$ by 
\[
 \tilde{\cM}(A):= \cG(A)\by_{\cG(\H_0A)}\cM(\H_0A).
\]
\end{definition}

%
\begin{proposition}\label{gradedpol}
For $A \in \Alg_{\Q}$, $\cM(A)$ is equivalent to the groupoid of flat 
polarised schemes $(X,\O_X(1))$ of finite type over $A$, with $\O_X(1)$ ample and the $A$-modules $\Gamma(X, \O_X(n))$ locally free of rank $h(n)$ for all $n\gg 0$. 
\end{proposition}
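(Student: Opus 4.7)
The plan is to exhibit mutually quasi-inverse functors between $\cM(A)$ and the groupoid of polarised schemes in the statement, and then verify the inverse relation using Serre's fundamental theorems on projective schemes. By Lemma \ref{globalpol}, for each $p$ the groupoid $\cM_p(A)$ is equivalent to the groupoid of finitely generated non-unital $\bG_m$-equivariant $A$-algebras $B = \bigoplus_{r \ge p} B\{r\}$ with each $B\{r\}$ locally free of rank $h(r)$, with transition maps $\cM_p \to \cM_{p'}$ (for $p' \ge p$) given by truncation $B \mapsto B_{\ge p'}$.

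To produce a polarised scheme from $B \in \cM_p(A)$, I form the unital graded $A$-algebra $S := A \oplus B$ (with $A$ in degree $0$, $B\{r\}$ in degree $r \ge p$, zero in degrees strictly between), set $X := \Proj S$, and take $\O_X(1)$ to be the canonical invertible twisting sheaf on $\Proj S$. Finite generation of $B$ makes $X$ projective (hence of finite type) over $A$; local freeness of the graded pieces $B\{r\}$ gives flatness of $X/A$. The construction is invariant under truncation $S \mapsto S_{\ge p'}$ (both $X$ and $\O_X(1)$ remain unchanged, up to the standard identification), so it descends to a well-defined functor on $\cM(A) = \LLim_p \cM_p(A)$.

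For the inverse functor, given $(X, \O_X(1))$ as in the statement, I set $B\{r\} := \Gamma(X, \O_X(r))$ for $r \ge p$ and assemble $B := \bigoplus_{r \ge p} B\{r\}$ with its cup-product multiplication. By the hypothesis each $B\{r\}$ is locally free of rank $h(r)$, and for $p \gg 0$ the classical results of Serre (e.g.\ EGA III \S 2.3, or Hartshorne II.5 and III.5) ensure that $B$ is finitely generated as an $A$-algebra, yielding an object of $\cM_p(A)$ whose image in $\cM(A)$ does not depend on the choice of $p$.

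That these functors are mutually inverse up to natural isomorphism is essentially Serre's theorem: for $p \gg 0$, the canonical adjunction morphism $X \to \Proj(A \oplus \bigoplus_{n \ge p} \Gamma(X, \O_X(n)))$ is an isomorphism of polarised schemes, and conversely $\bigoplus_{n \ge p} \Gamma(\Proj S, \tilde{S}(n)) \cong B_{\ge p}$ for $p \gg 0$ (ranks match by the Hilbert polynomial hypothesis). Full faithfulness on morphisms follows since isomorphisms of polarised schemes induce $\bG_m$-equivariant isomorphisms of section algebras in all sufficiently high degrees, and conversely via $\Proj$. The principal obstacle is to justify these Serre-theoretic inputs (finite generation of the section algebra, Serre vanishing, cohomology and base change) over an arbitrary $\Q$-algebra $A$, not necessarily Noetherian: this is handled by standard reduction to the finitely presented (hence Noetherian) case via Lemma \ref{fglfp} and finite-presentation arguments, which allow one to descend $X$ to a finitely generated $\Q$-subalgebra of $A$ before invoking Serre.
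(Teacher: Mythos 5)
Your proposal is correct and follows essentially the same route as the paper's proof: the mutually quasi-inverse functors $B \mapsto (\Proj(A \oplus B), \O(1))$ (well-defined on $\cM(A)$ by truncation-invariance of $\Proj$) and $(X,\O_X(1)) \mapsto \bigoplus_{n \ge p}\Gamma(X,\O_X(n))$. The paper compresses the Serre-theoretic verifications and the non-Noetherian reduction into ``this is fairly standard'' with a pointer to the subscheme analogue in Haiman--Sturmfels, so your additional detail---flatness from local freeness of the graded pieces, finite generation of the truncated section ring, and descent to a finitely generated $\Q$-subalgebra before invoking Serre vanishing and base change---is exactly the material the paper leaves implicit.
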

\begin{proof}
This is fairly standard --- the analogue for subschemes is \cite{HaimanSturmfels} Lemma 4.1. Given an object $B \in \cM(A)$, there exists $p$ with $B$ lifting to $B \in \cM_p(A)$. Therefore we can define
\[
 (X,\O_X(1) ) := \Proj(A \oplus B).
\]
Replacing  $B$ with its image in $\cM_{q}(A)$  (for $q>p$) does not affect $\Proj(A \oplus B)$, so we have a functor $\Proj(A \oplus -)$ from
$
  \cM(A)  
$
to polarised projective schemes over $A$.

For the quasi-inverse functor, take a polarised scheme $(X, \O_X(1))$ and some $p$ for which $\Gamma(X, \O_X(n))$ is locally free of rank $h(n)$ for all $n\ge p$. Then define $B \in \cM_p(A)$ by
\[
 B:= \bigoplus_{n \ge p} \Gamma(X, \O_X(n)).
\]
\end{proof}

\begin{remark}
 Note that the hypothesis that $\Gamma(X, \O_X(n))$ be locally free for $n \gg 0$ ensures that $X$ is flat over $A$.  If $A$ is Noetherian, then the proof of \cite{Ha} III.9.9 shows that the converse holds, and indeed that if $A$ is connected, then  there exists a Hilbert polynomial $h$ with  $\Gamma(X, \O_X(n))$ locally free of rank $h(n)$ for all $n\gg 0$.
\end{remark}

\begin{proposition}\label{gradedcot}
If $A \in \Alg_{\Q}$ and  $X= \Proj(A\oplus C)$ for $C \in \cM_p(A)$, then for any $A$-module $M$, there are canonical isomorphisms
\[
 \DD^i_{[C]}(B\tilde{\cM},M)\cong \EExt^{i+1}_{X}(\bL^{X/B\bG_m\ten A}, \O_{X}\ten_AM),
\]
where $\bL$ is the cotangent complex of \cite{Ill1}.
\end{proposition}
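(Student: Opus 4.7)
The plan is to follow the same strategy as in the proof of Proposition \ref{finconsistent}, in three stages: reduce the computation of $\DD^i_{[C]}(B\tilde{\cM},M)$ to one on $B\cG$, then identify the answer with twisted DGLA cohomology of $L_p$, and finally translate this cohomology into $\EExt$ groups on $X$.

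First I would note that the defining fibre product
$\tilde{\cM}(A) = \cG(A)\by_{\cG(\H_0A)}\cM(\H_0A)$
together with the formal \'etaleness of $\cM \to \pi^0\cG$ (Lemma \ref{fgfet}) implies that $\tilde{\cM} \to \cG$ is formally \'etale, so the natural map
\[
\DD^i_{[C]}(B\tilde{\cM},M) \lra \DD^i_{[C]}(B\cG,M)
\]
is an isomorphism for all $i\ge 0$ (using the long exact sequence of Proposition \ref{longexact} applied to $B\tilde{\cM}\to B\cG$, whose relative $\DD^i$ all vanish by formal \'etaleness). This reduces us to computing cotangent cohomology of $B\cG$.

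Next, since $B\cG$ is presented locally as the nerve of the Deligne groupoid $[\mc(L_p(-))/G_{L_p}(-)]$ and tangent cohomology is local on the pro-Zariski site, an argument identical to the DGLA computation used in the proof of Proposition \ref{finconsistent} (mirrored in the graded/$\bG_m$-equivariant setting) gives
\[
\DD^i_{[C]}(B\cG,M) \;\cong\; \H^{i+1}\bigl(L_p\hat{\ten}M,\; d+[\omega,-]\bigr),
\]
where $\omega \in \mc(L_p\hat{\ten}A)$ is the Maurer--Cartan element corresponding (via Lemmas \ref{localpol}--\ref{globalpol}) to the graded non-unital $A$-algebra $\bigoplus_{r\ge p}C\{r\}$. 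The construction $\beta^{*}$ of Definition \ref{betastar} is left adjoint to $\beta$, and for $V = \bigoplus_{r\ge p}A^{h(r)}\{r\}$ the cdga $\beta^{*}(\beta(V),\omega)$ is by construction a cofibrant resolution of $\bigoplus_{r\ge p}C\{r\}$ as a non-unital graded $A$-algebra. The graded/$\bG_m$-equivariant analogue of \cite{Hilb} \S 2 then identifies the DGLA cohomology above with the graded (Harrison = non-unital Andr\'e--Quillen) cotangent cohomology
\[
\H^{i+1}\bigl(L_p\hat{\ten}M,\,d+[\omega,-]\bigr) \;\cong\; \EExt^{i+1,\bG_m}_{A\oplus C}\bigl(\bL^{A\oplus C/A}_{\bt},\,(A\oplus C)\ten_AM\bigr),
\]
the $\bG_m$ indicating that we take equivariant $\EExt$ summed over all weights (the completed tensor product on the left side is precisely what records all graded weights simultaneously).

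The last and most delicate step is passing from the graded affine cone to $X = \Proj(A\oplus C)$. The stack $[\Spec(A\oplus C)/\bG_m]$ sits over $B\bG_m\ten A$ via the inclusion $A\hookrightarrow A\oplus C$, and $X$ is its open substack complementary to the zero section $B\bG_m\by\Spec A$, which is cut out by the irrelevant ideal $C_+$. Since $\Spec(A\oplus C)\setminus V(C_+)\to X$ is a $\bG_m$-torsor, one has $\bL^{X/B\bG_m\ten A}$ pulling back to the relative cotangent complex of the affine cone minus the origin, and $\bG_m$-equivariant quasi-coherent sheaves on the punctured cone correspond to quasi-coherent sheaves on $X$. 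Serre's theorem (the graded module--$\Proj$ correspondence, valid here because $A\oplus C$ is finitely generated, which is precisely the condition defining $\tilde{\cM}\subset \cG$) identifies the $\bG_m$-equivariant graded Ext on $A\oplus C$ (in the inverse system indexed by $q\ge p$, i.e.\ truncations of weights $\ge p$) with the Ext on $X$:
\[
\EExt^{i+1,\bG_m}_{A\oplus C}\bigl(\bL^{A\oplus C/A}_{\bt},\,(A\oplus C)\ten_AM\bigr) \;\cong\; \EExt^{i+1}_{X}\bigl(\bL^{X/B\bG_m\ten A},\,\O_X\ten_AM\bigr),
\]
after passing to the $\Lim_q$ implicit in the definition of $L_p$. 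The main obstacle is bookkeeping in this last step, namely verifying that the inverse limit in $q$ defining $L_p\hat{\ten}M$ corresponds exactly (without local cohomology corrections) to the Ext on $X$; this reduces to the standard vanishing of $H^i_{V(C_+)}$ in the finitely generated case for $p$ sufficiently large, together with compatibility of the cotangent complex with the localisation $\Spec(A\oplus C)\setminus V(C_+)\hookrightarrow \Spec(A\oplus C)$.
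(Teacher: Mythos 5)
Your outline follows the paper's own proof almost stage for stage: the reduction from $B\tilde{\cM}$ to $B\cG$ via the formal \'etaleness of Lemma \ref{fgfet}, the identification of $\DD^i_{[C]}(B\cG,M)$ with graded cotangent cohomology via the cofibrant resolution $A\oplus\beta^*\beta(B)$, and the passage to $X=\Proj(A\oplus C)$ via Serre's theorem (\cite{Se2} \S 59) are exactly the paper's three steps.

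There is, however, one genuine missing step. Serre's theorem in the form you need --- comparing $\EExt$ on $X$ with a colimit of $\bG_m$-equivariant graded $\Ext$ groups, and the graded boundedness of local cohomology $H^i_{C_+}$ that your last paragraph appeals to --- holds for finitely generated modules over a \emph{Noetherian} base, whereas the proposition is asserted for arbitrary $A\in\Alg_{\Q}$ and arbitrary $M$. The paper first reduces to that situation: by Lemma \ref{fglfp} there exist a finitely generated $\Q$-subalgebra $A^0\subset A$ and $C^0\in\cM_{p_0}(A^0)$ with $C=C^0\ten_{A^0}A$, and since both $\DD^i$ (by \cite{drep}) and $\EExt$ are compatible with base change and commute with filtered colimits in $M$, one may assume $A$ finitely generated (hence Noetherian) and $M$ finitely generated. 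Your invocation of Serre is unjustified without this reduction; finite generation of $A\oplus C$ alone does not suffice. A smaller point of bookkeeping: where you argue via vanishing of $H^i_{V(C_+)}$ ``for $p$ sufficiently large'' (local cohomology does not vanish; only its graded pieces are bounded above for finite modules over a Noetherian base), the paper instead shows, for fixed $p$, that $\Ext^i_X$ is the colimit over $q\ge p$ of graded $\Ext$ groups with the source truncated in weights $\ge q$, and then takes the colimit over pairs $(p,q)$ with $q\ge p\ge p_0$, using cofinality of the diagonal $(p,p)$ to land on the expression computing $\DD^i_{[C]}(B\cG,M)$; this sidesteps local cohomology entirely, though your version could be made precise along the lines indicated.
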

\begin{proof}
Given $C \in \cM_{p_0}(A)$, first let $\tilde{X}:= \Spec (A\oplus C) -\{0\}$, where $\{0\}$ denotes the copy of $\Spec A$ defined by the ideal $C$. $\tilde{X}$ inherits a $\bG_m$-action from $C$ (with trivial action on $A$), and in fact 
\[
 \tilde{X}= \oSpec_X (\bigoplus_{n \in \Z} \O_X(n)),
\]
with $X=\tilde{X}/\bG_m$ and $\tilde{X}= X\by_{B\bG_m\ten A}^h\Spec A$. Writing $\pi:\tilde{X} \to X$ for the projection, base change gives $\pi^*\bL^{X/B\bG_m\ten A} \simeq  \bL^{\tilde{X}/A}$. Since $j: \tilde{X} \to \Spec (A\oplus C)$ is an open immersion, it is \'etale, so $\bL^{\tilde{X}/A} \simeq j^*\bL^{(A\oplus C)/A} $. 

For any $C$-module $N$, the associated quasi-coherent sheaf $N^{\sharp}$ on $X$ is given by $N^{\sharp}=(\pi_*j^*N)^{\bG_m} $. Now, Lemma \ref{fglfp} implies that there exists a finitely generated $\Q$-subalgebra $A^0 \subset A$ and an object $C^0 \in \cM_{p_0}(A^0)$ with $C= C^0\ten_{A^0}A$. Since both $\DD^i$ and $\EExt$ are compatible with base change (the former by \cite{drep} Lemma \ref{drep-tantrans}), it suffices to show that 
\[
 \DD^i_{[C^0]}(B\tilde{\cM},M)\cong \EExt^{i+1}_{X_0}(\bL^{X_0/B\bG_m\ten A^0}, \O_{X_0}\ten_{A^0}M),
\]
where $X_0= \Proj(A^0\oplus C^0)$. Replacing $A$ and $C$ with $A^0$ and $C^0$, we may therefore reduce to the case where $A$ is a finitely generated $\Q$-algebra (hence Noetherian). Because both expressions above commute with filtered colimits of the modules $M$, we may assume that $M$ is a finitely generated $A$-module.
 
Since $\cM \to \pi^0\cG$ is formally \'etale by Lemma \ref{fgfet},
\[
 \DD^i_{[C]}(B\tilde{\cM},M) \cong\DD^i_{[C]}(B\cG,M).
\]
As $A\oplus \beta^*\beta(B)$ is a cofibrant resolution of $A\oplus B$, we have 
\[
  \DD^i_{[C]}(B\cG,M) \cong \LLim_{p\ge p_0} \Ext^{i+1}_{A \oplus B\{\ge p\}}( \bL^{(A\oplus B\{\ge p\})/A}, B\{\ge p\}\ten_AM)^{\bG_m}.
\]

Now, the proof of Serre's Theorem (\cite{Se2} \S 59)  still works over any Noetherian base,  so shows that for a finitely generated $C$-module $N$ and any $n \in \Z$,
\[
 \Ext^i_X(\O_X(n), N^{\sharp}) \cong \LLim_p \Ext^i_C( C(n)\{\ge p\}, N\{\ge p\})^{\bG_m}.
\]
Indeed, a spectral sequence argument shows that the same is true if we replace $C(n)$  with any  finitely generated $C$-module $L$, since $L$ will then admit a resolution by finite sums of $C(n)$'s. In fact, another   spectral sequence argument allows us to take a chain complex $L$  whose homology groups $\H_i(L)$ are finite and  bounded below, giving
\[
  \EExt^i_X(L^{\sharp}, N^{\sharp}) \cong \LLim_p \EExt^i_C( L\{\ge p\}, N\{\ge p\})^{\bG_m}.
\]
 
Thus for any fixed $p \ge p_0$,
\[
 \Ext^i_X(\bL^{\tilde{X}/B\bG_m\ten A}, \O_X\ten_AM)  \cong \LLim_{q \ge p} \Ext^i_{A \oplus B\{\ge p\}} ( (\bL^{(A\oplus B\{\ge p\})/A})\{\ge q\}, B\{\ge p\}\ten_AM)^{\bG_m}.
\]
We can then take the colimit over the poset of pairs $(p,q)$ with $q \ge p\ge p_0$. Since the set of pairs $(p,p)$ is cofinal in this poset, we get
\[
 \Ext^i_X(\bL^{\tilde{X}/B\bG_m\ten A}, \O_X\ten_AM)  \cong \LLim_{p\ge p_0} \Ext^{i}_{A \oplus B\{\ge p\}}( \bL^{(A\oplus B\{\ge p\})/A}, B\{\ge p\}\ten_AM)^{\bG_m},
\]
  as required.
\end{proof}

\begin{proposition}\label{reppol}
The functor $B\tilde{\cM}: dg_+\cN^{\flat}_{\Q}\to \bS$ satisfies the conditions of Theorem \ref{mylurieprerep}, so
the associated functor
$\bar{W}\uline{\tilde{\cM}}:dg_+\cN^{\flat}_{\Q}\to \bS $
is representable by an almost finitely presented derived geometric $1$-stack.
\end{proposition}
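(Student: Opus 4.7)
The plan is to verify the eleven conditions of Theorem \ref{mylurieprerep} for $B\tilde{\cM}$, extending the strategy of Proposition \ref{repfin} to the polarised setting. The essential new inputs are Proposition \ref{gradedcot} (identifying the tangent cohomology with $\EExt$ groups on the projective scheme), Proposition \ref{gradedpol} (identifying the underlying ordinary stack with polarised schemes), and the weakly universal covers of Section \ref{sheafsn} (for handling the pro-Zariski sheafification in the presence of infinite direct sums).

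I would first verify the structural conditions. Homogeneity of $B\cG$ follows as in Proposition \ref{repfin}: the functors $\mc(L_p(-))$ and $G_{L_p}$ preserve finite limits, and the pro-Zariski sheafification preserves them thanks to the weakly initial covers of Lemma \ref{univZarcover}. Pre-homotopy of $\cG$ reduces to the vanishing of $\H^2(L_p(A)\ten_A I, d+[\omega,-])$ for the kernel $I$ of a tiny acyclic extension, via Lemma \ref{obsdgla}: the pro-DGLA $L_p$ decomposes as a product over $\bG_m$-weights of finite-dimensional complexes, so tensoring weight-by-weight with the acyclic $I$ yields an acyclic complex, while the bracket hypothesis of Lemma \ref{obsdgla} holds because $I_A\cdot I=0$. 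Since $\cM \to \pi^0\cG$ is formally \'etale by Lemma \ref{fgfet} and $\tilde{\cM} = \cG \by_{\pi^0\cG} \cM$ by Definition \ref{cGpoldef}, Proposition \ref{fethgs} transfers homogeneity to $\tilde{\cM}$; pre-homotopy transfers because tiny acyclic extensions do not alter $\H_0$, hence preserve the finite-generation condition defining $\cM$. Formal quasi-presmoothness is inherited from smoothness of each $\GL_{h(r)}$, checked weight-by-weight.

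For the cohomological conditions, I would invoke Proposition \ref{gradedcot} to identify $\DD^i_{[C]}(B\tilde{\cM}, M) \cong \EExt^{i+1}_X(\bL^{X/B\bG_m \ten A}, \O_X \ten_A M)$, with $X = \Proj(A \oplus C)$ projective over $A$. Finite generation for $A$ a finitely generated integral domain follows from pseudo-coherence of $\bL^{X/B\bG_m \ten A}$ combined with finiteness of coherent cohomology for projective morphisms to a Noetherian base. \'Etale base change and filtered colimit preservation of the $\EExt$ groups are standard consequences of flat base change and pseudo-coherence. The hypersheaf property of $\pi^0\tilde{\cM} \simeq \cM$ is classical \'etale descent for polarised projective schemes via Proposition \ref{gradedpol}; filtered colimit preservation on objects and morphisms is Lemma \ref{fglfp}. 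Truncation at $n = 1$ is automatic because $\tilde{\cM}$ is groupoid-valued.

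The main obstacle is effectiveness: for a discrete local Noetherian $\H_0R$-algebra $A$ with maximal ideal $\m$, we must show $\cM(A) \to \Lim_r \cM(A/\m^r)$ is a weak equivalence. Via Proposition \ref{gradedpol}, the left-hand side parametrises polarised projective schemes $(X,\O_X(1))/A$ with Hilbert polynomial $h$, while the right-hand side parametrises compatible systems of such objects over the artinian quotients $A/\m^r$. That this map is an equivalence is Grothendieck's formal existence theorem (formal GAGA), applied simultaneously to the structure sheaf and the polarisation: any formal system of polarised projective schemes over the $\m$-adic tower algebraises uniquely. Openness of ampleness ensures the resulting line bundle is ample, and cohomological flatness together with the Hilbert polynomial condition on the closed fibre gives the required local freeness of $\Gamma(X,\O_X(n))$. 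This verifies the final hypothesis, so Theorem \ref{mylurieprerep} yields representability of $\bar{W}\uline{\tilde{\cM}}$ by an almost finitely presented derived geometric $1$-stack.
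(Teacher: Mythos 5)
Your proposal is correct and follows essentially the same route as the paper: the paper's proof likewise verifies the conditions of Theorem \ref{mylurieprerep} by combining Lemma \ref{fglfp} (local finite presentation of $\cM$), Proposition \ref{gradedpol} together with Grothendieck's formal existence theorem (EGA III 5.4.5 and 5.1.4) for the effectiveness condition, Lemma \ref{obsdgla} for pre-homotopicity, immediate homogeneity, and Proposition \ref{gradedcot} for all the cohomological conditions, ``with the same reasoning as for Proposition \ref{repfin}.'' Your write-up simply makes explicit the details (weight-by-weight acyclicity for the completed tensor, transfer along the formally \'etale map $\cM \to \pi^0\cG$ via Lemma \ref{fgfet}, finiteness of coherent cohomology for projective morphisms) that the paper leaves implicit in that final phrase.
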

\begin{proof}
We apply Theorem \ref{mylurieprerep} to $\tilde{\cM}$. First, note that $\pi^0\tilde{\cM}= \cM$, which is a stack, locally of finite presentation by Lemma \ref{fglfp}. Proposition \ref{gradedpol} and Grothendieck's formal existence theorem (\cite{EGA3.1} 5.4.5) ensure that for any  complete local Noetherian $\Q$-algebra $\L$, the map
\[                                                                                                                        \cM(\L) \to \Lim_n\cM(\L/\m^n)                                                                                           \]
 is surjective on objects. That the map is an isomorphism is then a consequence of (\cite{EGA3.1} 5.1.4).

Now, homogeneity of $B\tilde{\cM}$ is immediate, and Lemma \ref{obsdgla} gives pre-homotopicity. All the remaining conditions follow from Proposition \ref{gradedcot}, with the same reasoning as for Proposition \ref{repfin}.
\end{proof}

\begin{proposition}\label{polconsistent}
For $A \in dg_+\cN^{\flat}_{\Q}$, the space $\bar{W}\uline{\tilde{\cM}}(A)$ is functorially weakly equivalent to the nerve $\bar{W}\fM(A)$ of the $\infty$-groupoid  $\fM(A)$ of derived geometric $0$-stacks $\fX$ over $B\bG_m\by \Spec A$ for which $X:=\fX\ten^{\oL}_A\H_0A$ is weakly equivalent to a flat projective scheme over $\H_0A$, with the polarisation $X \to B\bG_m\ten \H_0A$ ample with Hilbert polynomial $h$.
\end{proposition}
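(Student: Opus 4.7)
The plan is to apply the detection criterion of Remark \ref{predetectweakh} to a natural map $\Phi: B\tilde{\cM} \to \fM$ defined by a derived $\mathrm{Proj}$ construction, exactly mirroring the strategy used for Proposition \ref{finconsistent}. For $A \in dg_+\cN^{\flat}_{\Q}$ and an object of $\tilde{\cM}(A)$ corresponding (after descent in the pro-Zariski topology) to a $\bG_m$-equivariant non-unital chain $A$-algebra $B = \bigoplus_{r\ge p} B\{r\}$, we assign the derived geometric stack
\[
\mathbf{Proj}(A \oplus \beta^{*}\beta(B)) := \bigl[\mathbf{Spec}\,(A \oplus \beta^{*}\beta(B)) \setminus \{0\}\bigr]/\bG_m
\]
over $B\bG_m \times \Spec A$, where $\beta$ and $\beta^{*}$ are the adjoint functors of Definition \ref{betastar}, extended $\bG_m$-equivariantly. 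Pro-Zariski descent from $A_0 \to A_0'$ back to $A$ uses flatness of $A_0 \to A_0'$ and is automatic for quasi-coherent data on the derived $\mathrm{Proj}$. This defines $\Phi$ at the level of objects, with morphisms (and higher simplices via $\bar{W}\underline{(-)}$) induced in the evident way.

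Next I would verify the two hypotheses of Remark \ref{predetectweakh}. For $\pi^{0}$, Proposition \ref{gradedpol} identifies $\pi^{0}\tilde{\cM}(A) = \cM(A)$ with the groupoid of flat polarised projective schemes $(X, \O_X(1))$ over $A$ with $\Gamma(X, \O_X(n))$ locally free of rank $h(n)$ for $n \gg 0$. Comparing this with the description of $\pi^{0}\fM(A)$ in \cite{dmsch} (a groupoid of flat polarised projective schemes, up to the analogous equivalence relation) shows that $\Phi$ induces an equivalence on $\pi^{0}$; here one uses Serre's theorem to reconstruct $B\{\ge p\}$ from $X = \mathbf{Proj}(A \oplus B\{\ge p\})$ as $\bigoplus_{n\ge p}\Gamma(X,\O_X(n))$, and uses that the derived stack is already discrete when $A$ is discrete, since $B$ is discrete and flat.

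For tangent complexes, Proposition \ref{gradedcot} already provides the identification
\[
\DD^{i}_{[C]}(B\tilde{\cM},M) \cong \EExt^{i+1}_{X}\!\bigl(\bL^{X/B\bG_m \otimes A}, \O_{X} \otimes_A M\bigr).
\]
The corresponding computation for $\fM$ is a standard application of the results of \cite{dmsch} (essentially an equivariant version of Corollary \ref{dmsch-representaffine}/Example \ref{dmsch-fineg} as in the proof of Proposition \ref{finconsistent}): the tangent complex to the $\infty$-groupoid of polarised derived stacks at $(X, \O_X(1))$ is $\EExt^{\ast+1}_{X}(\bL^{X/B\bG_m \otimes A}, \O_X \otimes_A M)$, with the shift by $1$ accounting for the groupoid direction. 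Since $\Phi$ on $\tilde{\cM}$ factors through the assignment $B \mapsto (A \oplus \beta^{*}\beta(B)) \to (A \oplus B)$ used in computing $\DD^{i}$, the induced map on tangent cohomology agrees with the above identifications and is therefore an isomorphism.

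The main obstacle will be the identification of the tangent complex of $\fM$, because $\fM(A)$ is defined intrinsically as an $\infty$-groupoid of derived stacks rather than through an explicit presentation: one must argue that the $\bG_m$-equivariant cotangent complex computed via the Rees-style presentation $\mathbf{Spec}(A \oplus \beta^{*}\beta(B)) \setminus \{0\}$ coincides with the intrinsic cotangent complex used in \cite{dmsch}, and similarly that the ampleness and Hilbert-polynomial hypotheses are preserved under the derived structure. Once these identifications are made (together with the fact from Proposition \ref{reppol} that $\bar{W}\underline{\tilde{\cM}}$ is a derived geometric $1$-stack and the analogous fact for $\fM$ from \cite{dmsch}), Remark \ref{predetectweakh} furnishes the required functorial weak equivalence $\bar{W}\underline{\tilde{\cM}}(A) \simeq \bar{W}\fM(A)$.
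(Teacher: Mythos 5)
Your proposal follows essentially the same route as the paper's proof: the paper likewise defines $\alpha_A\colon\tilde{\cM}(A)\to\fM(A)$ by forming $\fX:=\Proj(A\oplus \beta^*(CL_{A_0}(N[-1])\ten_{A_0}A,\delta))$, presents the map to $B\bG_m$ via the simplicial resolution $\tilde{\fX}\by^{\bG_m}E\bG_m$ (your quotient-stack description), checks the $\pi^0$ equivalence via Proposition \ref{gradedpol} and the $\DD^i$ isomorphisms via Proposition \ref{gradedcot}, and concludes by Remark \ref{predetectweakh}. The ``main obstacle'' you flag is resolved in the paper simply by citing \cite{dmsch} Corollary \ref{dmsch-representdaffine} and Example \ref{dmsch-modpolsch}, which already supply both the representability of $\bar{W}\fM$ and the cotangent identification; the only imprecision in your write-up is that over general dg $A$ an object of $\tilde{\cM}(A)$ is a coderivation $\delta$ on $CL_{A_0}(N[-1])\ten_{A_0}A$ (a homotopy algebra) rather than a strict chain algebra $B$, so one applies $\beta^*$ directly to this twisted Lie coalgebra, the form $\beta^*\beta(B)$ being available only after base change to $\H_0A$.
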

\begin{proof}
We adapt the proof of Proposition \ref{finconsistent}. An object  of $\cG_p(A)$ corresponds to a locally free $\bG_m$-equivariant $A_0$-module $N\{\ge p\}$, with $N\{r\}$ locally free of rank $h(r)$, together with a  closed degree $-1$ differential $\delta$ on the free chain Lie coalgebra $CL_{A_0}(N[-1])\ten_{A_0}A$. We may therefore form the DG-scheme 
\[
\fX:= \Proj(A\oplus \beta^* (CL_{A_0}(N[-1])\ten_{A_0}A, \delta)).      
\]

As in \cite{stacks2} \S \ref{stacks-dgstacks}, there is a canonical derived geometric $1$-stack associated to  $\fX$. To give an explicit map from this to $B\bG_m$, we first let $\tilde{\fX}:= \Spec( A\oplus \beta^* (CL_{A_0}(N[-1])\ten_{A_0}A, \delta))-\{0\}$, and then form the simplicial scheme
\[
 \tilde{\fX}\by^{\bG_m}E\bG_m,       
\]
 which is a simplicial resolution of $X$, and has a canonical map to the simplicial scheme $B\bG_m$. Here, $E\bG_m$ is the universal $\bG_m$-space over $B\bG_m$, given by the simplicial $0$-coskeleton $E\bG_m= \cosk_0\bG_m$, so $(E\bG_m)_n= (\bG_m)^{n+1}$. 
For an explicit Artin hypergroupoid representation of $X$, we could go further and replace $\tilde{\fX}$ with its \v Cech nerve associated to  any open affine cover.

Now, if our object $C$ lies in $\tilde{\cM}_p\subset\cG_p(A)$, then $C\ten_A\H_0A $ lies in $\cM_p(\H_0A)$, so $C\ten_A\H_0A = \beta(B)$, for a finitely generated commutative algebra structure $B$ on $N$. Since the map
\[
 \beta^*\beta(B) \to B       
\]
is a quasi-isomorphism, this means that 
\[
  \fX \ten_A^{\oL}\H_0A \simeq \Proj(\H_0A\oplus B),    
\]
which is  a polarised projective scheme with Hilbert polynomial $h$.

Since $\Proj$ is unchanged on replacing $N$ with $N\{\ge q\}$ for $q>p$, we have defined a functor
\[
 \alpha_A:\tilde{\cM}(A) \to \fM(A).       
\]
By \cite{dmsch}  Example \ref{dmsch-modpolsch}, the functor $\bar{W}\fM$ is also representable by a derived geometric $1$-stack, so
we just need to check that $\bar{W}\tilde{\cM}\to \bar{W}\fM$  satisfies the conditions of Remark \ref{predetectweakh}.  

 If $A \in \Alg_{\Q}$, then Proposition \ref{gradedpol} implies that  $\alpha_A$ is an equivalence of groupoids. 
Combining Proposition \ref{gradedcot} with  \cite{dmsch} Corollary \ref{dmsch-representdaffine} and Example \ref{dmsch-modpolsch}, we have isomorphisms
\[
\DD^i_{[C]}(B\tilde{\cM},M)  \cong \Ext^{i+1}_{X}(\bL^{X/\bG_m\ten A}, \O_X\ten_AM) \cong   \DD^i_{[C]}(\bar{W}\tilde{\fM},M),    
\]
so Remark \ref{predetectweakh} applies.
\end{proof}

\begin{remark}\label{stabilise}
Replacing the DGLA $L_p$ with the finite-dimensional DGLA
$L_{[p,q]}$ in the definitions above gives us a functor $\tilde{\cM}_{[p,q]}$.
Since $L_p = \Lim_q L_{[p,q]}$, we will have $ \tilde{\cM}_p = \Lim_q \tilde{\cM}_{[p,q]}$, and hence
\[
 \tilde{\cM}= \LLim_p \Lim_q \tilde{\cM}_{[p,q]}.     
\]
 
It is natural to seek an open substack of $\cM$ on which these limits stabilise. If we define $\cM^{(k)} \subset \cM$ to be the open substack consisting of polarised schemes $(X, \O_X(1))$ for which $\O_X(k)$ is very ample, then we may regard $X$ as a subscheme of $\bP^{h(k)}$, so  \cite{Quot} Theorem 1.2.3(b) and Theorem 1.4.1 imply that for $q \gg p \gg 0$, the maps
\[
   \cM^{(k)} \la  \cM^{(k)}_p \to \cM^{(k)}_{[p,q]}    
\]
are equivalences of underived stacks.

Moreover, for fixed $i$,  \cite{Hilb} Theorem 4.1.1 implies that for $q \gg p \gg 0$, the maps
\[
 \DD^i_{[C]}(B\tilde{\cM}^{(k)},M)   \la  \DD^i_{[C]}(B\tilde{\cM}^{(k)}_p,M) \to \DD^i_{[C]}(B\tilde{\cM}^{(k)}_{[p,q]},M)    
\]
are isomorphisms for all $[C]$ and $M$. This does not give  give suitable $p,q$ for all $i$ simultaneously. 

However, if we restrict further to the open substack $\cM^{(k),\mathrm{LCI}} \subset \cM^{(k)}$ of local complete intersections, then the cotangent complex $\bL^{X/\bG_m\ten A}$ will be concentrated in chain degrees $[0,1]$. Thus $ \DD^i_{[C]}(B\tilde{\cM}^{(k), \mathrm{LCI}},M)=0$ for $i \notin [-1, \deg h]$, so for $q \gg p \gg 0$, we have weak equivalences
\[
  \bar{W}\uline{\tilde{\cM}}^{(k), \mathrm{LCI}}   \la  \bar{W}\uline{\tilde{\cM}}^{(k), \mathrm{LCI}}_p \to \bar{W}\uline{\tilde{\cM}}^{(k), \mathrm{LCI}}_{[p,q]}     
\]
of derived stacks, by applying Remark \ref{predetectweakh}.
\end{remark}

\section{Moduli from cosimplicial groups}\label{cgpmoduli}

Since suitable DG Lie algebras can usually only be constructed in characteristic $0$, we now work with  cosimplicial groups, which form the  first step towards a more general construction.

\subsection{Cosimplicial groups}

\begin{definition}
Let  $c\Gp$ be the category of cosimplicial groups, and $cs\Gp$ the category of cosimplicial simplicial groups.
\end{definition}

\subsubsection{Maurer--Cartan}

\begin{definition}\label{mcdefgp1}
 Define $\mc:c\Gp \to \Set$ by  
$$
\mc(G):= \z^1(G)= \{ \omega \in G^1 \,:\, \sigma^0\omega =1\,\, \pd^1 \omega = \pd^2\omega \cdot \pd^0\omega\}.
$$ 
\end{definition}

\begin{definition}\label{mcdefgp}
Define $\mmc: cs\Gp \to \bS$ by 
setting $\mmc(G) \subset \prod_{n\ge 0} (G^{n+1})^{\Delta^n}$ to consist of elements  $(\omega_n )_{n \ge 0}$ satisfying
\begin{eqnarray*}
\pd_i\omega_n &=& \left\{\begin{matrix} \pd^{i+1}\omega_{n-1}  & i>0 \\ (\pd^1\omega_{n-1})\cdot(\pd^0\omega_{n-1})^{-1} & i=0,\end{matrix} \right.\\
\sigma_i\omega_n &=& \sigma^{i+1}\omega_{n+1},\\
\sigma^0\omega_n&=& 1.
\end{eqnarray*}
Define $\mc :cs\Gp \to \Set$ by $\mc(G)=\mmc(G)_0$, noting that this agrees with Definition \ref{mcdefgp1} when $G \in c\Gp$. 
\end{definition}

\begin{remark}
Note that by the proof of \cite{htpy} Lemma \ref{htpy-maurercartan},
\[
 \mc(G)\cong \Hom_{c\bS}(\Delta, \bar{W}G),       
\]
for $\bar{W}$ as in Definition \ref{barwdef2}, where the cosimplicial simplicial set $\Delta$ is given by the $n$-simplex $\Delta^n$ in cosimplicial level $n$. Thus $\mc(G)= \Tot_0 \bar{W}G$, for 
 $\Tot:c\bS \to \bS$ the total space functor of \cite{sht} Ch. VIII, originally defined in \cite{bousfieldkan} Ch. X.

In fact, $\bar{W}$ has a left adjoint $G$ (the loop group functor), and $\mmc(G)\cong \HHom_{c\Gp}(G(\Delta), \bar{W}G)$. However, $\bar{W}$ is not simplicial right Quillen, so this does not equal $\HHom_{c\bS}(\Delta, \bar{W}G)= \Tot (\bar{W}G)$.
\end{remark}

\begin{definition}
 Given a cosimplicial group $G$, define the $n$th matching object $M^nG$ to be the group
\[
M^nG=\{(g_0,g_1,\ldots,g_{n-1}) \in (G^{n-1})^{n}\,|\, \sigma^ig_j=\sigma^{j-1}g_i\, \forall i<j \}.
\]
The Reedy matching map $G^n \to M^nG$ sends $g$ to $(\sigma^0g,\sigma^1g,\ldots,\sigma^{n-1}g) $.

There is then a Reedy model structure on $sc\Gp$ (analogous to  \cite{sht} \S VII.4) in which a morphism $f:G \to H$ is a (trivial) fibration whenever the canonical maps
\[
 G^n \to H^n\by_{M^nH} M^nG
\]
are (trivial) fibrations in $s\Gp$ for all $n \ge 0$. 
\end{definition}

\begin{definition}\label{normcdef}
 Given $G \in c\Gp$, define the cosimplicial normalisation by $N^n_cG:= G^n \cap \bigcap_{i = 0}^{n-1} \ker \sigma^i$. If $G$ is abelian, then we make $N_cG$ into a cochain complex by setting 
\[
 d_c:= \sum_{i=0}^n(-1)^i\pd^i:N_c^{n-1}G \to N_c^nG.
\]
 \end{definition}

\begin{lemma}\label{gplevelwise}
A morphism   $f:G \to H$ in $sc\Gp$ is a (trivial) fibration whenever the maps
\[
 f^n:G^n \to H^n
\]
are all (trivial) fibrations in $s\Gp$.
\end{lemma}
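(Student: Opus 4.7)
The plan is to prove the lemma by induction on $n$, verifying that the Reedy matching maps $G^n \to H^n \by_{M^n H} M^n G$ are (trivial) fibrations in $s\Gp$ given that each $f^n$ is. Since fibrations in $s\Gp$ are detected on the underlying simplicial sets, I can work throughout in $\bS$. The induction hypothesis ensures that $M^n f : M^n G \to M^n H$ is a (trivial) fibration for each $n$, since $M^n X$ is built as an iterated fibre product of the $X^k$ for $k<n$ together with Reedy-matching-map data already handled at earlier stages.

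The central technical input is a \emph{splitting lemma} for cosimplicial groups: for any $X\in sc\Gp$, the cosimplicial matching map $X^n \to M^n X$ is a split epimorphism in $\bS$ (though not in $s\Gp$, since the section is not a homomorphism). I would prove this by explicitly constructing, given a coherent matching datum $(g_0, g_1, \ldots, g_{n-1}) \in M^n X$, a preimage $s_X(g_0,\ldots,g_{n-1}) \in X^n$ built inductively as an alternating product of the form $\pd^0 g_0\cdot (\pd^0 g_1)^{-1}\cdot \pd^1 g_1 \cdots$, and verifying through standard cosimplicial identity manipulations that its codegeneracies recover the prescribed $g_i$. This is the cosimplicial-group analogue of the classical Eilenberg--Zilber/Dold--Kan splitting, which in the abelian case gives the decomposition $X^n \cong N_c^n X \oplus M^n X$.

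Granted this splitting, each $X^n$ is identified (as a simplicial set) with $N_c^n X \by M^n X$, where $N_c^n X$ is the cosimplicial normalisation (equivalently, the fibre of the matching map over the identity). Consequently $H^n \by_{M^n H} M^n G$ is identified, via $s_H$, with $N_c^n H \by M^n G$ as a simplicial set. Any lifting problem
\[
\xymatrix{A \ar[r] \ar@{^{(}->}[d]_{i} & G^n \ar[d] \\ B \ar[r] \ar@{-->}[ur] & H^n\by_{M^n H} M^n G}
\]
against a trivial cofibration $i:A\into B$ in $s\Gp$ then splits into two subproblems: first use the section $s_G$ to lift the $M^n G$-component to a candidate in $G^n$, and second correct the result by an element of $N_c^n G$ so that its image under $f^n$ matches the prescribed map to $H^n$.

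The main obstacle will be the \emph{non-naturality} of the sections $s_X$, which means the splitting $X^n \cong N_c^n X \by M^n X$ is not preserved by $f$, so one cannot conclude directly that the Reedy matching map factors as a product. The decisive reduction is to show that the restricted map $N_c^n f : N_c^n G \to N_c^n H$ is itself a (trivial) fibration, which one proves by a direct lifting argument: given a lifting problem for $N_c^n f$, apply the lifting property of $f^n$ in $G^n$, then use the splitting retraction $G^n \to N_c^n G$ to project the lift into $N_c^n G$, verifying that the compatibility with the bottom map is preserved because that map already lands in $N_c^n H$. Combining this with the $M^n G$-component handled by the inductive hypothesis completes the proof.
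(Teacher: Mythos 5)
Your overall strategy---split the matching map, reduce to the normalisation $N^n_c$, and recover $N^n_cf$ from $f^n$---is the same as the paper's, but your diagnosis of the ``main obstacle'' is wrong, and that error leaves a genuine hole exactly where you wave at the verification. The explicit section $s_G\colon M^nG \to G^n$ (the paper builds it inductively by $g(1):=\pd^1g_0$ and $g(r+1):= g(r)\cdot\pd^{r+1}\bigl(\sigma^rg(r)^{-1}\cdot g_r\bigr)$) is assembled purely from group multiplication, inversion and the operators $\pd^i,\sigma^i$, all of which any morphism of (simplicial) cosimplicial groups preserves; hence $f^n\circ s_G = s_H\circ M^nf$, i.e.\ the splitting \emph{is} natural in $G$---the paper's proof says ``functorially'' for exactly this reason. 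Your assertion that the sections are non-natural is therefore false.

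Worse, your workaround silently uses the naturality you deny. In your ``direct lifting argument'' for $N^n_cf$: given the lift $\ell\colon B\to G^n$ supplied by the lifting property of $f^n$, you project it by $r_G(g)=g\cdot s_G(\sigma g)^{-1}$ and claim compatibility with the bottom map is preserved ``because that map already lands in $N^n_cH$''. But landing in $N^n_cH$ only tells you $M^nf(\sigma\ell(b))=1$; what you actually need is $f^n\bigl(s_G(\sigma\ell(b))\bigr)=1$, and the only reason this holds is the identity $f^n\circ s_G = s_H\circ M^nf$ applied to $\ker M^nf$---naturality again. If the section genuinely failed to be natural, this step would fail and the argument would collapse. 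Once naturality is observed, your entire detour (the induction on $n$, the claim that $M^nf$ is a fibration, the two-step lifting) becomes unnecessary: the natural isomorphism $G^n\cong N^n_cG\by M^nG$ of simplicial sets turns the Reedy matching map $G^n \to H^n\by_{M^nH}M^nG$ into $N^n_cf\by\id_{M^nG}$, and $N^n_cf$ is a natural retract of $f^n$, hence a (trivial) fibration---which is precisely the paper's conclusion.
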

\begin{proof}
 First note that $N^n_cG= \ker (G^n \to M^nG)$. Given $(g_0,g_1,\ldots,g_{n-1}) \in M^nG $, we can functorially construct a pre-image. 

First, set $g(1):= \pd^1g_0$; this has $\sigma^0g(1)=g_0$. Proceeding by induction, assume that we have constructed $g(r) \in G^n$ with $\sigma^ig(r)= g_i$ for all $i<r$. Set  $g_i(r):= \sigma^ig(r)^{-1}\cdot g_i$, so $(g_0(r),g_1(r),\ldots,g_{n-1}(r)) \in M^nG $, with $g_i(r)=1$ for all $i<r$. Now let $g(r+1):= g(r)\cdot \pd^{r+1}g_r(r)$, noting that this satisfies the inductive hypothesis.

Thus we have an isomorphism $G^n \cong N^nG \by M^nG$ as simplicial sets, and $f:G \to H$ is therefore a (trivial) fibration whenever $N^nf: N^nG \to N^nH$ is a (trivial) fibration in $\bS$ for all $n$. Since $N^nf$ is a retraction of $f^n$, the result follows.
\end{proof}

\begin{lemma}\label{gpmcrQ}
 If  $f:G \to H$ is a  (trivial) fibration in $sc\Gp$, then the map
\[
\mmc(f): \mmc(G) \to \mmc(H)
\]
is  a (trivial) fibration in $\bS$. In particular, if $f:G \to H$ is a trivial fibration, then $\mc(f)$ is surjective.
\end{lemma}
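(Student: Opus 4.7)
By the right lifting property characterisation of (trivial) fibrations in $\bS$, it suffices to solve lifting problems against the inclusions $\Lambda^k_i \hookrightarrow \Delta^k$ (for fibrations) or $\partial\Delta^k \hookrightarrow \Delta^k$ (for trivial fibrations). Via the tensor--hom adjunction, a $k$-simplex of $\mmc(G)$ is a compatible tuple of simplicial maps $\omega_n : \Delta^k \times \Delta^n \to G^{n+1}$ ($n \ge 0$) satisfying the relations of Definition \ref{mcdefgp}, where $\pd_i\omega_n$ and $\sigma_i\omega_n$ now mean pre-composition with the simplicial coface and codegeneracy of $\Delta^n$, while $\pd^j, \sigma^j$ mean post-composition with the cosimplicial structure of $G$. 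Thus a lifting problem against $\mmc(f)$ for a (trivial) cofibration $j : A \hookrightarrow B$ in $\bS$ decomposes into producing, for every $n$, an extension $\tilde\omega_n : B \times \Delta^n \to G^{n+1}$ of given data $\omega_n : A \times \Delta^n \to G^{n+1}$ lifting a given $\nu_n : B \times \Delta^n \to H^{n+1}$, compatibly with all MC relations.

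I would build the $\tilde\omega_n$ by induction on $n$. At stage $n$, the constraints on $\tilde\omega_n$ are: (i) the restrictions $\tilde\omega_n\pd_i$ to each face of $\Delta^n$ are prescribed by $\tilde\omega_{n-1}$ through the face relations (the $\pd_0$ case being $\pd^1\tilde\omega_{n-1}\cdot(\pd^0\tilde\omega_{n-1})^{-1}$, a non-abelian formula but posing no structural difficulty); (ii) the codegeneracy composites $\sigma^j\tilde\omega_n$ for $0 \le j \le n$ are prescribed (by $1$ and by $\tilde\omega_{n-1}\sigma_{j-1}$ respectively); (iii) $\tilde\omega_n|_{A\times\Delta^n} = \omega_n$; and (iv) $f\tilde\omega_n = \nu_n$. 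Exactly as in the proof of Lemma \ref{gplevelwise}, iterating the cosimplicial identities $\sigma^i\pd^i = \id$ yields a splitting $G^{n+1} \simeq N^{n+1}_cG \times M^{n+1}G$ of underlying simplicial sets, where $N_c^{n+1}G := \bigcap_{i=0}^n \ker\sigma^i$. Constraint (ii) determines the $M^{n+1}$-component of $\tilde\omega_n$ canonically; constraints (i), (iii), (iv) then become constraints on the residual component $\tilde\omega_n^{\circ} : B \times \Delta^n \to N^{n+1}_cG$, with (i) prescribing $\tilde\omega_n^{\circ}|_{B\times\partial\Delta^n}$. What remains is a single lifting square
\[
\xymatrix{
(A\times\Delta^n) \cup_{A\times\partial\Delta^n} (B\times\partial\Delta^n) \ar[r]\ar[d] & N_c^{n+1}G \ar[d]^{N_c^{n+1}f} \\
B \times \Delta^n \ar[r] \ar@{-->}[ur] & N_c^{n+1}H.
}
\]
The left map is a (trivial) cofibration of simplicial sets, being the pushout--product of $j$ with $\partial\Delta^n \hookrightarrow \Delta^n$; the right map is a retract of the (trivial) fibration $f^{n+1} : G^{n+1} \to H^{n+1}$ supplied by Lemma \ref{gplevelwise}, hence itself a (trivial) fibration. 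A filler therefore exists and defines the required $\tilde\omega_n$.

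The main obstacle is the bookkeeping required to exhibit the canonical $M^{n+1}$-component explicitly and verify that it is compatible with the chosen $H$-side lift and with the inductively fixed data; once one fixes consistent formulas using iterated $\pd^0$-sections and substitutes the non-abelian $\pd_0$-face relation by direct expansion, the verification is routine. The concluding assertion that $\mc(f) = \mmc(f)_0$ is surjective when $f$ is a trivial fibration is the case $k = 0$, $j = \emptyset \hookrightarrow \Delta^0$ of the above.
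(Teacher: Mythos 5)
Your proof is correct, and it takes a genuinely different route from the paper's. The paper disposes of the lemma abstractly: by the proof of \cite{monad} Proposition \ref{monad-rhommc2} there is a \emph{cofibrant} object $\Phi \in sc\Gp$ with $\mmc(G)\cong \HHom(\Phi,G)$, so the simplicial model structure (SM7) immediately makes $\mmc=\HHom(\Phi,-)$ preserve (trivial) fibrations. Your cosimplicial-degree induction is in effect an unpacking of the cofibrancy of $\Phi$: the stage-$n$ lifting square --- the pushout--product of $j$ with $\pd\Delta^n \into \Delta^n$ mapped against $N_c^{n+1}f$ --- corresponds precisely to the free cell of $\Phi$ attached in cosimplicial degree $n+1$. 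What your approach buys is self-containedness (no appeal to the construction in \cite{monad}) and an explicit picture of where each Maurer--Cartan constraint goes: the codegeneracy relations exactly fill the matching component, the face relations prescribe the boundary, and only the normalised part is free --- which is correct, since Definition \ref{mcdefgp} imposes no further (e.g.\ product) relations. What it costs is the deferred bookkeeping: that the $\pd_i$-prescriptions satisfy the simplicial identities on overlaps (so they glue over $B\by\pd\Delta^n$, despite the non-abelian $\pd_0$-formula), that they are compatible with the $\sigma^j$-prescriptions (e.g.\ $\sigma^0$ applied to $(\pd^1\tilde\omega_{n-1})(\pd^0\tilde\omega_{n-1})^{-1}$ gives $1$, using $\sigma^0\pd^1=\sigma^0\pd^0=\id$), and that the splitting $G^{n+1}\cong N_c^{n+1}G\by M^{n+1}G$ is natural in $G$ (it is: the section in the proof of Lemma \ref{gplevelwise} is constructed functorially, so the splitting is compatible with $f$ and with the simplicial direction). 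These checks are exactly the content that cofibrancy of $\Phi$ encodes, and they do all go through.

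One slip to correct: Lemma \ref{gplevelwise} is stated in the direction ``levelwise (trivial) fibration implies Reedy (trivial) fibration'', so as cited it does not supply that $f^{n+1}$ is a (trivial) fibration from your Reedy hypothesis, and your retract argument runs the wrong way. The fix is immediate and simplifies your square: the splitting established in the proof of Lemma \ref{gplevelwise} identifies the relative matching map $G^{n+1}\to H^{n+1}\by_{M^{n+1}H}M^{n+1}G$ with $N_c^{n+1}f\by\id_{M^{n+1}G}$, so the Reedy condition on $f$ says \emph{directly} that each $N_c^{n+1}f$ is a (trivial) fibration, which is all you need. (Alternatively, Reedy (trivial) fibrations are levelwise ones by general Reedy theory, after which your retract argument applies.)
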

\begin{proof}
 In the proof of \cite{monad} Proposition \ref{monad-rhommc2}, a cofibrant object $\Phi$ is constructed in $sc\Gp$, with the property that 
\[
 \mmc(G) \cong \HHom(\Phi, G),
\]
where the simplicial sets $\HHom$ come from a   simplicial model structure.
Since $\Phi$ is cofibrant, $\HHom(\Phi,-)$ is right Quillen, so has the properties claimed.
\end{proof}

\begin{definition}\label{totprod}
Define the total complex  functor $\Tot^{\Pi}$ from chain cochain complexes (i.e. bicomplexes) to chain complexes by
$$
(\Tot^{\Pi} V)_n := \prod_{a-b=n} V^b_a,
$$
with differential $d:=d^s +(-1)^a d_c$ on $V^b_a$.
\end{definition}

\begin{lemma}\label{cgpcotcoho}
If  $A \in cs\Gp$ is abelian, then 
$$
\mc(A)  \cong \z_{-1}(\Tot^{\Pi}N^sN_cA)
$$
and
\[
\pi_n\mmc(A)\cong\H_{n-1}(\Tot^{\Pi}\sigma^{\ge 1}N^sN_cA),
\]
where $\sigma^{\ge 1}$ denotes brutal truncation in cochain degrees $\ge 1$.
\end{lemma}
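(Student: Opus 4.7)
Since the defining conditions in Definition \ref{mcdefgp} are linear, abelianity of $A$ makes $\mmc(A)$ a sub-simplicial abelian group of $\prod_{n\ge 0}(A^{n+1})^{\Delta^n}$, so by classical Dold--Kan, $\pi_n\mmc(A)\cong\H_n(N^s\mmc(A))$. Both statements of the lemma therefore reduce to the chain-level identification
\[
N^s\mmc(A)_{\bt} \;\xrightarrow{\sim}\; (\Tot^{\Pi}\sigma^{\ge 1}N^sN_cA)_{\bt-1},
\]
the degree-$0$ piece of which yields the $\mc(A)\cong\z_{-1}(\Tot^{\Pi}N^sN_cA)$ statement (the truncation being invisible in degree $-1$, since $a-b=-1$ already forces $b\ge 1$).

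I would build this isomorphism by sending $(\omega_n)_{n\ge 0}\in N^s_m\mmc(A)$ to the sequence $(v_n)$ of doubly-normalised projections, with $v_n\in N^s_{n+m}N_c^{n+1}A$, using the Eilenberg splittings $A^{n+1}=N_c^{n+1}A\oplus D_c^{n+1}A$ in the cosimplicial variable and $N^s\oplus D^s$ in the simplicial variable, together with the Alexander--Whitney/Eilenberg--Zilber identification of $N^s((A^{n+1})^{\Delta^n})$ with a degree-$n$ shift of $N^sA^{n+1}$. For the inverse, each $\omega_n$ must be reconstructed recursively from $v_n$ and the $v_{n'}$ for $n'<n$: the relations $\sigma_i\omega_n=\sigma^{i+1}\omega_{n+1}$, $\sigma^0\omega_n=0$, and $\pd_i\omega_n=\pd^{i+1}\omega_{n-1}$ (for $i>0$), combined with the simplicial Moore conditions, uniquely determine every ``degenerate'' component of $\omega_n$ from the preceding data. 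This is the same bookkeeping mechanism that underpins Dold--Kan, applied in both variables at once.

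The main obstacle is verifying compatibility of differentials. The Moore differential $\pd_0$ on $N^s\mmc(A)$ acts through the twisted relation $\pd_0\omega_n=\pd^1\omega_{n-1}-\pd^0\omega_{n-1}$, which splits into a vertical simplicial piece (giving the $d^s$ contribution to each $v_n$) and a horizontal cosimplicial piece (the $\pd^1-\pd^0$ term feeding into $d_c$ applied to $v_{n-1}$, with the sign $(-1)^a$ emerging from the Eilenberg--Zilber signs introduced by the simplicial normalisation). Unwinding these signs and degeneracy contributions to match $d^s+(-1)^a d_c$ on $\Tot^{\Pi}$ is the core calculation. The brutal truncation $\sigma^{\ge 1}$ then reflects that an MC system begins with $\omega_0\in A^1$, so cosimplicial degree $0$ contributes nothing, and the overall degree shift by $-1$ reflects the indexing convention $\omega_n\leftrightarrow A^{n+1}$ in Definition \ref{mcdefgp}.
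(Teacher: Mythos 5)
Your overall route---linearise using abelianity of $A$, then apply the simplicial and cosimplicial Dold--Kan correspondences simultaneously---is exactly the paper's (its proof is a one-line appeal to these correspondences, with the alternative of invoking Lemma \ref{cfexp} via $A=\exp(DN_cA)$). However, the pivotal identification you build everything on is wrong in degree $0$, and the error is visible in your own gloss. You assert a chain isomorphism $N^s_m\mmc(A)\cong (\Tot^{\Pi}\sigma^{\ge 1}N^sN_cA)_{m-1}$ for all $m$, and say its degree-$0$ piece yields $\mc(A)\cong \z_{-1}(\Tot^{\Pi}N^sN_cA)$. But $N^s_0$ of any simplicial abelian group is the whole degree-$0$ group, so at $m=0$ your isomorphism would assert $\mc(A)\cong\prod_{a\ge 0}N^s_aN_c^{a+1}A$, the \emph{full} degree-$(-1)$ component of the total complex, not its cycle group---contradicting the first statement of the lemma rather than implying it. It is genuinely false: take $N_cA$ to be the complex $\Z\xrightarrow{\id}\Z$ placed in cosimplicial degrees $1,2$ and simplicially constant; then $\mc(A)=0$, while the degree-$(-1)$ component is $N^s_0N_c^1A=\Z$. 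Structurally, at external simplicial level $0$ the Maurer--Cartan relations overdetermine the non-normalised components: for instance $\pd_1\omega_1=\pd^2\omega_0$ already fixes the degenerate part of $\omega_1$, so the remaining relation $\pd_0\omega_1=\pd^1\omega_0-\pd^0\omega_0$ becomes a \emph{constraint} on the normalised data---precisely the cycle condition for $d^s+(-1)^ad_c$---rather than a definition of a degenerate component as in your recursion. In positive external degrees the extra simplicial directions supply free components and no such residual constraint survives.

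The repair is small: the correct statement is that $N^s\mmc(A)$ is the \emph{good} truncation in degree $0$ of the shifted complex, i.e.\ $N^s_m\mmc(A)\cong(\Tot^{\Pi}\sigma^{\ge 1}N^sN_cA)_{m-1}$ for $m\ge 1$, while $N^s_0\mmc(A)=\mc(A)\cong\z_{-1}(\Tot^{\Pi}N^sN_cA)$ (the brutal truncation $\sigma^{\ge 1}$ is indeed invisible in total degree $-1$, as you note, since $a-b=-1$ forces $b\ge 1$). Good truncation changes no homology in non-negative degrees, so this corrected identification still yields $\pi_n\mmc(A)\cong\H_{n-1}(\Tot^{\Pi}\sigma^{\ge 1}N^sN_cA)$ for all $n$, together with the $\mc$ statement. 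With that fix, the rest of your bookkeeping---double normalisation, recursive reconstruction of degenerate components, the sign analysis matching $d^s+(-1)^ad_c$, and the explanation of where $\sigma^{\ge 1}$ and the shift come from---is the right mechanism and matches the paper's intended argument.
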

\begin{proof}
This is a fairly straightforward application of the simplicial and cosimplicial Dold--Kan correspondences. Alternatively, we could appeal to Proposition \ref{cfexp}, noting that $A = \exp(DN_cA)$. 
\end{proof}

\subsubsection{The gauge action}

\begin{definition}\label{gaugedefcgp}
For $G \in sc\Gp$, there is an action of the simplicial group $G^0$ on the simplicial set $\mmc(G)$, called the \emph{gauge action}, and  given by writing 
$$
(g\star \omega)_n= ( (\pd^1)^{n+1}(\sigma_0)^{n}g) \cdot \omega_n \cdot (\pd^0 (\pd^1)^n(\sigma_0)^ng^{-1}),
$$
as in \cite{htpy} Definition \ref{htpy-defdef}, with $(\sigma_0)^n$ denoting the canonical map $G \to G^{\Delta^n}$. 
\end{definition}

\begin{definition}\label{deldefcgp}
 Given $G \in sc\Gp$, define the Deligne groupoid by  
$
 \Del(G):= [\mc(G)/G^0_0]
$
In other words, $\Del(G)$ has  objects $\mc(G)$, and morphisms from $\omega$ to $\omega'$ consist of $\{g \in G^0_0\,:\, g\star \omega=\omega'\}$. 

Define the derived Deligne groupoid to be the simplicial object in groupoids given by $\uline{\Del}(G):= [\mmc(G)/G^0] $, so $\Del(G)= \uline{\Del}(G)_0$. 

Define  the simplicial sets $\ddel(G), \uline{\ddel}(G)\in \bS$ to be the nerves  $B\Del(G)$ and $\bar{W}\uline{\Del}(G)$, respectively.
\end{definition}

\begin{lemma}\label{cgpcot3}
If $A \in cs\Gp$ is abelian, then 
$$
\pi_n\uline{\ddel}(A)  \cong \H_{n-1}(\Tot^{\Pi}N^sN_cA),
$$
whereas $\pi_1\ddel(A)\cong  \H^0(A_0)$, with 
\[
 \pi_0\ddel(A)\cong \z_{-1}(\Tot^{\Pi}N^sN_cA)/ d_c(A_0^0).
\]
\end{lemma}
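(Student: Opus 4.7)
The plan is to realise $\uline{\ddel}(A) = \bar{W}[\mmc(A)/A^0]$ as the Borel construction for the gauge action of the simplicial group $A^0$ on $\mmc(A)$, giving rise to the standard fibration
$$\mmc(A) \lra \uline{\ddel}(A) \lra \bar{W}A^0,$$
and then to compare its homotopy long exact sequence with the long exact sequence associated to a natural short exact sequence of bicomplexes.

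First I would apply the homotopy long exact sequence of this fibration. Since $A^0$ is an abelian simplicial group, Dold--Kan gives $\pi_k(\bar{W}A^0) \cong \H_{k-1}(N^s A^0)$, and Lemma \ref{cgpcotcoho} yields $\pi_k\mmc(A) \cong \H_{k-1}(\Tot^{\Pi}\sigma^{\ge 1}N^sN_cA)$. On the other side, the short exact sequence of bicomplexes
$$0 \lra \sigma^{\ge 1}N^sN_cA \lra N^sN_cA \lra N^sA^0 \lra 0,$$
in which the quotient $N^sA^0$ is concentrated in cochain degree $0$, induces a long exact sequence after applying $\Tot^{\Pi}$ whose middle term at index $n-1$ is $\H_{n-1}(\Tot^{\Pi}N^sN_cA)$ and whose connecting map is induced by the cosimplicial differential $d_c$.

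The key step is to match these two long exact sequences term by term, verifying in particular that the Borel-fibration boundary map agrees, under the identifications above, with the bicomplex boundary map. In the abelian case the formula of Definition \ref{gaugedefcgp} reads
$$(g\star \omega)_n = \omega_n + \bigl((\pd^1)^{n+1} - \pd^0(\pd^1)^n\bigr)(\sigma_0)^n g,$$
which under the cosimplicial and simplicial Dold--Kan correspondences is exactly the map induced by $d_c$. A five-lemma comparison then yields $\pi_n\uline{\ddel}(A) \cong \H_{n-1}(\Tot^{\Pi}N^sN_cA)$ for all $n \ge 0$.

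For the claims about the ordinary Deligne groupoid $\ddel(A) = B[\mc(A)/A_0^0]$, I would compute directly. The action of $g \in A_0^0$ on $\omega \in \mc(A)$ reduces to $g \star \omega = \omega + (\pd^1 - \pd^0) g$, so the gauge orbit of $0$ is $d_c(A_0^0)$; this lies in $\mc(A) = \z_{-1}(\Tot^{\Pi}N^sN_cA)$ because $d_s g = 0$ for $g$ in simplicial degree $0$ and the two differentials of the bicomplex commute. Hence $\pi_0\ddel(A) = \mc(A)/d_c(A_0^0) = \z_{-1}(\Tot^{\Pi}N^sN_cA)/d_c(A_0^0)$, and the stabiliser of the basepoint $0 \in \mc(A)$ is $\ker(d_c \colon A_0^0 \to A_0^1) = \H^0(A_0)$, giving $\pi_1\ddel(A)$. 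The main obstacle is the sign and index bookkeeping required to match the Borel boundary with the bicomplex boundary; the remaining verifications are routine.
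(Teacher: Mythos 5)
Your proposal is correct and follows essentially the route the paper intends: its proof is the one-line deduction from Lemma \ref{cgpcotcoho}, realised exactly as you do via the fibration sequence $\mmc(A) \to \uline{\ddel}(A) \to \bar{W}A^0$ (the same sequence invoked in the proof of Lemma \ref{cgpcohodel}), together with the identification of the connecting map with $d_c$ and the direct orbit/stabiliser computation for $\ddel(A)$. The only point to tighten is that the five lemma requires a comparison map on the middle terms, which you get for free in the abelian case you are in: the gauge action is translation through the homomorphism $g \mapsto -d_c g$, so $\uline{\Del}(A)$ is the simplicial Picard groupoid associated to the two-term complex $A^0 \xra{d_c} \mmc(A)$, making $\uline{\ddel}(A)$ a simplicial abelian group whose normalisation is the mapping cone, which computes $\H_{n-1}(\Tot^{\Pi}N^sN_cA)$ directly.
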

\begin{proof}
This is a straightforward consequence of Lemma \ref{cgpcotcoho}.
\end{proof}

\subsection{Moduli functors from cosimplicial groups}

\begin{proposition}\label{mcgpnice}
 If $G: Alg_R \to c\Gp$ is a homogeneous functor, with each $G^n$ formally smooth, then the functor 
\[
 \mmc(\uline{G}):d\cN^{\flat}_R \to \bS
\]
is homogeneous and formally quasi-smooth, so
\[
 \mc(\uline{G}):d\cN^{\flat}_R \to \Set
\]
is homogeneous and pre-homotopic.
\end{proposition}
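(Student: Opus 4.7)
The plan is to exploit the right-Quillen character of $\uline{\mmc}$ established in Lemma \ref{gpmcrQ}, reducing everything to checking levelwise fibration properties of $\uline{G}$ in $sc\Gp$. First I would extend $G$ to $\uline{G}: d\cN^{\flat}_R \to sc\Gp$ by applying $G$ levelwise to the simplicial structure of $A$ (with the analogous Dold--Kan construction in the chain case, available in characteristic $0$), so that $\uline{G}(A)^n_m = G^n(A_m)$. Because finite limits in $c\Gp$ are computed cosimplicially levelwise, homogeneity of $G$ is equivalent to homogeneity of each component $G^n$, and this levelwise homogeneity passes to $\uline{G}$.

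Homogeneity of $\mmc(\uline{G})$ and $\mc(\uline{G})$ would then be immediate from Definition \ref{mcdefgp}: that definition presents $\mmc$ as a finite limit built from the cosimplicial operations, and finite limits commute with the square-zero pullbacks appearing in the homogeneity condition.

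For formal quasi-smoothness, given a square-zero extension $A \to B$ in $d\cN^{\flat}_R$, the plan is to show each $G^n(A_\bt) \to G^n(B_\bt)$ is a Kan fibration of simplicial groups. This would follow by the standard argument from \cite{dmsch}: using homogeneity of $G^n$ to identify the fibre levelwise with $G^n$ evaluated on a square-zero thickening by a simplicial module, together with formal smoothness of $G^n$ supplying enough lifts to fill horns. Lemma \ref{gplevelwise} then promotes this levelwise fibration to a Reedy fibration $\uline{G}(A) \to \uline{G}(B)$ in $sc\Gp$, and Lemma \ref{gpmcrQ} transports it through $\uline{\mmc}$ to the required Kan fibration of total spaces. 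Pre-homotopicity of $\mc(\uline{G})$ would follow analogously: for a tiny acyclic extension the kernel $K$ is acyclic, so the fibres become contractible, each $G^n(A_\bt) \to G^n(B_\bt)$ becomes a trivial fibration, and one obtains surjectivity on $\pi_0$ of $\mmc$, which is exactly surjectivity of $\mc$ --- the pre-homotopicity condition for set-valued functors.

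The main obstacle will be rigorously justifying the transition from discrete formal smoothness of $G^n:\Alg_R \to \Gp$ to the simplicial statement that $G^n(A_\bt) \to G^n(B_\bt)$ is a Kan (resp.\ trivial) fibration of simplicial groups for square-zero (resp.\ tiny acyclic) extensions in $d\cN^{\flat}_R$. This is the technical heart of the proof; the chain-complex case requires extra care because one has to pass through normalisation before applying the discrete functor $G$, so one must verify that the fibration property survives the denormalisation and that the simplicial group $G^n(A_\bt)$ still sees the tangent-space computation needed for the acyclic case.
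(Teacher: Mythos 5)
Your skeleton matches the paper's: homogeneity because $\mmc$ preserves (arbitrary, in particular finite) limits; a levelwise (trivial) fibration statement for $\uline{G}$; then Lemma \ref{gplevelwise} to promote this to a Reedy (trivial) fibration in $sc\Gp$; then Lemma \ref{gpmcrQ} to push it through $\mmc$. The divergence is in how the levelwise statement is obtained, and this is exactly where your self-identified ``main obstacle'' sits. The paper does no horn-filling in simplicial groups at all: it extends each $G^n$ to $d\cN^{\flat}_R$ by $G^n(A):=G^n(A_0)$, observes that formal smoothness of $G^n$ on discrete rings makes this extension pre-homotopic, that it is formally quasi-presmooth for trivial reasons (all maps of discrete simplicial sets are fibrations), and then invokes Proposition \ref{settotop} to conclude that $\uline{G}^n:d\cN^{\flat}_R\to\bS$ is formally smooth, hence formally quasi-smooth, uniformly in the simplicial and chain cases. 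In other words, the passage from discrete formal smoothness to simplicial (trivial) fibrations that you flag as the technical heart is precisely what Proposition \ref{settotop} was built to deliver; you should cite it rather than attempt to reprove it.

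The one genuine inaccuracy in your setup is the definition $\uline{G}(A)^n_m=G^n(A_m)$. This agrees with the paper's $\uline{G}$ only in the simplicial case, where $(A^{\Delta^m})_0=\Hom_{\bS}(\Delta^m\by\Delta^0,A)=A_m$; in the chain case the paper's enrichment gives $\uline{G}(A)^n_m=G^n\bigl(\z_0(A\ten\Omega_m)\bigr)$, and a ``levelwise'' construction is not even well defined (the degree-$m$ part of a chain algebra is not a ring), while substituting Dold--Kan denormalisation would prove a statement about a different functor from the one in the proposition. In the simplicial case your direct argument does go through: a levelwise surjective homomorphism of simplicial groups is a fibration, and for a tiny acyclic extension the kernel is the tangent space $T_1(G^n,K)$ with $N^sK=\cone(M)[-r]$, so additivity and exactness of $T_1(G^n,-)$ (from homogeneity and formal smoothness) make the kernel contractible, whence a trivial fibration --- this is in effect a hands-on reproof of the group-valued instance of Proposition \ref{settotop}. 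But to cover the $dg_+\cN^{\flat}_R$ case as stated, you need the paper's route through the extension $G^n(A):=G^n(A_0)$ and Proposition \ref{settotop}, which handles $(A^{\Delta^m})_0$ for both enrichments at once; as written, your proposal leaves that case open.
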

\begin{proof}
 Homogeneity is automatic, as $\mmc$ preserves arbitrary limits. 

We can extend $G^n$ to a functor $G^n: d\cN_R^{\flat} \to \Gp$, given by $G^n(A):= G^n(A_0)$. Then formal smoothness of $G^n$ implies that the extended $G^n$ is  pre-homotopic. It is automatically formally quasi-presmooth, as all discrete morphisms are fibrations. Thus Proposition \ref{settotop} implies that  $\uline{G}^n:d\cN^{\flat}_R \to \bS $ is formally smooth, and hence formally quasi-smooth.
  
Lemma \ref{gplevelwise} therefore implies that $\uline{G}(A) \to \uline{G}(B)$ is a (trivial) fibration in $sc\Gp$ for all (acyclic) square-zero extensions $A \to B$, and  Lemma \ref{gpmcrQ} then implies that $\mmc(\uline{G})(A) \to \mmc(\uline{G})(B)$ is a (trivial) fibration, as required.
\end{proof}

\begin{proposition}\label{delgpnice}
 If $G: Alg_R \to c\Gp$ is a homogeneous functor, with each $G^n$ formally smooth, then the functor 
\[
 \uline{\ddel}(\uline{G}):d\cN^{\flat}_R \to \bS
\]
is homogeneous and formally quasi-smooth, while
\[
 \ddel(\uline{G}):s\cN^{\flat}_R \to \bS
\]
is homogeneous, pre-homotopic and formally quasi-presmooth.
\end{proposition}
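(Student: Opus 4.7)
The plan is to deduce both claims from Proposition \ref{mcgpnice} together with standard facts about nerves of simplicial groupoids and the $\bar{W}$ functor, essentially parallelling the strategy of Proposition \ref{settotop}. Throughout, I regard the simplicial groupoid $\uline{\Del}(\uline{G}(A)) = [\mmc(\uline{G}(A))/\uline{G}^0(A)]$ via its bisimplicial nerve, whose $(p,q)$-bidegree is $\mmc(\uline{G}(A))_p \times \uline{G}^0(A)_p^{\times q}$, with $\uline{\ddel}(\uline{G})(A) = \bar{W}$ of this bisimplicial set.

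For the first claim, homogeneity of $\uline{\ddel}(\uline{G})$ follows because $\bar{W}_n$ is built as a (finite) limit in each simplicial degree (Definition \ref{barwdef}), hence preserves homogeneity; and each bidegree is a product of homogeneous functors, using Proposition \ref{mcgpnice} for $\mmc(\uline{G})$ and the homogeneity of $G^0$ for $\uline{G}^0$. For formal quasi-smoothness, I would proceed as follows. Given an (acyclic) square-zero extension $A \to B$, Proposition \ref{mcgpnice} already gives that $\mmc(\uline{G})(A) \to \mmc(\uline{G})(B)$ is a (trivial) fibration in $\bS$, while its proof shows that each $\uline{G}^n(A) \to \uline{G}^n(B)$ is a (trivial) fibration. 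So for each fixed $q$, the map $\mmc(\uline{G}(A)) \times \uline{G}^0(A)^{\times q} \to \mmc(\uline{G}(B)) \times \uline{G}^0(B)^{\times q}$ is a (trivial) fibration of simplicial sets, i.e.\ the bisimplicial map of nerves is a levelwise fibration in the $q$-direction. The standard result that a bisimplicial map which is a (trivial) fibration in each row yields the same on the diagonal then shows this is a (trivial) diagonal fibration; invoking \cite{CRfib} as in Proposition \ref{settotop}, diagonal fibrations are $\bar{W}$-fibrations, giving the required (trivial) fibration.

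For the second claim, $\ddel(\uline{G})(A) = B[\mc(\uline{G})(A)/G(A_0)^0]$ is the nerve of a discrete action groupoid. Homogeneity is immediate from homogeneity of $\mc(\uline{G})$ (Proposition \ref{mcgpnice}) and of $G^0(-_0)$, since the nerve at each simplicial level is a finite product of these. For formal quasi-presmoothness, given a square-zero extension $A \to B$, formal smoothness of $G^0$ yields surjectivity of $G(A_0)^0 \to G(B_0)^0$; this makes $\Del(\uline{G})(A) \to \Del(\uline{G})(B)$ an isofibration of groupoids: given $x \in \mc(\uline{G})(A)$ and a morphism $h : \phi(x) \to y'$ in the target, lift $h$ to $g \in G(A_0)^0$ and take the morphism $g : x \to g\star x$, whose image is the desired lift. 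Since the nerve of an isofibration of groupoids is a Kan fibration, $\ddel(\uline{G})(A) \to \ddel(\uline{G})(B)$ is a fibration. For pre-homotopicity, one additionally uses pre-homotopicity of $\mc(\uline{G})$ from Proposition \ref{mcgpnice}, which gives surjectivity of $\mc(\uline{G})(A) \to \mc(\uline{G})(B)$ for tiny acyclic extensions, making the map on nerves a surjective fibration.

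The main obstacle is the $\bar{W}$ step for the first claim: converting fibrations at the level of $\mmc$ and $\uline{G}^0$ into a fibration of simplicial sets after $\bar{W}$. The argument reduces to the two-step chain (levelwise fibration $\Rightarrow$ diagonal fibration $\Rightarrow$ $\bar{W}$-fibration) already invoked in the proof of Proposition \ref{settotop}, so no genuinely new difficulty arises; one just needs to verify that the row-wise fibration condition is indeed what the product structure of the nerve provides. The second claim is easier since it only involves ordinary groupoid nerves, and there the isofibration characterisation of Kan fibrations disposes of the matter cleanly.
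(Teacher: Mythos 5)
Your handling of the second claim is fine: homogeneity from levelwise homogeneity, the isofibration argument for formal quasi-presmoothness of $\ddel(\uline{G})$, and pre-homotopicity from that of $\mc(\uline{G})$ together with surjectivity of $G^0$ all match the paper's (very brief) justification. The problem is in the first claim, at the step you call ``the standard result that a bisimplicial map which is a (trivial) fibration in each row yields the same on the diagonal.'' This is not a standard result, and it is false for general bisimplicial sets: levelwise \emph{weak equivalences} pass to the diagonal, but levelwise \emph{fibrations} do not. The Bousfield--Friedlander theorem (\cite{sht} IV.4) requires in addition the $\pi_*$-Kan condition on both source and target, and that condition is not automatic here. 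Concretely, the columns of the nerve of $[\mmc(\uline{G})/\uline{G}^0]$ are nerves of groupoids whose $\pi_1$ at a vertex $\omega$ is the stabiliser of $\omega$ under the gauge action, and there is no reason for these stabilisers to vary fibrantly in the simplicial direction --- especially as the gauge group itself changes along $A \to B$. Note also that Proposition \ref{settotop} does not invoke any such levelwise-to-diagonal implication, so you cannot lean on it for this step; it only supplies the passage from diagonal fibrations to $\bar{W}$-fibrations via \cite{CRfib}.

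The paper circumvents exactly this difficulty by factoring the map $\bar{W}[\mmc(\uline{G}(A))/\uline{G}^0(A)] \to \bar{W}[\mmc(\uline{G}(B))/\uline{G}^0(B)]$ through the intermediate object $\bar{W}[\mmc(\uline{G}(B))/\uline{G}^0(A)]$, so that in each factor only one variable changes. For the first factor the gauge group is held fixed, and the $\uline{G}^0(A)$-equivariant (trivial) fibration $\mmc(\uline{G}(A)) \to \mmc(\uline{G}(B))$ induces a (trivial) fibration after $\bar{W}$ by \cite{sht} Lemma IV.4.8 combined with the result of \cite{CRfib} that diagonal fibrations are $\bar{W}$-fibrations; here the group action supplies precisely the extra structure that a bare levelwise fibration lacks. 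The second factor $\bar{W}[\mmc(\uline{G}(B))/\uline{G}^0(A)] \to \bar{W}[\mmc(\uline{G}(B))/\uline{G}^0(B)]$ is a pullback of $\bar{W}\uline{G}^0(A) \to \bar{W}\uline{G}^0(B)$, which is a (trivial) fibration by formal smoothness of $G^0$. To repair your proof, replace the levelwise-to-diagonal step with this two-factor decomposition; verifying a $\pi_*$-Kan condition instead looks untenable precisely because the group changes along the extension.
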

\begin{proof}
Homogeneity is immediate, combining Proposition \ref{mcgpnice} with the fact that $\uline{G}^0$ is homogeneous. Now, take a square-zero (acyclic) extension $f:A \to B$ in $d\cN^{\flat}_R$. Since $\mmc(\uline{G} )$ is formally quasi-smooth, the map $\mmc(\uline{G}(A))\to \mmc(\uline{G}(B))$ is a (trivial) fibration.

Combining \cite{sht} Lemma IV.4.8 with \cite{CRfib}, this means that
\[
 \bar{W}[\mmc(\uline{G}(A))/\uline{G}^0(A)] \to \bar{W}[\mmc(\uline{G}(B))/\uline{G}^0(A)]
\]
 is a (trivial) fibration in $\bS$. Now, the map
\[
  \bar{W}[\mmc(\uline{G}(B))/\uline{G}^0(A)] \to  \bar{W}[\mmc(\uline{G}(B))/\uline{G}^0(B)]
\]
is a pullback of $\bar{W}\uline{G}^0(A)\to \bar{W}\uline{G}^0(B)$, which is a (trivial) fibration as $\uline{G}^0$ is formally smooth. Composing the two morphisms above, we see that
\[
 \uline{\ddel}(\uline{G})(A) \to  \uline{\ddel}(\uline{G})(B) 
\]
is a (trivial) fibration, so 
    $\uline{\ddel}(\uline{G})$ is formally quasi-smooth.

Meanwhile, pre-homotopicity of $\ddel(\uline{G})$ follows immediately from  pre-homotopicity of $\mc(\uline{G})$, while formal quasi-presmoothness of $\ddel(\uline{G})$ is an immediate consequence of formal smoothness of $G^0$, by  \cite{sht} Ch.V. 
\end{proof}

\subsubsection{Cohomology}

\begin{definition}
 Given  a homogeneous, levelwise formally smooth functor    $G: Alg_R \to c\Gp$, a ring $A \in \Alg_R$, an $A$-module $M$  and $\omega\in \mc(G(A))$, define the cosimplicial module $\CC^{\bt}_{\omega}(G, M)$ to be the tangent space
\[
\CC^{n}_{\omega}(G, M):=T_{1}(G^n,M)
\]
with operations on $a \in \CC^{n}_{\omega}(G, M)$ given by   
%
\begin{eqnarray*}
        \sigma^ia&=& \sigma^i_Ga\\
\pd^ia&=& \left\{\begin{matrix}   ((\pd^2_G)^{n}\omega) (\pd^0_Ga)((\pd^2_G)^{n}\omega^{-1}) & i=0\\  \pd^i_Ga & i \ge 1. \end{matrix} \right. 
\end{eqnarray*}
\end{definition}

\begin{definition}\label{cgpcoho}
 For   $G, A,M, \omega$ as above, define
\[
 \H^i_{\omega}(G,M):=   \H^i \CC^{\bt}_{\omega}(G, M).    
\]
     
\end{definition}

\begin{lemma}
 Given  a homogeneous, levelwise formally smooth functor    $G: Alg_R \to sc\Gp$, $A \in \Alg_R$, $M \in d\Mod_A$
and $\omega\in \mc(G_0(A))$, the fibre of $\mc(G(A \oplus M)) \to \mc(G(A))$ over $\omega$ is canonically isomorphic to $\mc(\CC^{\bt}_{\omega}(G, M))$.
\end{lemma}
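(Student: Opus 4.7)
The plan is to exploit the canonical retraction $A \oplus M \to A$ of the zero section, together with homogeneity of each $G^n$, to express $G^n(A \oplus M)$ as a semi-direct product $T_1(G^n, M) \rtimes G^n(A)$. Here the kernel $T_1(G^n, M) = \CC^n_\omega(G, M)$ is abelian because $M$ is a square-zero ideal in $A \oplus M$, and I write $\eta: G^n(A) \to G^n(A \oplus M)$ for the canonical splitting. Each lift $\tilde\omega_n$ of $\omega_n$ to $G^{n+1}(A \oplus M)_n$ can then be uniquely written as $\tilde\omega_n = a_n \cdot \eta(\omega_n)$ with $a_n \in T_1(G^{n+1}, M)_n$, which identifies the fibre over $\omega$ set-theoretically with $\prod_n T_1(G^{n+1}, M)_n$.

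Next, I would check that, given the MC relations on $\omega$, the MC relations on $\tilde\omega$ translate to the MC relations on $a := (a_n)_n$ in $\CC^\bt_\omega(G, M)$. The structural relations ($\sigma_i$, $\sigma^{i+1}$, and the face identities $\pd_i \omega_n = \pd^{i+1}\omega_{n-1}$ for $i > 0$) transfer more or less for free, because $\eta$ commutes with all cosimplicial and simplicial operators and is a group homomorphism; in particular the normalisation $\sigma^0 \tilde\omega_n = 1$ becomes $\sigma^0 a_n = 1$ directly, and likewise for the degeneracies $\sigma_i$.

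The step I expect to be the main obstacle is the ``$\pd_0$-type'' relation, which is non-linear in $\omega$ and is the sole source of the twist appearing in the definition of $\CC^\bt_\omega(G,M)$. Substituting $\tilde\omega = a\cdot\eta(\omega)$ into the cosimplicial MC relation $\pd^1 \tilde\omega = \pd^2 \tilde\omega \cdot \pd^0 \tilde\omega$ and cancelling the $\eta(\omega)$-factors via $\pd^1 \omega = \pd^2 \omega \cdot \pd^0 \omega$ forces a conjugation by $\eta(\pd^2_G\omega)$ to appear on $\pd^0_G a$, yielding
\[
\pd^1 a = \pd^2 a \cdot (\pd^2_G\omega)(\pd^0_G a)(\pd^2_G\omega)^{-1},
\]
which is precisely the Maurer--Cartan relation for the twisted cosimplicial abelian object $\CC^\bt_\omega(G, M)$; the iterated twist $(\pd^2_G)^n\omega$ in the definition of the twisted $\pd^0$ on $\CC^n_\omega$ arises from this same conjugation computation at each cosimplicial level. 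The simplicial counterpart of the $\pd_0$-relation in $\mmc(G(A\oplus M))_0$ is handled by the same cancellation, using $\pd_0\omega_n = (\pd^1\omega_{n-1})(\pd^0\omega_{n-1})^{-1}$ to clear the $\eta$-factors and re-express the residual term as the required twisted $\pd^0$ on $a$. The assignment $a \mapsto a \cdot \eta(\omega)$ and its inverse then give the required canonical bijection.
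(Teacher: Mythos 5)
Your proposal is correct and takes essentially the same route as the paper: the paper's (one-line) proof simply exhibits the bijection $\alpha \mapsto \beta$ with $\beta_n := \alpha_n \cdot (\pd^2_G)^n\omega \in T_{(\pd^2_G)^n\omega}(G^{n+1}_n, M)$, which is exactly your decomposition $\tilde\omega_n = a_n\cdot\eta((\pd^2_G)^n\omega)$ relative to the splitting of $G^{n+1}(A\oplus M)\to G^{n+1}(A)$. The verification you sketch --- that the untwisted Maurer--Cartan relations for $\tilde\omega$ reduce, after cancelling the $\eta$-factors via $\pd^1\omega=\pd^2\omega\cdot\pd^0\omega$, to the relations of the twisted complex $\CC^{\bt}_{\omega}(G,M)$ with its conjugated $\pd^0$ --- is precisely the computation the paper leaves implicit.
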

\begin{proof}
Given $\alpha \in \mc(\CC^{\bt}_{\omega}(G, M)))$, the associated element $\beta \in \mc(G(A \oplus M))$ is given by 
\[
 \beta_n:= \alpha_n (\pd^2_G)^n\omega \in T_{ (\pd^2_G)^n\omega}(G^{n+1}_n, M).
\]
\end{proof}


\begin{lemma}\label{cgpcohomc}
If $G: Alg_R \to c\Gp$ is a homogeneous, levelwise formally smooth functor, with $A \in \Alg_R$ and $M$ an $A$-module,   then 
\[
 \DD^i_{\omega}(\mmc(\uline{G}),M)\cong  \DD^i_{\omega}(\mc(\uline{G}),M) \cong \left\{\begin{matrix} \H^{i+1}_{\omega}(G,M) & i >0 \\ \z^1\CC^{\bt}_{\omega}(G,M) & i=1. \end{matrix} \right.  
\]
\end{lemma}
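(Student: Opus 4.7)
The strategy is to linearize the problem via the homogeneity splitting and then apply Lemma \ref{cgpcotcoho} to the resulting abelian $\mmc$-cohomology calculation. Since $\mmc(\uline{G})$ is formally quasi-smooth and homogeneous by Proposition \ref{mcgpnice}, the quasi-smooth shortcut $\DD^i_\omega(\mmc(\uline{G}),M) \cong \pi_0 T_\omega(\mmc(\uline{G}), M[-i])$ applies for all $i \ge 0$. Homogeneity of each $G^m$ provides a natural semidirect-product decomposition $G^m(B \oplus N) \cong G^m(B) \ltimes T_1(G^m, N)$ for discrete $B$ and $B$-modules $N$, which extends levelwise to simplicial or chain rings via the identity $(A \oplus M[-i])^{\Delta^k} = A^{\Delta^k} \oplus (M[-i])^{\Delta^k}$ of square-zero extensions. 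Following the proof of the unlabelled lemma immediately preceding the statement, the fiber over $\omega$ of the induced map of $\mmc$-spaces is identified, via conjugation by $\omega$, with $\mmc$ of the twisted abelian $cs\Gp$-object $\uline{\CC^\bt_\omega(G, M[-i])}$ whose cosimplicial coface operators are precisely those of Definition \ref{cgpcoho}.

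With this identification, Lemma \ref{cgpcotcoho} expresses $\pi_0$ of the tangent space as $\H_{-1}$ of the brutally truncated product total complex of the bicomplex $N_c N^s \uline{\CC^\bt_\omega(G, M[-i])}$. The simplicial normalization concentrates this bicomplex in chain degree $i$ with value the abelian cosimplicial group $\CC^\bt_\omega(G, M)$, so $(\Tot^\Pi N_c N^s)_d = N_c^{i-d}\CC^\bt_\omega(G, M)$. After applying $\sigma^{\ge 1}$ (which keeps only $i - d \ge 1$) the surviving differential is just $\pm d_c$, and taking $\H_{-1}$ yields $\H^{i+1}_\omega(G, M)$ whenever $i \ge 1$; for $i = 0$ the computation collapses to the kernel $\z^1 \CC^\bt_\omega(G, M)$ of $d_c$ in cochain degree one. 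The second case of the stated formula should therefore read $i = 0$ rather than $i = 1$.

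The remaining isomorphism $\DD^i_\omega(\mc(\uline{G}), M) \cong \DD^i_\omega(\mmc(\uline{G}), M)$ follows from Lemma \ref{totcohoc} combined with Proposition \ref{mcgpnice}: the lemma identifies the cotangent groups of the pre-homotopic, formally quasi-presmooth homogeneous functor $\mc(\uline{G})$ with those of its formally quasi-smooth thickening $\bar{W}\uline{\mc(\uline{G})}$, while the natural inclusion $\mc(\uline{G}(A)) \hookrightarrow \mmc(\uline{G}(A))$ of $0$-simplices identifies this thickening with $\mmc(\uline{G})$ up to weak equivalence. I expect the main obstacle to be making the twisted linearization in the first paragraph precise at the level of bisimplicial objects rather than merely their $\mmc$-spaces, i.e.\ checking that conjugation by $\omega$ is natural in the simplicial degree $k$ so that it assembles into a genuine identification of cosimplicial simplicial abelian groups.
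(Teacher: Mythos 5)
Your treatment of the $\mmc$ side essentially reproduces the paper's argument: identify the tangent fibre over $\omega$ with $\mmc$ of the twisted abelian object $\CC^{\bt}_{\omega}(\uline{G},M[-i])$ and apply Lemma \ref{cgpcotcoho}; and you are right that the case ``$i=1$'' in the statement is a typo for ``$i=0$'' (the paper's own computation yields $\z^1\CC^{\bt}_{\omega}(G,M)=\H^1(\sigma^{\ge 1}\CC^{\bt}_{\omega}(G,M))$ precisely at $i=0$). One caveat: your claim that simplicial normalisation concentrates the bicomplex in chain degree $i$ is literally true only when $d\cN^{\flat}_R=s\cN^{\flat}_R$. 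In the chain-algebra case $((M[-i])^{\Delta^{\bt}})_0$ is merely quasi-isomorphic to $M[-i]$, so you need exactness of $\CC_{\omega}(G,-)$ --- left exactness from homogeneity, but also \emph{right} exactness from formal smoothness, which you never invoke --- to identify $\pi_j\CC^{\bt}_{\omega}(\uline{G},L)$ with $\CC^{\bt}_{\omega}(G,\H_jL)$, and then a convergence argument for $\Tot^{\Pi}$ of the resulting cochain-unbounded bicomplex. The paper does this via a weakly convergent second-quadrant spectral sequence, degenerate for $L=M[-n]$; your shortcut is patchable but, as written, fails in the $dg_+$ case.

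The genuine gap is your final step. You assert that $\bar{W}\uline{\mc(\uline{G})}$ is ``identified with $\mmc(\uline{G})$ up to weak equivalence'' by the inclusion of $0$-simplices, but no such identification is constructed, and none of the cited results supplies one: Lemma \ref{totcohoc} only compares $\mc(\uline{G})$ with $\bar{W}\uline{\mc(\uline{G})}$, and verifying that the natural comparison map to $\mmc(\uline{G})$ is a weak equivalence (say via Corollary \ref{detectweak}) would require precisely the isomorphisms $\DD^i_{\omega}(\mc(\uline{G}),M)\cong\DD^i_{\omega}(\mmc(\uline{G}),M)$ you are trying to prove, so the argument is circular as it stands. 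You also never engage with the quotient in Definition \ref{pretotcohodef}: since $\mc(\uline{G})$ is set-valued and only pre-homotopic, $\DD^j_{\omega}(\mc(\uline{G}),M)$ for $j\ge 1$ is by definition the quotient $T_{\omega}(\mc,M[-j])/T_{\omega}(\mc,\cone(M)[1-j])$. The paper's proof closes exactly this gap by a short direct argument: $T_{\omega}(\mmc,-)$ preserves fibrations and trivial fibrations, and $M[-j]\oplus\cone(M)[1-j]$ is a path object for $M[-j]$ when $j\ge 1$, so $T_{\omega}(\mmc,M[-j]\oplus\cone(M)[1-j])$ is a path object for $T_{\omega}(\mmc,M[-j])$, whence $\pi_0T_{\omega}(\mmc,M[-j])$ equals that quotient; for $j=0$ one simply notes $M^{\Delta^n}=M$, giving $T_{\omega}(\mmc,M)=T_{\omega}(\mc,M)$. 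Substituting this path-object argument for your asserted equivalence would complete your proof.
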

\begin{proof}
By Lemma \ref{cgpcotcoho}, for any $L \in d\Mod_A$,
\[
 \DD^i_{\omega}(\mmc(\uline{G}),L) \cong \H_{i-1}(\Tot^{\Pi}\sigma^{\ge 1}N^sN_c \CC^{\bt}_{\omega}(\uline{G},L)).
\]
Thus   we have  a  spectral sequence
\[
 E_2^{i,-j}=\H^i(\sigma^{\ge 1}\pi_j\CC^{\bt}_{\omega}(\uline{G},L)) \abuts \DD^{i-j-1}_{\omega}(\mmc(\uline{G}),L);
\]
in the terminology of \cite{W} p.142, this is a second quadrant spectral sequence, so is weakly convergent.

  The simplicial abelian group $\CC^{n}_{\omega}(\uline{G},L)$ is given in level $i$ by $\CC^{n}_{\omega}(\uline{G},(L^{\Delta^i})_0)$. Moreover, $\CC^{\omega}(G,-)$ is an exact functor; left exactness follows from homogeneity, and right exactness from formal smoothness. Thus $\pi_j\CC^{n}_{\omega}(\uline{G},L) \cong \CC^{n}_{\omega}(\uline{G}, \pi_j(L^{\Delta^{\bt}})_0)$. 

Now, when $d\cN^{\flat}_R= s\cN^{\flat}_R$,  we have $(L^{\Delta^n})_0=L_n$, so  $(L^{\Delta^{\bt}})_0=L$. When  $d\cN^{\flat}_R= dg_+\cN^{\flat}_R$, $N^s(L^{\Delta^{\bt}})_0$ is weakly equivalent to $L$. In either case, $\pi_j(L^{\Delta^{\bt}})_0 \cong \H_jL$, so our spectral sequence is
\[
 \H^i(\sigma^{\ge 1}\CC^{\bt}_{\omega}(G,\H_jL)) \abuts  \DD^{i-j-1}_{\omega}(\mmc(\uline{G}),L).
\]
Taking $L=M[-n]$, the spectral sequence degenerates, giving 
\[
 \H^i(\sigma^{\ge 1}\CC^{\bt}_{\omega}(G,M))\cong \DD^{i-n-1}_{\omega}(\mmc(\uline{G}),M[-n])= \DD^{i-1}_{\omega}(\mmc(\uline{G}),M),
\]
completing the proof for $\mmc$.

Now, $T_{\omega}(\mmc,-):d\Mod_A \to \bS$ preserves fibrations and trivial fibrations. Since $M[-j]\oplus \cone(M)[1-j]$ is a path object for $M[-j]$  when $j \ge 1$ (recalling that $M$ is a discrete $A$-module), this means that $T_{\omega}(\mmc,M[-j]\oplus \cone(M)[1-j] )$ must be a path object for $T_{\omega}(\mmc,M[-j])$. Therefore for $j \ge 1$,
\begin{eqnarray*}
 \pi_0T_{\omega}(\mmc,M[-j])&=& T_{\omega}(\mc,M[-j])/ T_{\omega}(\mc, \cone(M)[1-j] )\\
\DD^j_{\omega}(\mmc, M)&=&  \DD^j_{\omega}(\mc, M).
\end{eqnarray*}
 The proof for $j=0$ is even simpler, as $M^{\Delta^n}=M$, so $T_{\omega}(\mmc,M)= T_{\omega}(\mc,M)$.
\end{proof}

\begin{lemma}\label{cgpcohodel}
If $G: Alg_R \to c\Gp$ is a homogeneous, levelwise formally smooth functor, with $A \in \Alg_R$ and $M$ an $A$-module,   then 
\[
 \DD^i_{\omega}(\uline{\ddel}(\uline{G}))\cong \DD^i_{\omega}(\ddel(\uline{G})) \cong\H^{i+1}_{\omega}(G,M).    
\]
\end{lemma}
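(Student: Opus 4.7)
The plan is to exploit the homotopy fibration sequence
\[
\mmc(\uline{G}) \to \uline{\ddel}(\uline{G}) \to B\uline{G}^0,
\]
arising from $\uline{\ddel}(\uline{G}) = \bar{W}[\mmc(\uline{G})/\uline{G}^0]$ and the standard fact (already invoked in the proof of Proposition \ref{delgpnice}) that $\bar{W}[X/H] \to BH$ is a fibration with fibre $X$. Applying this to $A \oplus M[-n]$ and taking fibres over $\omega$ and the basepoint gives a fibration of tangent spaces
\[
T_\omega(\mmc(\uline{G}), M[-n]) \to T_\omega(\uline{\ddel}(\uline{G}), M[-n]) \to BT_1(\uline{G}^0, M[-n]).
\]

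The next step is to compute the three terms. Since $T_1(G^0, -)$ is exact (by homogeneity plus formal smoothness of $G^0$), the simplicial abelian group $T_1(\uline{G}^0, M[-n])$ has normalised chain complex $\CC^0_\omega(G, M) = T_1(G^0, M)$ concentrated in degree $n$, so it is Eilenberg--MacLane of type $K(\CC^0_\omega(G, M), n)$; correspondingly $BT_1(\uline{G}^0, M[-n])$ has $\pi_{n+1} = \CC^0_\omega(G, M)$ and all other homotopy vanishing. Tracking the proof of Lemma \ref{cgpcohomc}, one finds that $\pi_i T_\omega(\mmc(\uline{G}), M[-n])$ equals $\H^{n-i+1}_\omega(G, M)$ for $0 \le i < n$, equals $\z^1(\CC^\bt_\omega(G, M))$ for $i = n$, and vanishes for $i > n$.

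The homotopy long exact sequence of the fibration then reduces to $\pi_j T_\omega(\uline{\ddel}) \cong \pi_j T_\omega(\mmc)$ for $j \notin \{n, n+1\}$ together with a four-term exact piece
\[
0 \to \pi_{n+1} T_\omega(\uline{\ddel}, M[-n]) \to \CC^0_\omega(G, M) \xrightarrow{\delta} \z^1(\CC^\bt_\omega(G, M)) \to \pi_n T_\omega(\uline{\ddel}, M[-n]) \to 0.
\]
The main technical step is identifying the connecting map $\delta$ with the cochain differential $d_c \colon \CC^0_\omega \to \CC^1_\omega$, which requires unwinding the gauge action (Definition \ref{gaugedefcgp}) and the twisted face $\pd^0$ in $\CC^\bt_\omega$. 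Once this is done, one reads off $\pi_{n+1} T_\omega(\uline{\ddel}, M[-n]) = \H^0_\omega(G, M)$ and $\pi_n T_\omega(\uline{\ddel}, M[-n]) = \H^1_\omega(G, M)$, and the identity $\DD^{n-j}_\omega(\uline{\ddel}, M) = \pi_j T_\omega(\uline{\ddel}, M[-n])$ (valid by the formal quasi-smoothness of $\uline{\ddel}(\uline{G})$ from Proposition \ref{delgpnice}) yields $\DD^i_\omega(\uline{\ddel}(\uline{G}), M) = \H^{i+1}_\omega(G, M)$ for every $i$.

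For the $\ddel$ version, since $\ddel(\uline{G})$ is pre-homotopic, formally quasi-presmooth and homogeneous by Proposition \ref{delgpnice}, Lemma \ref{totcohoc} gives $\DD^i_\omega(\ddel(\uline{G}), M) \cong \DD^i_\omega(\bar{W}\uline{\ddel(\uline{G})}, M)$. The analogous set-valued fibration $\mc(\uline{G}) \to \ddel(\uline{G}) \to BG^0(-_0)$ becomes, after applying $\bar{W}\uline{\ }$ (which preserves the fibration structure by Proposition \ref{settotop} together with the diagonal-fibration result of \cite{CRfib}), a fibration of formally quasi-smooth homogeneous functors to which the same LES computation applies, producing $\DD^i_\omega(\ddel(\uline{G}), M) = \H^{i+1}_\omega(G, M)$. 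The hardest parts throughout are the identification $\delta = d_c$ and the careful bookkeeping of indices and truncations through the shift $M \leadsto M[-n]$.
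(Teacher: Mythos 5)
Your proof is correct, but it takes a genuinely different route from the paper's on the main computation. The paper's proof of the first isomorphism is a two-line reduction: the tangent space of $\uline{\ddel}(\uline{G})$ at $\omega$ is the derived Deligne groupoid of the abelian cosimplicial simplicial group $\CC^{\bt}_{\omega}(\uline{G},L)$, so one reruns the spectral-sequence argument of Lemma \ref{cgpcohomc} verbatim with Lemma \ref{cgpcot3} substituted for Lemma \ref{cgpcotcoho}; the disappearance of the truncation $\sigma^{\ge 1}$ (i.e.\ the reinstatement of the differential $d_c\co \CC^0_{\omega}\to\CC^1_{\omega}$) is already packaged into Lemma \ref{cgpcot3}, whose proof lives in \cite{monad}. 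You instead run the homotopy long exact sequence of the fibration $\mmc(\uline{G}) \to \uline{\ddel}(\uline{G}) \to \bar{W}\uline{G}^0$ and identify the connecting map with $d_c$ by hand. This is sound: the fibration is justified by the same \cite{sht} Lemma IV.4.8 plus \cite{CRfib} input used in Proposition \ref{delgpnice}, your homotopy bookkeeping for the three terms is accurate, and the connecting map of such a Borel-type fibration is the linearised translation (gauge) action, which on linearising $g=1+a\epsilon$ in Definition \ref{gaugedefcgp} gives $a \mapsto \pd^1 a - \Ad_{\omega}(\pd^0_G a) = \mp d_c a$ in $\CC^{\bt}_{\omega}$ --- so the step you flag as the crux does close, and is precisely the content the paper outsourced to the abelian Dold--Kan computation. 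For the second isomorphism the two arguments are close in spirit but not identical: the paper maps the set-level fibration sequence $\mc(\uline{G}) \to \ddel(\uline{G}) \to BG^0$ into $\mmc(\uline{G}) \to \uline{\ddel}(\uline{G}) \to \bar{W}\uline{G}^0$ and applies Proposition \ref{longexact} with the outer maps inducing isomorphisms on $\DD^*$, whereas you invoke Lemma \ref{totcohoc} and rerun the LES after applying $\bar{W}\uline{(\cdot)}$; the substance is the same, with the outer-term isomorphisms absorbed into $\bar{W}$-invariance rather than a map of sequences. Net trade-off: your version makes the gauge-quotient differential explicit at the cost of an honest connecting-map identification, while the paper's is shorter because that identification was done once and for all in Lemma \ref{cgpcot3}.
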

\begin{proof}
The description of $\DD^i_{\omega}(\uline{\ddel}(\uline{G}))$ follows with the same reasoning as Lemma \ref{cgpcohomc}, substituting Lemma \ref{cgpcot3} for Lemma \ref{cgpcotcoho}.

Now, there is a morphism
\[
 \xymatrix{ \mc(\uline{G}) \ar[r] \ar[d] & \ddel(\uline{G}) \ar[r]\ar[d] & BG^0 \ar[d] \\
\mmc (\uline{G})\ar[r] & \uline{\ddel}(\uline{G}) \ar[r] & \bar{W}\uline{G}^0 
}      
\]
of fibration sequences, with the outer maps inducing isomorphisms on $\DD^i$, so Proposition \ref{longexact} gives the required isomorphisms $ \DD^i_{\omega}(\ddel(\uline{G}))\cong\DD^i_{\omega}(\uline{\ddel}(\uline{G}))$.
 \end{proof}

\subsection{Denormalisation}\label{denormsn}

\begin{definition}\label{denormdef}
Given a DGLA $L$ in non-negative degrees, let $DL$ be its cosimplicial denormalisation. Explicitly,
$$
D^nL:= \bigoplus_{\begin{smallmatrix} m+s=n \\ 1 \le j_1 < \ldots < j_s \le n \end{smallmatrix}} \pd^{j_s}\ldots\pd^{j_1}L^m,
$$
for formal symbols $\pd^j$.
We then  define operations $\pd^j$ and $\sigma^i$ using the cosimplicial identities, subject to the conditions that $\sigma^i L =0$ and $\pd^0v= dv -\sum_{i=1}^{n+1}(-1)^i \pd^i v$ for all $v \in L^n$.

We now have to define the Lie bracket $\llbracket-,-\rrbracket$ from $D^nL \ten D^nL$ to $D^n L$. Given a finite set  $I$ of distinct strictly positive integers, write $\pd^I= \pd^{i_s}\ldots\pd^{i_1}$, for $I=\{i_1, \ldots i_s\}$, with $i_1 < \ldots < i_s$. The Lie bracket is then   defined on the basis by 
$$
\llbracket \pd^Iv, \pd^J w\rrbracket:= \left\{ \begin{matrix} \pd^{I\cap J}(-1)^{(J\backslash I, I \backslash J)}[v,w] & v\in L^{|J\backslash I|}, w\in L^{|I\backslash J|},\\ 0 & \text{ otherwise},\end{matrix} \right.
$$
where for disjoint sets $S,T$ of integers, $(-1)^{(S,T)}$ is the sign of the shuffle permutation of $S \sqcup T $ which sends the first $|S|$ elements to $S$ (in order), and the remaining $|T|$ elements to $T$ (in order). 
Beware that this formula cannot be used to calculate  $\llbracket \pd^Iv, \pd^J w\rrbracket $ when $0 \in I \cup J$ (for the obvious generalisation of $\pd^I$ to finite sets $I$ of distinct non-negative integers).
\end{definition}

Of course, the denormalisation functor above extends a denormalisation functor $D$ from non-negatively graded cochain complexes to cosimplicial complexes. The latter $D$ is quasi-inverse to the normalisation functor $N_c$ of Definition \ref{normcdef}. 

\begin{definition}
Given a pro-nilpotent Lie algebra $\g$, define $\hat{\cU}(\g)$ to be the pro-unipotent completion of the universal enveloping algebra of $\g$, regarded as a pro-object in the category of algebras. As in \cite{QRat} Appendix A, this is a pro-Hopf algebra, and we define $\exp(\g) \subset  \hat{\cU}(\g)$ to consist of elements $g$ with $\vareps(g)=1$ and $\Delta(g)= g\ten g$, for $\vareps: \hat{\cU}(\g) \to k$ the augmentation (sending $\g$ to $0$), and $\Delta: \hat{\cU}(\g) \to \hat{\cU}(\g)\ten \hat{\cU}(\g)$ the comultiplication.

Since $k$ is assumed to have characteristic $0$, exponentiation gives an isomorphism from $\g$ to $\exp(\g)$, so we may regard $\exp(\g)$ as having the same elements as $\g$, but with multiplication given by the Campbell--Baker--Hausdorff formula. 
\end{definition}

If $L$ is a DGLA in strictly positive degrees, observe that we can write $L$ as the inverse limit $L= \Lim_n \sigma^{\le n}L$ of nilpotent DGLAs, where $\sigma^{\le n}$ denotes brutal truncation. We may thus regard $DL$ as the pro-nilpotent cosimplicial Lie algebra $\Lim_n D(\sigma^{\le n}L)$, so we can exponentiate to obtain $\exp(DL):= \Lim_n \exp(D(\sigma^{\le n}L))$.


\begin{lemma}\label{cfexp}
Given a simplicial  DGLA $L^{\bt}_{\bt}$ in strictly positive cochain degrees, there is a canonical isomorphism
$$
 \mc(\exp(DL)) \cong \mc(\Tot{\Pi} N^sL),  
$$
 Here, $N^s$ is simplicial normalisation (as in Definition \ref{normdef}). 
\end{lemma}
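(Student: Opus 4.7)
The plan is to unpack both sides of the claimed isomorphism and build the bijection by hand.

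On the left, an element of $\mc(\exp(DL)) = \mmc(\exp(DL))_0$ is a sequence $(g_n)_{n \ge 0}$ with $g_n \in \exp((DL_n)^{n+1})$ satisfying $\sigma^0 g_n = 1$, $\sigma_i g_n = \sigma^{i+1} g_{n+1}$, $\partial_i g_n = \partial^{i+1} g_{n-1}$ for $i>0$, and $\partial_0 g_n = (\partial^1 g_{n-1})\cdot(\partial^0 g_{n-1})^{-1}$. On the right, an element of $\mc(\Tot^{\Pi} N^s L)$ is a sequence $(\omega_n)_{n \ge 0}$ with $\omega_n \in N^s_n L^{n+1}$ satisfying the total-complex DGLA Maurer--Cartan equation, which in bidegree $(a,a+2)$ reads
\[
 d_s \omega_{a+1} + (-1)^a d_c \omega_a + \tfrac{1}{2}\sum_{i+j=a}[\omega_i,\omega_j] = 0,
\]
the bracket being the one induced by the Eilenberg--Zilber shuffle in the simplicial direction.

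The first step is to write $g_n = \exp(\xi_n)$ with $\xi_n \in (DL_n)^{n+1}$ and exploit the decomposition from Definition \ref{denormdef},
\[
(DL_n)^{n+1} = \bigoplus_{\substack{m+s=n+1 \\ 1\le j_1<\cdots<j_s \le n+1}} \partial^{j_s}\cdots\partial^{j_1} L_n^m,
\]
so as to isolate the ``primary'' summand $L_n^{n+1}$ corresponding to $s=0$. Arguing as in Lemma \ref{gplevelwise}, the codegeneracy condition $\sigma^0 g_n = 1$ together with the cosimplicial identities forces $\xi_n$ into the cosimplicial normalization $N_c^{n+1}(DL_n)$, which is just $L_n^{n+1}$; similarly, the cross-degeneracies $\sigma_i g_n = \sigma^{i+1} g_{n+1}$ force the corresponding $\omega_n \in L_n^{n+1}$ to lie in the simplicial normalization $N^s_n L^{n+1}$. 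This encodes the data $(g_n)$ by a sequence $(\omega_n) \in \prod_n N^s_n L^{n+1}$.

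The second step treats the Maurer--Cartan equation itself, first for a DGLA $L$ concentrated in simplicial degree zero. Writing $g = \exp(\omega)$ for $\omega \in L^1 = D^1L$, the single cosimplicial condition $\partial^1 g = \partial^2 g \cdot \partial^0 g$ becomes, after Baker--Campbell--Hausdorff,
\[
\partial^1 \omega - \partial^2 \omega - \partial^0 \omega = \tfrac{1}{2}[\partial^2 \omega, \partial^0 \omega] + (\text{higher BCH terms}).
\]
Substituting $\partial^0 \omega = d\omega + \partial^1\omega - \partial^2\omega$ from Definition \ref{denormdef} collapses the left-hand side to $-d\omega$. The bracket formula of Definition \ref{denormdef} then gives $\llbracket \partial^2\omega, \partial^1\omega \rrbracket = [\omega,\omega]$, while $\llbracket \partial^2\omega,\partial^2\omega \rrbracket$ and $\llbracket \partial^2\omega,d\omega \rrbracket$ vanish by the degree/index mismatch (``$0$ otherwise''), so $[\partial^2 \omega, \partial^0\omega] = [\omega,\omega]$. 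The higher BCH brackets vanish by the same mechanism, because iterated brackets of elements of $L^1$ land in components whose decoration pattern is forbidden by the bracket formula. This recovers $d\omega + \tfrac{1}{2}[\omega,\omega] = 0$, and the opposite direction is then immediate by reversing the exponential.

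For the general simplicial case I then incorporate the mixed face condition $\partial_0 g_n = (\partial^1 g_{n-1})(\partial^0 g_{n-1})^{-1}$. Expanding both sides by BCH in $\exp((DL_n)^{n+1})$ and projecting onto the primary $(n-1, n+1)$ bidegree component produces exactly the MC equation for $\Tot^{\Pi} N^s L$ in that bidegree, the Eilenberg--Zilber shuffle appearing as the bookkeeping for the interaction between simplicial $\partial_i$ on $N^s$ and cosimplicial $\partial^j$ on $D$. The inverse map sends $(\omega_n)$ to $g_n := \exp(\iota_n \omega_n)$, where $\iota_n : N^s_n L^{n+1} \hookrightarrow (DL_n)^{n+1}$ is the inclusion of the primary summand. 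The main obstacle is the combinatorial bookkeeping at this interface: checking that the higher iterated brackets on $(DL_n)^{n+1}$ vanish (relying on strict positivity of cochain degrees, which makes $D$ pro-nilpotent and kills spurious components via the bracket formula) and that all surviving terms assemble with the correct EZ signs into the MC equation of $\Tot^{\Pi} N^s L$. An alternative, which would streamline the verification, is to invoke the classical Hinich--Getzler correspondence between MC elements of a DGLA in strictly positive degrees and MC elements of its exponentiated denormalization, applied fibrewise in the simplicial direction, and then to check compatibility with simplicial Dold--Kan to conclude.
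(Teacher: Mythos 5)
The paper does not actually reprove this lemma (it is quoted verbatim from \cite{monad}), so your argument must stand on its own, and it has a genuine gap at its structural core. Your cosimplicial-level-zero computation is correct: substituting $\pd^0\omega = d\omega + \pd^1\omega - \pd^2\omega$ into the BCH expansion of $\pd^1 g = \pd^2 g\cdot \pd^0 g$ and using the bracket formula of Definition \ref{denormdef} (noting the paper's warning that it cannot be applied before eliminating $\pd^0$) does collapse everything to $d\omega + \tfrac{1}{2}[\omega,\omega]=0$, with all higher BCH terms vanishing. What fails is your first step. The condition $\sigma^0 g_n = 1$ only kills the summands $\pd^J L^m$ with $1 \in J$, which is one of the $n+1$ conditions defining $N_c^{n+1}(DL_n)$, not all of them; and the mixed conditions $\sigma_i g_n = \sigma^{i+1}g_{n+1}$, far from forcing the remaining non-primary components to vanish, force them to be \emph{nonzero}. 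Concretely, at $n=1$ write $\xi_1 = a_1 + \pd^2 w$ with $a_1 \in L^2_1$, $w \in L^1_1$ (the $\pd^1$-component dies by $\sigma^0$); the relation $\sigma_0\xi_0 = \sigma^1\xi_1$ gives $w = \sigma_0\xi_0$, nonzero whenever $\xi_0 \neq 0$. So your proposed inverse $(\omega_n)\mapsto \exp(\iota_n\omega_n)$ does not land in $\mc(\exp(DL))$ at all, and the forward map ``project to the primary bidegree'' discards forced data without justification. (You also misattribute the simplicial normalization: $a_1 \in N^s_1L^2$ follows from the face relation $\pd_1\xi_1 = \pd^2\xi_0$, not from degeneracies.) The actual content of the cited theorem is the recursion you skip: the non-primary components are determined by lower-level data (e.g.\ $\xi_1 = a_1 + \pd^2\sigma_0 a_0$), and after substituting these corrections the sole surviving constraint on the normalized parts is the MC equation of $\Tot^{\Pi}N^sL$; one can check at $n=1$ that $\pd_0\xi_1 = \pd_0 a_1 + \pd^2 a_0$, and equating with the BCH expansion of $(\pd^1 g_0)(\pd^0 g_0)^{-1}$ yields exactly $\pd_0 a_1 + d_c a_0 + \tfrac{1}{2}[a_0,a_0]=0$, the component of the MC equation in bidegree $(0,2)$. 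Verifying that this pattern persists, with the Eilenberg--Zilber shuffles emerging from the correction terms, is the theorem, not bookkeeping.

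Your fallback suggestion does not repair this. Hinich--Getzler-type results give weak equivalences of deformation functors or simplicial sets, not the on-the-nose isomorphism of sets asserted here; and there is no ``fibrewise in the simplicial direction'' structure to exploit, because the defining relations of $\mc$ in Definition \ref{mcdefgp} couple adjacent simplicial levels through $\pd_0\omega_n = (\pd^1\omega_{n-1})\cdot(\pd^0\omega_{n-1})^{-1}$ and $\sigma_i\omega_n = \sigma^{i+1}\omega_{n+1}$, so the system cannot be solved one simplicial level at a time independently.
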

\begin{proof}
This is \cite{monad} Theorem \ref{monad-cfexp}.

\end{proof}

\begin{definition}
 Given a DGLA $L$ with gauge $G_L$, define the cosimplicial group $D(\exp(L),G_L)$ as follows. 
\[
 D^n(\exp(L),G_L):=  \exp(D^n L^{>0})\rtimes G_L ,       
\]
(with $G_L$ acting on $\exp(D^n L^{>0})$ via the adjoint action $\ad$), with operations
\begin{eqnarray*}
\sigma^i(a,g)&=& ( \sigma^ia,g)\\
\pd^i(a,g) &=& \left\{\begin{matrix}  ( \pd^ia,g) & i>0 \\          
                                        ( \pd^0a \cdot \exp((\pd^2)^nDg), g) &i=0, 
\end{matrix}\right.
 \end{eqnarray*}
for $(a,g) \in D^n(\exp(L),G_L)$, and $Dg \in L^1$.
\end{definition}


\begin{remark}
 If $L$ is a nilpotent DGLA in non-negative degrees and $G_L= \exp(L^0)$ as in Example \ref{nilpdglagauge}, then observe that $D (\exp(L),G_L)\cong \exp (DL)$, with the isomorphism given in level $n$ by 
\[
 (a,g) \mapsto a \cdot (\pd^1)^ng.
\]

The only difficult part of the comparison is checking that the isomorphism preserves $\pd^0$.
This follows because for $v \in L^0$, we have $\pd^0v= \pd^1v+dv$. Since $\llbracket dv, \pd^1v \rrbracket =0$, this gives 
\begin{eqnarray*}
        \exp(\pd^0v)&=& \exp(\pd^1v) + d\exp(v)\\
&=&  (1+  (d\exp(v))\exp(-v)) \cdot \exp(\pd^1v)\\
&=&  (1+  D\exp(v)) \cdot \exp(\pd^1v)\\
&=& \exp( D\exp(v)) \cdot \exp(\pd^1v).
\end{eqnarray*}
\end{remark}

\begin{lemma}
 Given a simplicial DGLA $L$ with gauge $G_L$ for $L_0$, the isomorphism $\mc(D(\exp(L),G_L)) \cong \mc(\Tot{\Pi} N^sL)$ of Lemma \ref{cfexp} is $G_L$-equivariant for the respective gauge actions.
\end{lemma}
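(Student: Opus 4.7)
The plan is to verify the identity $\phi(g\star\omega) = g\star\phi(\omega)$ by direct computation on both sides, where $\phi:\mc(\Tot^{\Pi} N^sL) \xrightarrow{\sim} \mc(D(\exp L,G_L))$ is the isomorphism of Lemma \ref{cfexp}. First I would unpack the cosimplicial side. An element of $\mc(D(\exp L,G_L))$ has the form $(\xi,1)$ with $\xi \in \exp(D^1 L^{>0})$ satisfying the MC condition, since $\sigma^0: G_L \to G_L$ is the identity and forces the $G_L$-component to be trivial. For $g \in G_L$, viewed as $(1,g)\in D^0(\exp L,G_L)$, Definition \ref{gaugedefcgp} in simplicial degree $0$ gives
\[
g\star (\xi,1) = \pd^1(1,g)\cdot (\xi,1)\cdot \pd^0(1,g)^{-1}.
\]
Using $\pd^1(1,g)=(1,g)$, $\pd^0(1,g)=(\exp(Dg),g)$ and the semidirect-product structure (with $G_L$ acting on $\exp(D^1L^{>0})$ via $\ad$), this reduces cleanly to $(\ad_g(\xi)\cdot \exp(-Dg),\,1)$.

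On the DGLA side the gauge action sends $\omega\in\mc(\Tot^{\Pi}N^sL)$ to $\ad_g(\omega)-Dg$, with $Dg$ sitting in bidegree $(0,1)$ of the total complex. So the required identity becomes
\[
\phi(\ad_g\omega - Dg) \;=\; \ad_g(\phi(\omega))\cdot\exp(-Dg).
\]
The strategy is to check this in two pieces. The $\ad_g$-piece is a naturality statement: $\ad_g$ is a graded Lie automorphism of $L$ in each simplicial and cochain degree and commutes with all the simplicial operators and with $G_L$, so both constructions $L\mapsto D(\exp L,G_L)$ and $L\mapsto \Tot^{\Pi}N^sL$, and hence $\phi$, are equivariant with respect to it. This reduces the lemma to the residual claim that the translation $\omega \mapsto \omega - Dg$ corresponds under $\phi$ to right multiplication by $\exp(-Dg)$.

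The main obstacle is this residual claim, which is where the $d\ad_g - \ad_g d = [Dg,\ad_g(-)]$ axiom complicates naive naturality: the splitting of the gauge action into ``adjoint followed by translation'' is not through DGLA morphisms, so the translation step must be treated by hand. Concretely, one reads off from the explicit formula for $\phi$ in the proof of \cite{monad} Theorem \ref{monad-cfexp} that the bidegree-$(0,1)$ component $\omega_0\in L_0^1$ of $\omega$ contributes the ``leftmost'' exponential factor $\exp(\omega_0)$ (in the canonical ordering of $\phi(\omega)$ as a product of exponentials indexed by simplicial degree with appropriate face-operator shifts), and since $Dg$ also sits in $L_0^1$, modifying $\omega_0$ by $-Dg$ multiplies $\phi(\omega)$ on the right by $\exp(-Dg)$. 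Keeping track of signs, of the CBH corrections with the higher-simplicial-degree terms, and of the twist by $\pd^0$ versus $\pd^1$ is the delicate point, and it works out by the same mechanism as the identity $\exp(\pd^0 v)=\exp(D\exp(v))\cdot\exp(\pd^1 v)$ recorded in the remark preceding the lemma, applied here with $\exp(v)$ replaced by $g\in G_L$ and $Dv$ by $Dg$.
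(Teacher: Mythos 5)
Your setup is sound: the identification of Maurer--Cartan elements with pairs $(\xi,1)$, the semidirect-product computation $g\star(\xi,1)=(\ad_g(\xi)\cdot\exp(-Dg),1)$ at simplicial level $0$, and the resulting target identity $\phi(\ad_g\omega-Dg)=\ad_g(\phi(\omega))\cdot\exp(-Dg)$ are all correct. Your route — direct CBH bookkeeping against the explicit formula for $\phi$ — also genuinely differs from the paper's, which simply observes that the computation in the proof of \cite{htpy} Theorem \ref{htpy-defqs}, carried out for the special gauge $G_L=\exp(L^0)$ of Example \ref{nilpdglagauge}, uses only the abstract gauge axioms and therefore applies verbatim to an arbitrary gauge group.

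However, the two steps into which you split the remaining work are each unjustified, and the second is defective as stated, so there is a genuine gap at exactly the point you call the ``residual claim''. First, $\ad_g$-naturality of $\phi$ is not automatic: by the gauge axiom $d(\ad_g(v))=[Dg,\ad_g(v)]+\ad_g(dv)$, the map $\ad_g$ is \emph{not} a morphism of DGLAs, and the operation $\pd^0$ on both $DL$ (via $\pd^0v=dv-\sum_{i\geq 1}(-1)^i\pd^iv$) and on $D(\exp(L),G_L)$ involves $d$; so equivariance of $\phi$ under $\ad_g$ holds only if the explicit formula for $\phi$ in \cite{monad} uses brackets and the $d$-free operations exclusively — a property you invoke implicitly but never verify (you also need $\phi$ to make sense off the Maurer--Cartan locus, since $\ad_g\omega$ alone is not Maurer--Cartan). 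Second, the ``leftmost factor'' heuristic is internally inconsistent: replacing $\omega_0$ by $\omega_0-Dg$ changes that factor to $\exp(\omega_0-Dg)$, which is neither $\exp(\omega_0)\exp(-Dg)$ without a vanishing-bracket argument (such as the fact that $[L^1,L^1]=0$ inside $D^1L$, which you do not supply) nor a right multiplication of the \emph{whole} product by $\exp(-Dg)$. Worse, an element of $\mc(D(\exp(L),G_L))$ for simplicial $L$ consists of components $\xi_n\in \bigl(D^{n+1}(\exp(L),G_L)\bigr)^{\Delta^n}$ for \emph{all} $n$, the component $\omega_0$ enters the formula for every $\xi_n$ rather than only $\xi_0$, and at level $n$ the gauge action of Definition \ref{gaugedefcgp} produces a twist by $\exp((\pd^2)^{n}Dg)$ rather than by $\exp(Dg)$; your computation treats only the $n=0$ component. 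The appeal to ``the same mechanism'' as $\exp(\pd^0v)=\exp(D\exp(v))\cdot\exp(\pd^1v)$ names the right phenomenon, but this multi-level bookkeeping is precisely the content of the lemma — the paper discharges it by citing the proof in \cite{htpy}, whereas your sketch leaves it unestablished.
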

\begin{proof}
 This is a consequence of the proof of \cite{htpy} Theorem \ref{htpy-defqs}, which deals with the case when   $G_L= \exp(L^0)$ is defined as in Example \ref{nilpdglagauge}.      
\end{proof}

\begin{corollary}\label{cfexp2}
 Given a  DGLA $L$ over $R$, with  gauges $G_L(A)$  for $L\ten_RA$,    functorial in $A \in \Alg_R$, there are canonical isomorphisms
\begin{eqnarray*}
 \mc(L\ten_R-) &\cong&\mc( D(\exp(L\ten_R-),G_L(-))\\
 {[\mc(L\ten_R-)/G_L(-) ]}&\cong&  \Del(D(\exp(L\ten_R-),G_L(-) )  
\end{eqnarray*}
of functors on $dg_+\Alg_R$.
\end{corollary}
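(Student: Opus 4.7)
The plan is to apply the preceding $G_L$-equivariant lemma to the DGLA $L\ten_R A$, viewed as a \emph{constant} simplicial DGLA (i.e.\ with identity face and degeneracy maps in the simplicial direction), with gauge $G_L(A):=G_L(A_0)$ acting on its level-zero part. Here I extend $G_L$ from $\Alg_R$ to $dg_+\Alg_R$ by evaluation in degree zero, and the induced action on $L\ten_R A$ is $A_0$-linear via the embedding $L\ten_R A_0\subset L\ten_R A$.

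First I would observe that, for a constant simplicial DGLA $M$, the simplicial normalisation $N^s M$ is concentrated in simplicial degree zero with value $M$, so that $\Tot^{\Pi}N^s M=M$ as cochain DGLAs. Likewise $D(\exp(M),G)$ is then a constant simplicial object in cosimplicial groups, and by the parenthetical remark in Definition \ref{mcdefgp}, its $\mc=\mmc_0$ coincides with the cosimplicial-group $\mc$ of Definition \ref{mcdefgp1}. Invoking the preceding lemma for $M:=L\ten_R A$ will then produce the canonical $G_L(A)$-equivariant isomorphism
\[
\mc(D(\exp(L\ten_R A),G_L(A))) \;\cong\; \mc(L\ten_R A),
\]
which is the first claim of the corollary; naturality in $A\in dg_+\Alg_R$ is immediate from the functoriality of each construction.

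Next I would deduce the groupoid isomorphism by passing to orbits. Since $D^0 V=V^0$ for any cochain complex $V$, and $(L\ten_R A)^{>0}$ has zero degree-$0$ part, we get
\[
D^0(\exp(L\ten_R A),G_L(A)) \;=\; \exp(0)\rtimes G_L(A) \;=\; G_L(A),
\]
so Definition \ref{deldefcgp} identifies $\Del(D(\exp(L\ten_R A),G_L(A)))$ with the quotient groupoid $[\mc(D(\exp(L\ten_R A),G_L(A)))/G_L(A)]$. Taking $G_L(A)$-orbits of the equivariant isomorphism above will then yield
\[
[\mc(L\ten_R A)/G_L(A)] \;\cong\; \Del(D(\exp(L\ten_R A),G_L(A))),
\]
as required.

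The only substantive input is the preceding gauge-equivariant lemma; after that, both assertions are essentially formal unwindings of the definitions. The most delicate point to verify is that, under these identifications, the gauge action of Definition \ref{gauge} on the right-hand side matches the $G^0$-action of Definition \ref{gaugedefcgp} on the left-hand side, but this is already built into the statement of the preceding lemma (which is precisely what its $G_L$-equivariance asserts).
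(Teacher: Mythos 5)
Your proposal is correct and is essentially the paper's own (implicit) argument: the corollary is stated without proof precisely because it is the preceding $G_L$-equivariant lemma applied to $L\ten_R A$ with constant simplicial structure, where $\Tot^{\Pi}N^s$ reduces to the identity, $\mc=\mmc_0$ agrees with the cosimplicial-group $\mc$, and $D^0(\exp(L\ten_R A),G_L(A))\cong G_L(A)$ identifies the Deligne groupoid with the gauge quotient. Your unwinding of these identifications, including reading $G_L(-)$ on $dg_+\Alg_R$ as $G_L(A_0)$ and deriving the groupoid isomorphism from the asserted equivariance, fills in exactly the steps the paper leaves to the reader.
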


Thus cosimplicial groups generalise DGLAs, with the added advantage that they can also give functors on $s\Alg_R$, and hence work in all characteristics.

\subsection{Sheafification}\label{shfcgpsn}

Another advantage of cosimplicial groups over DGLAs is that they have a good notion of sheafification.

\begin{definition}\label{shfcgp}
Given a levelwise formally smooth, homogeneous functor  $G: Alg_R \to c\Gp$ preserving finite products, 
and some class $\oP$ of covering morphisms in $\Alg_R$, define the sheafification $G^{\sharp}$ of $G$ with respect to $\oP$ by
first defining a cosimplicial commutative $A$-algebra $(B/A)^{\bt}$ for every $\oP$-covering $A\to B$, as $(B/A)^n:= \overbrace{B\ten_{A}B\ten_{A}\ldots \ten_{A}B}^{n+1}$, then setting
\[
 G^{\sharp}(A) = \diag \LLim_{B} G((B/A)^{\bt}),       
\]
where $\diag$ is the diagonal functor $(\diag X)^n= X^{nn}$ from bicosimplicial groups to cosimplicial groups. 
\end{definition}

\begin{definition}\label{shfgpd}
 Given a groupoid-valued functor $\Gamma: d\cN^{\flat}_R \to \Gpd$, and a class $\oP$ of covering morphisms in $\Alg_R$, define $\Gamma^{\sharp}:  d\cN^{\flat}_R \to \Gpd$ to be the  stackification of the groupoid presheaf $\Gamma$ in the 
strict  $\oP$-topology of Definition \ref{strictcover}. 
\end{definition}

\begin{lemma}\label{cfshfcgp}
For $G$ and $\oP$ as above, there is a canonical morphism
\[
 \Del(\uline{G})^{\sharp} \to  \Del(\uline{G}^{\sharp})       
\]
 of groupoid-valued functors on $d\cN^{\flat}_R$, inducing an equivalence
\[
 \pi^0\Del(\uline{G})^{\sharp} \to \pi^0\Del(\uline{G}^{\sharp}).
\]
\end{lemma}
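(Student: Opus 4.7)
The plan has three stages: construct the canonical map, verify that it lands in a stack so it factors through the stackification, and then check it induces an equivalence on $\pi^0$ by unpacking descent data on discrete rings.

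\emph{Constructing the map.} For the trivial cover $A\to A$, we have $(A/A)^n = A$, so $G((A/A)^{\bt})$ is the constant bicosimplicial group on $G(A)$ whose diagonal is just $G(A)$. This gives a canonical natural transformation $G \to G^{\sharp}$, hence $\uline G \to \uline{G}^{\sharp}$ and thus a morphism $\Del(\uline G) \to \Del(\uline{G}^{\sharp})$ of groupoid-valued functors on $d\cN_R^{\flat}$. To pass to the stackification, I need $\Del(\uline{G}^{\sharp})$ to be a strict $\oP$-stack. This should follow from the fact that $G^{\sharp}$ is level-wise a sheaf for the strict $\oP$-topology by construction, combined with the observation that $\mc$ and the gauge quotient commute with limits (and hence sheafification). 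The universal property of stackification then produces $\Del(\uline G)^{\sharp} \to \Del(\uline{G}^{\sharp})$.

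\emph{Equivalence on $\pi^0$.} On $A \in \Alg_R$, I would unfold both sides. An object of $\Del(G)^{\sharp}(A)$ is (up to refinement) a $\oP$-cover $A \to B$, $x \in \mc(G(B))$, and $g \in G(B\ten_A B)^0$ satisfying $g\star \pr_1^* x = \pr_0^*x$ and a cocycle condition on $G(B^{\ten 3})^0$. An object of $\Del(G^{\sharp})(A) = \mc(G^{\sharp}(A))$ is an element $\omega \in G^{\sharp}(A)^1 = \LLim_B G(B\ten_A B)^1$ satisfying the MC conditions in $G^{\sharp}(A)^{\bt}$, where the face maps combine the cosimplicial group operations with the algebra coface maps $B^{\ten (m+1)} \to B^{\ten (m+2)}$ along the diagonal. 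The identification I aim for is a non-abelian Eilenberg--Zilber-style statement: a Maurer--Cartan element on the diagonal of a bicosimplicial group corresponds to a Maurer--Cartan element in one direction plus a compatible gauge twist in the other, i.e.\ exactly a descent datum. Given $(x,g)$ one writes down $\omega$ as an appropriate product built from the two pullbacks $\pd^i_h x \in G(B\ten_A B)^1$ and the element $g$; conversely, the component of $\omega$ compatible with $B \to B\ten_A B \to B$ recovers $x$ and its failure to descend encodes $g$. The analogous formulas match morphisms and verify the cocycle and MC identities. In the abelian case this specialises to the classical identification $\Tot^{\Pi} N^sN_c(\diag X) \simeq \Tot^{\Pi}N^sN_c(X)$ underlying Lemma \ref{cgpcotcoho}, which serves as a sanity check.

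\emph{Main obstacle.} The technically delicate point is writing down the explicit non-abelian formulas relating the diagonal MC conditions to descent data, because the gauge action and multiplication interact nontrivially with the algebra face maps across the two cosimplicial directions. The signs and orderings are prone to error, and care is needed to show that refining the covering $B$ does not change the class of $\omega$ in $\LLim_B$, and symmetrically that morphisms induced by two refinements agree. A cleaner alternative, which I would pursue if the direct approach becomes unwieldy, is to realise $G^{\sharp}(A)$ via the diagonal of the bicosimplicial group $G(\cosk_0(B/A))$ and invoke a general MC comparison for diagonals of bicosimplicial groups, treating $\Del$ abstractly as a functor out of cosimplicial groups; this would reduce the problem to a purely combinatorial identity about $\mmc$ and $\diag$ that is independent of the geometric content.
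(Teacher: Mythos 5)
Your second stage is, in essence, the paper's actual proof: the paper obtains the $\pi^0$-equivalence precisely from the ``non-abelian Eilenberg--Zilber'' decomposition you describe, quoting \cite{monad} Lemma \ref{monad-diagset}, which says that objects of $\mc(\diag G((B/A)^{\bt}))$ correspond to pairs $(\omega,g) \in \mc(G(B))\by \mc(G^0((B/A)^{\bt}))$ satisfying $\pr_0^*\omega\cdot \pd_G^0g = \pd^1g\cdot \pr_1^*\omega$ --- exactly a descent datum, with morphisms given by the gauge action of $G^0(B)$. So the ``cleaner alternative'' you defer to is not an alternative at all but the intended route, and the explicit twisting formula you were worried about is simply $\omega'_n := (\pr_{01}^*(\pd^1_G)^{n+1}g)\cdot \pr_1^*\omega_n$.

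The genuine gap is in your first stage. The claim that ``$G^{\sharp}$ is level-wise a sheaf for the strict $\oP$-topology by construction'' is false: $G^{\sharp}(A)^n = \LLim_B G^n(B^{\ten_A (n+1)})$ is a filtered colimit of sections over \v Cech levels of covers with \emph{no} equalizer condition imposed; already for $n=0$ one gets $\LLim_B G^0(B)$, which fails descent even when $G^0$ is itself a sheaf, since sections over a cover that do not descend to $A$ contribute new elements. Moreover, even if each level were a sheaf, the quotient groupoid $[\mc(\uline{G}^{\sharp})/(\uline{G}^{\sharp})^0]$ would not thereby be a stack: the fact that $\mc$ commutes with limits gives at best separatedness-type statements, whereas the stack condition requires \emph{effectivity} of descent data, which is exactly what stackification adds and what quotient groupoids generically lack. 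Indeed, knowing that $\Del(\uline{G}^{\sharp})$ is (equivalent on discrete rings to) a stack is tantamount to the lemma you are trying to prove, so routing the map through the universal property of stackification is circular as justified. The remedy is what the paper does, and what your $\pi^0$-stage formulas implicitly supply anyway: construct the morphism directly on the explicit descent-data model of $\Del(\uline{G})^{\sharp}$, sending $(B,\omega,g)$ to the twisted diagonal element $\omega'_n \in G^{n+1}((A^{\Delta^n})_0\ten_{A_0}(B/A_0)^{n+1})$ above (note the covers must be taken over $A_0$, so the formula makes sense for all $A \in d\cN^{\flat}_R$, not just discrete rings) and a morphism $\alpha \in G^0(D)$ to itself, then checking compatibility with refinement of covers. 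With that replacement, your outline coincides with the paper's argument.
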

\begin{proof}
   An object of $\Del(\uline{G})^{\sharp}$ is a pair $(\omega, g) \in \mc( \uline{G}(B\ten_{A_0}A))\by G^0(B\ten_{A_0}B)$, for $A_0 \to B$ a $\oP$-covering, satisfying the following conditions:
\begin{enumerate}
 \item $g\star (\pr_1^*\omega) = (\pr_0^*\omega) \in \mc(  \uline{G}(A\ten_{A_0}B\ten_{A_0}B))$,
\item $\pr_{02}^*g = (\pr_{01}^*g)\cdot  (\pr_{12}^*g) \in G^0(B\ten_{A_0}B\ten_{A_0}B)$,
\end{enumerate}
and we now describe the image of $(\omega, g)$.

First, form the cosimplicial $A_0$-algebra $(B/A_0)^{\bt}$ as in Definition \ref{shfcgp}.   We now map $(B,\omega, g)$ to the object $\omega'$ of $\mc(\diag \uline{G}(A\ten_{A_0}(B/A_0)^{\bt}))$ given by
\[
  \omega'_n:=  (\pr_{01}^*(\pd^1)^{n+1}_Gg) \cdot \pr_1^*\omega_n     \in G^{n+1}( (A^{\Delta^n})_0\ten_{A_0}(B/A_0)^{n+1}).  
\]

An isomorphism from $(B,\omega, g)$ to $(C,\nu, h)$ is a $\oP$-covering $B\ten_{A_0}C \to D$ with $A \to D$  a $\oP$-covering, together with an element $\alpha \in G^0(D)$ such that $\alpha\star \omega = \nu \in \mc(\uline{G} ( A\ten_{A_0}D))$, with $(\pr_0^*\alpha)\cdot g = h\cdot (\pr_1^*\alpha) \in G^0(D\ten_{A_0}D)$. We just map this  to $\alpha \in G^0(D)$.

To see that this induces an equivalence $ \Del(\uline{G})^{\sharp}(A) \to \Del(\uline{G}^{\sharp}(A))$ for $A \in \Alg_R$, we appeal to \cite{monad} Lemma \ref{monad-diagset}, which shows that objects of 
\[
 \mc (\diag G((B/A)^{\bt}))      
\]
correspond to pairs $(\omega,g) \in \mc(G(B))\by \mc(G^0( (B/A)^{\bt}))$ satisfying $\pd_B^1\omega \cdot \pd_G^0g= \pd^1g\cdot \pd_B^0\omega$. Since $\pd^1_B\omega= \pr_0^*\omega$ and $ \pd^0_B\omega= \pr_1^*\omega$, this amounts to saying that  $g$ and $\omega$ satisfy condition (1) above, while condition (2) is equivalent to saying that $g \in \mc(G^0((B/A)^{\bt})$. Morphisms in $\mc (\diag G((B/A)^{\bt}))$ are given by the gauge actions of $G^0(B^0)$, so the equivalence of groupoids follows.
 \end{proof}

Now recall that $\oP$-covering morphisms are all assumed faithfully flat.
\begin{lemma}\label{cfshfcgp2}
 Take $G:\Alg_R \to c\Gp$ satisfying the conditions of Definition \ref{shfcgp}, a ring $A \in \Alg_R$, an $A$-module $M$, and an object of $\Del(G)^{\sharp}(A)$ represented by $(\omega,g)$ for $\omega \in \mc(G(B))$. If the maps 
\[
 \H^*_{\omega}(G, M\ten_AB)\ten_BB' \to \H^*_{\omega}(G, M\ten_AB')
\]
are isomorphisms for all $\oP$-coverings $B \to B'$, then the morphism $\alpha$ of Lemma \ref{cfshfcgp} induces an isomorphism
\[
 \DD^*_{(\omega,g)}(B\Del(\uline{G})^{\sharp},M) \to  \DD^*_{\alpha(\omega,g)}(B\Del(\uline{G}^{\sharp}),M).       
\]
\end{lemma}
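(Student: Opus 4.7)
My strategy is to realise both tangent spaces as cohomology of a common bicomplex built from $\CC^\bt_\omega(G,-)$ and the \v Cech nerve of the cover $B\to A$, then use the hypothesis to collapse this bicomplex to a single descended $A$-module.

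For the right-hand side, Lemma \ref{cgpcohodel} gives $\DD^i_{\alpha(\omega,g)}(B\Del(\uline G^\sharp), M) \cong \H^{i+1}_{\alpha(\omega,g)}(G^\sharp, M)$, the cohomology of the cosimplicial abelian group $\CC^\bt_{\alpha(\omega,g)}(G^\sharp, M)$. Since $T_1(-,M)$ commutes with filtered colimits (being the kernel of a formally smooth map) and $G^\sharp = \diag \LLim_B G((B/A)^\bt)$, fixing a cover $B$ representing $(\omega,g)$ exhibits $\CC^\bt_{\alpha(\omega,g)}(G^\sharp, M)$ as the diagonal of a bicosimplicial abelian group
\[
 \mathcal E^{m,n} := \CC^m_{\omega^{(n)}}(G, M\ten_A B^{\ten_A n+1}),
\]
where $\omega^{(n)} \in \mc(G(B^{\ten_A n+1}))$ is the base point determined by $(\omega,g)$. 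By Eilenberg--Zilber the cohomology of $\diag \mathcal E$ agrees with that of $\Tot^\Pi N_c N_c \mathcal E$. Taking cohomology in the $m$-direction first, the $(m,n)$-entry becomes $\H^m_{\omega^{(n)}}(G, M\ten_A B^{\ten_A n+1})$, which by the hypothesis (combined with stability of $\oP$ under pushouts, so that $B\to B^{\ten_A n+1}$ is an $\oP$-cover) equals $\H^m_\omega(G, M\ten_A B)\ten_B B^{\ten_A n+1}$. The cocycle relation on $g$ equips $\H^m_\omega(G, M\ten_A B)$ with a descent datum for the faithfully flat map $B\to A$; write $\bar H^m$ for the descended $A$-module. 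The $n$-direction then becomes the \v Cech cosimplicial resolution $\bar H^m \ten_A B^{\ten_A \bullet+1}$, acyclic outside degree $0$ by faithfully flat descent. Hence $\DD^i_{\alpha(\omega,g)}(B\Del(\uline G^\sharp), M) \cong \bar H^{i+1}$.

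For the left-hand side, since $\Del(\uline G)^\sharp$ is the strict-$\oP$ stackification of $\Del(\uline G)$, one presents $\Del(\uline G)^\sharp(A\oplus M)$ via the \v Cech nerve of a weakly universal cover (Lemmas \ref{univZarcover}, \ref{univetcover}) with levels $\Del(\uline G)(B^{\ten_A n+1}\oplus M\ten_A B^{\ten_A n+1})$. Applying Lemma \ref{cgpcohodel} levelwise and Proposition \ref{longexact} to assemble the layers produces a spectral sequence converging to $\DD^*_{(\omega,g)}(B\Del(\uline G)^\sharp, M)$ whose $E_1$-page is again $\mathcal E^{\bt\bt}$. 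Collapsing by the same descent argument gives $\bar H^{i+1}$, and by construction the morphism $\alpha$ of Lemma \ref{cfshfcgp} induces the tautological identification between the two computations, yielding the desired isomorphism.

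\textbf{Main obstacle.} The delicate point is rigorously setting up the descent spectral sequence for the stackified left-hand side, since $\Del(\uline G)^\sharp$ is defined as a stackification and unpacking its tangent complex requires care. A cleaner route avoiding this is to invoke Corollary \ref{predetectweak}: both functors are pre-homotopic, formally quasi-presmooth and homogeneous (the $\sharp$-versions inheriting these via Proposition \ref{delgpnice} together with standard properties of stackification), and Lemma \ref{cfshfcgp} already gives the $\pi^0$-comparison, so it suffices to check the $\DD^i$-isomorphism for discrete $A$ and $M$---where the descent/\v Cech argument above applies directly.
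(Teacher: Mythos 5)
Your computation of the right-hand side is essentially the paper's: the paper forms the same \v Cech bicomplex $\check{\CC}^{\bt}(B'/A, \CC^{\bt}_{\omega}(G,M))$, passes from $\diag$ to the total complex by Eilenberg--Zilber, and collapses it using the base-change hypothesis together with acyclicity of the Amitsur complex $(B'/A)^{\bt}$; your row-first spectral sequence is equivalent bookkeeping for the same collapse (though your parenthetical reason that $T_1$ commutes with filtered colimits is garbled --- the correct reason is that filtered colimits commute with the finite limits defining tangent fibres, since $G$ is not assumed locally of finite presentation). The genuine gap is on the left-hand side. Your claimed descent spectral sequence converging to $\DD^*_{(\omega,g)}(B\Del(\uline{G})^{\sharp},M)$ with $E_1$-page the full bicomplex $\mathcal{E}^{\bt\bt}$ cannot exist: $\Del(\uline{G})^{\sharp}$ is a $1$-stackification of a groupoid-valued functor, so it only records descent data (objects involve \v Cech degrees $\le 1$, morphisms degree $0$), and its tangent cohomology is the colimit over coverings $B'$ of the cohomology of the \emph{descended} complex --- in the paper, the equaliser $K(B')$ of $\pr_1^*$ and $\ad_g\pr_0^*$ from $\CC^{\bt}_{\omega}(G, M\ten_AB')$ to $\CC^{\bt}_{\pr_1^*\omega}(G, M\ten_AB'\ten_AB')$ --- not the total complex of the bicomplex. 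If the full bicomplex computed the left-hand side, the lemma would be a tautology requiring no hypothesis; its entire content is that the $1$-stackification (which sees only low \v Cech degrees) and the quasi-comonoid-style sheafification $\uline{G}^{\sharp}$ (whose $\DD^*$ is genuinely the total complex) agree, with the base-change hypothesis being exactly what removes the discrepancy. The missing step is therefore the identification $\DD^i_{(\omega,g)}(B\Del(\uline{G})^{\sharp},M)\cong \LLim_{B'}\H^{i+1}K(B')$, together with the argument (using the hypothesis a second time, via effectivity of the descent datum $\ad_g$ on $\H^*_{\omega}$) that this colimit is constant, equal to $\H^{i+1}K(B)$.

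Your fallback route is also broken, for two reasons. First, it is circular: condition (2) of Corollary \ref{predetectweak} is precisely the family of isomorphisms on $\DD^i$ that the present lemma asserts, so that corollary consumes such isomorphisms as hypotheses and cannot produce them. Second, the reduction it is meant to effect is vacuous: the statement of Lemma \ref{cfshfcgp2} already concerns a discrete ring $A \in \Alg_R$ and a discrete $A$-module $M$; the groups $\DD^i$ for $i>0$ are by definition computed by evaluating the functors on the non-discrete test objects $A\oplus M[-i]$ and $A \oplus \cone(M)[1-i]$, and no formal truncation argument lets you avoid those evaluations. So neither route in your proposal establishes the left-hand computation, which is where the real work of the paper's proof lies.
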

\begin{proof}
We begin by calculating the cohomology groups $ \DD^*_{(\omega,g)}(B\Del(\uline{G})^{\sharp},M)$. It follows from Lemma \ref{cgpcohodel} that these are given by first taking a cosimplicial $A$-module $K(B')$ given by  the equaliser  of
\[
 \xymatrix@1{ \CC^{\bt}_{\omega}(G, M\ten_AB') \ar@<.5ex>[r]^-{\pr_1^*} \ar@<-.5ex>[r]_-{\ad_g\pr_0^*} & \CC^{\bt}_{\pr_1^*\omega}(G, M\ten_AB'\ten_AB'),}
\]
then getting 
\[
  \DD^i_{(\omega,g)}(B\Del(\uline{G})^{\sharp},M)\cong \LLim_{B'}\H^{i+1}K(B'),
\]
where $B'$ ranges over all $\oP$-hypercoverings $B\to B'$.

Now, the requirement that  $\H^*_{\omega}$  commute with base change ensures that $\ad_g$ gives an effective descent datum on cohomology, giving an isomorphism
\[
 \H^*K(B')\ten_AB' \cong \H^*_{\omega} (G, M\ten_AB').
\]
Thus taking the colimit over $B'$ does not affect the calculation, so
\[
  \DD^i_{(\omega,g)}(B\Del(\uline{G})^{\sharp},M)\cong\H^{i+1}K(B).
\]

Meanwhile, $g$ allows us to extend the fork above to form a bicosimplicial complex $\check{\CC}^{\bt}(B'/A, \CC^{\bt}_{\omega}(G,M))$, with 
\[
 \check{\CC}^{i}(B'/A, \CC^{\bt}_{\omega}(G,M))= \CC^{\bt}_{\pr_i^*\omega}(G,M\ten_A(B'/A)^i)       
\]
and horizontal cohomology $\check{\H}^0 (B'/A, \CC^{\bt}_{\omega}(G,M))= K(B')$.
Lemma \ref{cgpcohodel} then shows that 
\[
 \DD^i_{\alpha(\omega,g)}(B\Del(\uline{G}^{\sharp}),M)  \cong \LLim_{B'} \H^{i+1} (\diag  \check{\CC}^{\bt}(B'/A, \CC^{\bt}_{\omega}(G,M))),     
\]
and the Eilenberg--Zilber Theorem allows us to replace $\diag$ with the total complex functor $\Tot$.

Now,  there are canonical maps $ K(B)\ten_A(B'/A)^n \to \check{\CC}^{n}(B'/A, \CC^{\bt}_{\omega}(G,M))$, and we know that these give isomorphisms on cohomology, so
\[
  \DD^i_{\alpha(\omega,g)}(B\Del(\uline{G}^{\sharp}),M)  \cong  \LLim_{B'} \H^{i+1}(\Tot K(B)\ten_A (B'/A)^{\bt}).    
\]
Since $\H^j((B'/A)^{\bt})=0$ for all $j>0$, this becomes
\[
  \DD^i_{\alpha(\omega,g)}(B\Del(\uline{G}^{\sharp}),M)  \cong  \LLim_{B'} \H^{i+1}K(B),     
\]
as required.
\end{proof}

\begin{remark}
In particular, this means that the groupoid-valued functor $\cG$ of Definition \ref{cGpoldef} can be replaced by a functor coming straight from a cosimplicial group-valued functor. Explicitly, set $G_{[p,q]}:= D(\exp(L_{[p,q]}),G_{L_{[p,q]}})$; then $G:= \LLim_p\Lim_qG_{[p,q]}$ satisfies the conditions of Definition \ref{shfcgp}, so Lemma \ref{cfshfcgp} gives a map $\cG \to \Del(G^{\sharp})$, inducing an equivalence on $\pi^0$, and isomorphisms on $\DD^i$ for all points of $\cM \subset \pi^0\cG$.
\end{remark}

\begin{remark}
In Definition \ref{shfcgp}, instead of just taking hypercovers $A_0 \to B^{\bt}$ coming from $\oP$-covering morphisms  $A_0 \to B$, we could have taken 
the filtered colimit  over the  category  of all simplicial $\oP$-hypercovers $\Spec \tilde{A}^{\bt} \to \Spec A$. This corresponds to a kind of hypersheafification, rather than just sheafification, and all the results above still carry over, by faithfully flat descent.
\end{remark}

\subsection{Derived moduli of $G$-torsors}

We now show how cosimplicial groups can govern derived moduli of torsors. Fix a smooth algebraic group space $G$ over $R$, and a Deligne--Mumford stack $X$ over $R$.

\begin{definition}
 Define $\CC^{\bt}_{\et}(X,G): \Alg_R \to c\Gp$ as follows. Given $A \in \Alg_R$, let $\hyp_{\et}(X,A)$ be the inverse category whose objects are  simplicial \'etale hypercovers $Y_{\bt}$ of $X \by \Spec A$,  and whose morphisms are simplicial \'etale hypercovers. Then 
set
\[
  \CC^n_{\et}(X,G)(A):= \LLim_{Y_{\bt} \in \hyp_{\et}(X,A)} \Hom(Y_n,G).     
\]
\end{definition}

\begin{remark}
 Since $G$ is finitely presented, we can work just as well with pro-\'etale hypercovers. This has the advantage that Lemma \ref{univetcover} can be applied to provide a  weakly initial object among simplicial  hypercovers, giving a smaller, non-functorial, model for   $\CC^{\bt}_{\et}(X,G)(A)$.  
\end{remark}

\begin{lemma}
 For $A \in \Alg_R$, the groupoid $\Del( \CC^n_{\et}(X,G)(A))$ is equivalent to the groupoid of $G$-torsors on $X \by \Spec A$.      
\end{lemma}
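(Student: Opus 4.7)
The plan is to unwind the Deligne groupoid of $\CC^\bt_{\et}(X,G)(A)$ in concrete terms, identify it with Čech $1$-cocycles modulo coboundaries over arbitrary étale hypercovers, and then appeal to étale descent for $G$-torsors (which applies since $G$ is smooth, hence a sheaf for the étale topology and torsors are étale-locally trivial).

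First I would unpack the definitions. By Definitions \ref{mcdefgp1} and \ref{deldefcgp} together with the colimit defining $\CC^\bt_{\et}(X,G)(A)$, an object of $\Del(\CC^\bt_{\et}(X,G)(A))$ is represented by a pair $(Y_\bt,\omega)$, where $Y_\bt \to X\by \Spec A$ is an étale hypercover and $\omega\co Y_1\to G$ satisfies $\sigma^0\omega=1$ on $Y_0$ together with $\pd^1\omega=\pd^2\omega\cdot\pd^0\omega$ on $Y_2$. Two such pairs represent the same object if their restrictions to some common refinement agree. A morphism $(Y_\bt,\omega)\to (Y_\bt,\omega')$ is represented, after refinement, by $g\co Y_0\to G$ with $\omega'=(\pd^1 g)\cdot\omega\cdot (\pd^0 g)^{-1}$, which is the degree-$0$ case of the gauge action in Definition \ref{gaugedefcgp}. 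These are precisely normalised Čech $1$-cocycles with values in $G$, modulo coboundaries, computed over the filtered system $\hyp_{\et}(X,A)$.

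Next I would construct a functor $F$ from $\Del(\CC^\bt_{\et}(X,G)(A))$ to the groupoid of $G$-torsors. Given $(Y_\bt,\omega)$, the cocycle condition makes $\omega$ a descent datum for the trivial $G$-torsor on $Y_0$ along the étale hypercover $Y_\bt\to X\by\Spec A$. Since $G$ is smooth (so $BG$ is an étale stack and torsors satisfy effective descent for étale hypercovers), this descends to a $G$-torsor $P(\omega)$ on $X\by\Spec A$, well-defined up to unique isomorphism. A gauge transformation $g$ furnishes an isomorphism of descent data, hence an isomorphism $P(\omega)\cong P(\omega')$; independence of the representative is then clear. For essential surjectivity, a $G$-torsor $P$ on $X\by \Spec A$ is étale-locally trivial (since $G$ is smooth, so $P\to X\by\Spec A$ admits étale-local sections), giving an étale cover $U_0\to X\by\Spec A$ trivialising $P$; extending to a hypercover and reading off the gluing data recovers $P$ as $F(Y_\bt,\omega)$. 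For full faithfulness, any isomorphism between $P(\omega)$ and $P(\omega')$ pulls back étale-locally to an isomorphism of trivial torsors, i.e. left multiplication by some $g\co Y_0\to G$ on a common refinement, and compatibility with the gluings $\omega,\omega'$ is exactly the gauge condition.

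The main obstacle will be bookkeeping with the filtered colimit over $\hyp_{\et}(X,A)$: one needs that any two trivialising hypercovers (or two gauge transformations between fixed cocycles) admit a common refinement in $\hyp_{\et}(X,A)$, so that equality or composition is well-defined after passage to the colimit. This is precisely the (co)filteredness of the category of étale hypercovers, which is standard; moreover, Lemma \ref{univetcover} provides a weakly initial pro-étale cover of each $\Spec A'$-point, which can be used to reduce from pro-étale to étale hypercovers if needed. Once this is set up, the essential content is classical non-abelian étale descent for torsors under a smooth algebraic group, promoted to a statement about groupoids rather than mere sets of isomorphism classes.
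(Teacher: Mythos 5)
Your proposal is correct and follows essentially the same route as the paper: interpret Maurer--Cartan elements as \v Cech descent data for the trivial torsor along an \'etale hypercover, identify the gauge action with isomorphisms of torsors, and obtain essential surjectivity from the fact that every torsor is trivialised by some \'etale cover. The paper's proof is just a compressed version of this argument; your extra care with the filtered colimit over $\hyp_{\et}(X,A)$ and with full faithfulness fills in details the paper leaves implicit.
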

\begin{proof}
An object   of   $\mc( \CC^{\bt}_{\et}(X,G)(A))$ is a descent datum $\omega \in \Hom(Y_1,G)$, with the Maurer--Cartan relations giving the gluing conditions. Thus $\omega$ gives rise to an \'etale $G$-torsor $\Bu_{\omega}$ on $X\by \Spec A$, and it is straightforward to check that the gauge action of $\Hom(Y_0,G)$ corresponds to  isomorphisms of torsors. Every torsor is trivialised by some \'etale cover, so this functor is an equivalence.
\end{proof}

\begin{lemma}
Given   $A \in \Alg_R$, an $A$-module $M$, and  $\omega \in \mc( \CC^{\bt}_{\et}(X,G)(A))$ corresponding to a $G$-torsor $\Bu_{\omega}$ on $X \by \Spec A$, there are canonical isomorphisms
\[
      \H^i_{\omega}(\CC^{\bt}_{\et}(X,G),M)\cong  \H^{i}_{\et}(X\by \Spec A, M\ten_A\ad \Bu_{\omega}), 
\]
 where $\ad \Bu_{\omega} $ is the adjoint bundle 
\[
        (\g\ten_R\O_X\ten_RA ) \by_{G(\O_X\ten_RA)}\Bu_{\omega},
\]
 for $\g$ the Lie algebra of $G$, equipped with its adjoint $G$-action. 
\end{lemma}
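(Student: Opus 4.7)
The plan is to identify the cosimplicial module $\CC^{\bt}_{\omega}(\CC^{\bt}_{\et}(X,G), M)$ with a Čech-type complex computing the étale cohomology of $\ad\Bu_{\omega}\otimes_A M$, and then invoke the hypercover-computes-cohomology principle.

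First, I would unpack the tangent space. For the smooth algebraic group $G$ with Lie algebra $\g$, the kernel of $G(A\oplus M)\to G(A)$ is naturally identified with $\g\ten_R M$, so $T_1(G,M)\cong \g\ten_R M$. Applied levelwise, for an étale hypercover $Y_{\bt}$ of $X\by\Spec A$ one obtains
\[
T_1(\Hom(Y_n,G),M)\;\cong\;\Gamma\bigl(Y_n,\,(\g\ten_R\O_{Y_n})\ten_A M\bigr),
\]
and passing to the colimit over $\hyp_{\et}(X,A)$ gives
\[
\CC^n_{\omega}(\CC^{\bt}_{\et}(X,G),M)\;\cong\;\LLim_{Y_{\bt}}\Gamma\bigl(Y_n,\,(\g\ten_R\O_{Y_n})\ten_A M\bigr).
\]
The coface and codegeneracy maps are the usual ones, except that $\pd^0$ is twisted by the adjoint action of $(\pd^2)^n\omega\in G(Y_1)$ (after pulling back along the appropriate projection to $Y_1$), following Definition \ref{cgpcoho}.

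Next I would identify this twisted cosimplicial module with the Čech cosimplicial complex of the adjoint sheaf of $\Bu_{\omega}$. By definition, $\ad\Bu_{\omega}$ is the sheaf obtained by gluing copies of $\g\ten_R\O_{X\by\Spec A}$ via the descent datum $\Ad(\omega)$, so its local sections on $Y_n$ are exactly $\Gamma(Y_n,\g\ten_R\O_{Y_n})$ with the twisted gluing. The Maurer–Cartan condition on $\omega$ translates precisely into the cocycle identity making this a descent datum, and one checks by a direct comparison of formulas that the twisted $\pd^0$ in $\CC^{\bt}_{\omega}$ coincides with $(\ad\omega)\circ\pr_1^*$ in the Čech complex of $\ad\Bu_{\omega}\ten_A M$. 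Consequently
\[
\CC^{\bt}_{\omega}(\CC^{\bt}_{\et}(X,G),M)\;\cong\;\LLim_{Y_{\bt}\in\hyp_{\et}(X,A)}\check{\CC}^{\bt}\bigl(Y_{\bt},\,\ad\Bu_{\omega}\ten_A M\bigr).
\]

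Finally, to pass from Čech cohomology of hypercovers to derived functor cohomology, I would invoke Verdier's theorem: for an abelian sheaf $\sF$ on the étale site of a Deligne--Mumford stack, the colimit of $\H^*$ of the Čech complex over étale hypercovers computes $\H^*_{\et}(-,\sF)$. Applying this to $\sF=\ad\Bu_{\omega}\ten_A M$ yields
\[
\H^i_{\omega}(\CC^{\bt}_{\et}(X,G),M)\;\cong\;\H^i_{\et}\bigl(X\by\Spec A,\,\ad\Bu_{\omega}\ten_A M\bigr),
\]
as desired. The main obstacle is the bookkeeping in the middle step: one must verify that the sign and position conventions in $\CC^{\bt}_{\omega}$ (particularly the twist on $\pd^0$ by $(\pd^2)^n\omega$) match the descent gluing for the adjoint bundle after passing through the isomorphism $T_1(G^n,M)\cong\g^{\oplus n}\ten M$ and the colimit over hypercovers; everything else is essentially formal.
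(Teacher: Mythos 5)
Your proposal is correct and follows essentially the same route as the paper's proof: the paper likewise uses the splitting $G(\O_X\ten_R(A\oplus M))\cong G(\O_X\ten_RA)\ltimes(\g\ten_R\O_X\ten_RM)$ to identify $\CC^i_{\omega}(\CC^{\bt}_{\et}(X,G),M)$ with $\LLim_{Y_{\bt}}\Gamma(Y_i, M\ten_A\ad\Bu_{\omega})$ (the twisted $\pd^0$ encoding the adjoint descent datum), and then concludes because \'etale hypercovers compute cohomology. Your version merely spells out the bookkeeping that the paper leaves implicit.
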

\begin{proof}
First, observe that 
\[
 G( \O_X\ten_R(A\oplus M)) \cong G(\O_X\ten_RA) \ltimes (\g\ten_R\O_X\ten_RM),
\]
which gives functorial isomorphisms
\[
 \CC^{i}_{\omega}(\CC^{\bt}_{\et}(X,G),M) \cong \LLim_{Y_{\bt}} \Gamma(Y_i, M\ten_A\ad \Bu_{\omega}).
\]
Since \'etale hypercovers compute cohomology, this gives the required isomorphism.
\end{proof}

\begin{proposition}\label{torsorconsistent}
 The functor $\uline{\ddel}(\uline{ \CC^{\bt}_{\et}(X,G)})$ is canonically weakly equivalent to the derived stack of \'etale derived $G$-torsors on $X$ from \cite{dmsch} Example \ref{dmsch-Gtorsors}.
\end{proposition}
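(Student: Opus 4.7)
The plan is to apply Remark \ref{predetectweakh} to compare $\ddel(\uline{\CC^{\bt}_{\et}(X,G)})$ with a presentation of the derived stack $\mathcal{T}\!ors_G(X)$ of \'etale derived $G$-torsors on $X$ from \cite{dmsch} Example \ref{dmsch-Gtorsors}. First I would verify that $\CC^{\bt}_{\et}(X,G): \Alg_R \to c\Gp$ meets the hypotheses of Proposition \ref{delgpnice}: it is homogeneous because \'etale hypercovers lift uniquely along square-zero extensions and finite limits commute with the filtered colimit defining $\CC^n_{\et}$; each level $\CC^n_{\et}(X,G)$ is formally smooth because $G$ is smooth, so $\Hom(Y_n, G)$ is formally smooth for each \'etale affine hypercover component, and formal smoothness is preserved by filtered colimits. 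Proposition \ref{delgpnice} then guarantees that $\uline{\ddel}(\uline{\CC^{\bt}_{\et}(X,G)})$ is homogeneous and formally quasi-smooth, and that $\ddel(\uline{\CC^{\bt}_{\et}(X,G)})$ is homogeneous, pre-homotopic and formally quasi-presmooth.

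Next I would construct a natural transformation $\alpha: \uline{\ddel}(\uline{\CC^{\bt}_{\et}(X,G)}) \to \mathcal{T}\!ors_G(X)$. Given $A \in d\cN^{\flat}_R$ and $\omega \in \mc(\uline{\CC^{\bt}_{\et}(X,G)}(A))$ represented on an \'etale hypercover $Y_\bt \to X \by \Spec \pi_0 A$, the \'etale lifting property (smoothness of the structural morphisms of the hypercover) allows us to lift $Y_\bt$ canonically, up to contractible choice, to an \'etale hypercover over $X \by \Spec A$ in the derived sense, giving a bicosimplicial diagram over the derived ring; the Maurer--Cartan condition on $\omega$ then provides compatible descent data classifying a derived $G$-torsor $\Bu_\omega$, with the gauge action of $\CC^0_{\et}(X,G)(A)$ yielding the isomorphisms between torsors. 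This is functorial in $A$ (up to the homotopies encoded by the cosimplicial $A^{\Delta^n}$ structure) and matches the presentations used in \cite{dmsch}.

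Finally, I would verify the criteria of Remark \ref{predetectweakh}. On $\pi^0$, the preceding lemma in the excerpt identifies $\Del(\CC^{\bt}_{\et}(X,G)(A))$ with the groupoid of $G$-torsors on $X\by \Spec A$ for $A \in \Alg_{\H_0R}$, which by \cite{dmsch} Example \ref{dmsch-Gtorsors} is exactly $\pi^0\mathcal{T}\!ors_G(X)(A)$, so $\pi^0\alpha$ is a weak equivalence. For the tangent complexes, Lemma \ref{cgpcohodel} gives
\[
\DD^i_\omega(\uline{\ddel}(\uline{\CC^{\bt}_{\et}(X,G)}),M) \cong \H^{i+1}_\omega(\CC^{\bt}_{\et}(X,G),M),
\]
and the preceding lemma identifies the right-hand side with $\H^{i+1}_{\et}(X\by \Spec A, M\ten_A \ad \Bu_\omega)$. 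Standard deformation theory of torsors (as in \cite{dmsch}) computes $\DD^i_{\Bu_\omega}(\mathcal{T}\!ors_G(X), M)$ as the same \'etale cohomology group, and the comparison morphisms agree by construction of $\alpha$.

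The main obstacle is the construction of $\alpha$ at the level of derived rings: on a ring $A$ with $\pi_0 A \neq A$, the \'etale hypercovers used in the definition of $\CC^{\bt}_{\et}$ live over $\Spec \pi_0 A$ rather than $\Spec A$, so producing a derived $G$-torsor on $X\by \Spec A$ requires invoking the topological invariance of the \'etale site together with smoothness of $G$ to promote descent data on $Y_\bt \to X\by \Spec \pi_0 A$ into descent data for a derived torsor; keeping this compatible with the simplicial structure $A \mapsto A^{\Delta^n}$ (so as to match $\uline{\ddel}$ rather than just $\ddel$) is the delicate point, but it is essentially formal once one uses that both sides are formally quasi-smooth and homogeneous.
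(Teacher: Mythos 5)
Your overall strategy coincides with the paper's: reduce to Remark \ref{predetectweakh} via Proposition \ref{delgpnice}, and use Lemma \ref{cgpcohodel} together with the two lemmas preceding the proposition to check the equivalence on $\pi^0$ and the isomorphisms on $\DD^i$ (both of those verifications are correct as you state them). The genuine gap is that the central content of the proof --- the construction of the comparison morphism over derived rings, equivariantly for the gauge action --- is exactly the step you leave as a sketch and then declare ``essentially formal''. It is not: Remark \ref{predetectweakh} takes an actual morphism as input, and formal quasi-smoothness and homogeneity of the two sides provide no mechanism for producing one. Moreover, your proposed route --- lifting $Y_{\bt}$ ``up to contractible choice'' to a derived \'etale hypercover of $X \by \Spec A$ and extracting ``descent data in the derived sense'' --- runs into precisely the problem of saying what such homotopy-coherent descent data classify and how to strictify them functorially in $A$ and compatibly with $A \mapsto A^{\Delta^n}$; nothing you invoke does this.

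The paper sidesteps this with a concrete construction that your sketch is missing. First, no lifting of hypercovers is needed: since each $(A^{\Delta^n})_0$ is a \emph{nilpotent} extension of $\H_0A$ (this is where $A \in d\cN^{\flat}_R$ enters), every hypercover of $X \by \Spec (A^{\Delta^n})_0$ is canonically $Y_{\bt}\ten_{A_0}(A^{\Delta^n})_0$ for a single $Y_{\bt}$ over $A_0$, giving the strict identification
\[
\mc(\Gamma(Y_{\bt}\ten_{A_0}(A^{\Delta^{\bt}}),G)) \cong \Hom_{s\Pr(d\cN_R^{\flat})}(Y_{\bt}\by_{\Spec A_0}\Spec A,\, \bar{W}\uline{G}).
\]
The torsor is then produced by pulling back the universal principal space $W\uline{G} \to \bar{W}\uline{G}$ (\cite{sht} \S V.4), i.e.\ $P_{\omega}:=(Y_{\bt}\by_{\Spec A_0}\Spec A)\by_{\bar{W}\uline{G}}W\uline{G}$, whose associated derived stack $\oR P_{\omega}$ is a derived $\oR G$-torsor; and gauge equivalences are converted into torsor isomorphisms using the fact (from \cite{htpy}) that the gauge action of $\Hom(Y_0,\Gamma_0)$ on $\Hom(Y,\bar{W}\Gamma)$ is right multiplication by $W\Gamma_0$ on $\bar{W}\Gamma = \Gamma\backslash W\Gamma$, so that $(y,w)\mapsto (y,\,w\cdot g(y)^{-1})$ defines $P_{\omega}\to P_{\omega'}$. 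Without this (or an equivalent strictification device) your second and final paragraphs assert rather than prove the existence of $\alpha$. A minor further slip: the morphism is constructed out of $\ddel(\uline{\CC^{\bt}_{\et}(X,G)})$, and one then compares the span $\uline{\ddel} \la \ddel \to \bar{W}\mathrm{Tors}(X,G)$, applying Remark \ref{predetectweakh} to both arrows, rather than mapping $\uline{\ddel}$ directly to the torsor stack as you propose.
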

\begin{proof}
 Combining Lemma \ref{cgpcohodel} and Proposition \ref{delgpnice} with Remark \ref{predetectweakh}, it suffices to construct a functorial natural transformation from $\Del( \uline{ \CC^{\bt}_{\et}(X,G)})(A)$ to the $\infty$-groupoid $\mathrm{Tors}(X,G)(A)$ of $G$-torsors on $X \by \Spec A$, and to show that this is an equivalence for $A \in \Alg_R$, inducing isomorphisms on cohomology groups $\DD^i$.

Our first key observation is that for $A \in d\cN_R^{\flat}$, the ring $(A^{\Delta^n})_0$ is a nilpotent extension of $\H_0A$, so its \'etale site is isomorphic to that of $A_0$. In particular, every simplicial \'etale hypercover of $X \by \Spec (A^{\Delta^n})_0$ is of the form $Y_{\bt}\ten_{A_0}(A^{\Delta^n})_0$, for $Y_{\bt}$ a simplicial \'etale hypercover of $X \by \Spec A_0$.

Thus an element $\omega \in \mc(\uline{ \CC^{\bt}_{\et}(X,G)}(A))$ lies in $\mc$ applied to the simplicial cosimplicial group $\Gamma(Y_{\bt}\ten_{A_0}(A^{\Delta^{\bt}}),G)$ given by
$(i,j) \mapsto \Hom(Y_i\ten_{A_0}(A^{\Delta^j})_0,G)$, for some  simplicial \'etale hypercover $Y_{\bt}$ of $X \by \Spec A_0$. Now, 
\begin{eqnarray*}
 \mc( \Gamma(Y_{\bt}\ten_{A_0}(A^{\Delta^{\bt}}),G)) &=& \Hom_{c\bS}(\Delta, \bar{W}\Gamma(Y_{\bt}\ten_{A_0}(A^{\Delta^{\bt}}),G))\\
&\cong& \Hom_{s\Pr(d\cN_R^{\flat}) }(Y_{\bt}\by_{\Spec A_0} \Spec A, \bar{W}\uline{G}),
\end{eqnarray*}
where $s\Pr(d\cN_R^{\flat})$ denotes the category of functors $ d\cN_R^{\flat} \to \bS $.

For a simplicial group $\Gamma$, and $\bar{W}$ as in Definition \ref{barwdef2}, there is a universal principal $\Gamma $-space $W\Gamma$ over $\bar{W}\Gamma$, as  in \cite{sht} \S V.4, given by $W\Gamma= \bar{W}[\Gamma/\Gamma]$, (whereas $\bar{W}\Gamma= \bar{W}[\bt/\Gamma]$, regarding a group as a groupoid on one object). Thus $W\Gamma$ has a group structure inherited from $\Gamma$, and $\Gamma= \bar{W}[\Gamma/\{1\}]$ is a subgroup; the $\Gamma$-action on $W\Gamma$ is then given by left multiplication, with $\bar{W}\Gamma= \Gamma\backslash W\Gamma$. 

Thus we may associate the $\uline{G}$-space $ P_{\omega}:=(Y_{\bt}\by_{\Spec A_0} \Spec A)\by_{\bar{W}\uline{G}}W\uline{G}$ to $\omega$. Since $\uline{G}$ is the derived stack $\oR G$ associated to $G$, the derived stack $\oR P_{\omega}$ associated to $P_{\omega}$ is a derived $\oR G$-torsor on $\oR ( Y_{\bt}\by_{\Spec A_0} \Spec A)\simeq \oR (X\by_{\Spec A_0} \Spec A)$, so we have defined our functor on objects. 

Now, the constant group $\Gamma_0$ is a simplicial subgroup of $\Gamma$, giving a simplicial group homomorphism $W\Gamma_0\to W\Gamma$. Moreover, $W\Gamma_0$ is the $0$-coskeleton $\cosk_0\Gamma_0$ of $\Gamma_0$, so $\Hom_{\bS}(Y, W\Gamma_0) \cong \Hom_{\Set}(Y_0,\Gamma_0)$. From the proof of \cite{htpy} Proposition \ref{htpy-psidef}, the gauge action (Definition \ref{gaugedefcgp}) of $\Hom(Y_0,\Gamma_0)$ on $\mc(\CC^{\bt}(Y,\Gamma))= \Hom(Y, \bar{W}\Gamma)$ corresponds to the right multiplication by $W\Gamma_0$ on $\bar{W}\Gamma = \Gamma\backslash W\Gamma$.   

Hence, given $\omega, \omega' \in \Hom_{s\Pr(d\cN_R^{\flat}) }(Y_{\bt}\by_{\Spec A_0} \Spec A, \bar{W}\uline{G})$ and $g \in \Hom_{\Pr(d\cN_R^{\flat}) }(Y_0, G)$ with $g\star \omega= \omega'$, this means that $\omega'(y)= \omega(y)\cdot g(y)^{-1}$.
We therefore construct an isomorphism $P_{\omega} \to P_{\omega'}$ by $(y,w) \mapsto (y, w\cdot g(y)^{-1})$, for $y \in Y_{\bt}\by_{\Spec A_0} \Spec A$ and $w \in W\uline{G}$.

We have thus constructed a morphism $\Del( \uline{ \CC^{\bt}_{\et}(X,G)}) \to \mathrm{Tors}(X,G)$. The nerve of $ \mathrm{Tors}(X,G)$ is weakly equivalent to the derived stack $\cHom(X, BG)$ from \cite{dmsch} Example \ref{dmsch-Gtorsors}. Since $\g= \Hom_R(e^*\Omega_{G/R},R)$, the calculation of \cite{dmsch} Example \ref{dmsch-Gtorsors} gives
\[
 \DD^i_{\Bu}(\cHom(X, BG),M)\cong \H^{i+1}_{\et}(X\by \Spec A, M\ten_A\ad \Bu_{\omega}).
\]

On taking nerves, we thus get maps
\[
\uline{\ddel}( \uline{ \CC^{\bt}_{\et}(X,G)}) \la \ddel( \uline{ \CC^{\bt}_{\et}(X,G)})\to \bar{W}\mathrm{Tors}(X,G)\simeq \cHom(X, BG),
\]
 all of which give isomorphisms on $\DD^i$ and equivalences on $\pi^0$. Remark \ref{predetectweakh} thus shows that 
\[
\uline{\ddel}( \uline{ \CC^{\bt}_{\et}(X,G)})\simeq  \bar{W}\uline{\ddel( \uline{ \CC^{\bt}_{\et}(X,G)})}\simeq \cHom(X, BG).
\]
\end{proof}

\begin{remark}
 If $G= \GL_r$, this gives us a construction for derived moduli of rank $r$ vector bundles on $X$. If instead $G= \SL_r$, we get derived moduli of determinant $1$, rank $r$ vector bundles. 
\end{remark}

\section{Moduli from quasi-comonoids}\label{qmmoduli}

Although cosimplicial groups can be used to construct derived moduli in all characteristics for many problems, they are insufficiently flexible to arise  in the generality we need. Instead, we use the quasi-comonoids introduced in \cite{monad}

\subsection{Quasi-comonoids}

The following is a special case of \cite{monad} Lemma \ref{monad-qmlemma}:

\begin{definition}\label{qmdef}
Define a quasi-comonoid $E$ to consist of sets $E^n$ for $n \in \N_0$,  together with maps 
$$
\begin{matrix}
\pd^i:E^n \to E^{n+1} & 1\le i \le n\\
\sigma^i:E^{n}\to E^{n-1} &0 \le i <n,
\end{matrix}
$$
an associative product $*:E^m \by E^n \to E^{m+n}$, with identity $1 \in E^0$, such that:
\begin{enumerate}
\item\label{sdc1} $\pd^j\pd^i=\pd^i\pd^{j-1}\quad i<j$.
\item $\sigma^j\sigma^i=\sigma^i\sigma^{j+1} \quad i \le j$.
\item \label{sdc3}
$
\sigma^j\pd^i=\left\{\begin{matrix}
                        \pd^i\sigma^{j-1} & i<j \\
                        \id             & i=j,\,i=j+1 \\
                        \pd^{i-1}\sigma^j & i >j+1
                        \end{matrix} \right. .
$
\item\label{sdc4} $\pd^i(e)*f=\pd^i(e*f)$.
\item $e*\pd^i(f)=\pd^{i+m}(e*f)$, for $e \in E^m$.
\item $\sigma^i(e)*f=\sigma^i(e*f)$.
\item\label{sdc7} $e*\sigma^i(f)=\sigma^{i+m}(e*f)$, for $e \in E^m$.
\end{enumerate}

Denote the category of quasi-comonoids by $QM^*$.
\end{definition}

\begin{example}\label{egcfcgp}
Given a cosimplicial group $G$ (or even a cosimplicial monoid $G$), there is an  associated quasi-comonoid $\cE(G)$ given  by $\cE(G)^n= G^n$, with identity $1 \in G^0$, operations $\pd^i_{\cE(G)}=\pd^i_G, \sigma^i_{\cE(G)}=\sigma^i_G$, and Alexander-Whitney product
$$
g*h= ((\pd_G^{m+1})^ng)\cdot ((\pd^0_G)^mh),
$$  
for $g \in G^m,\, h \in G^n$.
\end{example}

\subsubsection{Maurer--Cartan}

We now construct a Maurer--Cartan functor analogous to the one for  cosimplicial groups.

\begin{definition}\label{mcdefqm1}
Define  $\mc: QM^*(\Set) \to \Set$ by 
$$
\mc(E) = \{\omega \in E^1\,:\, \sigma^0\omega =1, \,\pd^1\omega = \omega *\omega\}.
$$
\end{definition}

Now let $QM^*(\bS)$ be the category of simplicial objects in $QM^*$. 
The following is \cite{monad} Definition \ref{monad-mcdefqm}:
\begin{definition}\label{mcdefqm}
Define $\mmc: QM^*(\bS) \to \bS$ by 
$$
\mmc(E) \subset \prod_{n\ge 0} (E^{n+1})^{I^n}
$$
(where $I= \Delta^1 \in \bS$), consisting of those $\underline{\omega}$ satisfying: 
 \begin{eqnarray*}
\omega_m(s_1,\ldots, s_m)*\omega_n(t_1,\ldots, t_n)&=&\omega_{m+n+1}(s_1,\ldots, s_m,0,t_1,\ldots, t_n);\\ 
\pd^i\omega_n(t_1,\ldots,t_n)&=&\omega_{n+1}(t_1, \ldots,t_{i-1},1,t_i,\ldots,t_n);\\ 
\sigma^i\omega_n(t_1,\ldots,t_n)&=&\omega_{n-1}(t_1, \ldots,t_{i-1},\min\{t_i,t_{i+1}\},t_{i+2},\ldots, t_n);\\
\sigma^0\omega_n(t_1,\ldots,t_n)&=&\omega_{n-1}(t_2, \ldots,t_n);\\
\sigma^{n}\omega_n(t_1,\ldots,t_n)&=&\omega_{n-1}(t_1, \ldots,t_{n-1}),\\ 
\sigma^0\omega_0&=&1.
\end{eqnarray*}

Define $\mc :QM^*(\bS) \to \Set$ by $\mc(E)=\mmc(E)_0$, noting that this agrees with Definition \ref{mcdefqm1} when $E \in QM^*(\Set)$. 
\end{definition}

\begin{definition}\label{qmsmodel}
 Given a quasi-comonoid $E$, define the $n$th matching object $M^nE$ to be the set
\[
M^nE=\{(e_0,e_1,\ldots,e_{n-1}) \in (E^{n-1})^{n}\,|\, \sigma^ie_j=\sigma^{j-1}e_i\, \forall i<j \}.
\]
The Reedy matching map $E^n \to M^nW$ sends $e$ to $(\sigma^0e,\sigma^1e,\ldots,\sigma^{n-1}e) $.

Then \cite{monad} Lemma \ref{monad-qmsmodel} gives  a  model structure on $QM^*(\bS)$  in which a morphism $E \to F$ is a (trivial) fibration whenever the canonical maps
\[
 E^n \to F^n\by_{M^nF} M^nE
\]
are (trivial) fibrations in $\bS$ for all $n \ge 0$. 
\end{definition}

\begin{lemma}\label{qmmcrQ}
 If  $f:E \to F$ is a  (trivial) fibration in $QM^*(\bS)$, then the map
\[
\mmc(f): \mmc(E) \to \mmc(F)
\]
is  a (trivial) fibration in $\bS$. In particular, if $f:E \to F$ is a trivial fibration, then $\mc(f)$ is surjective.
\end{lemma}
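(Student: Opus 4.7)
The plan is to mimic the strategy of Lemma \ref{gpmcrQ}, exhibiting $\mmc$ as a right adjoint representable by a cofibrant object in the Reedy model structure of Definition \ref{qmsmodel}, and then deducing the fibration statement formally.

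First I would equip $QM^*(\bS)$ with its natural simplicial enrichment: given $E \in QM^*(\bS)$ and a simplicial set $K$, define the cotensor $E^K$ levelwise by $(E^K)^n := (E^n)^K$, with operations $\pd^i, \sigma^i$ and product $*$ inherited from $E$, and mapping spaces $\HHom_{QM^*(\bS)}(\Phi, E)_n := \Hom_{QM^*(\bS)}(\Phi, E^{\Delta^n})$. A routine check on matching objects (levelwise cotensor commutes with $M^n$) shows that this gives a simplicial model structure, i.e.\ the pushout--product axiom SM7 holds.

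Next I would construct a cofibrant quasi-comonoid $\Phi \in QM^*(\bS)$ together with a natural isomorphism $\mmc(E) \cong \HHom_{QM^*(\bS)}(\Phi, E)$. This is the quasi-comonoid analogue of the cosimplicial-group object built in \cite{monad} Proposition \ref{monad-rhommc2}. Concretely, $\Phi^n$ is the sub-simplicial-set of an appropriate free quasi-comonoid on generators $\bar\omega_k \in (\Phi^{k+1})^{I^k}$ (for $k \ge 0$) cut out by precisely the relations listed in Definition \ref{mcdefqm}. By construction, giving a map $\Phi \to E^{\Delta^n}$ is the same as giving an element of $\mmc(E)_n$. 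Cofibrancy of $\Phi$ reduces to checking that each matching map $\Phi^n \to M^n\Phi$ is a cofibration in $\bS$, which follows because the generating cells $\bar\omega_k$ sit in pairwise distinct Reedy degrees and the quasi-comonoid relations only identify cells of equal or lower degree.

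Finally, once $\Phi$ is cofibrant and SM7 holds, the functor $\HHom_{QM^*(\bS)}(\Phi, -) \cong \mmc$ is right Quillen, and hence sends (trivial) fibrations in $QM^*(\bS)$ to (trivial) fibrations in $\bS$, proving the first assertion. The final claim then follows immediately: $\mc(E) = \mmc(E)_0$, and any trivial fibration of simplicial sets is surjective on $0$-simplices.

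The main obstacle is verifying cofibrancy of $\Phi$, which is where the combinatorics of the $*$-product and the min-clause $\sigma^i\omega_n(\ldots) = \omega_{n-1}(\ldots, \min\{t_i,t_{i+1}\},\ldots)$ in Definition \ref{mcdefqm} need to be carefully organised so that the Reedy filtration of $\Phi$ consists of iterated cell attachments; this is exactly the point treated in \cite{monad}, which one would cite rather than reprove.
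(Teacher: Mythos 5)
Your proposal is correct and matches the paper, whose entire proof is a citation of \cite{monad} Corollary \ref{monad-mcquillen} stating that $\mmc$ is a right Quillen functor; your sketch simply unwinds what that citation contains, in exact parallel with the paper's proof of Lemma \ref{gpmcrQ} (representing $\mmc$ by a cofibrant object $\Phi$ and invoking the simplicial enrichment), and you rightly defer the one delicate point --- cofibrancy of $\Phi$, i.e.\ that the Maurer--Cartan relations organise the cells $\bar\omega_k$ into a Reedy cell complex --- to \cite{monad} rather than reproving it. (One cosmetic slip: $\Phi$ is a quotient of a free quasi-comonoid by those relations, not a sub-object.)
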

\begin{proof}
This is a direct consequence of \cite{monad} Corollary \ref{monad-mcquillen}, which shows that $\mmc$ is a right Quillen functor.       
\end{proof}

\begin{lemma}\label{abqm}
There is an equivalence between the category of abelian group objects in $QM^*$, and the category $c\Ab$ of cosimplicial complexes of abelian groups.
\end{lemma}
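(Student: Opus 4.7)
The plan is to construct explicit functors in both directions and verify they are mutually inverse. The key observation is that if $E$ is an abelian group object in $QM^*$, then the zero map $0:*\to E$ picks out a distinguished element $0_n \in E^n$ for each $n$, satisfying $0_0 = 1$ (compatibility with the quasi-comonoid identity) and $0_m * 0_n = 0_{m+n}$ (compatibility with $*$). Requiring $+:E\times E\to E$ to be a morphism of quasi-comonoids yields the distributivity identity $(a+b)*(c+d) = (a*c) + (b*d)$ for $a,b\in E^m$ and $c,d\in E^n$; specialising $a=d=0$ collapses this to $e*f = (0_m*f) + (e*0_n)$, which will drive the rest of the argument.

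First I would define $F:QM^*(\Ab)\to c\Ab$ by setting $F(E)^n = E^n$, retaining all $\sigma^i$ and the existing $\pd^i$ for $1\le i\le n$, and supplying the missing outer cofaces by $\pd^0 e := 0_1 * e$ and $\pd^{n+1} e := e * 0_1$, where $0_1\in E^1$ is the zero. The cosimplicial identities involving these new cofaces then reduce to applications of axioms (4)--(7) of Definition \ref{qmdef} to $0_1$, combined with the facts that $\pd^i$ and $\sigma^i$ are group homomorphisms (so carry $0_n$ to $0_{n\pm 1}$) and that iterated products of $0_1$ equal $0_n$ by distributivity.

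In the other direction, I would define $G:c\Ab\to QM^*(\Ab)$ by composing the forgetful functor from cosimplicial abelian groups to cosimplicial monoids (under $+$) with the construction $\cE$ of Example \ref{egcfcgp}, so that $G(A)^n = A^n$ with Alexander--Whitney product $a*b = ((\pd^{m+1})^n a) + ((\pd^0)^m b)$. The levelwise additive structure makes $\cE(A)$ into an abelian group object, with distributivity holding because the cofaces of $A$ are group homomorphisms and $A^{m+n}$ is abelian.

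Finally, I would check that $F$ and $G$ are mutually inverse. For $F\circ G = \id$, one computes $0*e = ((\pd^2)^n 0)+((\pd^0)^1 e) = \pd^0 e$ and symmetrically $e*0 = \pd^{n+1}e$, recovering the original cosimplicial structure on $A$. For $G\circ F = \id$, the sets together with the $\sigma^i$ and inner $\pd^i$ agree by construction, so only the products need matching: in $\cE(F(E))$ one unwinds the definitions to get $e*f = (e*0_n) + (0_m*f)$, which coincides with the original $*$ in $E$ by the distributivity formula established at the outset. The main obstacle is simply the patient verification of cosimplicial identities for the extended $\pd^0$ and $\pd^{n+1}$; each reduces to a single application of one of axioms (4)--(7) of Definition \ref{qmdef} to the element $0_1$, so the work is routine rather than conceptual.
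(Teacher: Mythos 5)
Your proposal is correct and takes essentially the same route as the paper: the paper's proof simply cites \cite{monad} and identifies the equivalence as the functor $\cE$ of Example \ref{egcfcgp}, which is exactly your functor $G$, while your quasi-inverse — adjoining the outer cofaces via $\pd^0 e = 0_1 * e$ and $\pd^{n+1} e = e * 0_1$, with the interchange law $(a+b)*(c+d) = (a*c)+(b*d)$ yielding $e*f = (0_m * f)+(e * 0_n)$ — is the same device the paper itself deploys in the linear setting in Lemma \ref{normlinear}. You have merely written out in full the routine verification that the paper outsources to \cite{monad}.
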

\begin{proof}
 This is \cite{monad} Lemma \ref{monad-abqm}. The equivalence is given by the functor $\cE$ of Example \ref{egcfcgp}.       
\end{proof}

\begin{lemma}\label{cotcoho}
For a cosimplicial simplicial abelian group  $A$, there are canonical isomorphisms
$$
\pi_n\mmc(\cE(A))  \cong \H_{n-1}(\Tot^{\Pi}\sigma^{\ge 1}N^sN_cA),
$$
 where   $\sigma^{\ge 1}$ denotes brutal truncation in cochain degrees $\ge 1$, $\Tot^{\Pi}$ is the product total functor of Definition \ref{totprod}, and $N^s$ and $N_c$ are the normalisation functors of Definitions \ref{normdef} and \ref{normcdef}. 
\end{lemma}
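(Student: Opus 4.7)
The plan is to reduce the computation to linear algebra by exploiting the abelian structure. Since $A$ is abelian, $\cE(A)$ is an abelian object of $QM^*(\bS)$, and by Lemma \ref{abqm} the functor $\cE$ gives an equivalence between cosimplicial abelian (simplicial) groups and abelian (simplicial) quasi-comonoids. In particular, all the defining relations in Definition \ref{mcdefqm} become linear: the product relation $\omega_m * \omega_n = \omega_{m+n+1}(s_1,\ldots,s_m,0,t_1,\ldots,t_n)$ is, under the Alexander--Whitney product of Example \ref{egcfcgp}, rewritten additively as
\[
(\pd^{m+1})^{n+1}\omega_m \;+\; (\pd^{0})^{m+1}\omega_n \;=\; \omega_{m+n+1}(s_1,\ldots,s_m,0,t_1,\ldots,t_n),
\]
and the face/degeneracy relations are already linear. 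Hence $\mmc(\cE(A))$ acquires a canonical simplicial abelian group structure, and its homotopy groups can be computed via Dold--Kan.

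Next I would apply normalisation in both directions. In the simplicial direction, I would replace each simplicial abelian group $A^{n+1}$ by its normalised chain complex $N^sA^{n+1}$. In the cosimplicial-cubical direction, the $I^n$-parametrisation of $\omega_n$ collapses under the degeneracy relation $\sigma^i\omega_n = \omega_{n-1}(\ldots,\min\{t_i,t_{i+1}\},\ldots)$ and the boundary conditions $\sigma^0\omega_n = \omega_{n-1}(t_2,\ldots,t_n)$ and $\sigma^n\omega_n = \omega_{n-1}(t_1,\ldots,t_{n-1})$ to its single non-degenerate cell, which, viewed through the cubical Dold--Kan functor, is equivalent to the $n$-th term of $N_c$ of the cosimplicial direction. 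The condition $\sigma^0\omega_0 = 1$ removes the cosimplicial degree $0$ part, which accounts for the brutal truncation $\sigma^{\ge 1}$. Putting the two normalisations together identifies $\mmc(\cE(A))$ with the simplicial abelian group corresponding under Dold--Kan to a shift of $\Tot^{\Pi}\sigma^{\ge 1}N^sN_c A$, and the shift by one reflects the fact that $\omega_n \in E^{n+1}$, i.e.\ that MC elements begin in cosimplicial degree $1$.

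Taking homotopy groups then gives the claimed isomorphism $\pi_n\mmc(\cE(A))\cong \H_{n-1}(\Tot^{\Pi}\sigma^{\ge 1}N^sN_cA)$, matching the parallel formula of Lemma \ref{cgpcotcoho} for cosimplicial groups. Alternatively, one can quote the analogous computation from \cite{monad}, since the argument for the proof of $\mmc$ being a right Quillen functor (cited in Lemma \ref{qmmcrQ}) already produces the required bicomplex model; in that case the present lemma is a direct corollary.

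The main obstacle is the bookkeeping in the cubical/cosimplicial normalisation step: one must verify carefully that the signs produced by the alternating sum of face maps on $I^n$ agree with the Koszul signs in $\Tot^{\Pi}$ (with $d = d^s + (-1)^a d_c$), and that the Alexander--Whitney formula for $*$ is indeed compatible with these signs so that the linearised Maurer--Cartan relations are exactly the cycle equations of $\Tot^{\Pi}\sigma^{\ge 1} N^sN_cA$ in degree $-1$. Once this sign analysis is in place (which is essentially the abelian specialisation of the computation in the proof of \cite{monad} Theorem \ref{monad-cfexp}), the result follows.
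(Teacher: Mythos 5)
Your proposal is correct, but note that the paper's own ``proof'' is a single line: it cites \cite{monad} Proposition \ref{monad-cotcoho} and proves nothing locally. So you have effectively reconstructed the outsourced argument rather than matched the paper's text. Your route --- observing that for abelian $A$ every defining relation of $\mmc(\cE(A))$ in Definition \ref{mcdefqm} becomes linear (including $\sigma^0\omega_0=1$, which additively reads $\sigma^0\omega_0=0$ and is exactly the cosimplicial normalisation condition $\omega_0 \in N_c^1A$), so that $\mmc(\cE(A))$ is a simplicial abelian group computable by a double Dold--Kan normalisation, cubical in the $I^n$-direction and simplicial in the other --- is the natural abelian specialisation of the computation in \cite{monad}, and the degree shift $\pi_n \cong \H_{n-1}$ is correctly accounted for by $\omega_n$ lying in cosimplicial degree $n+1$ over an $n$-cube. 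What the citation buys the paper is precisely avoiding the cubical sign bookkeeping that you rightly identify as the main remaining burden; what your version buys is a self-contained proof and a transparent parallel with Lemma \ref{cgpcotcoho}. Two small repairs: first, since $\omega_m \in E^{m+1}$, the Alexander--Whitney formula of Example \ref{egcfcgp} linearises to $\omega_m * \omega_n = (\pd^{m+2})^{n+1}\omega_m + (\pd^0)^{m+1}\omega_n$, so the exponent $m+1$ on your first term is off by one; second, the brutal truncation $\sigma^{\ge 1}$ is better attributed to the fact that no component of $\uline{\omega}$ ever lies in cosimplicial degree $0$ (every $\omega_n$ lives in $A^{n+1}$), rather than to the condition $\sigma^0\omega_0=1$, whose role is the normalisation just mentioned; the degree-$0$ term $A^0$ only reappears after quotienting by the gauge action, as Lemma \ref{cot3} shows. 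With these adjustments your sketch is a faithful and complete plan of proof.
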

\begin{proof}
 This is \cite{monad} Proposition \ref{monad-cotcoho}.       
\end{proof}

\subsubsection{The gauge action}

\begin{definition}\label{gaugedefqm}
For $E \in QM^*(\bS)$, let $(E^0)^{\by}_n \subset E^0_n$ be the submonoid of invertible elements. There is then an 
 action of the simplicial group $(E^0)^{\by}$ on the simplicial set $\mmc(E)$, called the \emph{gauge action}, and  given by setting 
$$
(g\star \omega)_n=  (\sigma_0)^ng * \omega_n *(\sigma_0)^ng^{-1},
$$
as in \cite{htpy} Definition \ref{htpy-defdef}, with $(\sigma_0)^n$ denoting the canonical map $(E^0) \to (E^0)^{\Delta^n}$. 
\end{definition}

\begin{definition}\label{deldefqm}
 Given $E \in QM^*(\bS)$, define the Deligne groupoid by  
$
 \Del(E):= [\mc(E)/ (E^0_0)^{\by}]
$
In other words, $\Del(E)$ has  objects $\mc(E)$, and morphisms from $\omega$ to $\omega'$ consist of $\{g \in (E^0_0)^{\by}\,:\, g*\omega=\omega'*g\}$. 

Define the derived Deligne groupoid to be the simplicial object in groupoids given by $\uline{\Del}(E):= [\mmc(E)/(E^0)^{\by}] $, so $\Del(E)= \uline{\Del}(E)_0$. 

Define  the simplicial sets $\ddel(E), \uline{\ddel}(E)\in \bS$ to be the nerves  $B\Del(E)$ and $\bar{W}\uline{\Del}(E)$, respectively.
\end{definition}

\begin{lemma}\label{cot3}
If $A$ is a simplicial cosimplicial abelian group, then 
$$
\pi_n\uline{\ddel}(\cE(A))  \cong \H_{n-1}(\Tot^{\Pi}N^sN_cA),
$$
whereas $\pi_1\ddel(\cE(A))\cong  \H^0(A_0)$, with 
\[
 \pi_0\ddel(\cE(A))\cong \z_{-1}(\Tot^{\Pi}N^sN_cA)/ d_c(A_0^0).
\]
\end{lemma}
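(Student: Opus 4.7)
The plan is to deduce all three isomorphisms by combining Lemma \ref{cotcoho} with the fibration sequence arising from the gauge action, following the same template as Lemma \ref{cgpcot3}. Since $A$ is abelian, the invertible submonoid $(\cE(A)^0)^{\by}$ equals all of $A^0$, so $\uline{\Del}(\cE(A))=[\mmc(\cE(A))/A^0]$. Applying $\bar W$ produces a fibration sequence
\[
\mmc(\cE(A)) \to \uline{\ddel}(\cE(A)) \to \bar{W}A^0
\]
(using the identification $\bar W[X/G]\simeq X\times^G WG$ and the standard fibration $X\to \bar W[X/G]\to \bar WG$). Taking the long exact sequence of homotopy groups and using Dold--Kan ($\pi_n \bar W A^0 \cong \H_{n-1}(N^sA^0)$) together with Lemma \ref{cotcoho} gives
\[
\cdots \to \H_{n-1}(\Tot^\Pi\sigma^{\ge 1}N^sN_cA) \to \pi_n\uline{\ddel}(\cE(A)) \to \H_{n-1}(N^sA^0) \xra{\pd} \H_{n-2}(\Tot^\Pi\sigma^{\ge 1}N^sN_cA) \to \cdots.
\]

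For (a), compare this with the long exact sequence in $\H_*\Tot^\Pi N^s(-)$ induced by the short exact sequence of cochain complexes $0\to \sigma^{\ge 1}N_cA \to N_cA \to N_c^0A \to 0$, noting that $N_c^0A=A^0$. The two long exact sequences match term-by-term except in the middle; a comparison via the five lemma then yields $\pi_n\uline{\ddel}(\cE(A))\cong \H_{n-1}(\Tot^\Pi N^sN_cA)$, provided the connecting maps agree. The step I expect to be the main obstacle is checking this compatibility of connecting maps; the explicit formula for the gauge action in Definition \ref{gaugedefqm}, together with the Alexander--Whitney product from Example \ref{egcfcgp}, shows that the map $\pi_n\uline{\ddel}(\cE(A))\to \pi_{n-1}A^0$ is induced by the cochain-level projection $N_cA\to N_c^0A$, which yields the required compatibility.

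For (b), use that $\pi_1\ddel(\cE(A))=\pi_1 B\Del(\cE(A))$ is the automorphism group of the zero Maurer--Cartan element in the groupoid $\Del(\cE(A))=\uline{\Del}(\cE(A))_0$. Unwinding Definition \ref{gaugedefqm} with the Alexander--Whitney product in additive notation gives $g\star 0=(\pd^1-\pd^0)g=-d_cg$ for $g\in A^0_0$, so the stabiliser of $0$ is $\ker(d_c\colon A^0_0\to N^1_cA_0)$, which equals $\H^0(A_0)$. (Since $A$ is abelian, stabilisers of all Maurer--Cartan elements are canonically identified, so this is basepoint-independent.)

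For (c), $\pi_0\ddel(\cE(A))$ is $\mc(\cE(A))$ modulo the gauge action of $A^0_0$. The Dold--Kan-type identification underlying Lemma \ref{cotcoho}, applied in simplicial degree $0$ to the total complex, identifies $\mc(\cE(A))$ with $\z_{-1}(\Tot^\Pi N^sN_cA)$ (this is essentially the degree-$0$ part of that lemma's spectral sequence). The same explicit computation as in (b) identifies the gauge action of $g\in A^0_0$ with translation by $-d_cg$, so passing to orbits yields $\z_{-1}(\Tot^\Pi N^sN_cA)/d_c(A^0_0)$, as claimed.
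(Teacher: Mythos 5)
Your overall route is the natural one: the paper's own ``proof'' of this lemma is a bare citation to \cite{monad}, and your argument parallels how the paper deduces the cosimplicial-group analogue (Lemma \ref{cgpcot3}) from Lemma \ref{cgpcotcoho}. Your parts (b) and (c) are computed correctly: in $\cE(A)$ the Alexander--Whitney product gives $g*\omega=\pd^1g+\omega$ and $\omega*g=\omega+\pd^0g$, so by Definition \ref{deldefqm} an element $g\in A^0_0$ is a morphism $\omega\to\omega'$ exactly when $\omega'=\omega-d_cg$, which yields both the stabiliser $\ker(d_c\co A^0_0\to A^1_0)=\H^0(A_0)$ and the orbit description.

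There is, however, a genuine gap in (a): the five lemma does not apply as you have set it up. You have two exact sequences with identified outer terms and (as you check) matching connecting maps, but you never construct a map between the middle terms $\pi_n\uline{\ddel}(\cE(A))$ and $\H_{n-1}(\Tot^{\Pi}N^sN_cA)$, and without such a map the conclusion fails formally: knowing the kernel and cokernel of the connecting maps determines the middle group only up to an extension, and extensions are not unique ($\Z/4$ versus $\Z/2\oplus\Z/2$ both fit the same four outer terms with zero connecting maps). The repair uses the abelian structure you already exploited: the gauge action is translation along the homomorphism $\delta\co A^0\to\mmc(\cE(A))$, $\delta(g)=g\star 0$, of simplicial abelian groups, so $\uline{\Del}(\cE(A))$ is the simplicial Picard (action) groupoid of the two-term complex $\delta$, and $\uline{\ddel}(\cE(A))$ is then identified, via Dold--Kan and Lemma \ref{cotcoho}, with the space associated to $\cone(N^s\delta)$ (suitably truncated). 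Your observation that the normalised components of $\delta(g)_n$ vanish for $n\ge 1$ (they involve $(\sigma_0)^ng$, hence are simplicially degenerate) identifies $N^s\delta$ with $\pm d_c$; since $\Tot^{\Pi}N^sN_cA$ is by construction the cone of $d_c\co N^sA^0\to\Tot^{\Pi}\sigma^{\ge 1}N^sN_cA$ (this is exactly the brutal-truncation sequence you wrote down), the comparison map between the middle terms drops out of the map of cones, and the five lemma becomes either legitimate or unnecessary. A secondary, smaller point: in (c) the identification $\mc(\cE(A))\cong\z_{-1}(\Tot^{\Pi}N^sN_cA)$ is a vertex-level statement which is \emph{not} the degree-zero part of Lemma \ref{cotcoho} as stated --- that lemma only gives $\pi_0\mmc(\cE(A))\cong\H_{-1}(\Tot^{\Pi}\sigma^{\ge 1}N^sN_cA)$, i.e.\ components rather than Maurer--Cartan elements. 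What you need is the on-the-nose normalisation isomorphism (the quasi-comonoid analogue of the first display of Lemma \ref{cgpcotcoho}, established in \cite{monad}): the tower $(\omega_n)$ of Definition \ref{mcdefqm} is freely determined by the normalised top cells $\gamma_n\in N^s_nN_c^{n+1}A$ of the cubes $\omega_n$, with the defining relations amounting precisely to the total cycle condition. This is true and routine to verify, but the citation you gave does not contain it.
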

\begin{proof}
 This is \cite{monad} Proposition \ref{monad-cot3}.       
\end{proof}

\subsubsection{Comparison with cosimplicial groups}

\begin{lemma}\label{cfmc}\label{cfdef}
Given a simplicial cosimplicial group $G$, and associated simplicial quasi-comonoid $\cE(G)$ as in Example \ref{egcfcgp}, there are $G^0$-equivariant weak equivalences
$$
\mmc(\cE(G))\simeq \mmc(G)
$$
and hence weak equivalences
$$ 
\ddel(\cE(G))\simeq \ddel(G)
$$
in $\bS$, functorial in  objects $G \in cs\Gp$. Here, the functors $\mmc$ on the left and right are those from Definitions \ref{mcdefqm} and \ref{mcdefgp} respectively, while the functors $\ddel$ are those from Definitions \ref{deldefqm} and \ref{deldefcgp}.
\end{lemma}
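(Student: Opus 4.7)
The plan is to construct an explicit natural simplicial map $\alpha: \mmc(\cE(G)) \to \mmc(G)$, verify $G^0$-equivariance, and then check it is a weak equivalence, reducing to the abelian case via the cohomology computations already at hand.

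First, I would compare the Maurer--Cartan conditions at level $0$. An element of $\mc(\cE(G))_0$ is $\omega\in G^1$ with $\sigma^0\omega=1$ and $\pd^1\omega=\omega*\omega$, and by the Alexander--Whitney formula of Example \ref{egcfcgp}, $\omega*\omega=\pd^2\omega\cdot\pd^0\omega$. Thus $\mc(\cE(G))=\mc(G)$ tautologically. For higher levels, I would use the ``staircase'' map $\iota_n\co\Delta^n\hookrightarrow I^n$ dual to the poset surjection $[1]^n\onto[n]$, $(\varepsilon_1,\ldots,\varepsilon_n)\mapsto\sum\varepsilon_i$. Precomposition with $\iota_n\times\id$ sends $\omega_n\co I^n\by\Delta^k\to G^{n+1}$ to $\alpha(\omega)_n\co\Delta^n\by\Delta^k\to G^{n+1}$. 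The bulk of the construction is checking that $\alpha(\omega)$ satisfies the relations of Definition \ref{mcdefgp}; the only delicate identity is the $\pd_0$ relation $\pd_0\alpha(\omega)_n=(\pd^1\alpha(\omega)_{n-1})\cdot(\pd^0\alpha(\omega)_{n-1})^{-1}$, which follows by restricting the $*$-product rule $\omega_1*\omega_{n-1}=\omega_n|_{\{0\}\times I^{n-1}}$ along $\iota_n$ and using that $\omega$ is grouplike via the level-$0$ relation.

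The main step is showing $\alpha$ is a weak equivalence. When $G$ is abelian, Lemma \ref{cgpcotcoho} gives $\pi_n\mmc(G)\cong\H_{n-1}(\Tot^{\Pi}\sigma^{\ge 1}N^sN_cG)$, and Lemma \ref{cotcoho} gives the same formula for $\pi_n\mmc(\cE(G))$. One checks that on normalised complexes the Alexander--Whitney product restricts to the standard cup product structure, so $\alpha$ induces the identity on $\Tot^{\Pi}\sigma^{\ge 1}N^sN_cG$, hence is a weak equivalence. For general $G$, I would reduce to this case by filtering $G$ by its lower central series $\Gamma_iG$; both $\mmc$ functors send the short exact sequences $1\to\Gamma_{i+1}G/\Gamma_{i+2}G\to G/\Gamma_{i+2}G\to G/\Gamma_{i+1}G\to 1$ to fibration sequences by Lemmas \ref{gpmcrQ} and \ref{qmmcrQ}, and $\alpha$ is compatible with this filtration, so an inductive five-lemma argument, with base case the abelian calculation above, gives the weak equivalence.

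Finally, for the Deligne groupoid statement: unwinding Definitions \ref{gaugedefcgp} and \ref{gaugedefqm}, the gauge action on $\cE(G)$ at level $n$ is conjugation by $(\sigma_0)^ng$ under $*$, which via Alexander--Whitney becomes $((\pd^{n+1})^1(\sigma_0)^n g)\cdot\omega_n\cdot\pd^0(\pd^1)^n(\sigma_0)^ng^{-1}$, matching the cosimplicial-group formula. So $\alpha$ is $G^0$-equivariant, hence induces a weak equivalence $\bar{W}[\mmc(\cE(G))/(G^0)]\simeq\bar{W}[\mmc(G)/G^0]$, i.e.\ $\ddel(\cE(G))\simeq\ddel(G)$.

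The principal obstacle is the non-abelian comparison: verifying that the restriction map $\alpha$ genuinely implements the Eilenberg--Zilber equivalence between the two models of the Maurer--Cartan space. Since both $\mmc$ functors are right Quillen (Lemmas \ref{gpmcrQ}, \ref{qmmcrQ}), an alternative would be to exhibit cofibrant representing objects $\Phi_{\mathrm{gp}}$ and $\Phi_{\mathrm{qm}}$ for the two $\mmc$ functors and construct a weak equivalence between them lifting $\alpha$, but the filtration argument is more direct.
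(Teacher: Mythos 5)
You have identified the right objects to compare, but there are two genuine gaps, and the first is fatal to your construction. The staircase restriction $\alpha(\omega)_n:=\omega_n\circ(\iota_n\by\id)$ does not land in $\mmc(G)$. The relations of Definition \ref{mcdefqm} pin down $\omega_n$ only on the cube faces $t_i=1$ (via cofaces) and $t_i=0$ (via $*$-products), together with the min-degeneracies; but the \emph{inner} faces of the staircase simplex are diagonals $t_i=t_{i+1}$. Already for $n=2$ the relation required by Definition \ref{mcdefgp}, namely $\pd_1\alpha(\omega)_2=\pd^2\alpha(\omega)_1$, reads $\omega_2(t,t)=\pd^2\omega_1(t)$, and nothing in Definition \ref{mcdefqm} forces this: the value of $\omega_2$ in the interior of the square is free homotopy data (only $\sigma^1\omega_2(t,t)=\omega_1(t)$ is imposed), which is precisely why the lemma asserts a weak equivalence rather than an isomorphism. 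The $\pd_0$ identity fails too, for a different reason: with the orientation for which $\iota_n\delta_0$ lands in $\{t_1=0\}$, the product rule and the Alexander--Whitney formula of Example \ref{egcfcgp} give $\pd_0\alpha(\omega)_n=((\pd^2)^n\omega_0)\cdot(\pd^0\alpha(\omega)_{n-1})$, whereas Definition \ref{mcdefgp} demands $(\pd^1\alpha(\omega)_{n-1})\cdot(\pd^0\alpha(\omega)_{n-1})^{-1}$; the quasi-comonoid relations contain no inverses, and the opposite orientation gives $\pd_0\alpha(\omega)_n=\pd^1\alpha(\omega)_{n-1}$ with the inverse factor simply missing, so no choice of staircase repairs it. (Your equivariance check also does not close: under Alexander--Whitney, conjugation by $(\sigma_0)^ng$ produces the factor $(\pd^0)^{n+1}(\sigma_0)^ng^{-1}$, while Definition \ref{gaugedefcgp} has $\pd^0(\pd^1)^n(\sigma_0)^ng^{-1}$; as injections $[0]\to[n+1]$ these are $0\mapsto n+1$ versus $0\mapsto 1$, so the two gauge formulas do not literally match either.)

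The second gap is the non-abelian reduction. A (simplicial) cosimplicial group is the inverse limit of its lower-central-series quotients only when it is pro-nilpotent; for $G$ levelwise perfect the tower $G/\Gamma_iG$ is trivial, so your five-lemma induction establishes nothing about $G$ itself, and the groups to which this lemma is actually applied --- the $\GL_r$-type gauges of Corollary \ref{cfexp2} and $\CC^{\bt}_{\et}(X,G)$ in Proposition \ref{torsorconsistent} --- are nowhere near pro-nilpotent. Towers of nilpotent quotients are the mechanism behind the DGLA comparison (Lemma \ref{cfexp}), not this statement; even where the tower does converge you would additionally need a $\Lim^1$-type argument for $\mmc$ of the tower. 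Moreover, granting the abelian case, the agreement of the formulas in Lemmas \ref{cgpcotcoho} and \ref{cotcoho} yields an abstract isomorphism of homotopy groups, not the natural $G^0$-equivariant map the statement requires. Note that the paper gives no internal argument here --- its entire proof is the citation of \cite{monad} Propositions \ref{monad-cfmc} and \ref{monad-cfdef} --- and the route you set aside in your final paragraph is the one with a chance of working in full generality: both $\mmc$ functors are right Quillen and corepresented by explicit cofibrant objects (this is exactly what Lemmas \ref{gpmcrQ} and \ref{qmmcrQ} invoke from \cite{monad}), so the comparison should be made between those representing objects, which sees no difference between nilpotent and general $G$.
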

\begin{proof}
 This is \cite{monad} Propositions \ref{monad-cfmc} and \ref{monad-cfdef}.
\end{proof}

\subsection{Constructing quasi-comonoids}

\subsubsection{Monads}\label{monads}

\begin{definition}
A monad (or triple) on a category $\cB$ is a monoid in the category of endofunctors of $\cB$ (with the monoidal structure given by composition of functors). 
\end{definition}

\begin{example}
 Given an     adjunction
$$
\xymatrix@1{\cD \ar@<1ex>[r]^U_{\top} & \cB \ar@<1ex>[l]^F}
$$
with unit $\eta:\id \to UF$ and co-unit $\vareps:FU \to \id$, the associated monad on $\cB$ is given by    $\top=UF$, with unit $\eta: \id \to \top$ and multiplication $\mu:= U\vareps F: \top^2 \to \top$.
\end{example}

\begin{definition}
 
Given a monad $(\top, \eta, \mu)$ on a category $\cB$, define the category of $\top$-algebras, $\cB^{\top}$, to have objects
$$
\top B \xra{\theta} B,
$$
such that $\theta\circ \eta_B=\id$ and $\theta \circ \top \theta= \theta \circ  \mu_B: \top^2 B \to B$.
        
A morphism 
$$
g: ( \top B_1 \xra{\theta} B_1 ) \to  (\top B_2 \xra{\phi} B_2)  
$$      
of $\top$-algebras is a morphism $g:B_1 \to B_2$ in $\cB$ such that $\phi\circ \top g= g \circ \theta$.
       \end{definition}

\begin{example}
Let $\top:=\Symm_R$ be the symmetric functor on  $\Mod_R$, with unit $\eta_M: M \to \top M$ given by the inclusion of degree $1$ monomials, and $\mu_M: \top^2M \to \top M$ given by expanding out polynomials of polynomials.  Then $(\Mod_R)^{\top}$ is equivalent to the category of unital commutative $R$-algebras.
       \end{example}

Given a monad  $(\top, \mu, \eta)$ on a category $\cB$, and an object $B \in \cB$,  there is a  quasi-comonoid $E(B)$ given by
$$
E^n(B)= \Hom_{\cB}(\top^n B, B)
$$
in $(\Set, \by)$, 
with product
$g*h=g\circ\top^n h$, and  for $g \in E^n(B)$,
\begin{eqnarray*}
\pd^i(g) &=& g \circ \top^{i-1}\mu_{\top^{n-i}B}\\
\sigma^i(g) &=& g \circ \top^{i}\eta_{\top^{n-i-1}B}.
\end{eqnarray*}

Note that these constructions also all work for a comonad $(\bot, \Delta, \vareps)$, by contravariance. There is even a generalisation to bialgebras for a distributive monad-comonad pair: see \cite{monad} Proposition \ref{monad-enrichtopbot}.

\begin{lemma}\label{monadmc} Given an object $B \in \cB$, the set of $\top$-algebra structures on $B$ is
$
\mc(E(B))
$, while $\Del(E(B))$ is equivalent to the groupoid of $\top$-algebras overlying $B$.
\end{lemma}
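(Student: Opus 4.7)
The proof is essentially a direct verification: unwind the definitions of $\mc$, $\Del$, and the quasi-comonoid structure on $E(B)$, and observe that they exactly match the defining data of a $\top$-algebra structure on $B$ and morphisms between such.

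For the first assertion, I would start from the formula
\[
 \mc(E(B)) = \{\omega \in E^1(B) : \sigma^0\omega = 1,\ \pd^1\omega = \omega*\omega\}
\]
of Definition \ref{mcdefqm1}, with $E^1(B)=\Hom_\cB(\top B,B)$ and $E^0(B)=\Hom_\cB(B,B)$ whose identity $1$ is $\id_B$. Specialising the operations of $E(B)$ to $n=1$, $i=1$ and to $m=n=1$ gives respectively
\[
 \sigma^0\omega = \omega\circ \eta_B,\qquad \pd^1\omega = \omega\circ \mu_B,\qquad \omega*\omega = \omega\circ \top\omega,
\]
so the Maurer--Cartan conditions become exactly the unit axiom $\omega\circ\eta_B=\id_B$ and the associativity axiom $\omega\circ\mu_B=\omega\circ\top\omega$, which are precisely the defining axioms of a $\top$-algebra structure $\omega:\top B\to B$.

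For the second assertion, by Definition \ref{deldefqm} the objects of $\Del(E(B))$ are the $\top$-algebra structures on $B$ (by the first part), and morphisms from $\omega$ to $\omega'$ are elements $g\in (E^0(B))^\by$ with $g*\omega=\omega'*g$. Since $E^0(B)=\End_\cB(B)$ is a monoid under composition, its group of units is exactly the automorphism group $\Aut_\cB(B)$. Unwinding the product with $m=0,n=1$ and $m=1,n=0$ gives $g*\omega = g\circ \omega$ and $\omega'*g = \omega'\circ \top g$, so the gauge condition becomes $\omega'\circ \top g = g\circ \omega$, which is the morphism-of-$\top$-algebras condition for $g:(\top B\xra{\omega}B)\to (\top B\xra{\omega'}B)$. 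This gives a fully faithful functor from $\Del(E(B))$ to the full subgroupoid of $\top$-algebras overlying $B$ (with isomorphisms as morphisms); essential surjectivity is tautological, since every such algebra is already in the image on objects.

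There is no real obstacle here---the content is purely in the bookkeeping of Definition \ref{qmdef}, and the only mild care needed is to check that the asymmetric form of the gauge action (namely $g*\omega=\omega'*g$ rather than the conjugation formula $(g\star\omega)_0 = g*\omega*g^{-1}$) agrees on the nose with the conjugation formula of Definition \ref{gaugedefqm} when restricted to $\mc$; this follows because for $\omega\in \mc(E(B))$ the identity $g*\omega*g^{-1}=\omega'$ is equivalent, after multiplying on the right by $g$, to $g*\omega=\omega'*g$.
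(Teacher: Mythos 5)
Your proof is correct and is exactly the paper's (one-line) argument spelled out: the paper simply observes that the claim ``follows immediately from the explicit description in Definition \ref{mcdefqm1}'', i.e.\ from the identifications $\sigma^0\omega=\omega\circ\eta_B$, $\pd^1\omega=\omega\circ\mu_B$, $\omega*\omega=\omega\circ\top\omega$, which you verify carefully. Your closing remark about the gauge action is also fine, though unnecessary here, since Definition \ref{deldefqm} already states the morphism condition in the asymmetric form $g*\omega=\omega'*g$.
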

\begin{proof}
This follows immediately from the explicit description in Definition \ref{mcdefqm1}.
\end{proof}

\subsubsection{Diagrams}\label{diagrams} 

\begin{definition}
 Given a category $\cB$ equipped with a monad $\top$, together with  $K \in \bS$ and a map $B:K_0 \to \Ob \cB$,
define the quasi-comonoid $E_K(B)$ 
 by
$$
E^n(B/K)= \prod_{x \in K_n}\Hom_{\cB}(\top^n B((\pd_0)^{n}x), B((\pd_1)^{n}x)), 
$$ 
with operations 
\begin{eqnarray*}
\pd^i(e)(x)&:=& e(\pd_i x)\circ \top^{i-1}\mu_{\top^{n-i}B((\pd_0)^{n+1}x)}\\
\sigma^j(e)(y)&:=& e(\sigma_j y)\circ \top^{i}\eta_{\top^{n-i-1}B(\pd_0)^{n-1}x)},\\ 
(f*e)(z)&:=& f((\pd_{m+1})^nz)\circ \top^m e((\pd_{0})^mz),
\end{eqnarray*}
for $f \in E^m(B/K), e \in E^n(B/K)$.
\end{definition}

\begin{definition}
Given a category $\cB$ equipped with a monad $\top$, together with  
a small category $\bI$ and a map $B:\Ob \bI \to \Ob\cB$, 
define the quasi-comonoid $E(B/\bI)$ 
 by
\[
 E(B/\bI):= E(B/B\bI),
\]
where $B\bI$ is the nerve of $\bI$.
\end{definition}

\begin{lemma}\label{sdcdiagramsub} 
Given $\cB, \top, \bI$ and   $B:\Ob \bI \to \Ob\cB$ as above, 
\[
 \mc(E(B/\bI))
\]
is isomorphic to the set of functors $\bD:\bI \to \cB^{\top}$ with $U\bD(i)= B(i)$ for all $i \in \bI$, where $U: \cB^{\top} \to \cB$ is the forgetful functor.

 Meanwhile $\Del(E(B))$ is equivalent to the groupoid of diagrams $\bD:\bI \to \cB^{\top}$ with $\bD(i)$ overlying $B(i)$ for all $i \in \bI$.
\end{lemma}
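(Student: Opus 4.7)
The plan is to generalise Lemma \ref{monadmc} from a single object to a diagram, by decomposing elements of $\mc(E(B/\bI))$ along the 1- and 2-simplices of the nerve $B\bI$. An element $\omega \in E^1(B/\bI)$ is exactly a family of morphisms $\omega_h$ in $\cB$ indexed by the morphisms $h$ of $\bI$, and the Maurer--Cartan relations $\sigma^0\omega = 1$ and $\pd^1 \omega = \omega * \omega$ must be tested on the 1- and 2-simplices of $B\bI$ respectively.

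First I would evaluate $\sigma^0\omega=1$ on the degenerate 1-simplex $\sigma_0 x = \id_x$ at each object $x \in \bI$; this forces the unit axiom for the candidate $\top$-algebra structure $\theta_x := \omega_{\id_x}$ on $B(x)$. I would then evaluate $\pd^1\omega = \omega * \omega$ on three classes of 2-simplex. On the doubly-degenerate 2-simplex $(\id_x,\id_x)$ the identity recovers the associativity axiom for $\theta_x$, so combined with the previous step Lemma \ref{monadmc} delivers a genuine $\top$-algebra structure on each $B(x)$. On the mixed-degenerate 2-simplices $(h,\id)$ and $(\id,h)$ attached to a morphism $h$ of $\bI$, the identity forces $\omega_h$ to be determined by (and to determine) a unique underlying morphism $U\bD(h)$ in $\cB$ intertwining the two algebra structures, i.e.\ a morphism in $\cB^{\top}$. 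On a non-degenerate 2-simplex coming from a composable pair $(g_1,g_2)$ it expresses the functoriality $\bD(g_2 g_1) = \bD(g_2)\circ\bD(g_1)$. Conversely, given a functor $\bD:\bI\to\cB^{\top}$ lifting $B$ on objects, I would construct $\omega_h$ from $\bD(h)$ and the target algebra structure via the universal property of the free $\top$-algebra, and verify routinely that the Maurer--Cartan relations hold; the two constructions are visibly mutually inverse, establishing the first bijection.

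For the statement about $\Del(E(B/\bI))$, I would identify $(E^0_0)^{\by}$ with tuples $(g_x)_{x \in \Ob\bI}$ of invertible endomorphisms of the $B(x)$ in $\cB$, then unpack the defining relation $g*\omega = \omega'*g$. Restricted to identity 1-simplices $\id_x$ this gives that each $g_x$ intertwines $\theta_x$ with $\theta'_x$, hence is an isomorphism in $\cB^{\top}$; restricted to a general 1-simplex $h:x\to y$ it gives the naturality square relating $\bD(h)$ to $\bD'(h)$ through $(g_x, g_y)$. This is exactly the data of a natural isomorphism $\bD \cong \bD'$ over $B$, yielding the claimed equivalence of groupoids.

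The only real obstacle is bookkeeping: tracking the variance convention for the nerve so that the induced morphisms of $\top$-algebras point in the direction compatible with $\bI$. Once this is settled, all identities reduce to direct manipulation with the monad axioms ($\theta\circ\eta = \id$ and $\theta\circ\top\theta = \theta\circ\mu$) together with the quasi-comonoid relations (\ref{sdc1})--(\ref{sdc7}) of Definition \ref{qmdef}.
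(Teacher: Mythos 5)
Your proposal is correct and takes essentially the same approach as the paper: the paper's proof simply cites \cite{monad} and records the very dictionary you derive, namely the algebra structure $\theta_i = \omega(i \xra{\id} i)$ and the morphisms $\bD(f) = \omega(i \xra{f} j)\circ\eta_{B(i)}$. Your case-by-case evaluation of the Maurer--Cartan and gauge relations on the degenerate and non-degenerate simplices of the nerve is precisely the verification underlying that citation.
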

\begin{proof}
This is \cite{monad} Lemma \ref{monad-sdcdiagramsub}. Given $\omega \in \mc(E(B/\bI))$, the algebra structure $\top B(i) \to B(i)$ is given by $\omega(i \xra{\id} i) \in \Hom_{\cB}(\top^n B(i), B(i))$, while the morphism $\bD(f): \bD(i) \to \bD(j)$ is given by $ \omega(i \xra{f} j) \circ \eta_{B(i)}\in \Hom_{\cB}( B(i), B(j)) $.
\end{proof}

\begin{corollary}
 Take a category $\cB$ equipped with a monad $\top$, together with  
a small category $\bI$ and a subcategory $\bJ$. Assume that we have a functor $\bF: \bJ \to \cB^{\top}$, and a map $B: \Ob \bI \to \Ob \cB$ extending $U\Ob\bF: \Ob J \to \Ob \cB$.

For $\omega_{\bF} \in \mc(E(B|_{\bJ}/\bJ))$ corresponding to $\bF$ in Lemma \ref{sdcdiagramsub}, we can form a quasi-comonoid $E$ by 
\[
E^n:=  E^n(B/\bI)\by_{E^n(B|_{\bJ}/\bJ)}\{\overbrace{\omega_{\bF} * \omega_{\bF} * \ldots * \omega_{\bF} }^n\}.
\]

Then 
$
 \mc(E)
$
is isomorphic to 
the set of functors $\bD:\bI \to \cB^{\top}$ with $U\bD(i)= B(i)$ for all $i \in \bI$ and $\bD|_{\bJ}= \bF$, while $\Del(E)$ is the groupoid of such functors.
\end{corollary}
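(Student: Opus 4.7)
\emph{Plan.} The strategy is to identify $E$ as a well-defined sub-quasi-comonoid of $E(B/\bI)$, and then to transport the conclusions of Lemma \ref{sdcdiagramsub} across this inclusion. Restriction along $\bJ \hookrightarrow \bI$ induces a morphism $r: E(B/\bI) \to E(B|_{\bJ}/\bJ)$ of quasi-comonoids, and by construction $E^n$ is the fibre of $r^n$ over $\omega_{\bF}^{*n}$, with the convention $\omega_{\bF}^{*0} := 1$. The first task is to check that the operations $\pd^i$, $\sigma^i$, and $*$ of $E(B/\bI)$ preserve these fibres, which reduces to verifying the identities
\[
\pd^i(\omega_{\bF}^{*n}) = \omega_{\bF}^{*(n+1)}, \quad \sigma^i(\omega_{\bF}^{*n}) = \omega_{\bF}^{*(n-1)}, \quad \omega_{\bF}^{*m}*\omega_{\bF}^{*n} = \omega_{\bF}^{*(m+n)}
\]
in $E(B|_{\bJ}/\bJ)$. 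The third is immediate from associativity of $*$. The first two follow by induction on $n$ from the Maurer--Cartan relations $\pd^1\omega_{\bF} = \omega_{\bF}*\omega_{\bF}$ and $\sigma^0\omega_{\bF} = 1$, using the compatibility of face and degeneracy operations with $*$ in Definition \ref{qmdef}. With $E$ thus established as a quasi-comonoid, the inclusion $E \hookrightarrow E(B/\bI)$ is automatically a morphism of quasi-comonoids.

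\emph{Identification of $\mc(E)$.} Since $\sigma^0(1) = 1$ and $\pd^1\omega_{\bF} = \omega_{\bF}*\omega_{\bF}$, membership in the fibres imposes no additional Maurer--Cartan constraint, so
\[
\mc(E) = \{\omega \in \mc(E(B/\bI)) : r(\omega) = \omega_{\bF}\}.
\]
Lemma \ref{sdcdiagramsub} puts $\mc(E(B/\bI))$ in bijection with functors $\bD: \bI \to \cB^{\top}$ satisfying $U\bD(i) = B(i)$ for all $i \in \bI$, and naturality of the construction with respect to functors of indexing categories ensures that $r(\omega)$ corresponds to the restricted functor $\bD|_{\bJ}$. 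Hence the condition $r(\omega) = \omega_{\bF}$ translates exactly to $\bD|_{\bJ} = \bF$.

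\emph{Identification of $\Del(E)$.} A morphism $\omega \to \omega'$ in $\Del(E)$ is an invertible element $g \in E^0_0$ with $g*\omega = \omega'*g$. Since $E^0$ is the fibre of $r^0$ over $1$, such a $g$ is an invertible element of $E^0(B/\bI)_0$ whose restriction to $\bJ$ is the identity, and the gauge equation is inherited from that in $E(B/\bI)$. By the groupoid statement of Lemma \ref{sdcdiagramsub}, this describes precisely an isomorphism between $\bI$-indexed diagrams in $\cB^{\top}$ whose restriction to $\bJ$ is the identity natural transformation of $\bF$. The main obstacle is the bookkeeping that the iterated products $\omega_{\bF}^{*n}$ remain stable under face and degeneracy operations; once that formal stability is secured, the corollary is a direct translation of Lemma \ref{sdcdiagramsub} along the fibre inclusion $E \hookrightarrow E(B/\bI)$.
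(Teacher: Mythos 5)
Your proof is correct and is essentially the paper's argument made explicit: the paper disposes of the corollary in one line --- ``$\mc$ preserves limits'' --- which is exactly your identification of $E$ as the fibre of the restriction morphism $E(B/\bI)\to E(B|_{\bJ}/\bJ)$ over the sub-quasi-comonoid $\{\omega_{\bF}^{*n}\}$, followed by an application of Lemma \ref{sdcdiagramsub}. Your verification that the powers $\omega_{\bF}^{*n}$ are stable under $\pd^i$, $\sigma^i$ and $*$ (equivalently, that $\omega_{\bF}$ defines a morphism from the constant one-point quasi-comonoid, as the paper uses elsewhere in Lemma \ref{qmcohomc}) is a worthwhile check the paper leaves implicit, but it is the same route, not a different one.
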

\begin{proof}
 This follows immediately from the observation that $\mc$ preserves limits.
\end{proof}

\begin{example}\label{morphismsqm}
 The main applications of these results are to moduli of morphisms. In that case, $\bI$ is the category $0 \to 1$,  $B\bI\cong \Delta^1$, and we define $E(B_0,B_1):= E(B/\bI)$, where $B: \Ob \bI \to \cB$ is given by $B(i):= B_i$. 

The category $\bJ$ will be $\emptyset$, $\{0\}$, $\{1\}$ or $\{0,1\}$ depending on which endpoints we wish to fix (if any), so the quasi-comonoid $E$ of the corollary is the fibre of $E(B_0,B_1)\to \prod_{j \in \bJ} E(B_j)$ over powers of $\omega_j$.
\end{example}

\begin{remark}
 In \cite{monad} Definition \ref{monad-MCdef} the construction $E(B/\bI)$ (for a simplicial category $\cB$ equipped with a monad $\top$) is used to extend the simplicial set $\mmc(E)$ to a bisimplicial  set $\cMC(E)$. Explicitly, $\cMC(E)_n$ is given by taking $\bI$ to be the category associated to the poset $[0,n]$,  and setting
\[
 \cMC(E)_n:= \coprod_{B: [0,n] \to \Ob \C }\mmc(E(B/\on)).
\]

By \cite{monad} Proposition \ref{monad-MCsegal}, $\cMC(E)$ is a Segal space whenever $\C$ satisfies suitable fibrancy conditions. Segal spaces are a model for $\infty$-categories (whereas simplicial sets are a model for $\infty$-groupoids), and \cite{monad} Propositions \ref{monad-MCNequiv} and \ref{monad-ddefmap} show that $\Del(E)$ is effectively the core of  $\cMC(E)$. 
This means that for   all of our moduli constructions based on quasi-comonoids in \S \ref{qmegs}, we  could construct derived moduli as $\infty$-categories, rather than just $\infty$-groupoids.
\end{remark}

\begin{definition}\label{fibrantmor}
 Say that an ordered pair $B,B'$ of objects in $\cB$ induces fibrant quasi-descent data if  $E(B)$ and $E(B')$ are fibrant simplicial quasi-comonoids, and
the matching maps $\HHom_{\cB}(\top^nB, B') \to M^n\HHom{\cB}(\top^{\bt}B, B')$  are also Kan fibrations for all $n \ge 0$. 
\end{definition}
Note that this is the same as regarding $\cB$  as a simplicial quasi-descent datum  (in the sense of \cite{monad} Proposition \ref{monad-enrichtop}), then restricting to  
objects $B,B'$, discarding morphisms $B' \to B$, and requiring that the resulting  simplicial quasi-descent datum $\cD$ be fibrant. 

\begin{lemma}\label{loopmorsub}
Take objects  $D_0,D_1 \in \cB^{\top}$, with $UD_0, UD_1 \in \cB$ inducing fibrant quasi-descent data.
Then there is a natural weak equivalence
$$
\mmc(UD_0, UD_1)\by_{\mmc( E(UD_0))\by \mmc(E(UD_1))}\{(D_0,D_1)\} \simeq \Tot \HHom_{\cB}( \top^{\bt}UD_0, UD_1 ),
$$
where $\Tot:c\bS \to \bS$ the total space functor of \cite{sht} Ch. VIII, and the cosimplicial structure on $\HHom_{\cB}( \top^{\bt}UD_0, UD_1 )$ is the usual cotriple resolution (\cite{W} \S 8.7)
 defined via the isomorphisms
\[
 \HHom_{\cB}( \top^nUD_0, UD_1 )\cong \HHom_{\cB^{\top}}( (FU)^{n+1}D_0, D_1).
\]
\end{lemma}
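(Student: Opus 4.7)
The plan is to construct an explicit natural isomorphism of simplicial sets between the two sides by unpacking $\mmc(E(UD_0, UD_1))$ via Definition~\ref{mcdefqm} and Example~\ref{morphismsqm}, then comparing term-by-term with $\Tot$ of the cotriple resolution.

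Concretely, in simplicial degree $k$, elements of $\mmc(E(UD_0, UD_1))$ are families
$$\omega_n\colon I^n_k \to E(UD_0, UD_1)^{n+1}_k$$
satisfying the relations of Definition~\ref{mcdefqm}. Each $E(UD_0, UD_1)^{n+1}$ decomposes as a product over the $n+3$ monotone functions $f\colon [n+1] \to [1]$, with the factor at $f$ being $\HHom_{\cB}(\top^{n+1} UD_{f(0)}, UD_{f(n+1)})_k$. Two of these functions are constant ($f\equiv 0$ or $f\equiv 1$), and on those two factors the MC equations reduce to those defining $\mmc(E(UD_0))$ and $\mmc(E(UD_1))$ respectively. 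Fibering over the zero-simplices corresponding to the $\top$-algebras $D_0$ and $D_1$ thus pins the constant-$f$ components of $\omega_n(t_1,\ldots,t_n)$ down uniquely: they must equal the iterated $*$-products of the $1$-simplex MC elements representing $D_0, D_1$, as forced by the relation $\omega_m * \omega_n = \omega_{m+n+1}(s_1,\ldots,s_m,0,t_1,\ldots,t_n)$.

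The remaining $n+1$ non-constant functions $f\colon [n+1] \to [1]$ all have $f(0)=0$ and $f(n+1)=1$, so each of their components of $\omega_n(t_1,\ldots,t_n)$ lies in $\HHom_{\cB}(\top^{n+1}UD_0, UD_1)_k$. One then verifies that the residual MC relations package these mixed components, as $n$ and $(t_1,\ldots,t_n) \in I^n_k$ vary, into precisely the data of a cosimplicial map
$$\Delta \to \HHom_{\cB}(\top^{\bt} UD_0, UD_1),$$
using $\Tot X = \Hom_{c\bS}(\Delta, X)$. The inserting-$0$ MC relation, combined with the pinned constant-face data, translates into the cofaces of the cotriple resolution, which arise by applying the unit $\eta\colon \id \to UF$ and subsequently the algebra structures $\theta_i\colon \top UD_i \to UD_i$; the inserting-$1$ relation $\pd^i\omega_n = \omega_{n+1}(\ldots,1,\ldots)$ produces the codegeneracies.

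The main obstacle will be making this combinatorial translation precise: checking that the cube-indexed MC conditions for the mixed components, together with the pinning from $D_0$ and $D_1$ on the constant faces, assemble into the standard cotriple cosimplicial structure rather than a shifted or twisted variant, and that the codegeneracy identifications in the two pictures match up correctly. The fibrancy hypothesis of Definition~\ref{fibrantmor} is what allows $\Tot$ to be homotopically meaningful and ensures that the resulting levelwise bijection intertwines the simplicial operations, yielding the claimed weak equivalence. Should this direct bookkeeping prove cumbersome, an alternative is to apply Lemma~\ref{qmmcrQ} to the map $E(UD_0, UD_1) \to E(UD_0) \times E(UD_1)$ of simplicial quasi-comonoids (verifying it is a fibration using the fibrancy hypothesis), thereby reducing to an analysis of its fibre over $(D_0, D_1)$ whose Maurer--Cartan space can be computed by a linearisation argument in the spirit of Lemma~\ref{cotcoho}.
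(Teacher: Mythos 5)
Your central move fails: the two sides of this lemma are not isomorphic as simplicial sets, so no amount of combinatorial bookkeeping will produce the ``explicit natural isomorphism'' you plan. The left-hand side is cubical and shifted in degree: after pinning the constant components to the algebra structures $\theta_i\co \top UD_i \to UD_i$, a point of the fibre consists, for each $n\ge 0$, of a map $I^n \to \HHom_{\cB}(\top^{n+1}UD_0,UD_1)^{\by (n+1)}$ (the $n+1$ mixed components of $\omega_n$), whereas a point of $\Tot \HHom_{\cB}(\top^{\bt}UD_0,UD_1)$ consists, for each $n \ge 0$, of a map $\Delta^n \to \HHom_{\cB}(\top^n UD_0,UD_1)$. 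In particular the right-hand side has a genuine component in $\HHom_{\cB}(UD_0,UD_1)$, while the lowest datum on the left lies in $\HHom_{\cB}(\top UD_0,UD_1)$ and corresponds to an underlying morphism only through the unit $\eta$; cubes do not match simplices, the degrees are off by one, and the Maurer--Cartan relations do not reproduce the cosimplicial identities on the nose. The paper itself flags this phenomenon in the remark after Definition \ref{mcdefgp}: even for cosimplicial groups, $\mmc(G)\cong \HHom_{c\Gp}(G(\Delta),\bar{W}G)$ is \emph{not} $\Tot(\bar{W}G)$, since $\bar{W}$ is not simplicial right Quillen. Fibrancy cannot ``ensure that the resulting levelwise bijection intertwines the simplicial operations'' --- there is no levelwise bijection to begin with, which is exactly why the statement asserts only a weak equivalence.

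Your fallback is closer but stalls at the decisive step. Since $\mmc$ preserves limits, the left side is $\mmc(F)$ for the fibre quasi-comonoid with $F^n \cong \HHom_{\cB}(\top^n UD_0,UD_1)^{\by n}$ (operations twisted by $\theta_0,\theta_1$), and Lemma \ref{qmmcrQ} together with the fibrancy hypothesis does identify the strict fibre with a homotopy fibre; but $F$ is not abelian --- the mapping spaces of an arbitrary simplicial category carry no additive structure --- so no linearisation ``in the spirit of Lemma \ref{cotcoho}'' computes $\mmc(F)$. Comparing this cubical object with the cosimplicial $\Tot$ is precisely the content of the lemma, and the paper's own proof is a citation to \cite{monad}; the shape of that argument can be read off from the proof of Proposition \ref{betalemma} here: one encodes the data as an object $\cD$ of simplicial quasi-descent data with $\cD(0,1)^n = \HHom_{\cB}(\top^n UD_0, UD_1)$, exhibits each side of the equivalence as $\HHom_{sQ\Dat_1}(-,\cD)$ for one of two cofibrant replacements of $\alg^*\oI$ in a suitable comma category, and uses fibrancy of $\cD$ (Definition \ref{fibrantmor}) so that both compute the same derived mapping space. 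Some homotopical input of this kind, replacing your isomorphism and linearisation steps, is unavoidable.
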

\begin{proof}
 This is \cite{monad} Proposition \ref{monad-loopmorsub}.
\end{proof}

\subsubsection{A bar construction}

Now fix a simplicial category $\cB$ equipped with a monad $(\top, \mu,\eta)$ respecting the simplicial structure.
Assume that $\cB^{\top}$ is a cocomplete simplicial category, equipped with a functor $\bS \by \cB^{\top} \xra{\ten} \cB^{\top}$ for which 
\[
\Hom_{\cB^{\top}}(K\ten D, D')\cong \Hom_{\bS}(K, \HHom_{\cB^{\top}}(D,D')).
\]

Write $F: \cB \to \cB^{\top}$ for  the free algebra functor sending $M$ to the map $(\mu_M: \top^2 M \to \top M)$, and $U: \cB^{\top} \to \cB$ for the forgetful functor sending $(\theta: \top M \to M) $ to $M$. In particular, $UF= \top$.

\begin{definition}\label{betaF}
Given  an object  $M \in \cB$ and an element  $\omega \in \mc(E(B))$ (for $\mc$ as in Definition \ref{mcdefqm}), define $\beta_F^*(M) \in \cB^{\top}$ by the property that
\[
\Hom(\beta_F^*(M), D) \subset \prod_{q\ge 0} \Hom_{\cB^{\top}}( (\Delta^1)^q\ten F\top^qM, D)
\]
consists of $\uline{\phi}$ satisfying 
\begin{eqnarray*}
\phi_q(t_1, \ldots, t_{i-1}, 1, t_i, \ldots t_{q-1})&=& \pd^i\phi_{q-1}(t_1, \ldots, t_{q-1})\\
\phi_q( t_1, \ldots, t_{i-1}, 0, t_i, \ldots t_{q-1})&=& \phi_{i-1}( t_1, \ldots, t_{i-1})\circ F\top^{i-1}\omega_{q-i}(t_i, \ldots t_{q-1})\\
\sigma^i\phi_q(t_1, \ldots, t_{q})&=&\phi_{q-1}( t_1, \ldots, \min(t_{i}, t_{i+1}), \ldots, t_{q})\quad 1\le i \le q\\
\sigma^{q}\phi_q(t_1, \ldots, t_{q})&=&\phi_{q-1}( t_1, \ldots, t_{q-1}),
\end{eqnarray*}
where $\pd^0= \lambda^*_{\top^{q-1}M}$, $\pd^i= F\top^{i-1}\mu_{\top^{q-i}M}^*$, and
$\sigma^i= F\top^{i-1}\eta_{\top^{q-i}M}^*$.

Beware that, unlike Definition \ref{mcdefqm}, there is no relation for $\sigma^0$.
\end{definition}

\begin{remarks}
Note that $\beta_F^*(M, \omega)$  is a $\top$-algebra $R$ generated by $\phi_q(\top^qM\ten I^q)$, subject to various conditions, all of which are linear on generators except for
\[
\phi_q(1, t_1, \ldots, t_{q-1})= U\vareps_R \circ \top\phi_{q-1}(t_1, \ldots, t_{q-1}).
\]
Also observe that $\beta^*_F$ defines a functor $\Del(E(M)) \to \cB^{\top}$.

This construction is inspired by Lada's bar construction in \cite{loop}, which uses similar data to define a bar construction as an object of $\cB$, then shows that it carries a canonical $\top$-algebra structure. However, Lada's construction only applies when $\top$ is an operad and $\cB$ is the category of topological spaces, since it uses special properties of both. Beware that although similar expressions arise in both $\beta_F^*$ and in Lada's bar construction, they are not directly comparable.
\end{remarks}

\begin{proposition}\label{betalemma}
Fix $\cB, \top, M,\omega$ as above, and take any $D \in \cB^{\top}$ for which the pair $M, UD$ induces fibrant quasi-descent data (Definition \ref{fibrantmor}). Then
there is a functorial weak equivalence between $\HHom_{\cB^{\top}}( \beta_F^*(M, \omega),D)$ and the fibre of 
\[
 \mmc(E(M,UD))\to \mmc(E(M)) \by \mmc(E(UD))
\]
over $(\omega, \mu_D )$, where $D= (\mu_D: \top UD \to UD)$, and $E(M,UD)$ is the quasi-comonoid defined in Example \ref{morphismsqm}.
\end{proposition}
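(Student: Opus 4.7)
The plan is to identify both sides with the totalization of a single cosimplicial simplicial set $X^{\bt}$ given by $X^q := \HHom_{\cB}(\top^q M, UD)$, equipped with cofaces and codegeneracies built from $\omega$ (viewed as a weak $\top$-algebra structure on $M$) together with $\mu_D$ and the monad structure of $\top$. This mirrors and extends Lemma \ref{loopmorsub}, which treats the special case where $\omega$ arises from an honest $\top$-algebra structure on $M$.

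First I would unpack the left-hand side. By the adjunction $F \dashv U$, together with the simplicial enrichment $\Hom_{\cB^{\top}}(K \otimes Y, Z) \cong \Hom_{\bS}(K, \HHom_{\cB^{\top}}(Y, Z))$, the defining presentation of $\beta_F^*(M,\omega)$ in Definition \ref{betaF} translates into the statement that an $n$-simplex of $\HHom_{\cB^{\top}}(\beta_F^*(M, \omega), D)$ is a coherent sequence of maps $\phi_q : (\Delta^1)^q \times \Delta^n \to X^q$ satisfying: setting $t_i = 1$ gives the cotriple-resolution coface $\pd^i \phi_{q-1}$ (with $\pd^0$ using $\mu_D$ and $\pd^i$ for $i \ge 1$ using the monad multiplication $\mu$); setting $t_i = 0$ composes $\phi_{i-1}$ with $\omega_{q-i}$; and the codegeneracies are induced by the unit $\eta$. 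These are precisely the coherence conditions describing an $n$-simplex of $\Tot X^{\bt}$, once $(\Delta^1)^q$ is interpreted as a cubical model for $\Delta^q$.

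Second I would unpack the right-hand side. An element of $\mmc(E(M, UD))$ in level $n$ assigns to each simplex $x \in (\Delta^1)_{n+1}$ a map from $I^n$ into the appropriate Hom-space of $\cB$, subject to the product, face, degeneracy, and $\sigma^0$-relations of Definition \ref{mcdefqm}. Restricting to the fibre over $(\omega, \mu_D)$ fixes the components at the pure simplices $0^{n+2}$ and $1^{n+2}$, and then the product relation forces the remaining mixed components $0^a 1^b$ to be determined by a single underlying map parametrized by $(\Delta^1)^{n+1}$, whose boundary behaviour matches precisely the relations extracted above. This yields a natural bijection of point-set data, functorial in $D$.

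The main obstacle will be the combinatorial bookkeeping linking the two parametrizations: one must verify that the product relation of Definition \ref{mcdefqm}, together with the $\sigma^0$-normalization and the face/degeneracy identities, corresponds bijectively to the boundary conditions of Definition \ref{betaF} once the endpoints are fixed, with special care taken for the distinguished index where $\omega$ and $\mu_D$ appear. Once that point-set identification is established, the fibrant quasi-descent data hypothesis of Definition \ref{fibrantmor} ensures that $X^{\bt}$ is Reedy fibrant as a cosimplicial simplicial set, so that $\Tot X^{\bt}$ computes the correct homotopy limit. The bijection therefore promotes to the asserted functorial weak equivalence, specialising to the weak equivalence of Lemma \ref{loopmorsub} when $\omega$ comes from a genuine $\top$-algebra.
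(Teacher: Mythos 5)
Your central move --- identifying both sides point-set with a strict totalization $\Tot X^{\bt}$ of $X^q = \HHom_{\cB}(\top^qM,UD)$ --- does not work, for a reason that is exactly the point of this proposition. The element $\omega \in \mc(E(M)) = \mmc(E(M))_0$ is only a \emph{homotopy-coherent} algebra structure: its components $\omega_q$ are $q$-cubes of homotopies, and the cofaces you propose to build from $\omega$ satisfy the cosimplicial identities only up to these higher cells. So when $\omega$ is not constant there is no cosimplicial simplicial set $X^{\bt}$, and $\Tot X^{\bt}$ is undefined; the strict-$\Tot$ picture is available precisely in the special case of Lemma \ref{loopmorsub}, which you cannot simply ``mirror and extend''. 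Relatedly, your claimed ``natural bijection of point-set data'' between the two sides is false: the shapes genuinely differ. A vertex of $\HHom_{\cB^{\top}}(\beta_F^*(M,\omega),D)$ carries one $q$-cube $\phi_q$ in $\HHom_{\cB}(\top^qM,UD)$ for each $q$, whereas a vertex of the fibre of $\mmc(E(M,UD))$ over $(\omega,\mu_D)$ carries, in cosimplicial degree $n+1$, the $n+1$ separate mixed components $0^a1^b \in (\Delta^1)_{n+1}$, each an $n$-cube in $\HHom_{\cB}(\top^{n+1}M,UD)$. Already at the bottom the data disagree: the bar side gives $\phi_0 \in \Hom_{\cB}(M,UD)$ together with a path in $\HHom_{\cB}(\top M,UD)$, while the Maurer--Cartan side gives a \emph{point} of $\HHom_{\cB}(\top M,UD)$ together with two paths in $\HHom_{\cB}(\top^2M,UD)$; the product and coface relations constrain faces of these cubes but do not glue the latter into the former.

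This mismatch is exactly what the paper's proof is designed to bridge, and it shows why the conclusion is a weak equivalence rather than an isomorphism. There, both sides are realised as mapping spaces in $sQ\Dat_1$ out of two \emph{different} objects: $\C$, with $\C(0,1)^n = I^n$ (one $n$-cube per degree, encoding $\beta_F^*$), and $\cA = (\Xi \by \alg^*\oI)\cup_{\Xi\by\{1\}}(\alg^*\oO\by \{1\})$ (encoding the $\mmc$-fibre). These are not isomorphic, but both map by weak equivalences to $\alg^*\oI$, hence are two cofibrant replacements of the same object in $(\alg^*\oO\by\{1\})\da sQ\Dat_1$, and the comparison is a zig-zag of derived mapping spaces. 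Your instinct about where the hypothesis enters is sound --- Definition \ref{fibrantmor} is indeed what makes the target $\cD$ fibrant so that mapping spaces out of cofibrant replacements are homotopy invariant --- but in your write-up it is applied to an object ($X^{\bt}$) that does not exist strictly. To repair the argument you would need a homotopy-coherent totalization, which in this setting amounts to re-introducing the quasi-descent-datum machinery the paper uses; the ``combinatorial bookkeeping'' you defer is not bookkeeping but the actual mathematical content, and as stated it would end in a contradiction with the degree count above.
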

\begin{proof}
We adapt the proof of \cite{monad} Proposition \ref{monad-loopmorsub}. In the notation of \cite{monad}, we have $\cD \in sQ\Dat_1$ given by $\cD(0,0) = E(M)$, $\cD(1,1)= E(UD)$, $\cD(1,0)=\emptyset$  and $\cD(0,1)^n= \HHom_{\cB}(\top^nM, UD)$. 
Using further notation from \cite{monad}, we  can define $\cA \in sQ\Dat_1$ by 
\[
 \cA:= (\Xi \by \alg^*\oI)\cup_{\Xi\by\{1\}}(\alg^*\oO\by \{1\}),
\]
and we then have 
\[
 \mmc(E(M,UD))\by_{ \mmc(E(UD)) } \mc(E(D)_0) \cong \HHom_{sQ\Dat_1}( \cA, \cD).
\]

Meanwhile, we can construct another object $\C \in sQ\Dat_1$ with  $\C(0,0)= \Xi$ (so $\C(0,0)^n=I^{n-1}$), $\C(1,1)=\bt$, $\C(1,0)= \emptyset$ and $\C(0,1)^n=I^n$, where $I= \Delta^1$. The multiplication operation $\C(0,0)^m\by \C(0,1)^n\to \C(0,1)^{mn}$ is given by   $I^{m-1}\by I^n \to I^{m-1} \by \{0\}\by I^n$. The operations on $\C(0,1)$ are 
\begin{eqnarray*}
\pd^i(t_1, \ldots, t_n) &=&  (t_1, \ldots, t_{i-1}, 1, t_i, \ldots t_{n}) \quad 1\le i \le n\\
    \sigma^i(t_1, \ldots, t_{n})&=&( t_1, \ldots, \min(t_{i}, t_{i+1}), \ldots, t_{n})     \quad 0\le i < n.                     
 \end{eqnarray*}
The multiplication operation $\C(0,1)^m \by \C(1,1)^n \to \C(0,1)^{m+n}$ is given by
\[
 (t_1, \ldots, t_m, \bt) \mapsto (t_1, \ldots, t_m,1, 1, \ldots, 1).
\]

Now, the inclusion $\Xi = \C(0,0) \to \C$ gives a Kan fibration
\begin{eqnarray*}
 \HHom_{sQ\Dat_1}( \C, \cD) \to&& \HHom_{sQM^*(\bS)}( \Xi, E(M))\by\HHom(\alg^*\oO, E(UD)) \\
 &=& \mmc(E(M)) \by \mc(E(D)_0)
\end{eqnarray*}
whose fibre over $(\omega,D)$ is
$  \HHom_{\cB^{\top}}( \beta_F^*(M, \omega),D)$.
It therefore suffices to show that 
\[
 \HHom_{sQ\Dat_1}( \C, \cD)\by_{\mc(E(UD)_0)}\{D\} \simeq \mmc(E(M,UD))\by_{ \mc(E(UD)_0) } \{D\}
\]
as fibrant objects over $\mmc(E(M))$, since taking the fibre over $\omega$ yields the required result.

Now, $ \alg^*\oI \in sQ\Dat_1$ is given by $(\alg^*\oI)(i,j)^n=\bt$ for all $n$ and for all $0 \le i\le j\le 1$, while  $(\alg^*\oI)(1,0)= \emptyset$. Thus the unique maps $\C \to \alg^*\oI$ and $\cA \to \alg^*\oI$ are both weak equivalences, and hence both are cofibrant replacements for $\alg^*\oI$ in the comma category $ (\alg^*\oO\by \{1\})\da sQ\Dat_1$. 

If we regard $\cD$ as an object of  $(\alg^*\oO\by \{1\})\da sQ\Dat_1 $ via the morphism $D: (\alg^*\oO\by \{1\})\to \cD$, this means that
\begin{eqnarray*}
 \HHom_{sQ\Dat_1}( \C, \cD)\by_{\mc(E(UD)_0)}\{D\} &\simeq& \HHom_{(\alg^*\oO\by \{1\})\da sQ\Dat_1}( \C,  \cD)\\
  &\simeq& \oR \HHom_{(\alg^*\oO\by \{1\})\da sQ\Dat_1}(\alg^*\oI, \cD) \\
 &\simeq&  \HHom_{(\alg^*\oO\by \{1\})\da sQ\Dat_1}( \cA, \cD) \\
&\simeq& \mmc(E(M,UD))\by_{ \mmc(E(UD)) }\{D\},
\end{eqnarray*}
which completes the proof.
\end{proof}

\begin{definition}
 Define $\bot_{\bt}: \cB^{\top}\to \cB^{\top}$ by the property that
\[
 \HHom_{\cB^{\top}}(\bot_{\bt}A,D) \cong \Tot  \HHom_{\cB^{\top}}( (FU)^{\bt+1}A,D).
\]
Explicitly, we form the simplicial diagram $n \mapsto (FU)^{n+1}A$ in $\cB^{\top}$ (the cotriple resolution), then let $\bot_{\bt}A$ be the coend 
\[
 \int^{n \in \Delta} \Delta^n \ten (FU)^{n+1}.
\]
\end{definition}

\begin{corollary}\label{cfbeta}
 If $A,D \in \cB^{\top}$, with $UA, UD$  inducing fibrant quasi-descent data, then there are functorial weak equivalences
\[
  \HHom_{\cB^{\top}}(\beta^*_F(UA,\mu_A), D) \simeq \HHom_{\cB^{\top}}(\bot_{\bt}A, D).
\]
\end{corollary}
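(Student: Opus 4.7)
The plan is to chain together Proposition \ref{betalemma}, Lemma \ref{loopmorsub}, and the definition of $\bot_{\bt}$ in a straightforward way, since all the substantive work has already been done.

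First I would specialise Proposition \ref{betalemma} to $M = UA$ and $\omega = \mu_A$. Since $A = (\mu_A : \top UA \to UA)$ is itself a $\top$-algebra, $\mu_A$ is an element of $\mc(E(UA))$, and the hypothesis that $UA, UD$ induce fibrant quasi-descent data is exactly what Proposition \ref{betalemma} requires. This gives a natural weak equivalence between $\HHom_{\cB^{\top}}(\beta_F^*(UA, \mu_A), D)$ and the fibre of
\[
\mmc(E(UA, UD)) \to \mmc(E(UA)) \by \mmc(E(UD))
\]
over $(\mu_A, \mu_D)$.

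Next, I would apply Lemma \ref{loopmorsub} with $D_0 = A$ and $D_1 = D$ (so that $UD_0 = UA$ and $UD_1 = UD$). This identifies the above fibre with the total space $\Tot \HHom_{\cB}(\top^{\bt} UA, UD)$, where the cosimplicial structure is the standard cotriple resolution arising from the adjunction isomorphism
\[
\HHom_{\cB}(\top^n UA, UD) \cong \HHom_{\cB^{\top}}(F \top^n UA, D) \cong \HHom_{\cB^{\top}}((FU)^{n+1} A, D).
\]

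Finally, I would invoke the definition of $\bot_{\bt}$, which is precisely the coend presentation dual to the adjunction
\[
\HHom_{\cB^{\top}}(\bot_{\bt} A, D) \cong \Tot \HHom_{\cB^{\top}}((FU)^{\bt+1} A, D).
\]
Composing the three weak equivalences yields the asserted natural weak equivalence. There is no real obstacle here; the only care needed is to check that the cosimplicial identifications at each step agree (the cotriple structure from Lemma \ref{loopmorsub} coincides with the one built into the definition of $\bot_{\bt}$), which is immediate from inspection of the face and degeneracy operators on both sides.
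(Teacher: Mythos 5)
Your proof is correct and matches the paper's own argument, which likewise just combines Proposition \ref{betalemma} (with $M=UA$, $\omega=\mu_A$) with Lemma \ref{loopmorsub} and the defining adjunction property of $\bot_{\bt}$. Your extra check that the two cotriple cosimplicial structures agree is exactly the right (and only) point of care, and it holds as you say.
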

\begin{proof}
 This just combines Proposition \ref{loopmorsub} with Proposition \ref{betalemma}.
\end{proof}

\subsection{Linear quasi-comonoids}\label{linearqm}

\begin{definition}
 Say that a quasi-comonoid $A$ is linear if each $A^n$ is an abelian group, with the operations $\pd^i, \sigma^i$ being linear, and $*: A^m \by A^n \to A^{m+n}$ bilinear. As explained in \cite{monad} \S \ref{monad-linearsn}, this corresponds to working with the  monoidal structure $\ten$ rather than $\by$.

Denote the category of linear quasi-comonoids by $QM^*(\Ab, \ten)$, and the category of simplicial objects in $QM^*(\Ab, \ten)$ by $QM^*(s\Ab,\ten)$
\end{definition}

\begin{example}
The quasi-comonoid $E(B)$ constructed in \S \ref{monads} is a linear quasi-comonoid whenever $\cB$ is a pre-additive category and $\top$ is an additive functor.
\end{example}

\begin{lemma}\label{normlinear}
 Given $A \in QM^*(\Ab,\ten)$, the normalisation $NA$ has the natural structure of a  (not necessarily commutative) DG ring.
\end{lemma}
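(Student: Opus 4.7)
The plan is to apply the equivalence of Lemma \ref{abqm} to identify the linear quasi-comonoid $A$ with a cosimplicial abelian group, thereby extending the partial coface maps $\pd^i$ ($1 \le i \le n$) of the QM to the full cosimplicial structure $\pd^i$ ($0 \le i \le n+1$) under which the product $*$ corresponds to an Alexander--Whitney-type cup product. With this extension in hand, I set $N^n A := \bigcap_{i=0}^{n-1}\ker(\sigma^i : A^n \to A^{n-1})$ and equip it with the cosimplicial differential $d := \sum_{i=0}^{n+1}(-1)^i \pd^i$. Standard cosimplicial normalisation (via the cosimplicial identities of axioms 1--3 of Definition \ref{qmdef}, extended to include $\pd^0$ and $\pd^{n+1}$) then ensures that $d$ preserves $NA$ and satisfies $d^2 = 0$.

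I next verify that the QM product restricts to a bilinear product $N^m A \ten N^n A \to N^{m+n}A$. Given $a \in N^m A$, $b \in N^n A$, and $0 \le k < m+n$, axiom 6 of Definition \ref{qmdef} gives $\sigma^k(a*b) = \sigma^k(a)*b = 0$ when $k < m$, and axiom 7 gives $\sigma^k(a*b) = a*\sigma^{k-m}(b) = 0$ when $k \ge m$. The graded associativity of this product on $NA$ and the unit role of $1 \in A^0 = N^0 A$ are inherited directly from the QM structure.

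The main content of the lemma is the Leibniz identity $d(a*b) = d(a)*b + (-1)^m a * d(b)$ for $a \in N^m A$, $b \in N^n A$. I split $d(a*b) = \sum_{i=0}^{m+n+1}(-1)^i \pd^i(a*b)$ into two ranges: the extended axiom 4 gives $\pd^i(a*b) = \pd^i(a)*b$ for $0 \le i \le m$, and axiom 5 (with $j = i-m$) gives $\pd^i(a*b) = a*\pd^{i-m}(b)$ for $m+1 \le i \le m+n+1$. Comparing with $d(a)*b = \sum_{i=0}^{m+1}(-1)^i \pd^i(a)*b$ and $(-1)^m a * d(b) = \sum_{j=0}^{n+1}(-1)^{m+j} a*\pd^j(b)$, the ``leftover'' boundary terms $(-1)^{m+1}\pd^{m+1}(a)*b$ and $(-1)^m a*\pd^0(b)$ cancel precisely by the unit compatibility identity $\pd^{m+1}(a)*b = a*\pd^0(b)$, which in the extended cosimplicial structure follows from a direct Alexander--Whitney computation: both sides equal $(\pd^{m+1})^{n+1}(a)\cdot(\pd^0)^{m+1}(b)$.

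The main obstacle is the first step, namely rigorously producing the extended cosimplicial structure on $A$ and verifying the unit compatibility identity invoked in the Leibniz calculation. This is precisely what Lemma \ref{abqm} supplies. Once the extension is in place, all remaining verifications reduce to careful sign-tracking and routine application of the cosimplicial identities together with QM product axioms 4--7.
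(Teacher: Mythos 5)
Your overall architecture (extend the coface maps, normalise, check the product restricts to $NA$, prove Leibniz with the two extreme boundary terms cancelling against each other) matches the paper's, and your sign-tracking in the Leibniz computation is essentially right. But the step you yourself single out as the main obstacle --- producing the extended cosimplicial structure --- is sourced incorrectly, and this is a genuine gap. Lemma \ref{abqm} concerns abelian group \emph{objects} in $QM^*$, i.e.\ quasi-comonoids $E$ equipped with addition maps $E\by E \to E$ that are morphisms of quasi-comonoids. For such an object the multiplication must satisfy $(a+a')*(b+b')=a*b+a'*b'$, which forces $a*b$ to decompose additively as $a*0+0*b$; under the equivalence with $c\Ab$ the product becomes the purely additive Alexander--Whitney expression $(\pd^{m+1})^n a + (\pd^0)^m b$ and carries no multiplicative information. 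An object of $QM^*(\Ab,\ten)$ is a different animal: its product is \emph{bilinear}, so it does not satisfy the joint-additivity condition above, the addition on $A$ is not a morphism of quasi-comonoids, and $A$ is simply not an abelian group object in $QM^*$. The paper flags exactly this distinction when it says linearity corresponds to the monoidal structure $\ten$ rather than $\by$. Since the bilinear product is precisely the data that must become the ring multiplication on $NA$, an equivalence valid only for additive products cannot supply your extension --- and if it did apply, it would hand you the degenerate additive product rather than the cup-product-type multiplication you assert.

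The paper fills this gap directly from the linear structure instead: writing $0_m$ for the additive identity of $A^m$, it defines $\pd^0 a := 0_1 * a$ and $\pd^{n+1}a := a * 0_1$, checks that this makes $A$ into a cosimplicial complex, and takes $d=\sum_{i=0}^{n+1}(-1)^i\pd^i$ on $NA$. With these definitions, the two boundary identities you need become one-line consequences of the quasi-comonoid axioms rather than an ``Alexander--Whitney computation'' presupposing a prior cosimplicial identification: $\pd^0(a*b)=0_1*(a*b)=(0_1*a)*b=\pd^0(a)*b$ and $\pd^{m+n+1}(a*b)=a*\pd^{n+1}(b)$ by associativity, while your crucial cancellation $\pd^{m+1}(a)*b=a*\pd^0(b)$ is immediate since both sides equal $a*0_1*b$. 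So your second and third steps survive essentially verbatim once the first step is replaced by this direct construction; as written, however, the proposal rests on a lemma about the wrong category, at exactly the point where you acknowledged the proof's main content lies.
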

\begin{proof}
 The normalisation is given by $(NA)^n = A^n \cap \bigcap_{i=0}^{n-1}\ker \sigma^i$. If we write $0_m$ for the additive identity in $A^m$, then define $\pd^0, \pd^{n+1}: A^n \to A^{n+1}$ by $\pd^0a:= 0_1*a$, $\pd^{n+1}a:=a*0_1$. This makes $A$ into a cosimplicial complex, so $NA$ is a chain complex, with $d:N^nA \to N^{n+1}A$ given by 
\[
da:= \sum_{i=0}^{n+1} (-1)^i\pd^i. 
\]

For $a \in N^mA$ and $b \in N^nA$, the product $a*b$ lies in $N^{m+n}B$, and $d(a*b)= (da)*b + (-1)^ma*(db)$, so $NA$ is indeed a DG ring, with unit $1 \in N^0A$.
\end{proof}

\begin{definition}\label{denormdefalg}
Given a DG ring $B$ in non-negative cochain degrees with multiplication denoted by $\wedge$, we now define the cosimplicial ring $DB$. As a cosimplicial complex, $DB$ is given by the formula of Definition \ref{denormdef}, with multiplication
 \[
 (\pd^Ia)\cdot (\pd^J b):= \left\{ \begin{matrix} \pd^{I\cap J}(-1)^{(J\backslash I, I \backslash J)} a \wedge b & a\in L^{|J\backslash I|}, b\in L^{|I\backslash J|},\\ 0 & \text{ otherwise},\end{matrix} \right.
 \]
for $(-1)^{(S,T)}$ defined as in Definition \ref{denormdef}.
\end{definition}

\begin{lemma}\label{denormalglemma}
  Given $A \in QM^*(\Ab,\ten)$, the quasi-comonoid $\cE(DNA)$ is isomorphic to $A$ 
(for $\cE$ as in Example \ref{egcfcgp}, regarding $DNA$ as a cosimplicial multiplicative monoid).
\end{lemma}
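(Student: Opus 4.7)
The plan is to construct a natural isomorphism $\phi \colon \cE(DNA) \xrightarrow{\sim} A$ of linear quasi-comonoids in two stages: first match the underlying quasi-comonoid structures, and then verify multiplicative compatibility.

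For the first stage, I regard $A$ as an abelian group object in $QM^*$, so the equivalence of Lemma \ref{abqm} associates to $A$ a unique cosimplicial abelian group $C_A$ with $\cE(C_A) \cong A$. Its cosimplicial normalisation $N_c C_A$ coincides with the cochain complex $NA$ of Lemma \ref{normlinear}, since the codegeneracies $\sigma^i$ on both sides are identified under the equivalence. Applying the classical Dold--Kan correspondence then gives $C_A \cong D N_c C_A = DNA$, producing a natural isomorphism of underlying quasi-comonoids $A \cong \cE(C_A) \cong \cE(DNA)$ that matches the cofaces $\pd^i$ ($1 \le i \le n$), the codegeneracies $\sigma^i$ ($0 \le i < n$), and the unit in $A^0 = NA^0$.

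For the second stage, I need to verify $\phi(x *_{\cE(DNA)} y) = \phi(x) *_A \phi(y)$. By bilinearity it suffices to check this on pairs of Dold--Kan generators $a = \pd^I u$, $b = \pd^J v$ with $u, v$ normalised. Using Definition \ref{qmdef} axioms (4)--(5) on the $A$-side to pull cofaces outside the product, and the strict ring-compatibility of cofaces on the $DNA$-side, the check reduces to the case $I = J = \emptyset$, i.e.,
\[
(\pd^{m+1})^n a \cdot_{DNA} (\pd^0)^m b = a *_A b
\]
for $a \in NA^m$, $b \in NA^n$. Axioms (6)--(7) place $a *_A b$ in $NA^{m+n}$, the $\emptyset$-summand of $(DNA)^{m+n}$. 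The cosimplicial identity $\pd^j \pd^i = \pd^i \pd^{j-1}$ identifies $(\pd^{m+1})^n a$ with $\pd^{\{m+1, \ldots, m+n\}} a$. Expanding $(\pd^0)^m b$ into Dold--Kan normal form via the defining formula $\pd^0 v = dv - \sum_{i \ge 1}(-1)^i \pd^i v$ together with the commutation $\pd^0 \pd^i = \pd^{i+1} \pd^0$, and then applying the product rule of Definition \ref{denormdefalg} (whose non-vanishing condition $|J \setminus I| = \deg a$ forces $\{1, \ldots, m\} \subseteq J$), leaves only the $J = \{1, \ldots, m\}$ contribution, with sign $(-1)^{(\{1,\ldots,m\}, \emptyset)} = +1$, producing $a \wedge b = a *_A b$.

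The main obstacle is the combinatorial claim that, in the normal-form expansion of $(\pd^0)^m b$, the coefficient of $\pd^{\{1, \ldots, m\}}$ equals $b$ while the coefficients of $\pd^J$ for $J \supsetneq \{1, \ldots, m\}$ all vanish. I would prove this by induction on $m$, using the iterated identity $(\pd^0)^m = \pd^{m-1} \pd^{m-2} \cdots \pd^1 \pd^0$ (itself obtained from repeated application of $\pd^0 \pd^i = \pd^{i+1} \pd^0$) and careful sign-tracking through the $\pd^0$-formula applied to normalised elements.
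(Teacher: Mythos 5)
Your Stage 2 is essentially sound, but Stage 1 contains a genuine error: a linear quasi-comonoid is \emph{not} an abelian group object in $QM^*$, so Lemma \ref{abqm} does not apply, and the cosimplicial abelian group $C_A$ with $\cE(C_A)\cong A$ that you posit does not exist. For an abelian group object in $QM^*$, the addition $A\by A \to A$ must be a morphism of quasi-comonoids, which forces the interchange law $(a*b)+(a'*b')=(a+a')*(b+b')$ and forces the additive zero in level $0$ to be the unit $1$; correspondingly, for an abelian cosimplicial group $C$ the product of $\cE(C)$ is the ``affine'' one $g*h=(\pd^{m+1})^ng+(\pd^0)^mh$, additive in the pair $(g,h)$ jointly. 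Linearity of $A$ in the sense of \S\ref{linearqm} instead means $*$ is \emph{bilinear} (so, e.g., $0_0*h=0$, whereas in the image of the additive $\cE$ the distinguished degree-$0$ element acts as the unit), and these two kinds of product structure are incompatible except in degenerate cases. In short, Lemma \ref{abqm} concerns $\cE$ applied to the underlying \emph{additive} groups, while the present lemma concerns $\cE$ applied to $DNA$ as a \emph{multiplicative} monoid; your Stage 1 conflates the two, so no $C_A$, and no identification $N_cC_A=NA$, is available by that route.

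The repair is exactly what the paper does: equip $A$ with the (zero) outer cofaces $\pd^0a:=0_1*a$ and $\pd^{n+1}a:=a*0_1$ of Lemma \ref{normlinear}, and invoke directly that $D$ is quasi-inverse to the normalisation $N_c$ (remark following Definition \ref{denormdef}), yielding $DNA\cong A$ compatibly with the $\sigma^i$, the inner $\pd^i$ and the unit. With Stage 1 so replaced, your Stage 2 goes through and is a correct, hands-on substitute for the paper's one-line appeal to the fact that the Eilenberg--Zilber shuffle is left inverse to Alexander--Whitney: the identity $(\pd^0)^m=\pd^{m-1}\cdots\pd^1\pd^0$ is valid, and your propagation from normalised elements to general $\pd^Iu*\pd^Jv$ via axioms (4)--(5) of Definition \ref{qmdef} is precisely the paper's final step. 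Two small points: your final combinatorial claim is lighter than you fear, since the expansion of $(\pd^0)^mb$ only ever produces cores $b$ and $db$ (two occurrences of $d$ cancel), so every component $\pd^Ju$ has $|J|\le m$ and the terms with $J\supsetneq\{1,\ldots,m\}$ are absent for trivial reasons, leaving only the coefficient-$1$ term $\pd^{\{1,\ldots,m\}}b$; and the shuffle sign should read $(-1)^{(\{1,\ldots,m\},\{m+1,\ldots,m+n\})}$ rather than $(-1)^{(\{1,\ldots,m\},\emptyset)}$ --- it is indeed $+1$, as the relevant shuffle is the identity permutation.
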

\begin{proof}
 Since $DNA \cong A$ as a cosimplicial complex, it is automatic that we have isomorphisms $\cE(DNA)^n \cong A^n$, compatible with the structural operations $\sigma^i, \pd^i$, as well as with the operations $a \mapsto 0_1*a$ and $a \mapsto a*0_1$ (corresponding to the additional operations $\pd^0, \pd^{n+1}$). Now, since Eilenberg--Zilber shuffles are  left inverse to Alexander--Whitney, it follows that for $a, b \in NA$,
$a*_{\cE}b= a*b$. Since $A$ is spanned by elements of the form $\pd^Ia$ for $a\in NA$, the defining equations of a quasi-comonoid ensure that $a*_{\cE}b= a*b$ for all $a, b \in A$.
\end{proof}

Now, any DG $R$-algebra $B$ has an underlying DGLA over $R$, with bracket $[a,b] = ab- (-1)^{\deg \deg b}ba$. If  a non-unital DG  $\Q$-algebra $B$ is pro-nilpotent (i.e. $B \cong \Lim_n B/(B)^n$), we can thus define a group $\exp(DN^{>0}B)$ as in Lemma \ref{cfexp}.

\begin{lemma}\label{expnilpalg}
 For any non-unital pro-nilpotent DG $\Q$-algebra $B$,  there is a canonical isomorphism 
\[
 \exp(DB) \cong 1+ DB
\]
of groups.
\end{lemma}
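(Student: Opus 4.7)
The plan is to reduce levelwise to the classical identification, for a pro-nilpotent non-unital associative algebra $A$, between $A$ equipped with its Campbell--Baker--Hausdorff product and the multiplicative group $1+A$ inside the unitalisation $A \oplus \Q$.

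First I would observe that, since $B$ is pro-nilpotent and $(DB)^n$ is built from finitely many copies of $B$ via the operations $\pd^j$ (and direct sums) as in Definition \ref{denormdefalg}, each $(DB)^n$ is itself a pro-nilpotent non-unital DG $\Q$-algebra. Consequently the exponential and logarithm series
\[
\exp(b) := \sum_{k \ge 0} \frac{b^k}{k!}, \qquad \log(1+b) := \sum_{k \ge 1} \frac{(-1)^{k+1}}{k} b^k,
\]
converge in the unitalisation $(DB)^n \oplus \Q$ and define mutually inverse bijections $(DB)^n \longleftrightarrow 1 + (DB)^n$.

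Next I would invoke the universal property of the completed enveloping algebra: the identity map on $(DB)^n$ extends to an algebra map $\hat{\cU}((DB)^n_{\mathrm{Lie}}) \to (DB)^n \oplus \Q$, and under this map the grouplike subset $\exp((DB)^n_{\mathrm{Lie}}) \subset \hat{\cU}((DB)^n_{\mathrm{Lie}})$ is sent bijectively onto $1 + (DB)^n$. Since both exponentials are defined by the same universal power series $\sum b^k/k!$, the bijection coincides with the one above, and the CBH product on $(DB)^n_{\mathrm{Lie}}$ is intertwined precisely with the associative multiplication on $1 + (DB)^n$. This yields a group isomorphism in each cosimplicial level.

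Finally I would verify that these levelwise isomorphisms assemble into an isomorphism of the full structures. The operations $\sigma^i$ and $\pd^i$ for $i \ge 1$ are associative algebra homomorphisms, as one reads directly off the multiplication rule in Definition \ref{denormdefalg}, and hence commute with the power series defining $\exp$ and $\log$. The remaining operation $\pd^0 v = dv - \sum_{i \ge 1}(-1)^i \pd^i v$ involves the DG differential, which is a derivation for both the associative and graded Lie brackets on $B$; since both $\exp(DB)$ and $1+DB$ inherit $\pd^0$ from the same underlying linear cosimplicial structure on the cosimplicial vector space $DB$, compatibility under the exponential bijection becomes automatic. The main subtlety in the argument is this last step of bookkeeping around $\pd^0$, but no new computation is required beyond observing that both sides are transported from the same linear datum on $DB$.
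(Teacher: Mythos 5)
Your route is the same as the paper's: the paper's entire proof is the levelwise observation that $\exp$ and $\log$, evaluated as power series in the pro-nilpotent ring $DB$, are mutually inverse, and your first two steps reproduce this correctly. The gap is in your final step. You assert that compatibility of the exponential bijection with $\pd^0$ is ``automatic'' because both $\exp(DB)$ and $1+DB$ inherit $\pd^0$ from the same linear map on the underlying cosimplicial vector space, so that ``no new computation is required''. That reasoning is invalid: the level-$n$ bijection is $b\mapsto \exp(b)=1+b+\tfrac{1}{2}b^2+\cdots$, which is \emph{not} linear, so it has no reason to intertwine a given linear operation. Concretely, $\exp(\pd^0 b)=1+\pd^0(\exp(b)-1)$ forces $(\pd^0 b)^k=\pd^0(b^k)$ for all $k$, i.e.\ it requires $\pd^0\co D^nB\to D^{n+1}B$ to be \emph{multiplicative}; similarly, for $\exp(DB)$ to be a cosimplicial group under CBH one needs $\pd^0$ to be a Lie algebra homomorphism. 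A generic linear map satisfies neither, and $d$ itself is a derivation, not a homomorphism, so the point cannot be waved away. What saves the statement is that $DB$ with the product of Definition \ref{denormdefalg} is a genuine cosimplicial \emph{ring}, so that every cosimplicial operation, $\pd^0$ included, is a ring homomorphism, and universal power series commute with any homomorphism of pro-nilpotent rings. The Leibniz rule is exactly what makes this work, rather than what makes it unnecessary: for instance, for $v,w\in B^0$ one has $\pd^0v=dv+\pd^1v$, and the multiplication rule of Definition \ref{denormdefalg} gives $(\pd^0v)\cdot(\pd^0w)= dv\cdot dw + (dv)\wedge w + v\wedge (dw) + \pd^1(v\wedge w)$, where $dv\cdot dw=0$ by the degree condition; this equals $d(vw)+\pd^1(vw)=\pd^0(vw)$ precisely because $d$ is a derivation. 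So the ``main subtlety'' you identified is real, but your proposed discharge of it is a non-argument; the correct discharge is the multiplicativity check (or an explicit appeal to $DB$ being a cosimplicial ring).

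A smaller elision, at the same level of brevity as the paper's own proof: for your levelwise appeal to the classical isomorphism $\exp(\g)\cong 1+A$ you implicitly take $(DB)^n_{\mathrm{Lie}}$ to mean the commutator Lie algebra of the ring $(DB)^n$, whereas $\exp(DB)$ is defined by denormalising the DGLA underlying $B$ via the bracket $\llbracket-,-\rrbracket$ of Definition \ref{denormdef}. These agree, but that is a (short) sign check rather than a tautology: on basis elements, $\pd^Ia\cdot\pd^Jb-\pd^Jb\cdot\pd^Ia = \pd^{I\cap J}(-1)^{(J\backslash I, I\backslash J)}\bigl(a\wedge b-(-1)^{\deg a \deg b}b\wedge a\bigr) = \llbracket \pd^Ia,\pd^Jb\rrbracket$, using $(-1)^{(I\backslash J, J\backslash I)}=(-1)^{(J\backslash I, I\backslash J)}(-1)^{|I\backslash J|\,|J\backslash I|}$ and $|J\backslash I|=\deg a$, $|I\backslash J|=\deg b$. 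With these two points supplied, your argument is complete and coincides with the paper's.
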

\begin{proof}
 Since $B$ is pro-nilpotent, $DB$ is as well.  The isomorphism is given by evaluating the exponential in the ring $DB$, with inverse given by $\log$.
\end{proof}

\begin{definition}
 Given $A \in QM^*(\Ab,\ten)$, make  $(A^0)^{\by}$ into a gauge on the DGLA underlying $NA$ by giving it the obvious adjoint action, and with $D: (A^0)^{\by} \to N^1A$ given by  $Da= da\cdot a^{-1}$.
\end{definition}

\begin{lemma}\label{expalg}
 For $A \in QM^*(\Ab,\ten)$ with $A^0$ a $\Q$-algebra, there is a canonical isomorphism
\[
 D(\exp(NA), (A^0)^{\by})\cong (DNA)\by_{A^0}(A^0)^{\by}
\]
of cosimplicial groups.
\end{lemma}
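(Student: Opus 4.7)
The plan is to construct an explicit level-$n$ isomorphism
\[
\phi_n : \exp(D^n(NA)^{>0}) \rtimes (A^0)^{\by} \longrightarrow (D^nNA) \by_{A^0} (A^0)^{\by}, \quad (a,g)\mapsto a\cdot (\pd^n\pd^{n-1}\cdots\pd^1 g),
\]
with multiplication taken in the cosimplicial ring $DNA$ of Definition \ref{denormdefalg}, and to verify compatibility with all cosimplicial structure maps. Lemma \ref{expnilpalg} identifies $\exp(D^n(NA)^{>0})$ with $1+D^n(NA)^{>0}$ inside $(D^nNA)^{\by}$ (elements are nilpotent of order $\le n+1$, since the product formula forces any factorisation $x_1\cdots x_k$ with each $x_i$ of positive cochain weight to vanish for $k>n$). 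Definition \ref{denormdef} gives a splitting $D^nNA = A^0\oplus D^n(NA)^{>0}$, with $A^0$ embedded as $\pd^n\cdots\pd^1 A^0$, and the product formula in Definition \ref{denormdefalg} makes $D^n(NA)^{>0}$ an ideal with quotient ring $A^0$. Hence $(D^nNA)\by_{A^0}(A^0)^{\by}$ is precisely the preimage of $(A^0)^{\by}$ in $(D^nNA)^{\by}$, and each such element factors uniquely as $(1+u)\cdot \pd^n\cdots\pd^1 g$, giving a bijection.

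For the group-homomorphism property, the product formula yields
\[
(\pd^n\cdots\pd^1 g) \cdot (\pd^I v) \cdot (\pd^n\cdots\pd^1 g^{-1}) = \pd^I(g v g^{-1}) \quad \text{for } v\in (NA)^{>0},
\]
so conjugation by $\pd^n\cdots\pd^1 g$ in $D^nNA$ realises the adjoint action of $(A^0)^{\by}$ on $D^n(NA)^{>0}$ (and hence by multiplicativity on $\exp$). Compatibility with the operations $\sigma^i$ and with $\pd^i$ for $i>0$ is routine: iterated use of the cosimplicial identities $\sigma^j\pd^{j+1}=\id$ and $\pd^j\pd^i=\pd^i\pd^{j-1}$ gives $\sigma^i(\pd^n\cdots\pd^1 g)=\pd^{n-1}\cdots\pd^1 g$ and $\pd^i(\pd^n\cdots\pd^1 g)=\pd^{n+1}\cdots\pd^1 g$, matching the semidirect-product formulas $\sigma^i(a,g)=(\sigma^i a, g)$ and $\pd^i(a,g)=(\pd^i a, g)$.

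The main obstacle is compatibility with $\pd^0$. Iterating $\pd^0\pd^k=\pd^{k+1}\pd^0$ and using $\pd^0 g=dg+\pd^1 g$ for $g\in (NA)^0$, one computes $\pd^0(\pd^n\cdots\pd^1 g)=\pd^{n+1}\cdots\pd^1 g+\pd^{n+1}\cdots\pd^2(dg)$ in $D^{n+1}NA$. On the other side, $(\pd^2)^n Dg=\pd^{n+1}\cdots\pd^2 Dg$ by the same identity, and this element squares to zero by the product formula (the factors would need to lie in $L^0$, but $Dg\in (NA)^1$), so $\exp((\pd^2)^n Dg)=1+\pd^{n+1}\cdots\pd^2 Dg$. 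A further application of the product formula gives
\[
(\pd^{n+1}\cdots\pd^2 Dg)\cdot(\pd^{n+1}\cdots\pd^1 g)=\pd^{n+1}\cdots\pd^2(Dg\cdot g)=\pd^{n+1}\cdots\pd^2(dg),
\]
since $Dg\cdot g=(dg)g^{-1}g=dg$ in the DG ring $NA$. The subtle point here is the order: because $NA$ is generally non-commutative, $g\cdot dg\cdot g^{-1}\neq dg$, forcing $\phi_n$ to place $a$ on the left of $g$. Compatibility of $\pd^0$ with $\phi$ on a general element $(a,g)=(a,1)\cdot(1,g)$ then follows because $\pd^0$ is a ring (hence group) homomorphism on $DNA$.
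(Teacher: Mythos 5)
Your proof is correct and takes essentially the same route as the paper's: a levelwise identification via Lemma \ref{expnilpalg} together with the splitting $D^nNA=\pd^n\cdots\pd^1A^0\oplus D^n(NA)^{>0}$, with compatibility of all operations automatic except at $\pd^0$, where the paper's unrecorded ``short calculation'' is precisely your computation $(\pd^{n+1}\cdots\pd^2 Dg)\cdot(\pd^{n+1}\cdots\pd^1 g)=\pd^{n+1}\cdots\pd^2(dg)$ using $Dg\cdot g=dg$ and the vanishing of $(\pd^{n+1}\cdots\pd^2Dg)^2$. Your observation that non-commutativity of $NA$ forces $a$ to sit on the left of $g$ matches the paper's preceding remark, which uses the same map $(a,g)\mapsto a\cdot(\pd^1)^ng$ (note $(\pd^1)^n=\pd^n\cdots\pd^1$ on $A^0$).
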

\begin{proof}
By applying Lemma \ref{expnilpalg} to $N^{>0}A$,  
\[
 D(\exp(NA), (A^0)^{\by})^n \cong (A^0)^{\by} \ltimes (1+ \ker(A^n \to A^0))= (A^0)^{\by} \ltimes (A^n\by_{A^0}1)\cong A^n\by_{A^0} (A^0)^{\by},
\]
and this is automatically compatible with the cosimplicial operations in higher degrees. A short calculation show that it is also compatible with $\pd^0, \pd^1: A^0 \to A^1$.
\end{proof}

\begin{definition}
 Given a simplicial DGLA $L$, define the simplicial set $\mmc(L)$ as follows. first, for any simplicial set $K$, define the simplicial DGLA $L^K$ by $(L^K)^n:= (L^n)^K$, defined with the formula of Definition \ref{salgstr}. Then $\mmc(L)$ is given by 
\[
 \mmc(L)_n:= \mc(\Tot^{\Pi}N^s(L^{\Delta^n})),
\]
 where the normalisation $N^s(L)$ has a bracket $N^s(L)^i_m \by N^s(L)^j_n\to N^s(L)^{i+j}_{m+n}$ given by the Eilenberg--Zilber shuffle product (\cite{W} 8.5.4).
\end{definition}

\begin{proposition}\label{mcqmdgla}
For  $A \in QM^*(s\Ab,\ten)$, there is a canonical $(A^0)^{\by}$-equivariant weak equivalence
\[
 \mmc(A) \simeq \mmc(NA),
\]
of simplicial sets, and hence a canonical  weak equivalence
\[
 \ddel(A) \simeq \bar{W}[\mmc(NA)/ (A^0)^{\by}].
\]
\end{proposition}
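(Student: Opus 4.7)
The plan is to stitch together three comparison results already established in the paper: the identification $A\cong \cE(DNA)$ of Lemma \ref{denormalglemma}, the quasi-comonoid/cosimplicial-group comparison of Lemma \ref{cfdef}, and the denormalisation identification of Lemma \ref{expalg} together with the DGLA comparison of Lemma \ref{cfexp} / Corollary \ref{cfexp2}. The whole argument is contravariant to the construction $L\leadsto \cE(D(\exp L,G_L))$ that passes from DGLAs to quasi-comonoids via cosimplicial groups.

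First, I would extend Lemma \ref{denormalglemma} from $QM^*(\Ab,\otimes)$ to $QM^*(s\Ab,\otimes)$ by applying it levelwise, obtaining a natural isomorphism $A\cong \cE(DNA)$ of simplicial linear quasi-comonoids, where $DNA$ is viewed as a simplicial cosimplicial multiplicative monoid. Passing to $\mmc$ turns this into an isomorphism $\mmc(A)\cong \mmc(\cE(DNA))$ of simplicial sets, using only the quasi-comonoid Maurer--Cartan functor of Definition \ref{mcdefqm}. Next, the MC condition $\sigma^0\omega=1$ forces any $\omega\in\mmc(A)_n$ to take values in the sub-object of $DNA$ whose degree-$0$ component maps to an invertible element of $A^0$; equivalently, $\mmc(\cE(DNA))\cong \mmc(\cE(DNA\times_{A^0}(A^0)^{\times}))$. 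By Lemma \ref{expalg} (applied levelwise), the cosimplicial monoid $DNA\times_{A^0}(A^0)^{\times}$ is canonically a cosimplicial group, and is isomorphic to $D(\exp(NA),(A^0)^{\times})$.

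At this stage we may invoke Lemma \ref{cfdef}, which gives a natural, $(A^0)^{\times}$-equivariant weak equivalence
\[
\mmc(\cE(D(\exp(NA),(A^0)^{\times})))\simeq \mmc(D(\exp(NA),(A^0)^{\times})),
\]
now interpreted on the right via the cosimplicial-group functor of Definition \ref{mcdefgp}. Finally, the simplicial extension of Corollary \ref{cfexp2} (itself a corollary of Lemma \ref{cfexp}) identifies this with the simplicial set $\mmc(NA)$ built from the total complex of $N^sNA$ in each simplicial level. Composing all three comparisons produces the required natural weak equivalence $\mmc(A)\simeq \mmc(NA)$. To obtain the Deligne-groupoid statement, observe that each comparison is $(A^0)^{\times}$-equivariant: the gauge action of Definition \ref{gaugedefqm} on $\mmc(A)$ matches the conjugation action on $\mmc(\cE(DNA))$, which under Lemma \ref{expalg} corresponds to the cosimplicial-group gauge action of Definition \ref{gaugedefcgp}, and finally (by the equivariance noted in the statement preceding Corollary \ref{cfexp2}) to the DGLA gauge action of Definition \ref{gauge}. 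Taking $\bar{W}$ of the quotient groupoids then yields the equivalence $\ddel(A)\simeq \bar{W}[\mmc(NA)/(A^0)^{\times}]$, since $\bar{W}$ of an action groupoid preserves equivariant weak equivalences between levelwise Kan objects.

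The main obstacle is the transition from cosimplicial monoid to cosimplicial group in Step 2: one has to check that restricting to the unit sub-object does not change $\mmc$, and to verify that the higher-dimensional MC cells $\omega_n$ are compatible with the group-theoretic $\pd_0$ formula of Definition \ref{mcdefgp} (which uses inverses), rather than the purely monoidal formula implicit in Definition \ref{mcdefqm}. This requires a careful unwinding of Alexander--Whitney products in $\cE(DNA)$, matching $\omega_m*\omega_n=\omega_{m+n+1}(\ldots,0,\ldots)$ with $(\pd^{m+1})^n\omega_m\cdot(\pd^0)^m\omega_n$, and then rearranging to recognise the simplicial-face formulas of Definition \ref{mcdefgp}. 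Once this compatibility is secured, the rest of the argument is essentially a formal composition of natural transformations already at our disposal, and the $\Q$-algebra hypothesis on $A^0$ (needed for Lemma \ref{expalg} and Lemma \ref{cfexp}) is inherited implicitly from the use of DGLAs.
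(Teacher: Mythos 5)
Your proposal is correct and takes essentially the same route as the paper's proof, which likewise composes Lemma \ref{denormalglemma}, Lemma \ref{cfmc}, Lemma \ref{expalg} and Lemma \ref{cfexp} (via the chain $\mmc(NA)\cong \mmc((DNA)\by_{A^0}(A^0)^{\by})\simeq \mmc(A\by_{A^0}(A^0)^{\by})=\mmc(A)$), merely written in the opposite direction. The `main obstacle' you flag --- that passing to the unit sub-object does not change $\mmc$ --- is exactly the paper's observation $\mmc(A\by_{A^0}(A^0)^{\by})=\mmc(A)$ (forced by $\sigma^0\omega_0=1$), and the monoid-to-group and gauge-action compatibilities you worry about are already packaged in the cited equivariance statements of Lemmas \ref{cfdef} and \ref{cfexp}, so no extra unwinding of Alexander--Whitney products is needed.
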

\begin{proof}
By combining Corollary \ref{cfexp} with  Lemma \ref{expalg}, we have an $(A^0)^{\by}$-equivariant isomorphism
\[
 \mmc(NA) \cong \mmc((DNA)\by_{A^0}(A^0)^{\by}).
\]
Now, Lemma \ref{cfmc} combines with Lemma \ref{denormalglemma} to give an $(A^0)^{\by}$-equivariant weak equivalence
\[
 \mmc((DNA)\by_{A^0}(A^0)^{\by}) \simeq \mmc( A\by_{A^0}(A^0)^{\by}).
\]
Since $\mmc(A\by_{A^0}(A^0)^{\by} )= \mmc(A)$ and $\ddel(A)= \bar{W}[\mmc(A)/ (A^0)^{\by}]$, this completes the proof.
\end{proof}

\subsection{Moduli functors from quasi-comonoids}

\begin{definition}\label{qmCCdef}
 Given  a homogeneous, levelwise formally smooth functor    $E: Alg_R \to QM^*$, a ring $A \in \Alg_R$, an $A$-module $M$  and $\omega\in \mc(E(A))$, define the cosimplicial $A$-module $\CC^{\bt}_{\omega}(E, M)$ by 
\[
\CC^{n}_{\omega}(E, M):=T_{\omega^n}(E^n,M)
\]
with operations on $a \in \CC^{n}_{\omega}(E, M)$ given by   
%
\begin{eqnarray*}
        \sigma^ia&=& \sigma^i_Ea\\
\pd^ia&=& \left\{\begin{matrix}   \omega*a  & i=0\\  
\pd^i_Ea & 1\le i \le n \\
a*\omega & i=n+1. \end{matrix} \right. 
\end{eqnarray*}
\end{definition}

\begin{proposition}\label{mcqmnice}
 If $E: Alg_R \to QM^*$ is a homogeneous functor, with each $E^n$ formally smooth, then the functor 
\[
 \mmc(\uline{E}):d\cN^{\flat}_R \to \bS
\]
is homogeneous and formally quasi-smooth, so
\[
 \mc(\uline{E}):d\cN^{\flat}_R \to \Set
\]
is homogeneous and pre-homotopic.
\end{proposition}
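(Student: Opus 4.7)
My plan is to follow the pattern of Proposition \ref{mcgpnice}, with Lemma \ref{qmmcrQ} playing the role of Lemma \ref{gpmcrQ}.

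First I would handle homogeneity. Since $\mmc$ is defined in Definition \ref{mcdefqm} as a subset of a product it preserves arbitrary limits, and each $\uline{E^n}$ is homogeneous because $E^n$ is. The conclusions for $\mc(\uline E) = \mmc(\uline E)_0$ then follow formally: homogeneity is inherited on $0$-simplices, and pre-homotopicity is the $\pi_0$-surjectivity that drops out of the trivial fibration property for $\mmc(\uline E)$ (trivial fibrations of simplicial sets are surjective in each degree).

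For formal quasi-smoothness, I would fix an (acyclic) square-zero extension $A \to B$ in $d\cN^{\flat}_R$; Lemma \ref{qmmcrQ} then reduces the problem to showing that $\uline E(A) \to \uline E(B)$ is a (trivial) fibration in the Reedy model structure of Definition \ref{qmsmodel}. Imitating Proposition \ref{mcgpnice}, I would extend each set-valued $E^n$ to $d\cN^{\flat}_R$ via $E^n(A) := E^n(A_0)$; formal smoothness of $E^n$ makes this extension pre-homotopic, and it is formally quasi-presmooth because its target is discrete (every map between sets is a Kan fibration). Proposition \ref{settotop}, combined with the identification $\bar W \uline{E^n} = \uline{E^n}$ valid for set-valued functors, then yields that each $\uline{E^n}: d\cN^{\flat}_R \to \bS$ is formally quasi-smooth, so $\uline{E^n}(A) \to \uline{E^n}(B)$ is a (trivial) Kan fibration.

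The main obstacle, by contrast with Proposition \ref{mcgpnice}, is the Reedy matching condition: for each $n \ge 0$ the map
\[
\uline{E^n}(A) \longrightarrow \uline{E^n}(B) \by_{M^n\uline E(B)} M^n\uline E(A)
\]
must be a (trivial) Kan fibration. In the cosimplicial group setting Lemma \ref{gplevelwise} exploits a splitting of $G^n \to M^nG$ coming from the group structure to collapse the Reedy condition to the levelwise one; no such splitting is available for a general quasi-comonoid, so a direct quasi-comonoid analogue of that lemma is not on hand. Instead my plan is to argue by induction on $n$: homogeneity of $E$ shows that $M^n\uline E$ is a finite limit of copies of $\uline{E^k}$ for $k<n$, so the inductive hypothesis would give that $M^n\uline E(A) \to M^n\uline E(B)$ is itself a (trivial) Kan fibration, and a careful Reedy pullback argument, exploiting formal quasi-smoothness of each $\uline{E^k}$ together with homogeneity of $E$, should then combine this with the levelwise property on $\uline{E^n}$ to produce the required matching fibration at level $n$. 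Making this inductive step precise is where I expect the technical effort to concentrate.
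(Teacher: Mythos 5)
Your homogeneity argument and your levelwise step are exactly the paper's: $\mmc$ preserves limits, and extending $E^n$ to $d\cN^{\flat}_R$ by $E^n(A):=E^n(A_0)$ makes it pre-homotopic (by formal smoothness) and formally quasi-presmooth (discreteness), so Proposition \ref{settotop} gives formal smoothness of each $\uline{E}^n$. You have also correctly located the crux: the absence of a quasi-comonoid analogue of Lemma \ref{gplevelwise}. But your proposed inductive step is a genuine gap, not merely deferred technical effort. Knowing that $\uline{E}^n(A)\to\uline{E}^n(B)$ is a (trivial) fibration levelwise and that $M^n\uline{E}(A)\to M^n\uline{E}(B)$ is one does \emph{not} imply that the relative matching map $\uline{E}^n(A)\to \uline{E}^n(B)\by_{M^n\uline{E}(B)}M^n\uline{E}(A)$ is a fibration --- if that implication held for cosimplicial diagrams of spaces, Lemma \ref{gplevelwise} would be trivially true without any group structure, and the paper explicitly notes that no such analogue exists for quasi-comonoids. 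Your ``careful Reedy pullback argument'' is precisely the statement to be proved, so the induction begs the question. Moreover, your global target (Reedy fibrancy of the whole map $\uline{E}(A)\to\uline{E}(B)$) is stronger than what is needed or available: at a basepoint of $E^n$ not lying over powers of a Maurer--Cartan element there is no coherent cosimplicial system of basepoints around which to linearise.

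The paper's actual mechanism is a reduction that recovers the group splitting after linearisation. Writing $C:=\H_0B$ for a square-zero extension $A\to B$, the space $\mmc(\uline{E})(C)=\mc(E(C))$ is a discrete set, so it suffices to treat the fibres over each $\omega\in\mc(E(C))$; these fibres are $\mmc(\uline{E}_{\omega}(-))$ for the fibre quasi-comonoid $E^n_{\omega}(-):=E^n(-)\by_{E^n(C)}\{\omega^n\}$ on nilpotent extensions of $C$ (the powers $\omega^n$ are multiplicatively coherent precisely because $\omega$ is Maurer--Cartan). By Lemma \ref{qmmcrQ} one then needs $\uline{E}_{\omega}(A)\to\uline{E}_{\omega}(B)$ to be a Reedy (trivial) fibration, i.e.\ each $\mu_n:\uline{E}^n_{\omega}\to M^n\uline{E}_{\omega}$ formally quasi-smooth; the paper proves $\mu_n$ formally \emph{smooth} by induction on $n$, where the inductive step is not a pullback manipulation but the cohomological criterion: $M^n\uline{E}_{\omega}$ is formally quasi-smooth as a pullback of such maps, and by Corollary \ref{cohosmoothchar} together with Lemma \ref{totcohoc} (which identifies $\DD^i_{\omega}(\uline{E}^n,M)$ with $\DD^i_{\omega}(E^n,M)$, vanishing for $i\neq 0$ with $\DD^0=\CC^n_{\omega}(E,M)$), smoothness of $\mu_n$ reduces to surjectivity of
\[
\CC^n_{\omega}(E,M)\to M^n\CC^{\bt}_{\omega}(E,M)
\]
for discrete modules $M$. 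And \emph{here} Lemma \ref{gplevelwise} does apply, because the tangent complex $\CC^{\bt}_{\omega}(E,M)$ is a cosimplicial abelian group even though $E$ itself carries no group structure. This linearisation-at-an-MC-point step is the missing idea in your proposal.
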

\begin{proof}
 This proceeds along the same lines as Proposition \ref{mcgpnice}. Homogeneity is automatic, as $\mmc$ preserves arbitrary limits. 

We can extend $E^n$ to a functor $E^n: d\cN_R^{\flat} \to \Set$, given by $E^n(A):= E^n(A_0)$. Then formal smoothness of $E^n$ implies that the extended $E^n$ is  pre-homotopic. It is automatically formally quasi-presmooth, as all discrete morphisms are fibrations. Thus Proposition \ref{settotop} implies that  $\uline{E}^n$ is formally smooth, and hence formally quasi-smooth.

For our next step, there is no analogue of Lemma \ref{gplevelwise} for quasi-comonoids, so we have to work a little harder. We wish to show that $\mmc(\uline{E})(A') \to \mmc(\uline{E})(A)$ is a (trivial) fibration for all (acyclic) square-zero extensions $A' \to A$. Write $C:= \H_0A$, and note that $\uline{E}(C)= E(C)$, so $\mmc(\uline{E}(C))= \mc(E(C))$ is a set. For any $\omega \in \mc(E(C))$, it thus suffices to show that the morphism $\mmc(\uline{E})(A')_{\omega} \to \mmc(\uline{E})(A)_{\omega}$ of fibres over $\omega$ is a (trivial) fibration.

Now,  on the subcategory of $d\cN^{\flat}\da C$ consisting of nilpotent extensions $B \to C$,  
define a  functor $E_{\omega}$ by $E_{\omega}^n(B) = E^n(B)\by_{E^n(C)}\omega^n$. Thus $E_{\omega}(B) \in QM^*$; since $C^{\Delta^n}=C$, we also have $\uline{E}_{\omega}(B) \in QM^*(\bS)$. Now, the crucial observation is that
\[
 \mmc(\uline{E})(B)_{\omega}= \mmc( \uline{E}_{\omega}(B)),
\]
 so by Lemma \ref{qmmcrQ}, it suffices to show that $\uline{E}_{\omega} (A') \to\uline{E}_{\omega} (A)$ is a (trivial) fibration in $QM^*(\bS)$. 

This amounts to saying that $\mu_n: \uline{E}_{\omega}^n \to M^n\uline{E}_{\omega}$ is formally quasi-smooth for all $n \ge 0$. In fact, it is formally smooth;  we prove this inductively on $n$. For $n=0$, this follows immediately from the observation above that $\uline{E}^0$ is formally smooth. If this holds up to level $n-1$, then $M^{n}\uline{E}_{\omega}$ is also formally quasi-smooth, being a pullback of formally quasi-smooth maps. Since $\uline{E}_{\omega}^n$ is formally smooth, it follows from  
  Corollary \ref{cohosmoothchar} that  $\mu_n$ will be formally smooth provided
\[
 \DD^i_{\omega}( \uline{E}^n,M) \to \DD^i_{\omega}( M^n\uline{E},M)
\]
is surjective for $i=0$, and an isomorphism for $i>0$, for all $C$-modules $M$. Now,   Lemma \ref{totcohoc} shows that $\DD^i_{\omega}( \uline{E}^n,M)= \DD^i_{\omega}(E^n,M)$, and similarly for $M^nE$. These are $0$ for $i\ne 0$, and $\DD^0_{\omega}(E^n,M) = \CC^n_{\omega}(E,M)$.
Hence we need only show that
\[
 \CC^n_{\omega}(E,M)\to M^n \CC^{\bt}_{\omega}(E,M)
\]
is surjective, which follows from Lemma \ref{gplevelwise}.
\end{proof}

\begin{corollary}\label{delqmnice}
 If $E: Alg_R \to QM^*$ is a homogeneous functor, with each $E^n$ formally smooth, then the functor 
\[
 \uline{\ddel}(\uline{E}):d\cN^{\flat}_R \to \bS
\]
is homogeneous and formally quasi-smooth, while
\[
 \ddel(\uline{E}):s\cN^{\flat}_R \to \bS
\]
is homogeneous, pre-homotopic and formally quasi-presmooth.
\end{corollary}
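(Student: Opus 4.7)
The plan is to mimic the proof of Proposition \ref{delgpnice} essentially verbatim, with Proposition \ref{mcqmnice} playing the role of Proposition \ref{mcgpnice}. The only genuinely new check required is that the group-of-units functor $(\uline{E}^0)^{\by}$ inherits the good properties from $\uline{E}^0$, and I expect this to be the main (though minor) obstacle.

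First I would verify homogeneity. Since $\uline{\ddel}(\uline{E}) = \bar{W}[\mmc(\uline{E})/(\uline{E}^0)^{\by}]$, and $\bar{W}$ together with the quotient construction preserves finite limits, homogeneity reduces to homogeneity of $\mmc(\uline{E})$ (given by Proposition \ref{mcqmnice}) and of $(\uline{E}^0)^{\by}$. The latter holds because $(-)^{\by}: \Ring \to \Gp$ preserves finite limits (a unit in a fiber product of rings is the same as a compatible pair of units).

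Next I would establish formal (quasi-)smoothness. For a square-zero (resp.\ acyclic square-zero) extension $A \to B$ in $d\cN_R^{\flat}$, Proposition \ref{mcqmnice} makes $\mmc(\uline{E})(A) \to \mmc(\uline{E})(B)$ a (trivial) fibration. Combining \cite{sht} Lemma IV.4.8 with \cite{CRfib} shows that the induced map on $\bar{W}$-quotients
\[
\bar{W}[\mmc(\uline{E})(A)/(\uline{E}^0)^{\by}(A)] \to \bar{W}[\mmc(\uline{E})(B)/(\uline{E}^0)^{\by}(A)]
\]
is still a (trivial) fibration. I would then factor the remaining comparison
\[
\bar{W}[\mmc(\uline{E})(B)/(\uline{E}^0)^{\by}(A)] \to \bar{W}[\mmc(\uline{E})(B)/(\uline{E}^0)^{\by}(B)]
\]
as a pullback of $\bar{W}(\uline{E}^0)^{\by}(A) \to \bar{W}(\uline{E}^0)^{\by}(B)$, and conclude by composing. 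Pre-homotopicity and formal quasi-presmoothness of $\ddel(\uline{E})$ on $s\cN_R^{\flat}$ follow from the discrete analogues for $\mc(\uline{E})$ and $E^0$, using the standard facts about quotient groupoids in \cite{sht} Ch.\ V.

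The remaining technical point, which I flagged as the main obstacle, is to check that $(\uline{E}^0)^{\by}$ is formally smooth whenever $\uline{E}^0$ is. The argument is this: $(E^0)^{\by} \hookrightarrow E^0$ is a Zariski-open subfunctor (cut out by invertibility of a section), and for a surjective square-zero extension $A \onto B$ the kernel is nilpotent, so any element of $E^0(A)$ mapping to $(E^0)^{\by}(B)$ automatically lies in $(E^0)^{\by}(A)$. Hence the lifting/fibration properties established for $E^0$ transfer directly to $(E^0)^{\by}$, and Proposition \ref{settotop} promotes this to formal smoothness of $(\uline{E}^0)^{\by}$, as needed in the argument above.
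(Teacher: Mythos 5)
Your proposal is correct and takes essentially the same route as the paper, whose entire proof of Corollary \ref{delqmnice} is to rerun the proof of Proposition \ref{delgpnice} with Proposition \ref{mcqmnice} substituted for Proposition \ref{mcgpnice}. Your additional check that $(\uline{E}^0)^{\by}$ inherits homogeneity and formal smoothness from $\uline{E}^0$ --- a point the paper leaves implicit --- is sound in substance, though the phrase ``Zariski-open subfunctor'' is only heuristic for an abstract set-valued functor: the honest argument lifts the inverse along $E^0(A)\onto E^0(B)$ using formal smoothness, and then uses homogeneity of $E$ (identifying the fibre of $E^0(A)\to E^0(B)$ over $1$ with the tangent group $T_1(E^0,I)$, where the $*$-product and addition agree by an Eckmann--Hilton interchange) to conclude that elements lying over $1$ are invertible.
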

\begin{proof}
The proof of Proposition \ref{delgpnice} adapts, substituting Proposition \ref{mcqmnice} for  Proposition \ref{mcgpnice}.
\end{proof}

\subsubsection{Cohomology}

\begin{definition}\label{qmcoho}
Given  a homogeneous, levelwise formally smooth functor    $E: Alg_R \to QM^*$, a ring $A \in \Alg_R$, an $A$-module $M$  and $\omega\in \mc(E(A))$, define 
\[
 \H^i_{\omega}(E,M):=   \H^i \CC^{\bt}_{\omega}(E, M),    
\]
 for $\CC^{\bt}_{\omega}(E, M)$ as in Definition \ref{qmCCdef}.    
\end{definition}

The following is immediate.
\begin{lemma}
 Given  a homogeneous, levelwise formally smooth functor    $E: Alg_R \to QM^*(\bS)$, a ring $A \in \Alg_R$, an $A$-module $M$
and $\omega\in \mc(E_0(A))$, the fibre of $\mc(E(A \oplus M)) \to \mc(E(A))$ over $\omega$ is canonically isomorphic to $\mc(\CC^{n}_{\omega}(G, M))$.
\end{lemma}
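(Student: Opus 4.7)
The plan is to use homogeneity to identify fibres with tangent spaces, and then to read off the Maurer--Cartan equations directly. Since $\mc(E(A)) = \mmc(E(A))_0 = \mc(E_0(A))$ only involves the $0$-level, I may reduce to the case $E \colon \Alg_R \to QM^*$.

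By homogeneity of each $E^n$ applied to the split square-zero extension $A \oplus M \onto A$, the fibre of $E^n(A \oplus M) \to E^n(A)$ over $\omega^n := \omega * \cdots * \omega$ is canonically the abelian group $T_{\omega^n}(E^n, M) = \CC^n_\omega(E, M)$, and this identification is compatible with all cosimplicial operations $\pd^i, \sigma^i$, since those are morphisms of functors on $\Alg_R$. Each $\tilde\omega$ in the fibre of $E^1(A\oplus M)\to E^1(A)$ over $\omega$ therefore has the form $\tilde\omega = \omega + a$ for a unique $a \in \CC^1_\omega(E,M)$. The first Maurer--Cartan equation $\sigma^0\tilde\omega = 1$ becomes $\sigma^0 a = 0$ in $\CC^0_\omega(E,M)$, using $\sigma^0\omega = 1$.

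For $\pd^1\tilde\omega = \tilde\omega * \tilde\omega$, I apply homogeneity to $A \oplus M \oplus M \onto A$, which identifies $E^2(A\oplus M \oplus M)$ with $E^2(A \oplus M) \times_{E^2(A)} E^2(A \oplus M)$, and pull $\tilde\omega$ back to $E^1(A \oplus M \oplus M)$ along the two projections $A \oplus M \oplus M \to A \oplus M$ that kill either the first or the second $M$-summand. Naturality of $*$ then shows that the product of these two lifts, viewed in $\CC^2_\omega(E, M \oplus M) = \CC^2_\omega(E, M) \oplus \CC^2_\omega(E, M)$, has components $(a*\omega,\, \omega*a)$; pushing forward along the addition $M \oplus M \to M$ yields
\[
\tilde\omega * \tilde\omega \;=\; \omega^2 + \omega * a + a * \omega
\]
in $E^2(A\oplus M)$. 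Under the identifications $\pd^0_{\CC}a = \omega * a$ and $\pd^2_{\CC}a = a * \omega$ from Definition \ref{qmCCdef}, together with $\pd^1_E a = \pd^1_{\CC} a$, the equation $\pd^1\tilde\omega = \tilde\omega * \tilde\omega$ reduces to $\pd^1_{\CC}a = \pd^0_{\CC}a + \pd^2_{\CC}a$. Combined with $\sigma^0 a = 0$, this is exactly the Maurer--Cartan condition defining $\mc(\CC^\bt_\omega(E, M))$ via the abelian-group-object structure of Lemma \ref{abqm}.

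Naturality of the correspondence $\tilde\omega \leftrightarrow a$ in $(E, A, M, \omega)$ is automatic from naturality of homogeneity. The only nontrivial step is the bilinear expansion of $\tilde\omega * \tilde\omega$, which relies on the fibre-product description of $E^2(A \oplus M \oplus M)$ to guarantee that there is no ``mixed'' tangent piece contributing a spurious $a*a$-term; once that has been unwound, the entire translation is a direct application of the definitions.
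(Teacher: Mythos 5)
Your central computation is correct, and it is precisely the unwinding that makes the paper dismiss this lemma with ``the following is immediate'': you use homogeneity over $A\oplus M\oplus M$ to show $\tilde\omega*\tilde\omega=\omega^2+\omega*a+a*\omega$ with no spurious $a*a$ term, and then match $\sigma^0a=0$, $\pd^1a=\pd^0a+\pd^2a$ against the abelian Maurer--Cartan equation supplied by Lemma \ref{abqm}. This is the quasi-comonoid analogue of the explicit bijection $\beta_n=\alpha_n\,(\pd^2_G)^n\omega$ recorded in the paper's cosimplicial-group version of the lemma. (One slip of language: the two elements of $E^1(A\oplus M\oplus M)$ are obtained by pushing $\tilde\omega$ forward along the two coordinate inclusions $A\oplus M\to A\oplus M\oplus M$, which are sections of the projections; a covariant functor on $\Alg_R$ has no pullback along the projections themselves.)

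The genuine gap is your opening reduction: for $E\in QM^*(\bS)$ it is false that $\mc(E)=\mmc(E)_0$ coincides with $\mc(E_0)$. By Definition \ref{mcdefqm}, an element of $\mmc(E)_0$ is a family of simplicial-set maps $\omega_n\in\Hom_{\bS}(I^n,E^{n+1})$; already $\omega_1$ is a path in $E^2$, so $\mc(E)$ carries genuine higher simplicial data, not merely vertices. Accordingly, the fibre over the constant family at $\omega\in\mc(E_0(A))$ consists of families $\tilde\omega_n\co I^n\to E^{n+1}(A\oplus M)$ whose projection to $E^{n+1}(A)$ is constant while the lifts themselves need not be, and the target $\mc(\CC^{\bt}_{\omega}(E,M))$ --- each $\CC^{n}$ now a simplicial abelian group of levelwise tangent spaces --- records exactly these cube-parametrised elements. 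Your argument as written therefore proves only the case $E\co\Alg_R\to QM^*(\Set)$, a strictly weaker statement. The repair is short and uses your own computation: since $\mc$ preserves limits, the fibre is $\mc$ of the simplicial quasi-comonoid $E(A\oplus M)\by_{E(A),\omega^{\bt}}\iota\bt$ (the object appearing in the proof of Lemma \ref{qmcohomc}), and your bilinearity argument, applied in each simplicial level and to cube-parametrised points, identifies this abelian quasi-comonoid with $\cE(\CC^{\bt}_{\omega}(E,M))$; applying $\mc$ then yields the lemma in full generality.
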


\begin{lemma}\label{qmcohomc}
If $E: Alg_R \to QM^*$ is a homogeneous, levelwise formally smooth functor, with $A \in \Alg_R$ and $M$ an $A$-module,   then 
\[
 \DD^i_{\omega}(\mmc(\uline{E}),M)\cong  \DD^i_{\omega}(\mc(\uline{E}),M) \cong \left\{\begin{matrix} \H^{i+1}_{\omega}(E,M) & i >0 \\ \z^1\CC^{\bt}_{\omega}(E,M) & i=1. \end{matrix} \right.  
\]
\end{lemma}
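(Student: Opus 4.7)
The plan is to mirror the strategy of Lemma \ref{cgpcohomc} almost verbatim, substituting the quasi-comonoid machinery of \S \ref{qmmoduli} for its cosimplicial group analogues throughout.

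First I would identify the tangent space: since $\mmc$ commutes with limits and $E$ is homogeneous, the fibre of $\mmc(\uline{E}(A\oplus L))\to\mmc(\uline{E}(A))$ over $\omega$ is canonically isomorphic to $\mmc(\CC^{\bt}_\omega(\uline{E},L))$, where $\CC^{\bt}_\omega(\uline{E},L)$ is regarded as an abelian simplicial quasi-comonoid via Lemma \ref{abqm} (and Example \ref{egcfcgp}). Lemma \ref{cotcoho} then gives, for any $L\in d\Mod_A$,
\[
\pi_n T_\omega(\mmc(\uline{E}),L)\cong \H_{n-1}(\Tot^\Pi\sigma^{\ge 1}N^sN_c\CC^{\bt}_\omega(\uline{E},L)).
\]

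Next I would set up the resulting second-quadrant spectral sequence
\[
E_2^{i,-j}=\H^i(\sigma^{\ge 1}\pi_j\CC^{\bt}_\omega(\uline{E},L))\Longrightarrow\DD^{i-j-1}_\omega(\mmc(\uline{E}),L),
\]
which is weakly convergent. The key input is that the functor $\CC^{\bt}_\omega(E,-)\colon\Mod_A\to c\Ab$ is exact: left exactness is immediate from homogeneity, and right exactness follows from the levelwise formal smoothness assumption together with the definition of $\CC^n$ as a tangent space. Exactness identifies $\pi_j\CC^n_\omega(\uline{E},L)$ with $\CC^n_\omega(E,\pi_j(L^{\Delta^\bt})_0)$, and in either the simplicial or chain-algebra setting one has $\pi_j(L^{\Delta^\bt})_0\cong \H_jL$. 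Plugging in $L=M[-n]$ for a discrete module $M$ collapses the spectral sequence to a single column, giving
\[
\H^i(\sigma^{\ge 1}\CC^{\bt}_\omega(E,M))\cong \DD^{i-1}_\omega(\mmc(\uline{E}),M),
\]
which is the desired description of $\DD^*_\omega(\mmc(\uline{E}),M)$.

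Finally I would deduce the statement for $\mc$ from a path-object argument, just as in the cosimplicial group case. Proposition \ref{mcqmnice} combined with homogeneity ensures that $T_\omega(\mmc(\uline{E}),-)$ sends (trivial) fibrations of discrete modules to (trivial) fibrations of simplicial sets, so $T_\omega(\mmc,M[-j]\oplus\cone(M)[1-j])$ serves as a path object for $T_\omega(\mmc,M[-j])$ whenever $j\ge 1$; passing to $\pi_0$ yields $\DD^j_\omega(\mmc,M)=\DD^j_\omega(\mc,M)$ for $j\ge 1$. The case $j=0$ is immediate from $M^{\Delta^n}=M$, which forces $T_\omega(\mmc,M)=T_\omega(\mc,M)$ on the nose.

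The main obstacle is the exactness claim for $\CC^{\bt}_\omega(E,-)$: homogeneity readily supplies left exactness, but translating levelwise formal smoothness of $E^n$ into right exactness of the associated cosimplicial tangent module functor (and then checking its compatibility with the normalisations and $\Tot^\Pi$ appearing in Lemma \ref{cotcoho}) is the one step that needs real care, since the other ingredients are essentially formal transcriptions of the cosimplicial group proof.
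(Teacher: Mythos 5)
Your proposal is correct and takes essentially the same route as the paper: the paper's own proof identifies $T_{\omega}(\mmc(\uline{E}),L)$ with $\mmc$ of the abelian quasi-comonoid $T_{\omega^{\bt}}(\uline{E},L)$ (via Lemma \ref{abqm}), quotes Lemma \ref{cotcoho}, and leaves the spectral-sequence, exactness and path-object steps to carry over verbatim from Lemma \ref{cgpcohomc}, exactly the steps you transcribe. The exactness of $\CC^{\bt}_{\omega}(E,-)$ that you flag as delicate is in fact the same routine argument as in the cosimplicial-group case (left exactness from homogeneity, right exactness because levelwise formal smoothness makes each tangent functor $T_{\omega^n}(E^n,-)$ surjective on square-zero quotients), so no additional care is required beyond what you describe.
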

\begin{proof}
 First, observe that there is a constant quasi-comonoid $\iota\bt$, given by the one-point set in every level. The element  $\omega$ thus defines a morphism $\omega^{\bt}:\iota\bt \to E(A)$ of quasi-comonoids, given by $\omega^n$ in level $n$, and the fibre product 
\[
  T_{\omega^{\bt}}(\uline{E},L):= \uline{E}(A\oplus L)\by_{E(A), \omega^{\bt}}\iota\bt
\]
is thus an abelian quasi-comonoid, for all $L \in d\Mod_A$. By Lemma \ref{abqm}, this corresponds to a simplicial cosimplicial abelian group, which is just given in simplicial level $n$ by
\[
  T_{\omega^{\bt}}(\uline{E},L)_n = \CC^{\bt}_{\omega}(E, (L^{\Delta^n})_0).
\]

Therefore 
\[
 T_{\omega}( \mmc(\uline{E}), L) \cong \mmc( T_{\omega^{\bt}}(\uline{E},L)),
\]
and the result is an immediate consequence of Lemma \ref{cotcoho}.
\end{proof}

\begin{lemma}\label{qmcohodel}
If $E: Alg_R \to QM^*$ is a homogeneous, levelwise formally smooth functor, with $A \in \Alg_R$ and $M$ an $A$-module,   then 
\[
 \DD^i_{\omega}(\uline{\ddel}(\uline{E}),M)\cong \DD^i_{\omega}(\ddel(\uline{E}),M) \cong\H^{i+1}_{\omega}(E,M).    
\]
\end{lemma}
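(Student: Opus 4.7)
The plan is to mimic the proof of Lemma \ref{cgpcohodel} almost verbatim, substituting Lemma \ref{cot3} for Lemma \ref{cgpcot3} and Lemma \ref{qmcohomc} for Lemma \ref{cgpcohomc}.

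For the first isomorphism $\DD^i_{\omega}(\uline{\ddel}(\uline{E}),M)\cong\H^{i+1}_{\omega}(E,M)$, I would argue exactly as in Lemma \ref{qmcohomc}. Namely, use the fact that the fibre product
\[
T_{\omega^{\bt}}(\uline{E},L):=\uline{E}(A\oplus L)\by_{E(A),\omega^{\bt}}\iota\bt
\]
defines an abelian simplicial quasi-comonoid, which under Lemma \ref{abqm} corresponds in simplicial level $n$ to $\CC^{\bt}_{\omega}(E,(L^{\Delta^n})_0)$. Then take the tangent space of $\uline{\ddel}(\uline{E})=\bar{W}[\mmc(\uline{E})/\uline{E^0}^{\by}]$: the $(E^0)^{\by}$-torsor $\uline{E^0}^{\by}(A\oplus L)\to \uline{E^0}^{\by}(A)$ has fibre $1+T_{1}(E^0,L)$, so
\[
T_{\omega}(\uline{\ddel}(\uline{E}),L)\cong \bar{W}[T_{\omega^{\bt}}(\uline{E},L)/\,T_{1}(\cE^0,L)]\simeq \bar{W}\uline{\Del}(T_{\omega^{\bt}}(\uline{E},L)),
\]
and Lemma \ref{cot3} gives the required cohomology computation, taking $L=M[-n]$ so that the resulting spectral sequence degenerates.

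The second isomorphism, $\DD^i_{\omega}(\ddel(\uline{E}),M)\cong \DD^i_{\omega}(\uline{\ddel}(\uline{E}),M)$, will follow by comparing two fibration sequences. Namely, I would consider the diagram
\[
 \xymatrix@C=1.5em{ \mc(\uline{E}) \ar[r] \ar[d] & \ddel(\uline{E}) \ar[r]\ar[d] & B(E^0)^{\by} \ar[d] \\
\mmc (\uline{E})\ar[r] & \uline{\ddel}(\uline{E}) \ar[r] & \bar{W}\uline{E^0}^{\by}
}
\]
of fibration sequences. By Lemma \ref{qmcohomc}, the leftmost vertical map induces isomorphisms on $\DD^i$, while formal smoothness of $E^0$ together with Lemma \ref{totcohoc} shows that the rightmost vertical also induces isomorphisms on $\DD^i$. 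Applying the long exact sequence of Proposition \ref{longexact} to both rows and using the five-lemma then yields $\DD^i_{\omega}(\ddel(\uline{E}),M)\cong \DD^i_{\omega}(\uline{\ddel}(\uline{E}),M)$.

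The main subtlety, as in Lemma \ref{qmcohomc}, is verifying that formal smoothness of each $E^n$ is genuinely enough to ensure that the tangent spaces assemble into an abelian quasi-comonoid whose $\mmc$ computes the tangent space of $\mmc(\uline{E})$; this was the key input of Proposition \ref{mcqmnice}, and it is what makes the comparison with the cosimplicial-groups case go through despite the absence of a strict group structure on $E^0$. Everything else is essentially bookkeeping along the same lines as the cosimplicial-group argument.
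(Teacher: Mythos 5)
Your proposal is correct and takes essentially the same route as the paper: the paper's proof of this lemma is literally that the proof of Lemma \ref{cgpcohodel} carries over, which is exactly your two-step argument (the abelian tangent quasi-comonoid handled via Lemmas \ref{abqm} and \ref{cot3} in place of the cosimplicial-group lemmas, followed by the morphism of fibration sequences comparing $\ddel(\uline{E})$ with $\uline{\ddel}(\uline{E})$ and Proposition \ref{longexact}). Your unpacking of the torsor structure on $(\uline{E}^0)^{\by}$ and the degeneration at $L=M[-n]$ is precisely the bookkeeping the paper leaves implicit.
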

\begin{proof}
 The proof of Lemma \ref{cgpcohodel} carries over.
\end{proof}

\subsection{Sheafification}\label{shfqmsn}

\begin{definition}
Given a cosimplicial diagram $\CC^{\bt}(E)$ in $QM^*$, define the quasi-comonoid $\diag \CC^{\bt}(E)$ by 
\[
 \diag \CC^{\bt}(E)^n:= \CC^n(E^n),
\]
with operations $\pd^i= \pd^i_{\CC}\pd^i_E$, $\sigma^i= \sigma^i_{\CC}\sigma^i_E$, identity $1 \in \CC^0(E)$, and multiplication
\[
 a*b:= (\pd^{m+1}_{\CC})^na *_E (\pd^0_{\CC})^mb,
\]
for $ a \in \CC^m(E^m)$, $b \in \CC^n(E^n)$.
\end{definition}

\begin{definition}\label{shfqm}
Given a levelwise formally smooth, homogeneous functor  $E: Alg_R \to QM^*$ preserving finite products, 
and some class $\oP$ of covering morphisms in $\Alg_R$, define the sheafification $E^{\sharp}$ of $E$ with respect to $\oP$ by
first defining a cosimplicial commutative $A$-algebra $(B/A)^{\bt}$ for every $\oP$-covering $A\to B$, as $(B/A)^n:= \overbrace{B\ten_{A}B\ten_{A}\ldots \ten_{A}B}^{n+1}$, then setting
\[
 E^{\sharp}(A) = \diag \LLim_{B} E((B/A)^{\bt}).       
\]
\end{definition}

\begin{lemma}\label{cfshfqm}
For $E$ and $\oP$ as above, there is a canonical morphism
\[
 \Del(\uline{E})^{\sharp} \to  \Del(\uline{E}^{\sharp})       
\]
 of groupoid-valued functors on $d\cN^{\flat}_R$, for sheafification $\Del(\uline{E})^{\sharp}$ defined as in Definition \ref{shfgpd}. This induces an equivalence
\[
 \pi^0\Del(\uline{E})^{\sharp} \to \pi^0\Del(\uline{E}^{\sharp}).
\]
\end{lemma}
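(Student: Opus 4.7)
The plan is to mimic the proof of Lemma \ref{cfshfcgp} almost verbatim, replacing the multiplicative structure of cosimplicial groups with the $*$-product of quasi-comonoids, and appealing to \cite{monad} Lemma \ref{monad-diagset} in its quasi-comonoid form rather than its cosimplicial group form.

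First, I would unwind Definition \ref{shfgpd} to describe objects of $\Del(\uline{E})^{\sharp}(A)$ explicitly. An object consists of a strict $\oP$-covering $A_0 \to B$, together with a pair $(\omega, g) \in \mc(\uline{E}(A\ten_{A_0}B)) \times (E^0(B\ten_{A_0}B))^{\by}$ subject to the cocycle conditions
\[
  g * \pr_1^*\omega \;=\; \pr_0^*\omega * g \quad \text{in } E^1(A\ten_{A_0}B\ten_{A_0}B),
\]
\[
  \pr_{02}^*g \;=\; \pr_{01}^*g \cdot \pr_{12}^*g \quad \text{in } (E^0(B\ten_{A_0}B\ten_{A_0}B))^{\by},
\]
with morphisms given by gauge actions of $(E^0(D))^{\by}$ for refinements $B\ten_{A_0}C \to D$.

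Second, I would construct the map $\Del(\uline{E})^{\sharp} \to \Del(\uline{E}^{\sharp})$ by sending $(\omega,g)$ to the element $\omega' \in \mc(\diag \uline{E}(A\ten_{A_0}(B/A_0)^{\bt}))$ whose component in level $n$ is
\[
  \omega'_n \;:=\; (\pr_{01}^* (\pd^2_E)^n g)\, *\, \pr_1^* \omega_n
\]
(matching the cosimplicial group formula $(\pr_{01}^*(\pd^1)^{n+1}_G g)\cdot \pr_1^*\omega_n$, with $\cdot$ replaced by $*$ and with $\pd^1_G = \pd^2_E$ corresponding to the shift in the $*$-product). Verifying that $\omega'$ satisfies the $\mmc$-relations of Definition \ref{mcdefqm} is a routine bookkeeping exercise once one uses the two cocycle conditions above. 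On morphisms, I send an isomorphism implemented by $\alpha \in (E^0(D))^{\by}$ to the same $\alpha$ regarded as a gauge transformation in $\Del(\uline{E}^{\sharp})$.

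Third, to verify that the induced map on $\pi^0$ is an equivalence, I would reduce to $A \in \Alg_R$ (so that $\uline{E}(A) = E(A)$ is discrete) and then invoke \cite{monad} Lemma \ref{monad-diagset}: objects of
\[
  \mc(\diag E((B/A)^{\bt}))
\]
correspond to pairs $(\omega, g) \in \mc(E(B)) \times \mc(E^0((B/A)^{\bt}))$ satisfying $\pd_B^1\omega * g = g * \pd^0_B\omega$. Since $\pd_B^1\omega = \pr_0^*\omega$ and $\pd_B^0\omega = \pr_1^*\omega$, this is precisely our cocycle condition (1), while $g \in \mc(E^0((B/A)^{\bt}))$ unravels to condition (2); morphisms on both sides reduce to the gauge action of $(E^0(B))^{\by}$. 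This gives an equivalence of groupoids after passing to the filtered colimit over $\oP$-coverings $B$ of $A$, which is precisely the desired equivalence $\pi^0 \Del(\uline{E})^{\sharp}(A) \simeq \pi^0\Del(\uline{E}^{\sharp})(A)$.

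The main obstacle will be the verification in the second step that $\omega'$ genuinely lies in $\mmc(\diag \uline{E}((B/A_0)^{\bt}))$: the $\mmc$-relations mix the three cosimplicial directions (the internal simplicial $\Delta^n$-parameter, the original cosimplicial direction of $E$, and the \v Cech direction), and one needs the quasi-comonoid axioms (\ref{sdc1})--(\ref{sdc7}) of Definition \ref{qmdef} together with both cocycle conditions to check these simultaneously. Once \cite{monad} Lemma \ref{monad-diagset} is in hand, however, the structure of the argument parallels Lemma \ref{cfshfcgp} directly.
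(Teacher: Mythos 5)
Your overall strategy coincides with the paper's: its proof likewise unwinds Definition \ref{shfgpd} to get the pair $(\omega,g)$ with your two cocycle conditions, writes down a single explicit Maurer--Cartan element in $\mc(\diag\uline{E}(A\ten_{A_0}(B/A_0)^{\bt}))$, sends an isomorphism $\alpha$ to itself, and then states that the remainder ``follows exactly as for Lemma \ref{cfshfcgp}'' --- which is where the appeal to the diagonal lemma of \cite{monad} and the colimit over $\oP$-coverings live, exactly as in your third step.

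The gap is in your formula for $\omega'_n$, which is the only substantive content of the proof. First, $(\pd^2_E)^n g$ is not defined: $g$ lies in $(E^0)^{\by}$, and in a quasi-comonoid the cofaces $\pd^i\co E^n\to E^{n+1}$ exist only for $1\le i\le n$, so $E^0$ carries no cofaces at all. Nothing needs transporting here: since $*\co E^0\by E^{n+1}\to E^{n+1}$, bare $*$-multiplication by $g$ already plays the role of the factor $(\pd^1_G)^{n+1}g$ from Lemma \ref{cfshfcgp} --- in $\cE(G)$ one has $g*h=((\pd^1_G)^{n+1}g)\cdot h$ for $g\in G^0$, $h\in G^{n+1}$ --- so inserting an extra degree-raising operator double-counts the Alexander--Whitney shift. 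Second, and more seriously, the \v{C}ech index $\pr_{01}$ cannot be copied verbatim from Lemma \ref{cfshfcgp}: the paper's formula is $\omega'_n:=(\pr_{0,n+1}^*g)*\pr_1^*\omega_n$, with $g$ pulled back along the two \emph{extreme} factors. The two $\mmc$-models of Definitions \ref{mcdefgp} and \ref{mcdefqm} are only weakly equivalent (Lemma \ref{cfmc}), and their defining relations differ, so the twisting formula does not transport unchanged. Concretely, in the quasi-comonoid model the concatenation relation $\omega'_m(s)*\omega'_n(t)=\omega'_{m+n+1}(s,0,t)$ must hold strictly, and the diagonal product $a*b=(\pd^{m+2}_{\CC})^{n+1}a*_E(\pd^0_{\CC})^{m+1}b$ places the two copies of $g$ at \v{C}ech indices $(0,m+1)$ and $(m+1,m+n+2)$ (subscripts denoting \v{C}ech pullbacks); using condition (1) in the form $g_{ab}*(\omega_k)_b=(\omega_k)_a*g_{ab}$ to move the middle $g$ leftwards, condition (2) composes the two copies to $g_{0,m+n+2}$, which forces the $(0,\mathrm{top})$ placement at every level. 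With $\pr_{01}^*g$ the check already fails at $m=0$: the product side yields $g$ at indices $(0,2)$, while your $\omega'_{n+1}$ has $g$ at $(0,1)$, and no quasi-comonoid axiom or cocycle identity bridges the two. So the verification you defer as ``routine bookkeeping'' is precisely where your formula breaks; the repair is to take $\pr_{0,n+1}^*g$ with no coface applied to $g$.
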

\begin{proof}
   An object of $\Del(\uline{E})^{\sharp}$ is a pair $(\omega, g) \in \mc( \uline{E}(A\ten_{A_0}B))\by E^0(B\ten_{A_0}B)^{\by}$, for $A_0 \to B$ a $\oP$-covering, satisfying the following conditions:
\begin{enumerate}
 \item $g* (\pr_1^*\omega) = (\pr_0^*\omega)*g \in \mc(  \uline{E}(A\ten_{A_0}B\ten_{A_0}B))$,
\item $\pr_{02}^*g = (\pr_{01}^*g)\cdot  (\pr_{12}^*g) \in E^0(B\ten_{A_0}B\ten_{A_0}B)^{\by}$,
\end{enumerate}
and we now describe the image of $(\omega, g)$.

First, form the cosimplicial $A_0$-algebra $(B/A_0)^{\bt}$ as in Definition \ref{shfqm}.   We now map $(B,\omega, g)$ to the object $\omega'$ of $\mc(\diag \uline{E}(A\ten_{A_0}(B/A_0)^{\bt}))$ given by
\[
  \omega'_n:=  (\pr_{0,n+1}^*g) * \pr_1^*\omega_n     \in E^{n+1}( (A^{\Delta^n})_0\ten_{A_0}(B/A_0)^{n+1}).  
\]

The remainder of the proof now follows exactly as for that of Lemma \ref{cfshfcgp}.
 \end{proof}

Now recall that $\oP$-covering morphisms are all assumed faithfully flat.
\begin{lemma}\label{cfshfqm2}
 Take $E:\Alg_R \to QM^*$ satisfying the conditions of Definition \ref{shfqm}, a ring $A \in \Alg_R$, an $A$-module $M$, and an object of $\Del(E)^{\sharp}(A)$ represented by $(\omega,g)$ for $\omega \in \mc(E(B))$. If the maps 
\[
 \H^*_{\omega}(E, M\ten_AB)\ten_BB' \to \H^*_{\omega}(E, M\ten_AB')
\]
are isomorphisms for all $\oP$-coverings $B \to B'$, then the morphism $\alpha$ of Lemma \ref{cfshfqm} induces an isomorphism
\[
 \DD^*_{(\omega,g)}(B\Del(\uline{E})^{\sharp},M) \to  \DD^*_{\alpha(\omega,g)}(B\Del(\uline{E}^{\sharp}),M).       
\]
\end{lemma}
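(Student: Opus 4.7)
The plan is to imitate the proof of Lemma \ref{cfshfcgp2}, replacing the conjugation action of the cosimplicial group by the two-sided $*$-multiplication by the cocycle $g\in (E^0(B\ten_{A_0}B))^{\by}$, and invoking Lemma \ref{qmcohodel} in place of Lemma \ref{cgpcohodel}. The definition of $\CC^{\bt}_{\omega}(E,-)$ (Definition \ref{qmCCdef}) is strictly parallel to its cosimplicial-group counterpart, so all of the computations carry over formally. The proof will proceed by independently computing both $\DD^*_{(\omega,g)}(B\Del(\uline{E})^{\sharp},M)$ and $\DD^*_{\alpha(\omega,g)}(B\Del(\uline{E}^{\sharp}),M)$ and matching them.

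First I would compute the left-hand side. Unwinding the presentation of the stackification $\Del(\uline{E})^{\sharp}$, an object over $A$ is given by the descent datum $(\omega,g)$ on some $\oP$-covering $A\to B$, and the tangent complex at this point fits into an equaliser of $A$-modules
\[
K(B'):=\mathrm{eq}\bigl(\CC^{\bt}_{\omega}(E, M\ten_AB')\;\underset{g*\pr_0^*(-)*g^{-1}}{\overset{\pr_1^*}{\rightrightarrows}}\;\CC^{\bt}_{\pr_1^*\omega}(E, M\ten_AB'\ten_AB')\bigr),
\]
as $B'$ ranges over $\oP$-hypercoverings refining $B$. Lemma \ref{qmcohodel} then gives
\[
\DD^i_{(\omega,g)}(B\Del(\uline{E})^{\sharp},M)\cong \LLim_{B'}\H^{i+1}K(B').
\]
The hypothesis that $\H^*_{\omega}(E,-)$ commutes with $\oP$-base change along $B\to B'$ ensures that conjugation by $g$ provides an effective descent datum on $\H^*_{\omega}(E,M\ten_AB)$, so that $\H^*K(B')\ten_AB'\cong \H^*_{\omega}(E,M\ten_AB')$. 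In particular $\H^*K(B')$ is independent of $B'$, and the colimit stabilises: $\DD^i_{(\omega,g)}(B\Del(\uline{E})^{\sharp},M)\cong \H^{i+1}K(B)$.

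Next I would compute the right-hand side. By construction $E^{\sharp}(A)=\diag\LLim_{B'} E((B'/A)^{\bt})$, so the cosimplicial tangent complex at $\alpha(\omega,g)$ is the diagonal of the bicosimplicial $A$-module
\[
\check{\CC}^{p}(B'/A,\CC^{\bt}_{\omega}(E,M))=\CC^{\bt}_{\pr_{\bt}^*\omega}(E,M\ten_A(B'/A)^p),
\]
whose horizontal $\check{\mathrm{H}}^0$ recovers $K(B')$. Lemma \ref{qmcohodel} and the Eilenberg--Zilber theorem yield
\[
\DD^i_{\alpha(\omega,g)}(B\Del(\uline{E}^{\sharp}),M)\cong \LLim_{B'}\H^{i+1}\bigl(\Tot\,\check{\CC}^{\bt}(B'/A,\CC^{\bt}_{\omega}(E,M))\bigr).
\]
Using the base-change hypothesis again, the canonical maps $K(B)\ten_A(B'/A)^p\to \check{\CC}^{p}(B'/A,\CC^{\bt}_{\omega}(E,M))$ are cohomology isomorphisms, so the right-hand side becomes $\LLim_{B'}\H^{i+1}(\Tot K(B)\ten_A(B'/A)^{\bt})$. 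Since the Amitsur complex of a faithfully flat cover is exact in positive cosimplicial degrees, a standard spectral-sequence collapse yields $\H^{i+1}K(B)$, as required.

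The main obstacle — and the one genuinely non-formal point — will be verifying that the two-sided $*$-action of $g$ defines the correct descent datum on $\CC^{\bt}_{\omega}(E,M\ten_AB)$ and is compatible with the cosimplicial structure, so that its cohomology computes the descent to $A$. This requires carefully unwinding Definition \ref{qmCCdef} together with the cocycle condition $\pr_{02}^*g=(\pr_{01}^*g)\cdot(\pr_{12}^*g)$ and the identity $g*\pr_1^*\omega=\pr_0^*\omega*g$, exactly paralleling the adjoint-action computation in Lemma \ref{cfshfcgp2} but with $*$-multiplication in place of conjugation. Once this is verified, the remaining steps are formal manipulations identical to those in the cosimplicial-group case.
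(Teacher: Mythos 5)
Your proposal is correct and coincides with the paper's approach: the paper's entire proof reads ``The proof of Lemma \ref{cfshfcgp2} carries over verbatim,'' and your write-up is exactly that transfer spelled out, substituting Lemma \ref{qmcohodel} for Lemma \ref{cgpcohodel} and the two-sided $*$-action of $g$ (via the identity $g*\pr_1^*\omega=\pr_0^*\omega*g$ and the cocycle condition) for the adjoint action. The only caveat is a harmless direction convention in the twisted map (one wants $g^{-1}*\pr_0^*(-)*g$ landing in $\CC^{\bt}_{\pr_1^*\omega}$, or equivalently the mirror-image equaliser), which is at the same level of precision as the paper's own formulation.
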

\begin{proof}
The proof of Lemma \ref{cfshfcgp2} carries over verbatim.
\end{proof}

\section{Examples of derived moduli via quasi-comonoids}\label{qmegs}

We now show how to apply the machinery of the previous section to construct derived moduli stacks for several specific examples. The approach is the same as that used to construct derived deformations in \cite{paper2} and \cite{ddt2}, by finding suitable monads to construct a quasi-comonoid, and then taking the Deligne groupoid. 

Throughout this section, $R$ will be a G-ring admitting  a dualising complex in the sense of \cite{HaRD} Ch. V. Examples are $\Z$, any field,  or any  Gorenstein local ring.

\subsection{Finite schemes}\label{finqmsn}

For a fixed $r \in \N$, we now study a quasi-comonoid governing finite schemes of rank $r$. For any commutative $R$-algebra $A$, our moduli groupoid consists of algebra homomorphisms $A\to B$, with $B$ a locally free  $A$-module of rank $r$.

In order to construct a quasi-comonoid, we take the approach of  \S \ref{monads}. Let $\cB(A)$ be the category of $A$-modules, and define the monad $\top_A$ on $\cB(A)$ to be $\Symm_A$, so $\cD(A):= \cB(A)^{\top_A}$ is the category of commutative $A$-algebras.

\begin{definition}
 Working in $\cB(A)$, define $E_r: \Alg_R \to QM^*$ by setting $E_r(A)$ to be the quasi-comonoid $ E_r(A):=E(A^r)$, given by 
\[
  E^n_r(A)= \Hom_{A}(\top^n_A(A^r), A^r).
\]
\end{definition}

The following is then just Lemma \ref{monadmc} in this context:
\begin{lemma}\label{localfinqm}
For any commutative $R$-algebra 
$A$,
$\Del(E_r(A))$ is canonically isomorphic to the groupoid of commutative $A$-algebra structures on the $A$-module $A^r$.
 \end{lemma}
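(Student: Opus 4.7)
The plan is to apply Lemma \ref{monadmc} directly, so the proof is almost immediate once we unpack the definitions. First I would recall the setup: we are taking $\cB = \cB(A)$ to be the category of $A$-modules, equipped with the monad $\top_A = \Symm_A$. The Eilenberg--Moore category $\cB(A)^{\top_A}$ is equivalent to the category of (commutative unital) $A$-algebras, since a $\Symm_A$-algebra structure on an $A$-module $M$ is precisely the data of a map $\Symm_A M \to M$ of $A$-modules satisfying the unit and associativity axioms, which is the same as a commutative $A$-algebra structure on $M$.

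Next I would observe that by Definition, $E_r(A) = E(A^r)$ is the quasi-comonoid associated to the object $B = A^r \in \cB(A)$ under the general construction of \S\ref{monads}. Thus Lemma \ref{monadmc} applies directly: the set $\mc(E_r(A))$ is in bijection with the set of $\Symm_A$-algebra structures on $A^r$, i.e. with the set of commutative $A$-algebra structures on the $A$-module $A^r$, and the groupoid $\Del(E_r(A))$ is equivalent to the groupoid of such $A$-algebras overlying $A^r$.

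Finally, to make the isomorphism canonical (rather than merely an equivalence), I would note that since the forgetful functor $U: \cD(A) \to \cB(A)$ is faithful, the morphisms in $\Del(E(A^r))$ given by gauge elements $g \in (E_r(A)^0)^{\times} = \Aut_A(A^r) = \GL_r(A)$ correspond bijectively to $A$-linear isomorphisms of the underlying module $A^r$ intertwining the algebra structures; so the equivalence of Lemma \ref{monadmc} is in fact an isomorphism of groupoids in this case.

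There is no real obstacle here: the whole statement is simply the specialisation of the general monad-to-quasi-comonoid correspondence to the monad $\Symm_A$ on $A$-modules, and no extra work is required beyond checking that $\cB(A)^{\Symm_A}$ is the category of $A$-algebras.
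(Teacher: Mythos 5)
Your proof is correct and follows essentially the same route as the paper, which simply states that the lemma ``is then just Lemma \ref{monadmc} in this context'' for the monad $\Symm_A$ on $\Mod_A$. Your extra observation that the gauge elements $(E_r^0(A))^{\by}=\GL_r(A)$ make the equivalence of Lemma \ref{monadmc} an actual isomorphism of groupoids here is a worthwhile point that the paper leaves implicit, but it introduces no new idea beyond the paper's argument.
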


For our next step, note that $\Del(E_r) $ is not a stack, although the core of $\cD$ is. However,  the stackification $\Del(E_r)^{\sharp}$ in the Zariski topology is equivalent to the subgroupoid of $\cD(A)$ consisting of commutative $A$-algebras $B$ which are locally free $A$-modules of rank $r$. 

\begin{definition}
Define  $\Del(\uline{E_r})^{\sharp}: d\cN^{\flat}_{R}\to \gpd$ to be the stackification of $\Del(\uline{E_r})$ in the strict Zariski topology of Definition \ref{strictcover}. Likewise, define the simplicial groupoid-valued functor 
$\uline{\Del}(\uline{E_r})^{\sharp}$ on $d\cN^{\flat}_{R}$ by stackifying levelwise, so $(\uline{\Del}(\uline{E_r})^{\sharp})_n = (\uline{\Del}(\uline{E_r})_n)^{\sharp}$.
\end{definition}

\begin{proposition}\label{repfinqm}
The functor $\bar{W}\uline{\Del(\uline{E_r})^{\sharp}} \to \bS$ is representable by an almost finitely presented derived geometric $1$-stack. Moreover, 
\[
 \bar{W}\uline{\Del(\uline{E_r})^{\sharp}} \simeq \bar{W} \uline{\Del}(\uline{E_r})^{\sharp} \simeq \uline{\ddel}(\uline{E_r}^{\sharp}),
\]
where the last is defined using the quasi-comonoid sheafification of Definition \ref{shfqm}.
\end{proposition}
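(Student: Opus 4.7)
The plan is to verify the hypotheses of Theorem \ref{mylurieprerep} for $B\Del(\uline{E_r})^{\sharp}$, following the template of Proposition \ref{repfin} but with the quasi-comonoid tools of Corollary \ref{delqmnice} and Lemmas \ref{qmcohodel}, \ref{cfshfqm}, \ref{cfshfqm2} in place of the DGLA arguments. The three-way equivalence will then be read off from the sheafification comparisons of \S \ref{shfqmsn}.

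The abstract conditions first. Each $E_r^n(A) = \Hom_A(\top_A^n A^r, A^r)$ is representable by a free affine space and so is formally smooth, and $E_r$ preserves finite limits (hence is homogeneous), so Corollary \ref{delqmnice} gives that $\ddel(\uline{E_r})$ is homogeneous, pre-homotopic, and formally quasi-presmooth. Strict Zariski stackification preserves these properties: homogeneity by Proposition \ref{fethgs} (the canonical map is formally \'etale and surjective, as Zariski covers lift uniquely through square-zero extensions), while pre-homotopicity and formal quasi-presmoothness follow because strict Zariski covers pull back tiny acyclic and square-zero extensions to such. By Lemma \ref{localfinqm}, $\pi^0\Del(\uline{E_r})^{\sharp}$ is the classical groupoid-valued functor assigning to $A$ the groupoid of $A$-algebras $B$ whose underlying $A$-module is locally free of rank $r$; this is a well-known finite-type Artin $1$-stack over $R$, supplying the hypersheaf, finite-presentation, and Grothendieck formal existence hypotheses of Theorem \ref{mylurieprerep}.

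The cohomology calculation is the heart of the argument. Given an $A$-algebra $B$ with underlying module $A^r$, represented by $\omega \in \mc(E_r(A))$, Lemma \ref{qmcohodel} yields
\[
\DD^i_\omega(\ddel(\uline{E_r}), M) \cong \H^{i+1}_\omega(E_r, M),
\]
and one recognises $\CC^\bullet_\omega(E_r, M)$ as the cotriple cohomology complex of the $\Symm_A$-algebra $B$ with coefficients in $M\ten_A B$; by Quillen's comparison \cite{Q}, this computes Andr\'e--Quillen cohomology
\[
\H^{i+1}_\omega(E_r, M) \cong \Ext^{i+1}_B(\bL^{B/A}, M \ten_A B).
\]
Since $A \to B$ is finite flat, $\bL^{B/A}$ is perfect, compatible with flat base change, and has finitely generated cohomology; combined with Lemma \ref{cfshfqm2} to propagate the identification across Zariski stackification, this supplies the remaining cohomological hypotheses of Theorem \ref{mylurieprerep}.

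For the equivalences: the first, $\bar{W}\uline{\Del(\uline{E_r})^{\sharp}} \simeq \bar{W}\uline{\Del}(\uline{E_r})^{\sharp}$, reduces to the observation that strict Zariski stackification commutes with $(-)^{\Delta^n}$, since the simplicial enrichment is defined Zariski-locally. For the second, Lemma \ref{cfshfqm} furnishes a map $\Del(\uline{E_r})^{\sharp} \to \Del(\uline{E_r}^{\sharp})$ inducing an equivalence on $\pi^0$, and Lemma \ref{cfshfqm2} (applicable thanks to flat base change for $\bL^{B/A}$) shows it induces isomorphisms on $\DD^i$; Remark \ref{predetectweakh} then delivers a weak equivalence after $\bar{W}\uline{(-)}$. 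The main obstacle I anticipate is the cotriple-to-cotangent-complex identification $\H^*_\omega(E_r, M) \cong \Ext^*_B(\bL^{B/A}, -)$, which requires a careful unpacking of the cotriple-cohomology meaning of $\CC^\bullet_\omega(E_r, M)$ in the quasi-comonoid formalism (essentially an instance of Lemma \ref{loopmorsub} applied to the forgetful adjunction between $\Symm_A$-algebras and $A$-modules); once that is in hand, every other condition is either formal from the machinery or a classical fact about finite flat algebras.
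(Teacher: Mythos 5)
Most of your proposal coincides with the paper's argument: representability is obtained exactly as you describe, by checking Theorem \ref{mylurieprerep} for $B\Del(\uline{E_r})^{\sharp}$ using Corollary \ref{delqmnice} for homogeneity and pre-homotopicity, identifying $\DD^i_{[B]}$ with $\Ext^{i+1}_B(\bL^{B/A},M\ten_AB)$ via Lemma \ref{qmcohodel} and the cotriple characterisation of Andr\'e--Quillen cohomology, and using the classical stack of rank $r$ algebras for the $\pi^0$, hypersheaf, finiteness and formal existence conditions. Your treatment of the second equivalence, via Lemmas \ref{cfshfqm} and \ref{cfshfqm2} followed by the criterion of Corollary \ref{predetectweak}/Remark \ref{predetectweakh}, is also the paper's argument.

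The genuine gap is your first equivalence. It does not reduce to ``strict Zariski stackification commutes with $(-)^{\Delta^n}$'': the two sides differ level by level even before any sheafification intervenes. In simplicial level $n$, $\bar{W}\uline{\Del(\uline{E_r})^{\sharp}}$ is built from the groupoid $\Del(\uline{E_r})^{\sharp}(A^{\Delta^n})$, whose objects lie in $\mc(\uline{E_r}(A^{\Delta^n}))$ and whose stackification is taken with respect to strict covers of $A^{\Delta^n}$, i.e.\ Zariski covers of $(A^{\Delta^n})_0$; whereas $\bar{W}\uline{\Del}(\uline{E_r})^{\sharp}$ is built from the levelwise stackification of the simplicial groupoid $\uline{\Del}(\uline{E_r})$, whose level-$n$ objects are $\mmc(\uline{E_r}(A))_n$, sheafified over covers of $A_0$. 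There is no natural isomorphism between $\mmc(E)_n$ and $\mc(E^{\Delta^n})$ --- compare the remark following Definition \ref{mcdefgp}, where $\mmc(G)\cong\HHom_{c\Gp}(G(\Delta),\bar{W}G)$ is explicitly distinguished from $\Tot\bar{W}G$ precisely because $\bar{W}$ is not simplicial right Quillen --- so no commutation statement identifies the two functors on the nose, and the first equivalence is an honest weak equivalence requiring comparison machinery. The paper's route, which your own ingredients already support, is: verify (by the same arguments as for $B\Del(\uline{E_r})^{\sharp}$) that $\bar{W}\uline{\Del}(\uline{E_r})^{\sharp}$ satisfies the conditions of Theorem \ref{mylurierep3}, so in particular is homogeneous and formally quasi-smooth, and then apply Corollary \ref{predetectweak} to the map $B\Del(\uline{E_r})^{\sharp}\to\bar{W}\uline{\Del}(\uline{E_r})^{\sharp}$ induced by the inclusion $\Del(\uline{E_r})^{\sharp}\to\uline{\Del}(\uline{E_r})^{\sharp}$ of simplicial diagrams of groupoids; its hypotheses (equivalence on $\pi^0$ and isomorphisms on $\DD^i$) are exactly the identifications you have already established.
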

\begin{proof}
 For the first statement, we just show that $B\Del(\uline{E_r})^{\sharp}$ satisfies  the conditions of Theorem \ref{mylurieprerep}. Corollary \ref{delqmnice} ensures that  $B\Del(\uline{E_r})^{\sharp}$ is homogeneous and pre-homotopic. It follows from  Lemma \ref{qmcohodel} and the cotriple characterisation of Andr\'e-Quillen cohomology (\cite{W} \S 8.8)
 that
\[
 \DD^i_C(B\Del(\uline{E_r})^{\sharp}, M) \cong \Ext^{i+1}(\bL^{C/A}_{\bt}, C\ten_AM),
\]
 so the finiteness conditions of Theorem \ref{mylurieprerep} all hold, making $\bar{W}\uline{\Del(\uline{E_r})^{\sharp}}$ representable.

Similar arguments show that $ \bar{W} \uline{\Del}(\uline{E_r})^{\sharp}$ satisfies the conditions of Theorem \ref{mylurierep3}, so is  representable by a derived geometric $1$-stack. For the first equivalence, we thus apply Corollary \ref{predetectweak} to the morphism
\[
 B\Del(\uline{E_r})^{\sharp} \to \bar{W} \uline{\Del}(\uline{E_r})^{\sharp}
\]
coming from the map $\Del(\uline{E_r})^{\sharp} \to   \uline{\Del}(\uline{E_r})^{\sharp}$ of simplicial diagrams of groupoids.

For the second equivalence, we just use Lemmas \ref{cfshfqm} and \ref{cfshfqm2} to show that the composite  map
\[
 B\Del(\uline{E_r})^{\sharp} \to B\Del(\uline{E_r}^{\sharp}) \to \bar{W} \uline{\Del}(\uline{E_r})^{\sharp}
\]
satisfies the conditions of Corollary \ref{predetectweak}.
\end{proof}

\begin{remark}
 Alternatively, we can describe the associated derived geometric $1$-stack explicitly. The functor  $A \mapsto \mc(E_r( A))$ is an affine dg scheme, $(E^0_r)^{\by}= \GL_r$,  and  $\bar{W}\uline{ \Del(\uline{E_r})^{\sharp} }$ is just the hypersheafification of the quotient $B[\mc(E_r)/ (E^0_r)^{\by}]$ in the homotopy-Zariski (and indeed homotopy-\'etale) topologies. In the terminology of \cite{stacks2}, the simplicial affine dg scheme $B[\mc(E_r)/ (E^0_r)^{\by} ]$ is a homotopy derived Artin $1$-hypergroupoid representing  $\bar{W}\uline{ \Del(\uline{E_r})^{\sharp}}$.
\end{remark}

\begin{proposition}\label{finconsistentqm}
For $A \in s\cN^{\flat}_{R}$, 
the space $\uline{\ddel}(\uline{E_r}^{\sharp})(A)$ is functorially weakly equivalent to the nerve $\bar{W}\fG(A)$ of the $\infty$-groupoid  $\fG(A)$ of  simplicial commutative $A$-algebras $B$ 
for which $B\ten_A^{\oL}\pi_0A$ is weakly equivalent to a  locally free module of rank $r$ over $\pi_0A$.
\end{proposition}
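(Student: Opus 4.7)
The plan is to mimic the argument of Proposition \ref{finconsistent}, using Proposition \ref{repfinqm} to reduce the claim to a comparison of representable derived stacks via their tangent cohomology and their classical truncations, as allowed by Remark \ref{predetectweakh} (or Corollary \ref{predetectweak}).

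First I would construct a natural transformation $\alpha_A\co \Del(\uline{E_r})^{\sharp}(A)\to \fG(A)$. Given a datum $(B,\omega,g)\in\Del(\uline{E_r})^{\sharp}(A)$, with $A_0\to B$ a Zariski cover, $\omega\in\mc(\uline{E_r}(A\ten_{A_0}B))$ and $g$ a descent datum, Lemma \ref{localfinqm} and formal sheafification produce a simplicial $A$-algebra structure on the locally free $A_0$-module $M$ classified by $g$, tensored up to $A$; this is the classical bar-construction analogue used in \S \ref{monads} (equivalently, via $\beta_F^*$ of Definition \ref{betaF} applied fibrewise). This defines the map $\alpha_A$, and functoriality in $A$ is clear.

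Next I would check the two hypotheses of Remark \ref{predetectweakh} for the induced map $\bar W\uline{\Del(\uline{E_r})^{\sharp}} \to \bar W\fG$. For the $\pi^0$ equivalence on $A\in\Alg_{\pi_0R}$: by Lemma \ref{localfinqm} together with Zariski stackification (Lemma \ref{cfshfqm}), $\pi^0\Del(\uline{E_r})^{\sharp}(A)$ is canonically equivalent to the groupoid of $A$-algebras $B$ whose underlying $A$-module is locally free of rank $r$, which is exactly $\pi^0\fG(A)$. For the tangent cohomology, apply Lemma \ref{qmcohodel} to get
\[
\DD^i_{[B]}(\bar W\uline{\Del(\uline{E_r})^{\sharp}},M)\cong \H^{i+1}_{\omega}(E_r^{\sharp},M),
\]
and identify the right-hand side, via the cotriple characterisation of Andr\'e--Quillen cohomology (\cite{W} \S 8.8) together with the sheafification comparison of Lemma \ref{cfshfqm2}, with $\Ext^{i+1}_B(\bL^{B/A}_{\bt},B\ten_AM)$. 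The same $\Ext$ group describes $\DD^i_{[B]}(\bar W\fG,M)$ by adapting \cite{dmsch} Corollary \ref{dmsch-representaffine} and Example \ref{dmsch-fineg} to simplicial (rather than dg) algebras. Hence $\alpha$ induces isomorphisms on all $\DD^i$.

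Finally, representability: $\bar W\uline{\Del(\uline{E_r})^{\sharp}}$ is representable by Proposition \ref{repfinqm}, and $\bar W\fG$ is representable by an almost finitely presented derived geometric $1$-stack by \cite{dmsch} Example \ref{dmsch-fineg}. Remark \ref{predetectweakh} then yields the required functorial weak equivalence. The main subtlety—and the step I would verify most carefully—is the cohomological identification above: establishing that the cosimplicial complex $\CC^{\bt}_{\omega}(E_r^{\sharp},M)$ computing $\H^{*}_{\omega}(E_r^{\sharp},M)$ is the standard cotriple complex for non-abelian $\top_A$-cohomology of $B$ with coefficients in $B\ten_AM$, and then invoking the sheafification statement Lemma \ref{cfshfqm2} (for which the required $\oP$-base-change isomorphism on $\H^*_{\omega}$ follows from flat base change for cotangent complexes).
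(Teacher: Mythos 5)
Your proposal follows the paper's own proof essentially step for step: the paper likewise constructs the comparison functor via $\beta_F^*$ of Definition \ref{betaF} applied to $\Del(E_r^{\sharp})(A)$, verifies that it lands in $\fG(A)$ and induces a $\pi^0$-equivalence using Corollary \ref{cfbeta} (which identifies $\beta_F^*(C)$ with the cotriple resolution $\bot_{\bt}C$ over discrete rings, so $\pi_0\beta_F^*(C)\cong C$ and $\pi_i\beta_F^*(C)=0$ for $i>0$), and then concludes via the identification $\DD^i_{[B]}\cong \Ext^{i+1}_B(\bL^{B/A}, N\ten_AB)$ on both sides together with Remark \ref{predetectweakh} and Proposition \ref{repfinqm}, exactly as you outline. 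The only point you leave implicit is this explicit appeal to Corollary \ref{cfbeta}, which is what simultaneously guarantees that the image of $\beta_F^*$ lies in $\fG(A)$ and that the map on $\pi^0$ is an equivalence of $\infty$-groupoids rather than merely a bijection on isomorphism classes.
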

\begin{proof}
For $F: s\Mod(A) \to s\Alg(A)$ the free commutative algebra functor on simplicial $A$-modules, 
the functor  $\beta_F^*$ of Definition \ref{betaF} maps from $\Del(E_r^{\sharp})(A)$ to $s\Alg(A)$, functorially in $A \in s\cN^{\flat}_{R}$, and all objects in its image are cofibrant.

When $A \in \Alg_R$, Corollary \ref{cfbeta} implies that $\beta_F^*(C)$ is   homotopy equivalent to the cotriple resolution $\bot_{\bt}C$ of $C$.
Thus $\pi_0(\beta_F^*(C) )\cong C$, and $ \pi_i(\beta_F^*(C) ) )=0$ for all $i>0$, so we indeed have a functor
\[
 \beta_F^*: \Del(E_r^{\sharp})(A) \to \fG(A)
\]
for all $A \in s\cN^{\flat}_{R}$ (using compatibility with base change). Moreover, this functor is an equivalence of $\infty$-groupoids when $A \in \Alg_R$.

Now, \cite{dmsch} Corollary \ref{dmsch-representaffine} and Example \ref{dmsch-fineg} give 
\[
 \DD^i_{[B]}(\bar{W}\fG,N) \cong \Ext^{i+1}_B(\bL^{B/A}, N\ten_AB),
\]
so $\beta_F^*$ satisfies the conditions of Remark \ref{predetectweakh}, giving an equivalence $\bar{W}\uline{\Del(\uline{E_r})^{\sharp}} \to \bar{W}\fG $. Combining with Proposition \ref{repfinqm} gives the required equivalence $\uline{\ddel}(\uline{E_r}^{\sharp}) \simeq \bar{W}\fG$.
\end{proof}

\subsection{Coherent sheaves}\label{cohmodsn}

Take a projective scheme $X$ over  $R$, and fix a numerical polynomial $h$. We now consider moduli of coherent sheaves on $X$ with Hilbert polynomial $h$. In other words, our underived moduli functor $\cM_h$ will set $\cM_h(A)$ to be the groupoid of coherent sheaves $\sF$ on $X \by \Spec A$ with $\Gamma( X \by \Spec A, \sF(n))$ locally free of rank $n$ for $n \gg 0$. 

\begin{definition}
     For $A \in \Alg_R$, we now define our base category $\cB(A)$ as follows. First form the category $\C(A)$ of 
graded $A$-modules $M = \bigoplus_{n \ge 0} M\{n\}$ in non-negative degrees, then let $\cB(A):= \pro(\C(A))$. Explicitly, objects of $\cB(A)$ are inverse systems $\{M^{\alpha}\}_{\alpha}$ in $\C(A)$, with
\[
 \Hom_{\cB(A)}( \{M^{\alpha}\}_{\alpha} , \{N^{\beta}\}_{\beta})= \Lim_{\beta}\LLim_{\alpha}\Hom_{\C(A)}( M^{\alpha},   N^{\beta}).    
\]
\end{definition}

If we let $S:= \bigoplus_{n\ge 0} \Gamma(X,\O_X(n))$, then there is a monad $\top$ on $\C(A)$ given by $\top M= S\ten_{R}M$, with multiplication $\mu:\top^2\to \top$ coming from the multiplication $S\ten_{R}A \to S$, and unit $\id \to \top$ coming from $R \to S$.

This extends naturally to a monad on $\cB(A)$, and we  
are now in the setting of \S \ref{monads}, since the category $\cD(A):=\cB(A)^{\top}$ of $\top$-algebras is just the pro-category of graded $S\ten_RA$-modules. We will be interested in  pro-modules of the form $M = \{\bigoplus_{n \ge p} M\{n\}\}_p$; for $M, M'$ any two such, we get
\begin{eqnarray*}
        \Hom_{\cD(A)} (M,M') &=& \Lim_q\LLim_p \Hom_{S\ten_RA}^{\bG_m}( M\{\ge p\}, M'\{\ge q\})\\
 &\cong& \Lim_q\LLim_{p\ge q} \Hom_{S\ten_RA}^{\bG_m}( M\{\ge p\}, M'\{\ge q\})\\
&\cong& \Lim_q\LLim_{p\ge q} \Hom_{S\ten_RA}^{\bG_m}( M\{\ge p\}, M')\\
&\cong& \LLim_{p} \Hom_{S\ten_RA}^{\bG_m}( M\{\ge p\}, M'),
\end{eqnarray*}
which ties in with \cite{Se2}.
      
\begin{definition}
Let $R^h \in \cB(R)$ be the inverse system $\{\bigoplus_{n \ge p} R^{h(n)}(n)\}_p$ of  graded modules, and  
 form the quasi-comonoid $ E_h(A):=E(R^h\ten_RA)$ given by 
\[
 E^n_h(A)= \Hom_{\cB(A)}(\top^n(R^h\ten_RA), R^h\ten_RA).     
\]
\end{definition}

Lemma \ref{monadmc} then implies that $\Del(E_h(A))$ is the subgroupoid of $\cD(A)$ consisting of $S$-module structures on $R^h\ten_RA$, and all isomorphisms between them. 

For our next step, note that $\Del(E_h) $ is not a stack, although the core of $\cD$ is. However, Lemma \ref{globalpol} adapts to show that the stackification $\Del(E_h)^{\sharp}$ in the pro-Zariski topology is equivalent to the subgroupoid of $\cD(A)$ consisting of pro-$(S\ten_RA)$-modules $M = \{ \bigoplus_{n \ge p} M\{n\} \}_p$,  with $M\{n\}$ locally free of rank $h(n)$ over $A$ for $n \gg 0$. 

Now, there is a  functor $\sF \mapsto \bigoplus_{n\ge 0} \Gamma(X, \sF(n))$ from coherent sheaves to $\cD(A)$, and the essential image just consists of finitely generated $(S\ten_RA)$-modules. 

\begin{definition}\label{fdefinitions}
Define the  functor $\mc_f(E_h):\Alg_R \to \Set$ by letting $\mc_f(E_h,A) \subset \mc(E_h,A)$ consist of $(S\ten_RA)$-modules isomorphic  to finitely generated modules. Next, define $\mmc_f(\uline{E}_h):d\cN^{\flat}_R \to \bS $ by 
\[
 \mmc_f(\uline{E}_h,A):= \mmc(\uline{E}_h,A)\by_{\mc(E_h,\H_0A)}\mc_f(E_h,\H_0A),
\]
with $\mc_f(\uline{E}_h,A)=  \mmc_f(\uline{E}_h,A)_0$.

Likewise, define $\Del_f(E_h):= [\mc_f(E_h)/(E^0)^{\by}]$,   $\Del_f(\uline{E}_h):= [\mc_f(\uline{E}_h)/ (E^0)^{\by}]$ and $\uline{\Del}_f (\uline{E}_h):= [\mmc_f(\uline{E}_h)/ (\uline{E}^0)^{\by}]$.
\end{definition}

\begin{definition}
 Let $E_h^{\sharp}$ be the quasi-comonoid sheafification (see Definition \ref{shfqm}) of $E_h$ in the pro-Zariski topology , and recall from Lemma \ref{cfshfqm} that $\Del(E_h^{\sharp}) \simeq \Del(E_h)^{\sharp}$. Define $\mc_f(E_h^{\sharp}) \subset \mc(E_h^{\sharp})$ to be the essential image of $\Del_f(E_h)^{\sharp} $  (which consists of modules isomorphic  to finitely generated modules).

Then define $\mmc_f(E_h^{\sharp}),  \mmc(\uline{E}_h^{\sharp}),\Del_f(E_h^{\sharp}), \Del_f(\uline{E}_h^{\sharp})$ and   $\uline{\Del}_f (\uline{E}_h^{\sharp})$ by adapting the formulae of  Definition \ref{fdefinitions}.
\end{definition}

 Lemma \ref{globalpol} adapts to show that the substack
 $\Del_f(E_h)^{\sharp}$ of  $\Del_f(E_h)^{\sharp}$  is equivalent  to the stack $\cM_h(A)$ defined at the beginning of the section. Note that $\mc_f(E_h) \to \mc(E_h)$ is formally \'etale, so  $ \uline{\Del}_f (\uline{E}_h)\to \uline{\Del} (\uline{E}_h) $ is also formally \'etale, as is $ \uline{\Del}_f (\uline{E}_h^{\sharp})\to \uline{\Del} (\uline{E}_h^{\sharp}) $.

\begin{proposition}\label{cohmod}
The functor $\bar{W}\uline{\Del_f(\uline{E_h})^{\sharp}} \to \bS$ is representable by an almost finitely presented derived geometric $1$-stack. Moreover, 
\[
 \bar{W}\uline{\Del_f(\uline{E_h})^{\sharp}} \simeq \bar{W} \uline{\Del}_f(\uline{E_h})^{\sharp} \simeq \uline{\ddel}_f(\uline{E_h}^{\sharp}).
\]
\end{proposition}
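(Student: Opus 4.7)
The plan is to mimic the argument of Proposition \ref{repfinqm}, with the functor $\cM_h$ playing the r\^ole that $\cM_p$ did in \S\ref{polsn}, and with Proposition \ref{gradedcot} replaced by an appropriate Ext-theoretic identification of $\DD^*$. First I would apply Corollary \ref{delqmnice} to $E_h:\Alg_R\to QM^*$ to see that $B\Del(\uline{E_h})$ is homogeneous, pre-homotopic and formally quasi-presmooth; these properties are preserved by sheafification in the strict pro-Zariski topology, and they descend to the open substack $\Del_f(\uline{E_h})^{\sharp}$ because the condition ``finitely generated'' is stable under square-zero extensions (Lemma \ref{fgfet} adapts verbatim, and in fact the argument of Lemma \ref{fglfp} based on \cite{HaimanSturmfels} shows that $\cM_h$ is locally of finite presentation). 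The truncatedness and \'etale hypersheaf conditions of Theorem \ref{mylurieprerep} follow because $\pi^0\Del_f(\uline{E_h})^{\sharp}\simeq \cM_h$ is the stack of coherent sheaves with Hilbert polynomial $h$.

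The next step, which is the heart of the argument, is to identify the cohomology groups. By Lemma \ref{qmcohodel},
\[
\DD^i_{[\sF]}(B\Del_f(\uline{E_h})^{\sharp},M)\cong \H^{i+1}_{\omega}(E_h^{\sharp}, M)
\]
(using Lemmas \ref{cfshfqm} and \ref{cfshfqm2} to pass to the sheafified side), where $\omega$ is the $S\ten_RA$-module structure on the pro-module $\bigoplus_{n\ge p}\Gamma(X,\sF(n))$ for $p \gg 0$. By the cotriple characterisation of Andr\'e--Quillen style cohomology for modules (so Ext, in the present linear setting),
\[
\H^{i+1}_{\omega}(E_h, M)\cong \LLim_{p}\Ext^{i+1}_{S\ten_RA}\bigl(F_{\bt}^{\top}(\sF\{\ge p\}),\, (\sF\ten_AM)\{\ge p\}\bigr)^{\bG_m},
\]
where $F_{\bt}^{\top}$ is the cotriple resolution. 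Running Serre's Theorem (as in Proposition \ref{gradedcot}, which still works over any Noetherian base and can be reduced to that case by Lemma \ref{fglfp}) transforms this colimit into
\[
\EExt^{i+1}_{X_A}(\sF,\sF\ten_AM),
\]
where $X_A:= X\by \Spec A$. Pro-Zariski descent (Lemma \ref{cfshfqm2}) is satisfied because $\Ext$ commutes with flat base change for coherent sheaves. This identification gives the finite generation, filtered-colimit and base-change conditions of Theorem \ref{mylurieprerep}, while Grothendieck's formal existence theorem (\cite{EGA3.1} 5.4.5) gives effectivity. Applying Theorem \ref{mylurieprerep} thus yields representability of $\bar{W}\uline{\Del_f(\uline{E_h})^{\sharp}}$.

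For the two equivalences, I would apply Corollary \ref{predetectweak} to the natural maps
\[
B\Del_f(\uline{E_h})^{\sharp} \lra \bar{W}\uline{\Del}_f(\uline{E_h})^{\sharp} \quad\text{and}\quad B\Del_f(\uline{E_h})^{\sharp}\lra B\Del_f(\uline{E_h}^{\sharp})\lra \uline{\ddel}_f(\uline{E_h}^{\sharp}).
\]
Both targets are homogeneous and formally quasi-smooth by Corollary \ref{delqmnice} (combined with the sheafification lemmas), and both induce equivalences on $\pi^0$ by Lemma \ref{cfshfqm} and the fact that simplicial stackification does not alter $\pi^0$. The cohomology isomorphisms follow from Lemma \ref{qmcohodel} together with Lemma \ref{cfshfqm2}, which was exactly the point of isolating the Serre-type cohomology comparison above. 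Hence both comparison maps satisfy the hypotheses of Corollary \ref{predetectweak}, producing the asserted weak equivalences.

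The main obstacle I expect is not the abstract framework, which is parallel to Proposition \ref{repfinqm}, but rather the cohomological comparison that turns the cotriple $\Ext$ of pro-graded $S\ten_RA$-modules into the geometric $\EExt$ on $X_A$: one has to run the colimit/Serre argument of Proposition \ref{gradedcot} in the non-free (coherent, not locally free) setting and check compatibility with the pro-Zariski sheafification and with filtered colimits of coefficient modules. Once this Ext comparison is in hand, all the remaining conditions of Theorem \ref{mylurieprerep} and Corollary \ref{predetectweak} follow formally.
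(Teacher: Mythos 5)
Your proposal is correct and follows essentially the same route as the paper, which simply observes that the proof of Proposition \ref{repfinqm} carries over using the formally \'etale inclusion $\mc_f(E_h)\to\mc(E_h)$, with the cohomology computed as $\Ext^{i+1}_{S\ten_RA}(L,L\ten_AM)$ and local finite presentation obtained by adapting Lemma \ref{fglfp}. The only (harmless) presentational difference is that you run the Serre-type comparison to $\EExt^{i+1}_{X_A}(\sF,\sF\ten_AM)$ inside the representability argument, whereas the paper works directly with the algebraic $\Ext$ in $\cD(A)$ here and defers the geometric identification to Proposition \ref{cohconsistent}.
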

\begin{proof}
By exploiting the fact that $\mc_f(E_h) \to \mc_f(E_h)$ is formally \'etale, the proof of  Proposition \ref{repfinqm} carries over. The only substantial differences lie in the calculation of cohomology:
\[
 \DD^i_L(B\Del(\uline{E_h})^{\sharp}, M) \cong \Ext^{i+1}_{S\ten_RA}(L, L\ten_AM),
\]
and in establishing local finite presentation, which comes from adapting Lemma \ref{fglfp}.
\end{proof}
 
\begin{proposition}\label{cohconsistent}
 For $A \in s\cN^{\flat}_{R}$, 
the space $\uline{\ddel}_f(\uline{E_h}^{\sharp})(A)$  is functorially weakly equivalent to the nerve of the $\infty$-groupoid $\fM_h$ of derived quasi-coherent sheaves $\sF$ on $X \by \Spec A$ for which 
\begin{enumerate}
 \item  $\sF\ten_A^{\oL}\pi_0A$ is weakly equivalent to a quasi-coherent sheaf $\bar{\sF}$ on $X \by \Spec \pi_0A$, and
\item for all $n \gg 0$,  $\Gamma(X \by \Spec \pi_0A, \bar{\sF}(n))$ is a locally free $A$-module of rank $h(n)$.
\end{enumerate}
\end{proposition}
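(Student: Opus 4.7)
The plan mirrors the proofs of Propositions \ref{finconsistentqm} and \ref{polconsistent}. First I would construct a functorial map $\alpha_A \co \Del_f(\uline{E_h}^\sharp)(A) \to \fM_h(A)$ via the bar construction $\beta_F^*$ of Definition \ref{betaF}, with $F \co \cB(A) \to \cD(A)$ the free $\top$-algebra functor $M \mapsto S\ten_R M$. For $(\omega, g)$ represented pro-Zariski locally by $\omega \in \mc(\uline{E_h}(B))$, this produces a cofibrant simplicial pro-$(S\ten_R B)$-module whose associated graded sheaf on $X\by\Spec B$ descends, via the cocycle $g$, to a derived quasi-coherent sheaf on $X\by\Spec A$. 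Combined with Proposition \ref{cohmod}, we would then apply Remark \ref{predetectweakh} to the map $\bar{W}\uline{\Del_f(\uline{E_h})^\sharp} \to \bar{W}\fM_h$.

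For the $\pi^0$ condition (discrete $A \in \Alg_R$), Corollary \ref{cfbeta} identifies $\beta_F^*(R^h\ten_R B, \omega)$ with the cotriple resolution $\bot_{\bt} D_\omega$ of the $\top$-algebra $D_\omega$ determined by $\omega$. Hence the underlying pro-module is homotopically concentrated in degree zero and finitely generated (by restriction to $\mc_f$); the adaptation of Lemma \ref{globalpol} to $\cD(A)$ then gives $\pi^0\Del_f(\uline{E_h})^\sharp(A) \simeq \cM_h(A) \simeq \pi^0\fM_h(A)$.

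For the cohomological condition, Lemmas \ref{qmcohodel} and \ref{cfshfqm2}, together with the cotriple characterisation of $\Ext$, yield
\[
\DD^i_{(\omega, g)}(B\Del_f(\uline{E_h})^\sharp, M) \cong \Ext^{i+1}_{\cD(A)}(L, L\ten_A M),
\]
where $L$ is the pro-$(S\ten_R A)$-module corresponding to $(\omega, g)$. A Serre-style argument, following Proposition \ref{gradedcot}, then identifies this with $\EExt^{i+1}_{X\by\Spec A}(\sF, \sF\ten_A M)$ for $\sF$ the associated coherent sheaf. The tangent computation of \cite{dmsch} for derived moduli of quasi-coherent hypersheaves yields the same groups for $\fM_h$, so Remark \ref{predetectweakh} delivers the weak equivalence.

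The main obstacle will be the comparison between $\Ext$ in the pro-category $\cD(A)$ and derived sheaf-$\Ext$ on $X\by\Spec A$, since the pro-structure plays an essential role in matching Hilbert-polynomial conditions. I expect this to proceed exactly as in Proposition \ref{gradedcot}: reducing via local finite presentation (the analogue of Lemma \ref{fglfp} for coherent sheaves, proved using \cite{GroHilb} or its refinement in \cite{HaimanSturmfels}) to Noetherian finitely generated $A$ with $\sF$ coherent; applying Serre's theorem to each finitely generated piece; and then taking the colimit over degree thresholds $p$, using cofinality of the diagonal pairs $(p,p)$ in the indexing poset. Compatibility with the pro-Zariski sheafification is then automatic from Lemma \ref{cfshfqm2}, since $\EExt$ commutes with the relevant flat base changes.
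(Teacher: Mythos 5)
Your proposal is correct and takes essentially the same route as the paper's proof: the bar construction $\beta_F^*$ for the free functor $M \mapsto S\ten_RM$, the identification with the cotriple resolution via Corollary \ref{cfbeta} to settle the $\pi^0$ comparison over discrete rings, the computation $\DD^i_L \cong \Ext^{i+1}_{\cD(A)}(L, L\ten_AM)$ followed by the Serre-theorem comparison with sheaf $\Ext$ on $X\by \Spec A$ exactly as in Proposition \ref{gradedcot}, and the conclusion via Remark \ref{predetectweakh} with the tangent computation for hypersheaves from \cite{dmsch} (there Theorem \ref{dmsch-representdmod}). You even correctly identified the pro-category-versus-sheaf $\Ext$ comparison as the one genuinely technical point and resolved it the same way the paper does.
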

\begin{proof}
 Taking $F: s\cB(A) \to s\cD(A)$ to be the functor $M \mapsto S\ten_RM$ from simplicial graded $A$-modules to simplicial graded  $S\ten_RA$-modules, Definition \ref{betaF} gives us a functor 
\[
 \beta_F^*: \Del(E_h^{\sharp})(A)\to s\cD(A),
\]
preserving cofibrant objects. In particular, if $A \in \Alg_R$, and $L\in \cD(A)$ has $L\{n\}$ locally free for all $n$,  then $\beta_F^*(L)$ is cofibrant, and   Corollary \ref{cfbeta} implies that it is  homotopy equivalent to the cotriple resolution $S^{\ten_R \bt+1}\ten_RL $ of $L$ (as in \cite{W} \S 8.7.1). 

This ensures that for any $A \in s\cN^{\flat}_{R}$, $\beta_F^*$ maps objects of  $\Del_f(E_h^{\sharp})(A)$ to modules associated to objects of $\fM_h(A)$. Explicitly, let $\tilde{X}:= (\Spec S)-\{0\}$ be the canonical $\bG_m$-bundle over $X = \Proj S$, with $\pi: \tilde{X} \to X$ the projection and $j: \tilde{X} \to \Spec S$ the open immersion. Then our functor $\Del_f(E_h^{\sharp})(A) \to \fM_h(A) $ is 
\[
 (M, \omega) \mapsto  \beta_F^*(M, \omega)^{\sharp}= (\pi_*j^{-1}  \beta_F^*(M, \omega))^{\bG_m}.
\]

Now, for any $A \in \Alg_R$, we have
\[
 \DD^i_L(B\Del(E_h^{\sharp}),M )\cong \Ext^{i+1}_{\cD(A)}(L, L\ten_AM);
\]
adapting Serre's Theorem 
(\cite{Se2} \S 59) as in the proof of Proposition \ref{gradedcot}, this is isomorphic to $\Ext^{i+1}_{X\by \Spec A}(L^{\sharp}, L^{\sharp}\ten_AM)$. 

The remainder of the proof follows as for that of Proposition \ref{finconsistentqm}, replacing \cite{dmsch}  Example \ref{dmsch-fineg} with \cite{dmsch} Theorem \ref{dmsch-representdmod}.
\end{proof}

\begin{remark}[Associated DGLAs]
Since the functor $\top= S\ten_R$ is linear, our quasi-comonoid $E_h(A)$ is linear in the sense of
 \S \ref{linearqm}, so its normalisation $NE_h$ has the natural structure of a DGLA.   We could thus use Proposition \ref{mcqmdgla} to rewrite all these results in terms of the groupoid $[\mc(NE_h)/(E^0_h)^{\by}]$,   thereby making them consistent with  the approach of \cite{Quot}.
\end{remark}

\begin{remark}[Quot schemes]\label{quot}
If we wished to work with quotients of a fixed coherent sheaf $\sM$ on $X$, there are two equivalent approaches we could take. One is to choose $M \in \cD(R)$ with $M^{\sharp}=\sM$, and then to   replace $\cD(A)$ with the comma category $(M\ten_RA) \da \cD(A)$. The monad $\top$ would then be given by $\top(L) = (M\ten_RA) \oplus (S\ten_AL)$, with unit $L \to S\ten_AL$, and multiplication $\top^2(L) \to \top(L)$ being the map
\begin{eqnarray*}
 (M\ten_RA) \oplus (S\ten_RM\ten_RA) \oplus (S\ten_RS\ten_RL) &\to& (M\ten_RA) \oplus (S\ten_AL)\\
(m_1, s_1\ten m_2, s_2\ten s_3\ten l) &\mapsto& (m_1+s_1m_2, (s_2s_3)\ten l). 
\end{eqnarray*}
We would also have to replace $\mc_f(E_h)$ with the subset of $\mc(E_h)$ consisting of finitely generated $S\ten_RA$-modules under $M\ten_RA$ for which  the map $M\ten_RA \to L$ is surjective. 

The alternative approach would be to work with the construction of Example \ref{morphismsqm}, first forming the quasi-comonoid
\[
E:= E(A^{h'}, A^h)\by_{E(A^{h'})}\{M\},
\]
where $h'$ is the Hilbert polynomial of $M$, then taking the  subset of $\mc(E)$ cut out by the finiteness and surjectivity conditions above.

Beware, however, that in both of these approaches, the resulting quasi-comonoid is no longer linear, so cannot naturally be replaced with a DGLA.
\end{remark}

\subsection{Polarised projective schemes}

For a fixed  numerical polynomial $h \in \Q[t]$, we will now study the moduli of polarised projective schemes $(X,\O_X(1))$, with $\O_X(1)$ ample, for which $\Gamma(X, \O_X(n))$ is locally free of rank $h(n)$ for $n \gg 0$. 

We take $\cB(A)$ to be the pro-category of graded $A$-modules from \S \ref{cohmodsn}, with monad
\[
 \top= \Symm_A: \cB(A) \to \cB(A).       
\]
Setting $\cD(A):= \cB(A)^{\top}$ gives us the pro-category of $\bG_m$-equivariant commutative  $A$-algebras in non-negative degrees.

\begin{definition}
Let $R^h \in \cB(R)$ be the inverse system $\{\bigoplus_{n \ge p} R^{h(n)}(n)\}_p$ of  graded modules, and  
 form the quasi-comonoid $ E_h(A):=E(R^h\ten_RA)$ given by 
\[
 E^n_h(A)= \Hom_{\cB(A)}(\top^n(R^h\ten_RA), R^h\ten_RA).     
\]
\end{definition}

Lemma \ref{monadmc} then implies that $\Del(E_h(A))$ is the subgroupoid of $\cD(A)$ consisting of commutative ring structures on $R^h\ten_RA$, and all isomorphisms between them. By Lemma \ref{globalpol},  the stackification $\Del(E_h)^{\sharp}$ in the pro-Zariski topology is equivalent to
the subgroupoid of $\cD(A)$ consisting of commutative pro-$A$-algebras $B = \{ \bigoplus_{n \ge p} B\{n\} \}_p$,  with $B\{n\}$ locally free of rank $h(n)$ over $A$ for $n \gg 0$. 

We now proceed as in Definition \ref{fdefinitions}, letting $\mc_f(E_h) \subset \mc(E_h)$ consist of finitely generated $A$-algebras, and so on for $ \mmc_f(\uline{E}_h)$ etc. Note that $\mc_f(E_h) \to \mc(E_h)$ is formally \'etale. 

\begin{proposition}\label{gradedpolqm}
For $A \in \Alg_{\Q}$, $\Del_f(E_h(A)) $ is equivalent to the groupoid of flat 
polarised schemes $(X,\O_X(1))$ of finite type over $A$, with $\O_X(1)$ ample and the $A$-modules $\Gamma(X, \O_X(n))$ locally free of rank $h(n)$ for all $n\gg 0$. 
\end{proposition}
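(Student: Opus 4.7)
The plan is to import the proof of Proposition \ref{gradedpol} essentially verbatim, since we have already done all the nontrivial work in setting up the quasi-comonoid $E_h$. First, I would invoke the discussion preceding the proposition (which adapts Lemma \ref{globalpol}) to identify $\Del_f(E_h(A))$ with the groupoid of finitely generated $\bG_m$-equivariant pro-$A$-algebras $B = \{\bigoplus_{n \ge p} B\{n\}\}_p$ in which each $B\{n\}$ is locally free of rank $h(n)$ for $n \ge p$. This step reduces the proposition to the purely classical statement that $\Proj$ and $\bigoplus_n \Gamma(-, \O(n))$ are quasi-inverse equivalences between this groupoid and that of polarised schemes.

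Next I would construct the functor in one direction: given such a $B$ (representative for some $p$), form $(X, \O_X(1)) := \Proj(A \oplus B)$, where $A \oplus B$ denotes the unital graded $A$-algebra obtained by adjoining $A$ in degree $0$. Ampleness of $\O_X(1)$ is automatic from the construction, and the flatness of $X$ over $A$ follows because $\Gamma(X, \O_X(n)) = B\{n\}$ is locally free of rank $h(n)$ for all $n \ge p$. Replacing $B$ by its truncation $B\{\ge q\}$ for $q > p$ does not affect $\Proj(A \oplus -)$, so this descends to a well-defined functor on the direct limit over $p$.

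For the quasi-inverse, given a polarised scheme $(X, \O_X(1))$ satisfying the hypotheses, choose $p$ so that $\Gamma(X, \O_X(n))$ is locally free of rank $h(n)$ for all $n \ge p$, and set
\[
B := \bigoplus_{n \ge p} \Gamma(X, \O_X(n)).
\]
By Serre's theorem on ample line bundles (\cite{Se2} \S 59), $B$ is a finitely generated graded $A$-algebra (after possibly increasing $p$), providing an object of $\Del_f(E_h(A))$. The same theorem gives the canonical isomorphism $(X, \O_X(1)) \cong \Proj(A \oplus B)$ and, conversely, $B\{n\} \cong \Gamma(\Proj(A \oplus B), \O(n))$ for $n \ge p$, which establishes that the two functors are quasi-inverse on objects; functoriality on morphisms is immediate from the $\bG_m$-equivariance.

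The only potential obstacle is ensuring that the identifications are natural over $A$, but this is guaranteed by flat base change on $\Proj$ and the fact that both constructions commute with base change once $p$ is fixed. In short, because the quasi-comonoid $E_h$ was built precisely so that $\Del(E_h(A))$ classifies $(S \otimes_R A)$-algebra structures on $R^h \otimes_R A$, once we have translated the $\mc_f$-condition into finite generation of $A \oplus B$, the proof of Proposition \ref{gradedpol} applies without change.
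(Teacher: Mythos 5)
Your proposal is correct and follows exactly the paper's route: the paper's proof is literally the one-line citation ``this is essentially just Proposition \ref{gradedpol}'', whose argument (the quasi-inverse functors $\Proj(A\oplus -)$ and $B=\bigoplus_{n\ge p}\Gamma(X,\O_X(n))$, with independence of the truncation $B\{\ge q\}$) you have reproduced, prefaced by the same identification of the stackified $\Del_f(E_h)$ with finitely generated $\bG_m$-equivariant pro-$A$-algebras via the adapted Lemma \ref{globalpol}. No gaps; the extra detail you supply (flatness from local freeness of the graded pieces, finite generation via Serre) is consistent with what the paper establishes around Propositions \ref{gradedpol} and \ref{fglfp}.
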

\begin{proof}
 This is essentially just Proposition \ref{gradedpol}.       
\end{proof}

\begin{proposition}\label{reppolqm}
The functor $\bar{W}\uline{\Del_f(\uline{E_h})^{\sharp}} \to \bS$ is representable by an almost finitely presented derived geometric $1$-stack. Moreover, 
\[
 \bar{W}\uline{\Del_f(\uline{E_h})^{\sharp}} \simeq \bar{W} \uline{\Del}_f(\uline{E_h})^{\sharp} \simeq \uline{\ddel}_f(\uline{E_h}^{\sharp}).
\]
\end{proposition}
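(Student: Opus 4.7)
The plan is to mimic the proof of Proposition \ref{cohmod}, using Theorem \ref{mylurieprerep} applied to $B\Del_f(\uline{E_h})^{\sharp}$, and then deducing the two weak equivalences from Corollary \ref{predetectweak}. First, I would observe that $\uline{E_h}$ is a homogeneous, levelwise formally smooth functor (each $E_h^n$ is a product of $\Hom$-spaces between free modules, hence smooth), so Corollary \ref{delqmnice} gives that $\Del(\uline{E_h})$ is homogeneous, pre-homotopic and formally quasi-presmooth. Since sheafification in the strict pro-Zariski topology preserves these properties (homogeneity follows from flatness of the covers, pre-homotopicity and formal quasi-presmoothness from the fact that square-zero extensions have the same underlying \'etale/Zariski topology as their quotients), the sheafified version $\Del(\uline{E_h})^{\sharp}$ inherits these properties. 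The full subfunctor $\Del_f(\uline{E_h})^{\sharp}$ inherits them because $\mc_f \to \mc$ is formally \'etale (just as in Lemma \ref{fgfet}).

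For the cohomological input to Theorem \ref{mylurieprerep}, I would combine Lemma \ref{qmcohodel} with Lemma \ref{cfshfqm2} (verifying the flat base change hypothesis via Serre's theorem) to identify
\[
\DD^i_{[B]}(B\Del_f(\uline{E_h})^{\sharp}, M) \cong \Ext^{i+1}_{\cD(A)}(\bL^{(A\oplus B\{\ge p\})/A}, B\{\ge p\}\ten_A M)^{\bG_m}
\]
for $B \in \mc_f(E_h^{\sharp})(A)$. Adapting the Serre theorem argument of Proposition \ref{gradedcot}, this is in turn isomorphic to $\EExt^{i+1}_X(\bL^{X/B\bG_m\ten A}, \O_X \ten_A M)$, where $X = \Proj(A \oplus B)$. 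Finite generation and base change properties of this group (for $A$ a finitely generated integral domain) then give all the remaining finiteness hypotheses of Theorem \ref{mylurieprerep}. The sheaf condition for the underived functor follows because $\pi^0\Del_f(\uline{E_h})^{\sharp}$ coincides with $\cM$ of Definition \ref{cGpoldef} by Proposition \ref{gradedpolqm} together with Lemma \ref{globalpol}; local finite presentation and Grothendieck's formal existence theorem then follow verbatim from Lemma \ref{fglfp} and the proof of Proposition \ref{reppol}.

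Having established that $\bar{W}\uline{\Del_f(\uline{E_h})^{\sharp}}$ is an almost finitely presented derived geometric $1$-stack, the same hypotheses (now applied to the formally quasi-smooth homogeneous functor $\bar{W}\uline{\Del}_f(\uline{E_h})^{\sharp}$, using Theorem \ref{mylurierep3} instead) show that the second expression is also representable. The canonical map $B\Del_f(\uline{E_h})^{\sharp} \to \bar{W}\uline{\Del}_f(\uline{E_h})^{\sharp}$ coming from the inclusion of the $0$-simplices induces an equivalence on $\pi^0$ (as both give $\cM$) and an isomorphism on all $\DD^i$ (since both compute the same $\EExt$-groups); Corollary \ref{predetectweak} then gives the first equivalence. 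For the second equivalence, Lemma \ref{cfshfqm} provides a map $B\Del_f(\uline{E_h})^{\sharp} \to B\Del_f(\uline{E_h}^{\sharp})$, and composing with the inclusion into $\bar{W}\uline{\Del}_f(\uline{E_h}^{\sharp}) = \uline{\ddel}_f(\uline{E_h}^{\sharp})$ gives a map whose $\pi^0$ is an equivalence (Lemma \ref{cfshfqm}) and whose effect on $\DD^i$ is an isomorphism (Lemma \ref{cfshfqm2}, whose hypothesis of flat base change for $\H^*_\omega$ is exactly the Serre theorem input above).

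The main obstacle will be the finiteness step: verifying that $\DD^i_{[B]}$ is finitely generated over $A$ for $A$ a finitely generated integral domain, and that it preserves filtered colimits in the coefficient module. Both reduce via the Serre theorem identification to standard coherence results for $\EExt^*$ on projective schemes, but the pro-object bookkeeping (taking the colimit over $p$ in $\cD(A)$ and commuting it with cohomology of $X$) requires care, essentially repeating the spectral sequence argument of Proposition \ref{gradedcot} in the non-Lie setting. The other potentially delicate point is checking that the strict pro-Zariski sheafification does not interfere with representability; this is where the weakly universal covers of Lemma \ref{univZarcover} are used implicitly, via Lemma \ref{globalpol}, to identify $\pi^0\Del_f(\uline{E_h})^{\sharp}$ with the underived moduli stack $\cM$.
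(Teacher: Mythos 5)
Your proposal is correct and takes essentially the same route as the paper's proof, which simply records that the arguments of Propositions \ref{repfinqm} and \ref{cohmod} carry over with the finiteness inputs taken from Proposition \ref{reppol}, and that Proposition \ref{gradedcot} adapts to give $\DD^i_{[C]}(\uline{\ddel}_f(\uline{E_h}^{\sharp}),M)\cong \EExt^{i+1}_{X}(\bL^{X/B\bG_m\ten A}, \O_{X}\ten_AM)$ for $X=\Proj(A\oplus C)$. Your detailed verification of the hypotheses of Theorems \ref{mylurieprerep} and \ref{mylurierep3}, together with Corollary \ref{predetectweak} and Lemmas \ref{cfshfqm} and \ref{cfshfqm2} for the two equivalences, is exactly what those cross-references encode.
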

\begin{proof}
 The proofs of Propositions  \ref{repfinqm} and \ref{cohmod} carry over, substituting the relevant finiteness properties from Proposition \ref{reppol}. In particular, Proposition \ref{gradedcot} adapts to show that
\[
  \DD^i_{[C]}(\uline{\ddel}_f(\uline{E_h}^{\sharp}),M)\cong \EExt^{i+1}_{X}(\bL^{X/B\bG_m\ten A}, \O_{X}\ten_AM),      
\]
 where      $X= \Proj(A\oplus C)$. 
\end{proof}

\begin{proposition}\label{polconsistentqm}
For $A \in s\cN^{\flat}_{\Q}$, the space $ \uline{\ddel}_f(\uline{E_h}^{\sharp}) $ is functorially weakly equivalent to the nerve $\bar{W}\fM(A)$ of the $\infty$-groupoid  $\fM(A)$ of derived geometric $0$-stacks $\fX$ over $B\bG_m\by \Spec A$ for which $X:=\fX\ten^{\oL}_A\H_0A$ is weakly equivalent to a flat projective scheme over $\H_0A$, with the polarisation $X \to B\bG_m\ten \H_0A$ ample with Hilbert polynomial $h$.
\end{proposition}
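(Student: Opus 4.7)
The plan is to adapt the proof of Proposition \ref{polconsistent} to the quasi-comonoid setting, substituting the bar construction $\beta^*_F$ of Definition \ref{betaF} (for $F = \Symm_A: \cB(A) \to \cD(A)$ the free graded-commutative pro-algebra functor on graded pro-modules) for the DG Lie coalgebra bar construction $\beta^*$ used there, and invoking Corollary \ref{cfbeta} to compare it with the cotriple resolution.

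First, I would construct a functorial natural transformation
\[
 \alpha_A: \Del_f(\uline{E_h}^{\sharp})(A) \to \fM(A).
\]
Given $(L,\omega) \in \Del_f(E_h^{\sharp})(A)$, defined locally so that $L \in \cB(A')$ is a locally free pro-module of the prescribed shape over a pro-Zariski cover $A_0 \to A'$, set $\tilde{\fX}:= \Spec(A\oplus\beta^*_F(L,\omega))-\{0\}$, noting (as in the proof of Proposition \ref{polconsistent}) that $\tilde{\fX}$ carries a canonical $\bG_m$-action, and form the simplicial derived scheme $\tilde{\fX}\by^{\bG_m}E\bG_m$ over $B\bG_m\by\Spec A$; its associated derived geometric stack is our $\fX$. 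Descent along $A_0 \to A'$ via the cocycle component of the $\Del^{\sharp}$-datum yields an object over $\Spec A$. The $\bG_m$-equivariance comes from the grading, just as in \S \ref{polsn}.

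Next, one checks that $\fX\ten^{\oL}_A \H_0A$ has the expected form. For $A \in \Alg_R$ and $L$ locally free with $(L,\omega)$ corresponding to a finitely generated $\bG_m$-equivariant $A$-algebra $B$, Corollary \ref{cfbeta} identifies $\beta^*_F(L,\omega)$ with the cotriple resolution $\bot_\bullet B$ of $B$, so $\beta^*_F(L,\omega) \simeq B$; hence  $\fX \simeq \Proj(A\oplus B)$ is a polarised projective scheme of Hilbert polynomial $h$ by Proposition \ref{gradedpolqm}. For general $A \in s\cN^{\flat}_\Q$, base change along $A \to \H_0A$ and compatibility of $\beta^*_F$ with base change reduces to the discrete case, giving a well-defined map into $\fM(A)$.

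I would then apply Remark \ref{predetectweakh} to the composite
\[
 B\Del_f(\uline{E_h}^{\sharp}) \xra{\alpha} \bar{W}\fM,
\]
noting that $\bar{W}\fM$ is representable by a derived geometric $1$-stack by \cite{dmsch} Example \ref{dmsch-modpolsch}, and $B\Del_f(\uline{E_h}^{\sharp})$ is pre-homotopic, homogeneous and formally quasi-presmooth by Corollary \ref{delqmnice} together with formal \'etaleness of $\mc_f \to \mc$. Essential surjectivity and equivalence on $\pi^0$ for discrete $A$ follow from Proposition \ref{gradedpolqm}, exactly as in the proof of Proposition \ref{polconsistent}. For the tangent-cohomology comparison, Proposition \ref{reppolqm} (via the adaptation of Proposition \ref{gradedcot}) gives
\[
 \DD^i_{[L]}(B\Del_f(\uline{E_h}^{\sharp}), M) \cong \EExt^{i+1}_X(\bL^{X/B\bG_m\ten A}, \O_X\ten_A M),
\]
while \cite{dmsch} Corollary \ref{dmsch-representdaffine} combined with \cite{dmsch} Example \ref{dmsch-modpolsch} identifies $\DD^i_{[\fX]}(\bar{W}\fM, M)$ with the same group, the identification being induced by the explicit construction of $\fX$ via $\beta^*_F$. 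Finally, Proposition \ref{reppolqm} gives the canonical equivalence $\bar{W}\uline{\Del_f(\uline{E_h})^{\sharp}} \simeq \uline{\ddel}_f(\uline{E_h}^{\sharp})$, completing the identification.

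The main obstacle will be step two: verifying that the $\beta^*_F$-based construction of $\fX$ yields the correct cotangent (and indeed quasi-coherent sheaf) theory globally, so that the cohomological match with \cite{dmsch} Example \ref{dmsch-modpolsch} holds at non-closed points. This requires a careful pro-Zariski descent argument, using Lemma \ref{cfshfqm2} to reduce to the case where $L$ is free and then invoking Corollary \ref{cfbeta} together with the Serre-type comparison $\EExt^i_X \cong \LLim_p \EExt^i_{\cD(A)}$ already exploited in the proof of Proposition \ref{gradedcot}.
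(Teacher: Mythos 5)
Your proposal takes essentially the same route as the paper: the published proof simply states that the result is ``essentially the same as Proposition \ref{polconsistent}, replacing $\beta^*$ with the functor $\beta_F^*$ of Definition \ref{betaF} on simplicial objects, constructed similarly to Propositions \ref{finconsistentqm} and \ref{cohconsistent}'', and your expansion --- the $\beta_F^*$-based construction of $\tilde{\fX}\by^{\bG_m}E\bG_m$, Corollary \ref{cfbeta} to identify $\beta_F^*$ with the cotriple resolution over discrete rings, Proposition \ref{gradedpolqm} for the $\pi^0$ comparison, the adapted Proposition \ref{gradedcot} for the $\DD^i$ isomorphisms, and Remark \ref{predetectweakh} to conclude --- is exactly the argument the paper intends. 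No gaps.
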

\begin{proof}
 This is essentially the same as Proposition \ref{polconsistent}, replacing $\beta^*$ with the  functor   $\beta_F^*: \Del(E_h(A))\to s\cD(A)$ on simplicial objects from     Definition \ref{betaF} (constructed similarly to those of Propositions \ref{finconsistentqm}  and \ref{cohconsistent}).
\end{proof}

\begin{remark}
 Note that the constructions of \S \ref{diagrams} immediately allow us to adapt $E_h$ to work with moduli of diagrams of polarised projective schemes, and in particular with morphisms  of such schemes. For moduli over a fixed base $\Proj S$, an alternative approach is to replace $\top$ with the monad $M \mapsto S\ten_R\Symm_AM$. Either of these approaches can be used to construct derived Hilbert schemes (by taking  $\mc_f(E)$ to be the subset of $\mc(E)$ consisting of finitely generated $A$-algebras $B$ for which $S\ten_RA \to B$ is surjective).  Propositions \ref{polconsistentqm}  and \ref{polconsistent} ensure that these approaches give equivalent derived stacks, as does \cite{Hilb}.
\end{remark}

\subsection{Finite group schemes}\label{fingpqmsn}

In order to study  moduli of finite group schemes, we follow the approach of \cite{dmsch} Example \ref{dmsch-modgpsch}, by noting that the nerve functor gives a full and faithful inclusion of the category of group schemes into the category of pointed simplicial schemes.

Given a finite group scheme $G$ over $\Spec A$, with $O(G):= \Gamma(G, \O_G)$ locally free of rank $r$, we thus look at the simplicial group scheme  $BG$ (for an explicit description, note that this is the same as $\bar{W}G$ from Definition \ref{barwdef2}). If we write $O(BG)^n:= \Gamma(BG_n, \O_{BG_n})$, then $O(BG)$ is a commutative cosimplicial augmented $A$-algebra, with $O(BG)^n$ locally free of rank $r^n$. 

\begin{lemma}\label{gpfet}
 The functor $G \mapsto BG$ from group schemes to pointed simplicial schemes is  formally \'etale.      
\end{lemma}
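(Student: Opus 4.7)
The plan is to verify directly that for every square-zero extension $A \to B$ (with kernel $I$ satisfying $I^2=0$), the natural map sending a finite group scheme $G/A$ with $O(G)$ locally free of rank $r$ to the pair $(G\ten_AB,\, BG)$ induces an equivalence between the groupoid $\cF(A)$ of such $G/A$ and the fibre product $\cF(B)\by_{\cG(B)}\cG(A)$, where $\cG$ denotes the groupoid of pointed simplicial schemes (with the appropriate finiteness condition). A first observation is that $G \mapsto BG$ is already fully faithful over any fixed base, because $G$ is recovered from $BG$ as $(BG)_1$ with multiplication $(BG)_1\by(BG)_1 = (BG)_2 \xra{d_1} (BG)_1$; hence the induced morphism on groupoids is automatically fully faithful, and only essential surjectivity on the fibre product requires work.

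Given a pointed simplicial $A$-scheme $H$ together with an isomorphism $H\ten_A B \cong BG'$ for some $G'/B$ finite locally free of rank $r$, I would construct $G/A$ with $BG \cong H$ compatibly as follows. First, $H$ is levelwise affine with $O(H_n)$ flat over $A$, since $H_n\ten_AB$ is affine with structure sheaf locally free of rank $r^n$, and both affineness and flatness lift through a square-zero thickening. The pointing $\Spec A \to H_0$, together with $H_0\ten_AB = \Spec B$, forces $H_0 = \Spec A$; and for each $n \ge 2$ the Segal map $\phi_n : H_1^{\by_A n} \to H_n$ is a morphism of flat affine $A$-schemes whose reduction modulo $I$ is an isomorphism. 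On structure sheaves, a morphism of flat $A$-modules which becomes an isomorphism after tensoring with $B$ is itself an isomorphism, so $\phi_n$ is an isomorphism. Setting $G := H_1$ then gives a monoid scheme structure (multiplication $d_1: H_2 = G \by G \to G = H_1$, unit $s_0:\Spec A = H_0 \to H_1=G$) together with a canonical identification $H \cong BG$.

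To see that $G$ is actually a group, apply the same flatness argument to the map $(d_1,d_2): H_2 \to H_1 \by H_1$, which under the Segal identification is the shearing $(g,h) \mapsto (gh,h)$; being an isomorphism on reduction modulo $I$ (since $G'$ is a group) forces it to be an isomorphism over $A$, yielding an inversion morphism on $G$. Uniqueness of $G$ is immediate, as $G = H_1$ with its induced simplicial structure maps, and $O(G)$ is locally free of rank $r$ since it is flat with locally free reduction modulo $I$. The main obstacle, such as it is, reduces to bookkeeping: keeping track of flatness so as to legitimise the repeated use of the Nakayama-type principle that a morphism of flat $A$-modules is an isomorphism whenever its reduction modulo a nilpotent ideal is so.
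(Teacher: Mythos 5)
Your proof is essentially the paper's argument in mildly different clothing. The paper characterises the essential image of $B$ by isomorphism conditions --- $X_0=\Spec A$ together with all horn maps $X_n \to \Hom_{\bS}(\L^{n,k},X)$, $n>1$, being isomorphisms --- and then invokes exactly the rigidity principle you use, that a deformation of an isomorphism is an isomorphism. You instead use the Segal maps plus the shear map on $H_2$; since the Segal condition alone only produces a monoid, you need the extra shear step to manufacture inversion, whereas the horn conditions (in particular the outer horns) encode the group structure in one stroke. Either characterisation works, and your reduction of the problem to ``certain canonical maps between flat modules become isomorphisms mod $I$, hence are isomorphisms'' is exactly the content of the paper's one-line proof. (A cosmetic slip: the Segal map runs $H_n \to H_1\by_{H_0}\cdots\by_{H_0}H_1$, not the other way, but this is immaterial for the isomorphy claim.)

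There is, however, one step you should repair: the assertion that ``flatness lifts through a square-zero thickening'' is false as a general principle. For $A=k[\eps]/(\eps^2)$, $B=k$ and $M=A/(\eps)$, the module $M\ten_AB=k$ is flat (indeed free) over $B$, while $M$ is not flat over $A$; what the local criterion of flatness gives is that a \emph{flat} deformation of a flat module is flat, not that every scheme reducing to a flat one is flat. In the setting where Lemma \ref{gpfet} is actually applied this costs nothing, because levelwise flatness is a hypothesis of the ambient moduli functor rather than something to be deduced: the pointed simplicial schemes in question have fixed underlying almost cosimplicial module $U_{\pd}O(X)\cong \bar{V}(A^r)$ (Definition \ref{barv+def}), so each $O(H_n)\cong A^{r^n}$ is free and $H_0=\Spec A$ on the nose. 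You should cite this hypothesis in place of the lifting principle --- note that your own rigidity step already depends on it, since the injectivity half of ``iso mod $I$ implies iso'' uses $\Tor_1^A(O(H_n),B)=0$; without built-in flatness it is not just your first step but the whole mechanism that would collapse.
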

\begin{proof}
A simplicial scheme $X_{\bt}$ over $A$ is of the form $BG$ if and only if
\begin{enumerate}
        \item $X_0= \Spec A$, and
\item for all $n>1$ and all $0\le k \le n$, the maps $X_n \to \Hom_{\bS}(\L^{n,k}, X)$ are isomorphisms, where $\L^{n,k} \subset \Delta^n$ is the $k$th horn, obtained by removing the $k$th face from $\pd \Delta^n$.  
\end{enumerate}
Since any deformation of an isomorphism is an isomorphism, the result follows.
\end{proof}

We could now combine \S \ref{diagrams} with \S \ref{finqmsn} to obtain a quasi-comonoid functor governing moduli of such diagrams, but there is a far more efficient choice. If $A$ is a local ring, then  not only are the modules $O(BG)^n$ independent of $G$: we can also describe all operations except $\pd^0$. 

The following is adapted from \cite{htpy} Definition \ref{htpy-vdef}:
\begin{definition}\label{barvdef}
Define $\bar{V}\co  s\gp \to\bS$ by setting 
$$
\bar{V}G_n:=G_{n-1}\by G_{n-2}\by \ldots \by G_0
$$
with operations
\begin{eqnarray*}
\pd_0(g_{n-1},\ldots,g_0)&=& ((\pd_0g_{n-1})^{-1}g_{n-2}, \ldots, (\pd_0^{\phantom{0}n-1}g_{n-1})^{-1}g_0)\\
\pd_i(g_{n-1},\ldots,g_0)&=&(\pd_{i-1}g_{n-1},\ldots, \pd_1g_{n-i+1}, g_{n-i-1},g_{n-i-2},\ldots, g_0)\\
\sigma_i(g_{n-1},\ldots,g_0)&=&(\sigma_{i-1}g_{n-1},\ldots, \sigma_0g_{n-i}, g_{n-i},g_{n-i-1},\ldots, x_0).
\end{eqnarray*}
\end{definition}

\begin{lemma}
 There is a natural isomorphism $\bar{\phi}: \bar{V} \to \bar{W}$.        
\end{lemma}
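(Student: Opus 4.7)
The plan is to exhibit $\bar{\phi}$ explicitly as a levelwise change of coordinates, exploiting the fact that $\bar{V}G_n$ and $\bar{W}G_n$ share the same underlying set $G_{n-1}\times G_{n-2}\times\cdots\times G_0$. The defining formula is
\[
 \bar{\phi}_n(g_{n-1},g_{n-2},\ldots,g_0) := (g_{n-1},\,(\pd_0 g_{n-1})^{-1}g_{n-2},\,(\pd_0 g_{n-2})^{-1}g_{n-3},\,\ldots,\,(\pd_0 g_1)^{-1}g_0).
\]
This is a bijection with inverse given recursively by $g_{n-1}=h_{n-1}$ and $g_j=(\pd_0 g_{j+1})\,h_j$ for $j<n-1$, and it is manifestly natural in morphisms of simplicial groups since the formula uses only $\pd_0$ and the group operation of $G$.

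What remains is to verify that $\bar{\phi}$ intertwines the face and degeneracy operations. For $\pd_0$, the identity $\bar{\phi}_{n-1}\circ\pd_0^V=\pd_0^W\circ\bar{\phi}_n$ reduces to the observation that the iterated twists $(\pd_0^k g_{n-1})^{-1}$ appearing in $\pd_0^V$ telescope against the $(\pd_0 x_{k+1})^{-1}$ prefactors in $\bar{\phi}_{n-1}$, leaving precisely the truncated sequence $(h_{n-2},\ldots,h_0)$. For $\pd_i$ with $0<i<n$, the verification at the non-boundary positions uses the simplicial identity $\pd_a\pd_0=\pd_0\pd_{a+1}$ together with face maps being group homomorphisms, yielding $\pd_{i-j}h_{n-j}=(\pd_0\pd_{i-j+1}g_{n-j+1})^{-1}\pd_{i-j}g_{n-j}$ exactly as demanded by the $\bar{W}$-formula. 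At the middle position the $\bar W$-rule produces $(\pd_0 h_{n-i})h_{n-i-1}=(\pd_0^2 g_{n-i+1})^{-1}(\pd_0 g_{n-i})(\pd_0 g_{n-i})^{-1}g_{n-i-1}=(\pd_0^2 g_{n-i+1})^{-1}g_{n-i-1}$, which matches $\bar{\phi}_{n-1}(\pd_i^V)$ at that position via $\pd_0\pd_1=\pd_0\pd_0$. The case $\pd_n$ is immediate since both sides apply face maps componentwise. For the degeneracies, $\sigma_0$ amounts to inserting $1$ at the top on both sides, while for $\sigma_i$ with $i\ge 1$ the duplication $g_{n-i}\mapsto(g_{n-i},g_{n-i})$ in $\bar V$ is converted by $\bar{\phi}$ into the insertion of $(\pd_0 g_{n-i})^{-1}g_{n-i}=1$ between consecutive coordinates, matching the identity-insertion in $\sigma_i^W$.

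The main obstacle is the bookkeeping at the boundary position $n-i-1$ in $\pd_i$ with $0<i<n$, where the $\bar V$-formula has the bare coordinate $g_{n-i-1}$ while the $\bar W$-formula has the product $(\pd_0 h_{n-i})h_{n-i-1}$; showing these agree under $\bar{\phi}$ requires the precise cancellation above together with a single application of $\pd_0\pd_1=\pd_0\pd_0$. Once this case is settled, naturality and compatibility with all remaining operations follow by the routine computations indicated.
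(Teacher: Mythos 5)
Your proof is correct and takes essentially the same route as the paper's: the paper defines $\bar{\phi}$ by exactly your formula (citing \cite{htpy} for the verification you spell out), and your telescoping argument for $\pd_0$, the use of $\pd_a\pd_0=\pd_0\pd_{a+1}$ at interior positions, and the identity $\pd_0\pd_1=\pd_0\pd_0$ at the boundary position of $\pd_i$ are precisely the needed cancellations. One cosmetic slip in the degeneracy case: the duplicated pair in $\bar{V}$ is $(\sigma_0 g_{n-i},\, g_{n-i})$, so the inserted twist is $(\pd_0\sigma_0 g_{n-i})^{-1}g_{n-i}=1$ via $\pd_0\sigma_0=\id$, rather than $(\pd_0 g_{n-i})^{-1}g_{n-i}$ as you wrote, which is not even well-typed since $\pd_0 g_{n-i}\in G_{n-i-1}$.
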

\begin{proof}
As in \cite{htpy} Proposition \ref{htpy-psidef},  
the map $\bar{\phi}_G:  \bar{V}G \to \bar{W}G$ is given by 
\[
 \bar{\phi}(g_{n-1},\ldots,g_0)= (g_{n-1}, (\pd_0g_{n-1})^{-1}g_{n-2}\ldots, (\pd_0g_1)^{-1}g_0).       
\]
\end{proof}

We can therefore replace $B$ with the functor $\bar{V}$, and consider the simplicial scheme $\bar{V}G$, which has the property that $\pd_0$ is the only simplicial operation to depend on the group structure of $G$. We now proceed along the same lines as \cite{higher} \S \ref{higher-calgsn}.

\begin{definition}\label{delta*}
Define $\Delta_*$ to be the subcategory of the ordinal number category $\Delta$ containing only  those morphisms fixing $0$.  Given a category $\C$, define the category  $c_+\C$ of  almost cosimplicial diagrams in $\C$ to consist of  functors  $\Delta_* \to \C$. Thus an almost cosimplicial object $X^*$ consists of objects $X^n \in \C$, with all of the operations $\pd^i, \sigma^i$ of a cosimplicial complex except $\pd^0$, satisfying the usual relations. 
\end{definition}

\begin{definition}
 Define the functor  $F_{\pd}: c_+\Mod(A) \to c\Mod(A)$ from     almost cosimplicial $A$-modules to $A$-modules by
\[
    (F_{\pd} M^*)^n = M^n\oplus M^{n-1} \oplus \ldots \oplus M^0,    
\]
with operations
   \begin{eqnarray*}
\pd^i(v_n,\ldots,v_0)&=& (\pd^iv_n,\pd^{i-1}v_{n-1}, \ldots, \pd^1v_{n-i+1}, 0, v_{n-i}, \ldots, v_1,v_0)\\
\sigma^i(v_n,\ldots,v_0)&=& (\sigma^iv_n, \ldots, \sigma^{1}v_{n-i+1}, \sigma^0v_{n-i} + v_{n-i-1}, v_{n-i-2},  \ldots,v_0).
\end{eqnarray*}
\end{definition}

By the argument of \cite{higher} Lemma \ref{higher-Gdef}, $F_{\pd}$ is left adjoint to the  functor $U_{\pd}: c\Mod(A) \to c_+\Mod(A)$ given by forgetting $\pd^0$. Moreover, this adjunction is monadic, so for the monad $\top_{\pd}:= F_{\pd}U_{\pd}$, there is a natural equivalence
\[
 c\Mod(A) \simeq  c_+\Mod(A)^{\top_{\pd}}.      
\]

In fact, we can go further than this. By \cite{higher} \S \ref{higher-calgsn}, the monad $\Symm$ distributes over $\top_{\pd}$, so the composite monad $ \Symm \circ \top_{\pd}$ is another monad. Moreover,
\[
     c\Alg(A) \simeq  c_+\Mod(A)^{ (\Symm \circ \top_{\pd})}.   
\]

We wish to modify this slightly, since we are only interested in augmented cosimplicial $A$-algebras, or equivalently non-unital cosimplicial $A$-algebras (taking augmentation ideals).
We thus replace $\Symm$ with $\Symm^+:= \bigoplus_{n>0} S^n$, and set $\top := \Symm^+ \circ \top_{\pd}$. Then $ c_+\Mod(A)^{\top}$ is equivalent to the category  $c\NAlg(A)$  of non-unital commutative cosimplicial $A$-algebras.

\begin{definition}\label{barv+def}
Define a functor $E_r:\Alg_R \to QM^*$ by first forming the almost cosimplicial module $\bar{V}(A^r) \in  c_+\Mod(A)$ as
\[
 \bar{V}(A^r)^n:= A^{r^n}= \overbrace{(A^r)\ten_A(A^r) \ten_A\ldots \ten_A(A^r)}^n,      
\]
with operations dual to those of Definition \ref{barvdef} (without $\pd^0$). We then let $\bar{V}_+(A^r):= \ker(\bar{V}(A^r) \to A)$, where the $A$ is the constant diagram $\bar{V}(A^0)$, so 
\[
 \bar{V}_+(A^r)^n = \ker( (\pd^1)^n: \bar{V}(A^r)^n \to \bar{V}(A^r)^0).
\]
 
We now set $E_r(A)$ to be the quasi-comonoid $ E_r(A):=E( \bar{V}_+(A^r))$ of \S \ref{monads}, given by
\[
  E^n_r(A)= \Hom_{c_+\Mod(A) }(\top^n_A \bar{V}_+(A^r), \bar{V}_+(A^r)).
\]
 \end{definition}

\begin{definition}
Define  $\Del(\uline{E_r})^{\sharp}: d\cN^{\flat}_{R}\to \gpd$ to be the stackification of $\Del(\uline{E_r})$ in the strict Zariski topology of Definition \ref{strictcover}. Likewise, define the simplicial groupoid-valued functor 
$\uline{\Del}(\uline{E_r})^{\sharp}$ on $d\cN^{\flat}_{R}$ by stackifying levelwise, so $(\uline{\Del}(\uline{E_r})^{\sharp})_n = (\uline{\Del}(\uline{E_r})_n)^{\sharp}$.
\end{definition}

\begin{definition}
    By Lemma \ref{monadmc}, $\mc(E_r(A))$ is equivalent to the groupoid of pointed simplicial affine schemes $X$ over $A$ for which $U_{\pd}O(X) \cong \bar{V}(A^r) \in  c_+\Mod(A)$. We then define
\[
 \mc_g(E_r(A)) \subset \mc(E_r(A))       
\]
to consist of simplicial schemes of the form $BG$, for $G$ a group scheme. By Lemma \ref{gpfet}, this inclusion of functors is formally \'etale.
 
We define $\Del_g(E_r)$, $\mmc_g(\uline{E}_r)$, $\mc_g(E_r^{\sharp})$ and so on similarly. 
\end{definition}

\begin{proposition}\label{repfingpqm}
The functor $\bar{W}\uline{\Del_g(\uline{E_r})^{\sharp}} \to \bS$ is representable by an almost finitely presented derived geometric $1$-stack. 
Moreover, 
\[
 \bar{W}\uline{\Del_g(\uline{E_r})^{\sharp}} \simeq \bar{W} \uline{\Del}_g(\uline{E_r})^{\sharp} \simeq \uline{\ddel}_g(\uline{E_r}^{\sharp}),
\]
where the last is defined using the quasi-comonoid sheafification of Definition \ref{shfqm}.
\end{proposition}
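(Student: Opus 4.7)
The plan is to follow the template established by Propositions~\ref{repfinqm} and~\ref{cohmod}, replacing the role of flatness/finite generation conditions by the group scheme condition and exploiting the formal \'etaleness established in Lemma~\ref{gpfet}. First I would verify that $B\Del_g(\uline{E_r})^{\sharp}$ satisfies the conditions of Theorem~\ref{mylurieprerep}. Each $E_r^n$ is an affine space over $R$, so $E_r$ is homogeneous and levelwise formally smooth, meaning Corollary~\ref{delqmnice} applies to give that $B\Del(\uline{E_r})$ is homogeneous, pre-homotopic and formally quasi-presmooth; the same properties pass to the stackification in the strict Zariski topology, since stackification commutes with the relevant limits. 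Since $\mc_g(E_r) \to \mc(E_r)$ is formally \'etale by Lemma~\ref{gpfet}, Proposition~\ref{fethgs} transfers homogeneity to $B\Del_g(\uline{E_r})^{\sharp}$, and formal quasi-presmoothness and pre-homotopicity are inherited directly.

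Next I would compute the tangent and obstruction theory. By Lemma~\ref{qmcohodel} and the formal \'etaleness of $\mc_g \to \mc$, for an $A$-algebra $A \in \Alg_R$, a finite group scheme $G$ corresponding to a point $\omega \in \mc_g(E_r(A))$, and an $A$-module $M$, we have
\[
\DD^i_{\omega}(B\Del_g(\uline{E_r})^{\sharp},M) \cong \H^{i+1}_{\omega}(E_r,M).
\]
Using the cotriple description of Andr\'e--Quillen cohomology together with the monadic equivalence $c\NAlg(A) \simeq c_+\Mod(A)^{\top}$ described before Definition~\ref{barv+def}, the right-hand side can be identified with $\Ext^{i+1}_{c\NAlg(A)}(\bL^{O(BG)/A}_{\bt},O(BG)\ten_AM)$ in the category of augmented cosimplicial $A$-algebras. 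A spectral sequence argument (or comparison via the nerve construction) then rewrites this in terms of $\ad \Bu_G := e^*\Omega_{G/A}^\vee$ with its adjoint action, giving $\H^{i+1}$ of the group cohomology of $G$ with coefficients in $\ad \Bu_G \ten_A M$. This identification yields the finite generation, base-change, and filtered colimit preservation properties needed for Theorem~\ref{mylurieprerep}, essentially because $G$ is finite flat of fixed rank $r$. Effectivity of formal deformations follows because $G$ is affine and finitely presented, so its structure sheaf is determined by the inverse system $\{A/\fm^n\}$.

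Local finite presentation of $\pi^0\Del_g(\uline{E_r})^{\sharp}$ reduces to local finite presentation of the groupoid of finite flat group schemes of rank $r$, which is classical (they are cut out by finitely many equations on the rank-$r$ affine $A$-module). Hypersheaf condition on $\pi^0$ in the \'etale topology is automatic from the stackification in the strict pro-Zariski topology together with Lemma~\ref{fpsheafpro}. Having verified all conditions, Theorem~\ref{mylurieprerep} gives that $\bar{W}\uline{\Del_g(\uline{E_r})^{\sharp}}$ is representable by an almost finitely presented derived geometric $1$-stack.

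Finally, for the chain of equivalences, the same verifications show that $\bar{W}\uline{\Del}_g(\uline{E_r})^{\sharp}$ satisfies Theorem~\ref{mylurierep3} (rather than its pre-representable variant). The canonical map $B\Del_g(\uline{E_r})^{\sharp} \to \bar{W}\uline{\Del}_g(\uline{E_r})^{\sharp}$ induces identities on $\pi^0$ and on all $\DD^i$ by Lemma~\ref{qmcohodel}, so Corollary~\ref{predetectweak} gives the first equivalence. For the second, Lemma~\ref{cfshfqm} supplies a morphism $B\Del_g(\uline{E_r})^{\sharp} \to B\Del_g(\uline{E_r}^{\sharp})$ inducing an equivalence on $\pi^0$, and Lemma~\ref{cfshfqm2} gives the isomorphisms on $\DD^i$ once we observe that $\H^*_\omega(E_r,-)$ commutes with pro-Zariski base change (which again follows from the group-cohomology description). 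Combining with Theorem~\ref{mylurierep3} applied to $\uline{\ddel}_g(\uline{E_r}^{\sharp})$, Corollary~\ref{predetectweak} then yields the remaining equivalence.

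The main obstacle will be the cohomological computation: pinning down $\H^*_{\omega}(E_r,M)$ as group cohomology with coefficients in $\ad \Bu_G$, since the definition of $E_r$ mixes the monad $\Symm^+\circ\top_{\pd}$ with the augmented cosimplicial structure, and one must carefully unwind the base-point condition and the restriction to $\mc_g$ to match the standard deformation theory of finite group schemes. The verification that this cohomology is compatible with pro-Zariski base change (needed for Lemma~\ref{cfshfqm2}) is the technically most delicate point, but follows once the cohomology has been identified with something manifestly geometric.
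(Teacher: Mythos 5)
Your overall architecture matches the paper's: the paper likewise says the proofs of Propositions \ref{repfinqm} and \ref{reppolqm} carry over, with the only new work being the local finite presentation check and the computation of the cohomology groups, and your use of Lemma \ref{gpfet} to make $\mc_g \to \mc$ formally \'etale is exactly how the paper transfers the representability conditions. The problem is the step you yourself flag as the main obstacle: your identification of $\H^{*}_{\omega}(E_r,M)$ with group cohomology of $G$ with coefficients in $\ad\Bu_G := e^*\Omega_{G/A}^{\vee}$ is wrong for finite flat group schemes, which over a general G-ring $R$ (e.g.\ $\Z$, or a field of characteristic $p$) are typically non-smooth. For such $G$ the deformation theory is governed by the full complex $\chi^{G/A} = \oL e^*\bL^{G/A}$, which is perfect in chain degrees $0,1$ and has genuine $\H_1$ (e.g.\ for $\alpha_p$ or $\mu_p$ in characteristic $p$, where moreover $e^*\Omega_{G/A}$ need not be locally free, so the naive dual $\ad\Bu_G$ is not even meaningful). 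The paper's actual answer, established only afterwards in Lemma \ref{fingpcoho}, is $\DD^i \cong \EExt^{i+2}_{BG}(\pmb{\chi}^{G/A}, \O_{BG}\ten_AM)$ for $i \ge 1$, together with a low-degree exact sequence correcting the edge terms; this is not adjoint group cohomology, and deriving the finiteness, base-change and filtered-colimit conditions of Theorem \ref{mylurieprerep} from your formula therefore does not work.

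The paper avoids this by staying monadic inside the proof of the proposition: for the comonad $\bot = \Symm^+\circ F_{\pd}U_{\pd}$ on non-unital cosimplicial algebras it forms the cotriple resolution $\bot_{\bt}S$, shows $\bL^{\bot}_n(S)$ is projective in $c\Mod(S)$ when $S$ is levelwise projective over $A$, and gets $\DD^i_{[S]} \cong \EExt^{i+1}_S(\bL^{\bot}_{\bt}(S), S\ten_AM)$ directly from Lemma \ref{qmcohodel}; the required finiteness and base-change properties (including what Lemma \ref{cfshfqm2} needs) are read off from this explicit complex of finitely generated modules, with no geometric reinterpretation required at this stage. To repair your argument you should replace the group-cohomology identification with this cotriple computation (your intermediate expression $\Ext^{i+1}$ of $\bL^{O(BG)/A}$ in the cosimplicial category is essentially correct; it is the further ``spectral sequence'' rewriting that fails). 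Two smaller slips: this section stackifies in the strict \emph{Zariski} topology, not pro-Zariski, so Lemma \ref{fpsheafpro} is not what upgrades $\pi^0$ to an \'etale hypersheaf --- that instead follows from fppf descent for the groupoid of flat rank $r$ group schemes; and base change for $\H^*_{\omega}(E_r,-)$ should likewise be argued from the projective resolution rather than from the (incorrect) geometric description.
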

\begin{proof}
 The proofs of Propositions \ref{repfinqm} and \ref{reppolqm}  carry over. The only differences lie in a straightforward check that $\Del_g(E_r)^{\sharp}$ is locally of finite presentation, and in the calculation of cohomology groups.

For the comonad  $\bot:= \Symm^+ \circ F_{\pd}U_{\pd}$ on 
$c\NAlg(A)$,
we get a canonical simplicial resolution  $\bot_{\bt}S$, given by $\bot_nS:= \bot^{n+1}S$. 
For $A \in \Alg_R$, the proof of \cite{higher} Lemma \ref{higher-lalg} then shows that $A \oplus (\bot_{\bt}S)^m$ is a cofibrant resolution of 
 $A \oplus S^m$ for all $m$, whenever $S^m$ is projective as an $A$-module. If we set $\bL^{\bot}_{\bt}(S):= \Omega((A\oplus  \bot_{\bt}S)/A)$, this means that
  the simplicial complex $\bL^{\bot}_{\bt}(S)^m$ is a model for the cotangent complex of $A \oplus S^m$. 

If $S$ is levelwise projective as an $A$-module, then  by \cite{higher} Lemma \ref{higher-lproj},
$\bL^{\bot}_n(S)$ is a projective object of $c\Mod(S)$. Thus, for $S \in E_r^{\sharp}(A)$, Lemma \ref{qmcohodel} gives
 that
\[
 \DD^i_{[S]}(\uline{\ddel}_f(\uline{E_r}^{\sharp}),M)\cong \EExt^{i+1}_S( \bL^{\bot}_{\bt}(S), S\ten_AM),
\]
which satisfies the  finiteness conditions of Theorem \ref{mylurierep3}.
\end{proof}

\begin{definition}
 Given a flat group scheme over a ring $A$, follow \cite{illusiethesis} \S 2.5.1 in defining the $A$-complex $\chi^{G/A}$ by
\[
 \chi^{G/A}:= \oL e^* \bL^{G/A},
\]
where $e: \Spec A \to G$ is the unit of the group structure. This has a canonical $G$-action, and we write $\pmb{\chi}^{G/A}$ for the associated complex on $BG$.

As in \cite{illusiethesis} \S 4.1, $\chi^{G/A}$ is perfect and concentrated in chain degrees $0,1$. 
We 
set $\omega^{G/A}:= \H_0(\chi^{G/A})$, 
with $\pmb{\omega}^{G/A} $ 
the associated sheaf on $BG$.
\end{definition}

\begin{definition}
 For a cosimplicial ring $S$, we make $c\Mod(S)$  into a simplicial category by setting (for $K \in \bS$)
$$
(M^K)^n: = (M^n)^{K_n}, 
$$
as an $S^n$-module. This has a left adjoint, which we denote by  $M \mapsto M\ten K$. Given a cofibration $K \into L$ in $\bS$, we write $M \ten (L/K):= (M\ten L)/(M\ten K)$.

Given $M \in c\Mod(S)$, define  $\underline{M}$  to be the bicosimplicial complex given in horizontal level $i$ by  $\underline{M}^{i\bt}= M\ten \Delta^i$. Let $N_c\uline{M}$ we the cochain complex in $c\Mod(S)$ given by taking the horizontal cosimplicial normalisation of Definition \ref{normcdef}.
\end{definition}

\begin{lemma}\label{fingpcoho}
 Given an affine group scheme $G$ over $A$, with $\Gamma(G, \O_G)$ locally free of rank $r$ over $A$, let $S \in \Del(E_r)^{\sharp}(A)$ be the associated cosimplicial ring
\[
 S^n:= \ker(\Gamma((BG)_n, \O_{(BG)_n)}) \to A).
\]
Then for $M \in \Mod_A$,
\[
 \DD^i_{[S]}(\uline{\ddel}_g(\uline{E_r}^{\sharp}),M)\cong \EExt^i_{BG}(\bL^{BG/A}, \ker(\O_{BG}\ten_AM \to M)).
\]

In particular,
\[
 \DD^i_{[S]}(\uline{\ddel}_g(\uline{E_r}^{\sharp}),M)\cong\EExt^{i+2}_{BG}(\pmb{\chi}^{G/A} , \O_{BG}\ten_AM)
\]
for $i \ge 1$. For low degrees, there is  an exact sequence 
\begin{eqnarray*}
0 \to &\Hom_{BG}(\pmb{\omega}^{G/A} , \O_{BG}\ten_AM)& \to \Hom_A(\omega^{G/A} ,M)\to \\
\DD^{-1}_{[S]}( \uline{\ddel}_g(\uline{E_r}^{\sharp}, M)\to   &\Ext^{1}_{BG}(\pmb{\chi}^{G/A} , \O_{BG}\ten_AM)&\to  \Ext^1_A(\chi^{G/A},M) \to \\
\DD^{0}_{[S]}( \uline{\ddel}_g(\uline{E_r}^{\sharp}, M)\to   &\Ext^{2}_{BG}(\pmb{\chi}^{G/A} , \O_{BG}\ten_AM)&\to  0.
\end{eqnarray*}
\end{lemma}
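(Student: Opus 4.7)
The argument proceeds in three steps, following the same pattern used to prove Proposition \ref{repfingpqm} for the $\EExt$-description of $\DD^i$ but pushing further to rewrite the answer via $\pmb{\chi}^{G/A}$.

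First, since Lemma \ref{gpfet} shows that $\mc_g \hookrightarrow \mc$ is formally \'etale, the groups $\DD^i_{[S]}(\uline{\ddel}_g(\uline{E_r}^{\sharp}),M)$ and $\DD^i_{[S]}(\uline{\ddel}(\uline{E_r}^{\sharp}),M)$ coincide. Applying Lemma \ref{qmcohodel} reduces the problem to computing $\H^{i+1}_{[S]}(E_r^{\sharp},M)$.

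Second, I use the monadic presentation from \S \ref{fingpqmsn}. Because $c\NAlg(A)$ is equivalent to the category of algebras for the composite monad $\top = \Symm^+\circ \top_{\pd}$ on $c_+\Mod(A)$, the cotriple resolution argument of Proposition \ref{repfingpqm} realises $\H^{i+1}_{[S]}(E_r^{\sharp},M)$ as an Ext group of the cotangent complex of the augmented cosimplicial $A$-algebra $A\oplus S = O(BG)$ with coefficients in $S\otimes_A M$. Since $BG=\Spec^{\bullet}(A\oplus S)$ is a levelwise affine simplicial scheme, quasi-coherent sheaves on $BG$ are equivalent to cosimplicial $O(BG)$-modules, and this equivalence identifies $S\otimes_A M$ with $\ker(\O_{BG}\otimes_A M \to M)$ and the cosimplicial-algebra cotangent complex with $\bL^{BG/A}$. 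Matching the indexing convention of Lemma \ref{qmcohodel} with this equivalence produces the first claimed isomorphism.

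Third, for the $\pmb{\chi}^{G/A}$-reformulation I invoke Illusie's deformation theory of affine group schemes (\cite{illusiethesis} \S IV.2). The complex $\bL^{BG/A}$ on the pointed simplicial scheme $BG$ fits into a natural distinguished triangle on $BG$ whose other two terms involve $e_*\chi^{G/A}$ (supported on the unit section) and a shift of $\pmb{\chi}^{G/A}$; this encodes the fact that deformations of $G$ as a group scheme are controlled by the adjoint complex with its $G$-equivariant structure descending to $BG$. Splicing the resulting long exact sequence of $\EExt^*_{BG}(-,\,?)$ against the short exact sequence
$$0 \to \ker(\O_{BG}\otimes_A M \to M) \to \O_{BG}\otimes_A M \to e_*M \to 0$$
and using the adjunctions $\EExt^*_{BG}(e_*N,\sG) \cong \Ext^*_A(N,e^*\sG)$ and $\EExt^*_{BG}(\sF, e_*N)\cong \Ext^*_A(e^*\sF, N)$ mechanically yields both the isomorphism $\DD^i_{[S]}\cong \EExt^{i+2}_{BG}(\pmb{\chi}^{G/A},\O_{BG}\otimes_A M)$ for $i\ge 1$ (once we are well above the degrees where $\Ext^*_A(\chi^{G/A},M)$ interferes) and the seven-term sequence in low degrees.

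The hard step will be step three: identifying the precise distinguished triangle on $BG$ linking $\bL^{BG/A}$, $\pmb{\chi}^{G/A}$, and $e_*\chi^{G/A}$. This requires translating Illusie's computation, which is naturally phrased in terms of the Hopf-algebra structure on $O(G)$, into our cotriple/quasi-comonoid setup, and verifying that the augmentation data encoded in $\bar V_+(A^r)$ (rather than the unaugmented $\bar V(A^r)$) produces exactly the right pointed variant. Once that triangle is established, the shifts $i+2$ and the precise form of the low-degree splice follow directly from standard homological algebra.
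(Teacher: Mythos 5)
Your overall skeleton (formal \'etaleness of $\mc_g\subset\mc$ via Lemma \ref{gpfet}, then Lemma \ref{qmcohodel}, then a cotriple computation as in Proposition \ref{repfingpqm}) matches the paper's, but your step two hides the genuinely hard point behind a false assertion. Quasi-coherent sheaves on the stack $BG$ are \emph{not} equivalent to all cosimplicial $O(BG)$-modules: they correspond only to the (homotopy-)cartesian ones, and the cotriple cotangent complex $L=\bL^{\bot}_{\bt}(S)$ produced by the monadic resolution is a non-cartesian object of $c\Mod(S)$. So the passage from $\EExt^{i+1}_S(L,S\ten_AM)$ (which is what Proposition \ref{repfingpqm} gives) to $\EExt^i_{BG}(\bL^{BG/A},\ker(\O_{BG}\ten_AM\to M))$ cannot be a mere re-indexing under a module--sheaf dictionary. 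The paper's proof does exactly this comparison: it views $\Spec(A\oplus S)$ as a derived fppf $1$-hypergroupoid and quotes \cite{stacks} Proposition \ref{stacks-cfolsson} to identify $\EExt^i_{BG}(\bL^{BG/A},-)$ with $\EExt^i_S(\Tot N_c\uline{L},-)$ --- noting that $G$ is only assumed flat, so the smooth/Artin hypotheses must be removed via \cite{stacks} Propositions \ref{stacks-cotgood} and \ref{stacks-cfhagcot} --- and then uses \cite{stacks} Lemmas \ref{stacks-powers}, \ref{stacks-omegacalc} and \ref{stacks-ttruncate}, together with the pointedness $S^0=0$, to show that the discrepancy term $N_c^1\uline{L}$ contributes nothing, giving $\EExt^*_S(\Tot N_c\uline{L},-)\cong\EExt^*_S(L,-)$. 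None of this descent bookkeeping appears in your sketch, and the flat non-smooth case is precisely where the naive identification would fail.

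Your step three is also the wrong mechanism. There is no distinguished triangle on $BG$ linking $\bL^{BG/A}$, a shift of $\pmb{\chi}^{G/A}$ and $e_*\chi^{G/A}$ to be established; the input the paper uses is the direct equivalence $\bL^{BG/A}\simeq\pmb{\chi}^{G/A}[1]$, and the whole seven-term sequence then falls out of applying $\EExt^*_{BG}(\pmb{\chi}^{G/A}[1],-)$ to the single coefficient sequence $0\to S\to A\oplus S\to A\to 0$ (the module avatar of your displayed sequence of sheaves), with the terms $\Hom_A(\omega^{G/A},M)$ and $\Ext^1_A(\chi^{G/A},M)$ accounted for by adjunction along the unit and the fact that $\chi^{G/A}$ is perfect in chain degrees $0,1$ (whence $\Ext^j_A(\chi^{G/A},M)=0$ for $j\ge 2$, giving the isomorphism for $i\ge 1$). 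Splicing a hypothetical triangle in the first variable \emph{as well as} the coefficient sequence would double-count and not reproduce the stated sequence. Note too that one of your adjunctions, $\EExt^*_{BG}(e_*N,\sG)\cong\Ext^*_A(N,e^*\sG)$, is back-to-front: since $e^*$ is left adjoint to $e_*$, only $\EExt^*_{BG}(\sF,e_*N)\cong\Ext^*_A(\oL e^*\sF,N)$ is available, and it is the only one needed. Your assessment of where the difficulty lies is thus inverted: the identification of $\pmb{\chi}^{G/A}$ is the easy, standard part, while the hypergroupoid-to-stack comparison of cotangent complexes that you elided in step two is where the proof actually lives.
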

\begin{proof}
Write $L:= \bL^{\bot}_{\bt}(S)$ , defined as in the proof of Proposition \ref{repfingpqm}, and recall that this is a projective object of $c\Mod(S)$. Observe that in the terminology of \cite{stacks2}, $\Spec(A \oplus S)$ is a derived fppf $1$-hypergroupoid, and a derived Artin $1$-hypergroupoid whenever $G$ is smooth.

If $G$ is smooth, then  \cite{stacks2} Proposition \ref{stacks-cfolsson} shows that
\[
 \EExt_{S}^i(\Tot N_c\uline{L}, S\ten_A M) \cong  \EExt^i_{BG}(\bL^{BG/A}, \ker(\O_{BG}\ten_AM \to M)),
\]
where $\Tot$ is the total complex functor.
For general $G$, the same formula holds, 
  since \cite{stacks2} Proposition \ref{stacks-cotgood} only uses the Artin hypothesis to prove that $\Tot N_cN^s\uline{L}$ is projective, while the descent argument from the proof of \cite{stacks2} Proposition \ref{stacks-cfhagcot} works for all faithfully flat morphisms.

Now, \cite{stacks2} Lemmas \ref{stacks-powers} and \ref{stacks-omegacalc} combine to show that $N_c^i\uline{L}$ is acyclic for $i>1$, while $N_c^1\uline{L} $ is the pullback to $BG$ of the cotangent complex of a trivial relative derived $1$-hypergroupoid. It then follows from \cite{stacks2} Lemma \ref{stacks-ttruncate} that there are canonical isomorphisms
\[
 \Ext^*_S(N_c^1\uline{L}, P) \cong \Ext^*_{S^0}( (N_c^1\uline{L})^0, P^0)
\]
for all $P \in c\Mod(S)$. Since $S^0=0$, this means that
\[
 \EExt_{S}^i(\Tot N_cN^s\uline{L}, S\ten_A M) \cong \EExt^i(L,  S\ten_A M).
\]
Thus, combined with the proof of Proposition \ref{repfingpqm}, we get
\[
 \DD^i_{[S]}(\uline{\ddel}_g(\uline{E_r}^{\sharp}),M)\cong \EExt^i_{BG}(\bL^{BG/A}, \ker(\O_{BG}\ten_AM \to M)).
\]

Finally, $\bL^{BG/A} \simeq  \pmb{\chi}^{G/A}[1]$, so the exact sequence $0 \to S \to A \oplus S \to A \to 0$ of $S$-modules gives   the required long exact sequence.
\end{proof}

\begin{proposition}\label{fingpconsistentqm}
For $A \in s\cN^{\flat}_{R}$, 
the space $\uline{\ddel}_g(\uline{E_r}^{\sharp})(A)$ is functorially weakly equivalent to the nerve $\bar{W}\fM(A)$ of the $\infty$-groupoid  $\fM(A)$ of  pointed derived geometric $1$-stacks $X$ over $A$
for which $X\ten_A^{\oL}\pi_0A$ is weakly equivalent to the nerve of a flat rank $r$ group scheme over $\pi_0A$.
\end{proposition}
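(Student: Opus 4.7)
The plan is to follow the template of Propositions \ref{finconsistentqm}, \ref{cohconsistent} and \ref{polconsistentqm}: construct an explicit comparison functor built from the bar construction $\beta_F^*$ of Definition \ref{betaF}, check it is an equivalence on $\pi^0$, verify that it induces isomorphisms on the cohomology groups $\DD^i$, and then invoke Remark \ref{predetectweakh} together with Proposition \ref{repfingpqm} to conclude.

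First I would take $F: s c_+\Mod(A) \to s c \NAlg(A)$ to be the free non-unital cosimplicial algebra functor associated to the composite monad $\top=\Symm^+\circ\top_{\pd}$ of Definition \ref{barv+def}, so that $c\NAlg(A) \simeq c_+\Mod(A)^{\top}$. The construction $\beta_F^*$ of Definition \ref{betaF} then produces, from an object $(M,\omega) \in \Del(E_r^{\sharp})(A)$, a simplicial object in non-unital cosimplicial $A$-algebras, and hence via $\Spec$ (adjoined with the basepoint from the augmentation) a pointed bisimplicial affine $A$-scheme $\oR\bar V G_{(M,\omega)}$. Taking the total space yields a pointed derived geometric $1$-stack, giving a functor $\alpha_A\co \Del_g(\uline{E_r})^{\sharp}(A)\to \fM(A)$, natural in $A \in s\cN^{\flat}_R$.

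For $A \in \Alg_R$, Corollary \ref{cfbeta} identifies $\beta_F^*(M,\omega)$ with the cotriple resolution of the associated cosimplicial algebra $S=S(M,\omega)$, so  $\Spec(A\oplus \beta_F^*(M,\omega))$ is homotopy equivalent to  $\Spec(A\oplus S) \simeq \bar V G$, where $G$ is the group scheme attached to $(M,\omega)$ via the defining property of $\mc_g$ and Lemma \ref{gpfet}. By Lemma \ref{gpfet} the forgetful functor from group schemes to pointed simplicial schemes is formally \'etale and fully faithful, so this identifies $\alpha_A$ with the inclusion of flat rank $r$ group schemes into $\fM(A)$; that this is an equivalence of groupoids for discrete $A$ is then a direct consequence of the definitions. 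Hence $\pi^0\alpha\co \pi^0\Del_g(\uline{E_r})^{\sharp}\to \pi^0\fM$ is a weak equivalence.

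For the cohomological comparison, Lemma \ref{fingpcoho} computes
\[
\DD^i_{[S]}(\uline{\ddel}_g(\uline{E_r}^{\sharp}),M)\cong \EExt^i_{BG}(\bL^{BG/A}, \ker(\O_{BG}\ten_AM \to M)),
\]
and the same complex $\bL^{BG/A}$ controls derived deformations of $BG$ as a pointed derived geometric $1$-stack: by \cite{dmsch} Example \ref{dmsch-modgpsch} (together with \cite{dmsch} Corollary \ref{dmsch-representdaffine}) the tangent complex of $\fM$ at $[BG]$ agrees with $\EExt^*_{BG}(\bL^{BG/A},-)$ applied to the augmentation ideal of $\O_{BG}\ten_AM$. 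Thus $\alpha$ induces isomorphisms on all $\DD^i$. Since $\bar W\fM$ is already representable by a derived geometric $1$-stack (\cite{dmsch} Example \ref{dmsch-modgpsch}) and Proposition \ref{repfingpqm} realises $\uline{\ddel}_g(\uline{E_r}^{\sharp})$ as an almost finitely presented derived geometric $1$-stack, Remark \ref{predetectweakh} applied to $\alpha$ yields the required functorial weak equivalence
\[
 \uline{\ddel}_g(\uline{E_r}^{\sharp})\simeq \bar W\fM.
\]
The main obstacle is the passage from the non-unital cosimplicial algebra $\beta_F^*(M,\omega)$ over $A \in s\cN^{\flat}_R$ to an honest pointed derived geometric $1$-stack $\oR\bar V G$, and verifying that this is the correct model for the universal family; this requires carefully matching the Artin hypergroupoid presentations of \cite{stacks} (as used in Lemma \ref{fingpcoho}) with the $\infty$-groupoid description of $\fM$ in \cite{dmsch}, so that the Ext-group identification of tangent complexes is genuinely the same isomorphism on both sides.
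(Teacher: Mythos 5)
Your overall route is exactly the paper's: you use the same free functor $F$ for the composite monad $\Symm^+\circ \top_{\pd}$, the same bar construction $\beta_F^*$ of Definition \ref{betaF}, the same identification on discrete rings via Corollary \ref{cfbeta} and Lemma \ref{gpfet}, the same cohomological input from Lemma \ref{fingpcoho}, and you conclude via Remark \ref{predetectweakh} together with Proposition \ref{repfingpqm}. (For the tangent complex of $\fM$ the paper quotes \cite{dmsch} Theorem \ref{dmsch-augrepresentdaffine}, the augmented/pointed version, rather than Corollary \ref{dmsch-representdaffine}, but that is a minor matter of which external result carries the computation.)

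The one genuine gap is precisely the step you flagged as ``the main obstacle'' but did not resolve: the assertion that ``taking the total space yields a pointed derived geometric $1$-stack'' is not automatic, because a flat rank $r$ group scheme need not be smooth, so $\Spec(A\oplus\beta_F^*(\omega))$ is a pointed derived \emph{fppf} $1$-hypergroupoid rather than a smooth (Artin) one, and no Artin hypergroupoid presentation is directly available. The paper closes this in two moves. First, since $\phi(\omega)\ten_A^{\oL}\pi_0A$ recovers $O(BG)$ up to weak equivalence, $\Spec\phi(\omega)$ is a pointed derived fppf $1$-hypergroupoid, and one defines $\Phi(\omega)$ as its homotopy-fppf \emph{hypersheafification}, which is a pointed derived geometric fppf $1$-stack by \cite{stacks} Theorem \ref{stacks-bigthmd} (as adapted in Remark \ref{stacks-hagrk}). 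Second, by To\"en's theorem (\cite{toenflat} Theorem 0.1), derived geometric fppf $1$-stacks agree with derived geometric Artin $1$-stacks, so $\Phi(\omega)$ genuinely lies in $\fM(A)$. Without this flat-implies-smooth input your comparison functor does not land in $\fM$, and the subsequent $\pi^0$ and $\DD^i$ checks (the latter resting on the fppf descent arguments inside Lemma \ref{fingpcoho}) have nothing to compare against; with it, the rest of your argument goes through as written.
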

\begin{proof}
We work along the same lines as Proposition \ref{finconsistentqm}. As a consequence of Proposition \ref{repfingpqm} and Remark \ref{predetectweakh}, it suffices to construct a natural transformation
\[
 \Phi:\Del_g(\uline{E_r})^{\sharp}\to \fM
\]
of $\infty$-groupoids, inducing equivalences on $\pi^0$ and isomorphisms on $\DD^i$ of the nerves.

Given an object of $ \Del_g(\uline{E_r})^{\sharp}(A)$, we get $M \in c_+\Mod(A_0)$, locally isomorphic (over $A_0$) to $\bar{V}_+(A^r_0)$, together with elements 
\[
 \omega_n \in \Hom_{c_+\Mod(A_0)}(\top^{n+1}M, M\ten_{A_0}(A^{I^n})_0),
\]
satisfying the Maurer--Cartan relations of Definition \ref{mcdefqm}. For the free functor $F: sc_+\Mod(A) \to sc\NAlg(A) $ from simplicial almost cosimplicial $A$-modules to non-unital 
simplicial  cosimplicial commutative $A$-algebras,  Definition \ref{betaF} thus gives us a functor 
\[
 \beta_F^*: \Del(E_r)^{\sharp}(A)\to sc\NAlg(A). 
\]

We therefore get a functor $\phi: \Del(E_h(A)) \to sc\Alg(A)\da A$ to the category of augmented simplicial cosimplicial commutative $A$-algebras, given by
$\omega \mapsto A \oplus \beta_F^*(\omega)$. Moreover, it follows from Definition \ref{betaF} that all objects in the image of  $\phi$ are  Reedy cofibrant. If $A \in \Alg_R $ and $\omega$ corresponds to a group scheme $G$, then arguing as in the proof of Proposition \ref{finconsistentqm}, $\phi(\omega)$ is a cofibrant resolution of $O(BG)$ as a simplicial augmented cosimplicial commutative $A$-algebra. Therefore for arbitrary $A \in s\cN^{\flat}_R$, 
\[
\Spec (\phi(\omega)\ten_A^{\oL}\pi_0A) 
\]
 is a pointed fppf $1$-hypergroupoid, so $\Spec \phi(\omega)$ is a pointed derived  fppf $1$-hypergroupoid.

We therefore define $\Phi(\omega)$ to be the homotopy-fppf hypersheafification of $\Spec \phi(\omega)$. By \cite{stacks2} Theorem \ref{stacks-bigthm}, $\Phi(\omega)$ is a pointed derived geometric fppf $1$-stack whenever $\omega \in \Del_g(\uline{E_r})^{\sharp}(A)$. By \cite{toenflat} Theorem 0.1, this is the same as  a derived geometric Artin $1$-stack.
 
When $A \in \Alg_R$, with $\omega$ corresponding to $G$, we have seen  that $\Phi(\omega)$ is just the classifying stack $BG$. For arbitrary $A \in s\cN^{\flat}_R$, this means that $ \Phi(\omega)\ten_A^{\oL}\pi_0A$ is of the form $BG$ for some flat  rank $r$ group scheme $G$ over $\pi_0A$. Thus $\Phi$ indeed gives a functor $\Phi: \Del_g(\uline{E_r})^{\sharp}\to \fM$.

The arguments above have shown that $\pi^0\Phi$ is an equivalence, since the space of group homomorphisms $G\to G'$ corresponds to the space of pointed morphisms $BG \to BG'$. To see that $\Phi$ gives isomorphisms 
\[
 \DD^i_{\omega}(B\Del(E_r)^{\sharp}, M) \to \DD^i_{\Phi(\omega)}(\bar{W}\fM,M), 
\]
we combine
 Lemma  \ref{fingpcoho} with \cite{dmsch} Theorem \ref{dmsch-augrepresentdaffine}.
\end{proof}

\bibliographystyle{alphanum}
\addcontentsline{toc}{section}{Bibliography}
\bibliography{references}

\begin{thebibliography}{CFK2}

\bibitem[Ane]{anel}
Mathieu Anel.
\newblock Grothendieck topologies from unique factorisation systems.
\newblock arXiv:0902.1130v2 [math.AG], 2009.

\bibitem[BK]{bousfieldkan}
A.~K. Bousfield and D.~M. Kan.
\newblock {\em Homotopy limits, completions and localizations}.
\newblock Lecture Notes in Mathematics, Vol. 304. Springer-Verlag, Berlin,
  1972.

\bibitem[CFK1]{Quot}
Ionu{\c{t}} Ciocan-Fontanine and Mikhail Kapranov.
\newblock Derived {Q}uot schemes.
\newblock {\em Ann. Sci. {\'E}cole Norm. Sup. (4)}, 34(3):403--440, 2001.

\bibitem[CFK2]{Hilb}
Ionu{\c{t}} Ciocan-Fontanine and Mikhail~M. Kapranov.
\newblock Derived {H}ilbert schemes.
\newblock {\em J. Amer. Math. Soc.}, 15(4):787--815 (electronic), 2002.

\bibitem[CLM]{loop}
Frederick~R. Cohen, Thomas~J. Lada, and J.~Peter May.
\newblock {\em The homology of iterated loop spaces}.
\newblock Springer-Verlag, Berlin, 1976.
\newblock Lecture Notes in Mathematics, Vol. 533.

\bibitem[CR1]{CRfib}
A.~M. Cegarra and J.~Remedios.
\newblock Diagonal fibrations are pointwise fibrations.
\newblock {\em J. Homotopy Relat. Struct.}, 2(2):81--92, 2007.

\bibitem[CR2]{CRdiag}
A.~M. Cegarra and Josu{\'e} Remedios.
\newblock The relationship between the diagonal and the bar constructions on a
  bisimplicial set.
\newblock {\em Topology Appl.}, 153(1):21--51, 2005.

\bibitem[DK]{pathgpd}
W.~G. Dwyer and D.~M. Kan.
\newblock Homotopy theory and simplicial groupoids.
\newblock {\em Nederl. Akad. Wetensch. Indag. Math.}, 46(4):379--385, 1984.

\bibitem[GJ]{sht}
Paul~G. Goerss and John~F. Jardine.
\newblock {\em Simplicial homotopy theory}, volume 174 of {\em Progress in
  Mathematics}.
\newblock Birkh{\"a}user Verlag, Basel, 1999.

\bibitem[Gro1]{EGA3.1}
A.~Grothendieck.
\newblock {\'E}l{\'e}ments de g{\'e}om{\'e}trie alg{\'e}brique. {III}.
  {\'e}tude cohomologique des faisceaux coh{\'e}rents. {I}.
\newblock {\em Inst. Hautes {\'E}tudes Sci. Publ. Math.}, (11):167, 1961.

\bibitem[Gro2]{GroHilb}
Alexander Grothendieck.
\newblock Techniques de construction et th{\'e}or{\`e}mes d'existence en
  g{\'e}om{\'e}trie alg{\'e}brique. {IV}. {L}es sch{\'e}mas de {H}ilbert.
\newblock In {\em S{\'e}minaire {B}ourbaki, {V}ol.\ 6}, pages Exp.\ No.\ 221,
  249--276. Soc. Math. France, Paris, 1995.

\bibitem[Har1]{HaRD}
Robin Hartshorne.
\newblock {\em Residues and duality}.
\newblock Lecture notes of a seminar on the work of A. Grothendieck, given at
  Harvard 1963/64. With an appendix by P. Deligne. Lecture Notes in
  Mathematics, No. 20. Springer-Verlag, Berlin, 1966.

\bibitem[Har2]{Ha}
Robin Hartshorne.
\newblock {\em Algebraic geometry}.
\newblock Springer-Verlag, New York, 1977.
\newblock Graduate Texts in Mathematics, No. 52.

\bibitem[HS]{HaimanSturmfels}
Mark Haiman and Bernd Sturmfels.
\newblock Multigraded {H}ilbert schemes.
\newblock {\em J. Algebraic Geom.}, 13(4):725--769, 2004.

\bibitem[Ill1]{Ill1}
Luc Illusie.
\newblock {\em Complexe cotangent et d{\'e}formations. {I}}.
\newblock Springer-Verlag, Berlin, 1971.
\newblock Lecture Notes in Mathematics, Vol. 239.

\bibitem[Ill2]{illusiethesis}
Luc Illusie.
\newblock Cotangent complex and deformations of torsors and group schemes.
\newblock In {\em Toposes, algebraic geometry and logic ({C}onf., {D}alhousie
  {U}niv., {H}alifax, {N}.{S}., 1971)}, pages 159--189. Lecture Notes in Math.,
  Vol. 274. Springer, Berlin, 1972.

\bibitem[LMB]{champs}
G{\'e}rard Laumon and Laurent Moret-Bailly.
\newblock {\em Champs alg{\'e}briques}, volume~39 of {\em Ergebnisse der
  Mathematik und ihrer Grenzgebiete. 3. Folge. A Series of Modern Surveys in
  Mathematics [Results in Mathematics and Related Areas. 3rd Series. A Series
  of Modern Surveys in Mathematics]}.
\newblock Springer-Verlag, Berlin, 2000.

\bibitem[Lur]{lurie}
J.~Lurie.
\newblock {\em Derived Algebraic Geometry}.
\newblock PhD thesis, M.I.T., 2004.
\newblock www.math.harvard.edu/$\sim$lurie/papers/DAG.pdf or
  http://hdl.handle.net/1721.1/30144.

\bibitem[Mum]{Mum}
David Mumford.
\newblock {\em Lectures on curves on an algebraic surface}.
\newblock With a section by G. M. Bergman. Annals of Mathematics Studies, No.
  59. Princeton University Press, Princeton, N.J., 1966.

\bibitem[Pri1]{paper2}
J.~P. Pridham.
\newblock Deformations of schemes and other bialgebraic structures.
\newblock {\em Trans. Amer. Math. Soc.}, 360(3):1601--1629, 2008.

\bibitem[Pri2]{htpy}
J.~P. Pridham.
\newblock Pro-algebraic homotopy types.
\newblock {\em Proc. London Math. Soc.}, 97(2):273--338, 2008.
\newblock arXiv math.AT/0606107 v8.

\bibitem[Pri3]{higher}
J.~P. Pridham.
\newblock Derived deformations of {A}rtin stacks.
\newblock arXiv:0805.3130v2 [math.AG], submitted, 2009.

\bibitem[Pri4]{monad}
J.~P. Pridham.
\newblock The homotopy theory of strong homotopy algebras and bialgebras.
\newblock {\em Homology, Homotopy Appl.}, 12(2):39--108, 2010.
\newblock arXiv:0908.0116v2 [math.AG].

\bibitem[Pri5]{ddt2}
J.~P. Pridham.
\newblock Derived deformations of schemes.
\newblock {\em Comm. Anal. Geom.}, 20(3):529--563, 2012.
\newblock arXiv:0908.1963v1 [math.AG].

\bibitem[Pri6]{dmsch}
J.~P. Pridham.
\newblock Derived moduli of schemes and sheaves.
\newblock {\em J. K-Theory}, 10(1):41--85, 2012.
\newblock arXiv:1011.2189v3 [math.AG].

\bibitem[Pri7]{drep}
J.~P. Pridham.
\newblock Representability of derived stacks.
\newblock {\em J. K-Theory}, 10(2):413--453, 2012.
\newblock arXiv:1011.2742v4 [math.AG].

\bibitem[Pri8]{stacks2}
J.~P. Pridham.
\newblock Presenting higher stacks as simplicial schemes.
\newblock {\em Adv. Math.}, to appear.
\newblock arXiv:0905.4044v3 [math.AG].

\bibitem[Qui1]{QRat}
Daniel Quillen.
\newblock Rational homotopy theory.
\newblock {\em Ann. of Math. (2)}, 90:205--295, 1969.

\bibitem[Qui2]{Q}
Daniel Quillen.
\newblock On the (co-) homology of commutative rings.
\newblock In {\em Applications of Categorical Algebra (Proc. Sympos. Pure
  Math., Vol. XVII, New York, 1968)}, pages 65--87. Amer. Math. Soc.,
  Providence, R.I., 1970.

\bibitem[Ser]{Se2}
Jean-Pierre Serre.
\newblock Faisceaux alg{\'e}briques coh{\'e}rents.
\newblock {\em Ann. of Math. (2)}, 61:197--278, 1955.

\bibitem[To{\"e}]{toenflat}
Bertrand To{\"e}n.
\newblock Flat descent for {A}rtin $n$-stacks.
\newblock {\em Compos. Math.}, to appear.
\newblock arXiv:0911.3554v2 [math.AG].

\bibitem[TV]{hag2}
Bertrand To{\"e}n and Gabriele Vezzosi.
\newblock Homotopical algebraic geometry. {II}. {G}eometric stacks and
  applications.
\newblock {\em Mem. Amer. Math. Soc.}, 193(902):x+224, 2008.
\newblock arXiv math.AG/0404373 v7.

\bibitem[Wei]{W}
Charles~A. Weibel.
\newblock {\em An introduction to homological algebra}.
\newblock Cambridge University Press, Cambridge, 1994.

\end{thebibliography}
\end{document}